\newtheorem{theorem}{Theorem}[section]
\newtheorem{corollary}[theorem]{Corollary}
\newtheorem{lemma}[theorem]{Lemma}
\newtheorem{proposition}[theorem]{Proposition}
\theoremstyle{definition}
\newtheorem{definition}[theorem]{Definition}
\newtheorem{remark}[theorem]{Remark}
\newtheorem{example}[theorem]{Example}
\theoremstyle{remark}
\renewcommand{\theclaim}{\textup{\theclaim}}
\newtheorem*{acknowledgements}{Acknowledgements}
\numberwithin{equation}{section}
\def\openone
\newbox\ipbox
\newcommand{\ip}[2]{\left\langle #1\, , \,#2\right\rangle}
\newcommand{\diracb}[1]{\left\langle #1\mathrel{\mathchoice

{\setbox\ipbox=\hbox{$\displaystyle \left\langle\mathstrut
#1\right.$}

\vrule height\ht\ipbox width0.25pt depth\dp\ipbox}

{\setbox\ipbox=\hbox{$\textstyle \left\langle\mathstrut
#1\right.$}

\vrule height\ht\ipbox width0.25pt depth\dp\ipbox}

{\setbox\ipbox=\hbox{$\scriptstyle \left\langle\mathstrut
#1\right.$}

\vrule height\ht\ipbox width0.25pt depth\dp\ipbox}

{\setbox\ipbox=\hbox{$\scriptscriptstyle \left\langle\mathstrut
#1\right.$}

\vrule height\ht\ipbox width0.25pt depth\dp\ipbox}

}\right. }
\newcommand{\dirack}[1]{\left. \mathrel{\mathchoice

{\setbox\ipbox=\hbox{$\displaystyle \left.\mathstrut
#1\right\rangle$}

\vrule height\ht\ipbox width0.25pt depth\dp\ipbox}

{\setbox\ipbox=\hbox{$\textstyle \left.\mathstrut
#1\right\rangle$}

\vrule height\ht\ipbox width0.25pt depth\dp\ipbox}

{\setbox\ipbox=\hbox{$\scriptstyle \left.\mathstrut
#1\right\rangle$}

\vrule height\ht\ipbox width0.25pt depth\dp\ipbox}

{\setbox\ipbox=\hbox{$\scriptscriptstyle \left.\mathstrut
#1\right\rangle$}

\vrule height\ht\ipbox width0.25pt depth\dp\ipbox}

} #1\right\rangle}
\newcommand{\cj}[1]{\overline{#1}}
\newcommand{\bz}{\mathbb{Z}}
\newcommand{\br}{\mathbb{R}}
\newcommand{\bn}{\mathbb{N}}
\newcommand{\beq}{\begin{equation}}
\newcommand{\eeq}{\end{equation}}
\def\blfootnote{\xdef\@thefnmark{}\@footnotetext}
\renewcommand{\mod}{\operatorname{mod}}
\def\-{^{-1}}
\def\D{\mathcal{D}}
\def\ty{\emptyset}
\begin{document}

\title[Hadamard triples generate self-affine spectral measures]{Hadamard triples generate self-affine spectral measures}
\author{Dorin Ervin Dutkay}

\address{[Dorin Ervin Dutkay] University of Central Florida\\
	Department of Mathematics\\
	4000 Central Florida Blvd.\\
	P.O. Box 161364\\
	Orlando, FL 32816-1364\\
U.S.A.\\} \email{Dorin.Dutkay@ucf.edu}

\author{John Haussermann}

\address{[John Haussermann] University of Central Florida\\
	Department of Mathematics\\
	4000 Central Florida Blvd.\\
	P.O. Box 161364\\
	Orlando, FL 32816-1364\\
U.S.A.\\} \email{jhaussermann@knights.ucf.edu}

\author{Chun-Kit Lai}

\address{[Chun-Kit Lai] Department of Mathematics, San Francisco State University,
1600 Holloway Avenue, San Francisco, CA 94132.}

 \email{cklai@sfsu.edu}

\thanks{}
\subjclass[2010]{Primary 42B05, 42A85, 28A25.}
\keywords{Hadamard triples, quasi-product form, self-affine sets, spectral measure}

\begin{abstract}
Let $R$ be an expanding matrix with integer entries and let $B,L$ be finite integer digit sets so that $(R,B,L)$ form a Hadamard triple on ${\br}^d$ in the sense that the matrix
$$
\frac{1}{\sqrt{|\det R|}}\left[e^{2\pi i \langle R^{-1}b,\ell\rangle}\right]_{\ell\in L,b\in B}
 $$
is unitary. We prove that the associated fractal self-affine measure $\mu = \mu(R,B)$ obtained by an infinite convolution of  atomic measures
$$
\mu(R,B) = \delta_{R^{-1} B}\ast\delta_{R^{-2}B}\ast\delta_{R^{-3}B}\ast...
$$
is a spectral measure, i.e.,  it admits an orthonormal basis of exponential functions in $L^2(\mu)$. This settles a long-standing conjecture proposed by Jorgensen and Pedersen and studied by many other authors. Moreover, we also show that if we relax the Hadamard triple condition to an almost-Parseval-frame condition, then we obtain a sufficient condition for a self-affine measure to admit Fourier frames.
\end{abstract}
\maketitle \tableofcontents
\section{Introduction}

\subsection{Fuglede's Problems}
As it is well known, Fourier discovered that the exponential functions $\{e^{2\pi i \langle n,x\rangle}: n\in {\mathbb Z}^d\}$ form an orthonormal basis for $L^2([0,1]^d)$ and his discovery is now one of the fundamental pillars in modern mathematics. It is natural to ask what other measures have this property, that there is a family of exponential functions which form an orthonormal basis for their $L^2$-space?

 Let $\mu$ be a Borel probability measure on ${\mathbb R}^d$ and let $\langle\cdot,\cdot\rangle$ denote the standard inner product on $\br^d$. We say that $\mu$ is a {\it spectral measure} if there
exists a countable set $\Lambda\subset {\mathbb R}^d$ called the {\it spectrum} of $\mu$ such  that
$E(\Lambda): = \{e^{2\pi i \langle\lambda,x\rangle}:
\lambda\in\Lambda\}$ is an orthonormal basis for $L^2(\mu)$. Suppose that the Fourier transform of $\mu$ is defined to be
 $$
\widehat{\mu}(\xi)= \int e^{-2\pi i \langle\xi,x\rangle}d\mu(x).
 $$
 It is straightforward to verify that a measure is a spectral measure with spectrum $\Lambda$ if and only if the following two conditions are satisfied:
 \begin{enumerate}
\item (Orthogonality) $ \widehat{\mu}(\lambda-\lambda')=0$ for all distinct $\lambda,\lambda'\in\Lambda$ and
\item (Completeness) If for $f\in L^2(\mu)$, $\int f(x)e^{-2\pi i \langle\lambda,x\rangle}d\mu(x)=0$ for all $\lambda\in\Lambda$, then  $f=0$.
    \end{enumerate}
    Furthermore, we say that a Lebesgue measurable subset $\Omega$ of $\br^d$ is a {\it spectral set} if the corresponding Lebesgue measure supported on $\Omega$, $\chi_{\Omega} dx$, is a spectral measure.  In this paper, we are interested in the following question

\medskip

{\bf (Q1)} When is a Borel probability measure $\mu$ spectral?

\medskip

%

The question was first studied by  Fuglede \cite{Fug74} in 1974 while he was working on a problem by Segal on the existence of {\it commuting} extensions of the partial differential operators on domains of $\br^d$. Fuglede proved that the domains $\Omega$ under some regularity condition for which such extensions exist are exactly those with the property that there exists an orthogonal exponential  basis for $L^2(\Omega)$, with Lebesgue measure. The regularity condition was removed later by Pedersen \cite{P87}.  In the same paper, Fuglede proposed his famous conjecture:

\medskip

\noindent
{\bf Fuglede's Conjecture:}~~A measurable set $\Omega$ is a spectral set in ${\mathbb R}^d$ if and only if $\Omega$ tiles ${\mathbb R}^d$ by translation.

\medskip

Fuglede's Conjecture has been studied by many authors, e.g., Jorgensen, Pedersen, Lagarias, {\L}aba, Kolountzakis, Matolcsi, Iosevich, Tao, Wang and others (\cite{JP98,MR1700084,IKT1,IKT2,Ko1,MR2237932,La,LRW,LaWa96,LaWa97,Tao04}), but it had baffled experts for 30 years until Terence Tao \cite{Tao04} constructed the first counterexample, a spectral set which is not a tile in ${\mathbb R}^d$, $d\geq5$. The example and technique were refined later to disprove the conjecture in both directions on ${\mathbb R}^d$ for $d\geq3$ \cite{MR2159781,MR2237932,MR2264214}. The conjecture is still open in dimensions $d=1$ and $d=2$.

\medskip

Although Fuglede's Conjecture in its original form has been disproved, there is a clear connection between spectral sets and tilings, but the precise correspondence is still a mystery.  Furthermore, spectral sets are a particular case of a broader class of problems concerning the existence and construction of families of complex exponential functions that form either Riesz bases or, more generally, Fourier frames \cite{K15,KN15,NOU}. Also, it is known that Fuglede's conjecture is true under some additional assumptions and in some other groups \cite{IMP15}, and it is related to the construction of Gabor and wavelet bases \cite{LW,Y}. We will refer to the problems concerning spectral measures and their relation to translational tilings as the {\it Fuglede problem}.

\bigskip

\subsection{Fractal Spectral measures and Main Results}
Another major advance in the study of the Fuglede problem was the discovery that fractal singular measures can also be spectral. This opened up a new possibility of applying the well-developed Fourier analysis techniques to certain classes of fractals.

 \medskip

 In 1998, Jorgensen and Pedersen \cite{JP98} constructed the first example of a singular, non-atomic spectral measure. The measure is the Hausdorff measure supported on a Cantor set, where the scaling factor is 4 and the digits are 0 and 2; we call them the one-fourth Cantor measure/set. The spectrum for this measure is the set
$$\Lambda:=\left\{\sum_{k=0}^n 4^k l_k : l_k\in\{0,1\}, n\in\bn\right\}.$$

They also proved that the usual Middle Third Cantor measure is non-spectral. The Fourier series on the one-fourth Cantor measure were studied by Strichartz who proved in \cite{MR2279556} that they have much better convergence properties than their classical counterparts: the Fourier series associated to continuous functions converge uniformly and the Fourier series of $L^p$-functions converge in the $L^p$-norm.

\medskip

Following this discovery, many other examples of singular measures have been constructed, and the spectral property of various classes of fractal measures have been analyzed, see, e.g.,  \cite{JP98,LaWa02,Str98,Str00,DJ06,MR3163581,MR3273183,MR3302160,HL08,Dai, AHLau,GL} and the references therein. To the best of our knowledge, all these constructions have been based on the central idea of Hadamard matrices and {\it Hadamard triples}:
	
\begin{definition}\label{hada}
Let $R\in M_d({\mathbb Z})$ be an $d\times d$ expansive matrix (i.e., all eigenvalues have modulus strictly greater than 1) with integer entries. Let $B, L\subset{\mathbb Z}^d $ be  finite sets of integer vectors with $N:= \#B=\#L$ ($\#$ denotes the cardinality). We say that the system $(R,B,L)$ forms a {\it Hadamard triple} (or $(R^{-1}B, L)$ forms a {\it compatible pair}, as it is called in \cite{LaWa02} ) if the matrix
\begin{equation}\label{Hadamard triples}
H=\frac{1}{\sqrt{N}}\left[e^{2\pi i \langle R^{-1}b,\ell\rangle}\right]_{\ell\in L, b\in B}
\end{equation}
is unitary, i.e., $H^*H = I$.
\end{definition}

 The system $(R,B,L)$ forms a  Hadamard triple if and only if the Dirac measure $\delta_{R^{-1}B} = \frac{1}{\#B}\sum_{b\in B}\delta_{R^{-1}b}$ is a spectral measure on ${\mathbb R}^{d}$ with spectrum $L$. Infinite convolutions of rescaled discrete measures produce self-affine measures, which we define below.

\begin{definition}\label{defifs}
For a given expansive $d\times d$ integer matrix $R$ and a finite set of integer vectors $B$ with $\#B =: N$, we define the {\it affine iterated function system} (IFS)
 $$\tau_b(x) = R^{-1}(x+b),\quad ( x\in \br^d, b\in B).$$ The {\it self-affine measure} (with equal weights) is the unique probability measure $\mu = \mu(R,B)$ satisfying
\begin{equation}\label{self-affine}
\mu(E) = \sum_{b\in B} \frac1N \mu (\tau_b^{-1} (E)),\mbox{ for all Borel subsets $E$ of $\br^d$.}
\end{equation}
This measure is supported on the {\it attractor} $T(R,B)$ which is the unique compact set that satisfies
$$
T(R,B)= \bigcup_{b\in B} \tau_b(T(R,B)).
$$
The set $T(R,B)$ is also called the {\it self-affine set} associated with the IFS. It can also be described as
$$T(R,B)=\left\{\sum_{k=1}^\infty R^{-k}b_k : b_k\in B\right\}.$$
 One can refer to \cite{Hut81} and \cite{Fal97} for a detailed exposition of the theory of iterated function systems.
\end{definition}

For a given integral expanding matrix $R$ and  a simple digit set $B$ for $R$. We let
 \begin{equation}\label{B_n}
B_n := B+RB+R^{2}B+...+R^{n-1}B = \left\{\sum_{j=0}^{n-1}R^jb_j: b_{j}\in B\right\}.
\end{equation}
\begin{equation}\label{L_n}
L_n^T := L+R^TL+(R^T)^{2}L+...+(R^T)^{n-1}L = \left\{\sum_{j=0}^{n-1}(R^T)^j\ell_j: \ell_{j}\in L\right\}.
\end{equation}
Another important description of the self-affine measure $\mu(R,B)$ is as the infinite convolution of discrete measures
\begin{equation}\label{infinite convolution}
\begin{aligned}
\mu(R,B) =& \delta_{R^{-1}B}\ast\delta_{R^{-2}B}\ast\delta_{R^{-3}B}\ast...\\
=&\mu_n\ast\mu_{>n},
\end{aligned}
\end{equation}
where
$$
\mu_n =  \delta_{R^{-1}B}\ast\delta_{R^{-2}B}\ast...\ast\delta_{R^{-n}B} = \delta_{R^{-n}(B_n)}
$$
and $\mu_{>n} = \delta_{R^{-(n+1)}B}\ast\delta_{R^{-(n+2)}B}\ast... = \mu ((R^T)^{-n}(\cdot)) $ by self-similarity. For a finite set $A$ in $\br^d$,
$$\delta_A:=\frac{1}{\#A}\sum_{a\in A}\delta_a,$$
where $\delta_a$ is the Dirac measure at $a$.

\medskip

Suppose that $(R,B,L)$ is a Hadamard triple. Then $(R^{k},B, (R^T)^{k-1}L)$ are Hadamard triples for all $k$. Hence, each factor $\delta_{R^{-k}B}$ is a spectral measure. Moreover, because $R$ and $B$ have integer entries, we can see that all $\mu_n$ are spectral. Hence, it is natural to conjecture that the weak limit $\mu$ of  $\mu_n$ is spectral:

\medskip

\noindent
{\bf Conjecture:}~~ Suppose that $(R,B,L)$ forms a Hadamard triple. Then the self-affine measure $\mu(R,B)$ is a spectral measure.

\medskip

This conjecture has been proposed since Jorgensen and Pedersen's first discovery of spectral singular measures. It was first proved on ${\mathbb R}^1$ by Laba and Wang \cite{LaWa02} and later refined in \cite{DJ06}. The situation becomes more complicated when $d>1$. Dutkay and Jorgensen showed that the conjecture  is true if $(R,B,L)$ satisfies a technical condition called the {\it reducibility condition} \cite{DJ07d}. The conjecture is true under some additional assumptions, introduced by Strichartz \cite{Str98,Str00}. Some low-dimensional special cases were also considered by Li \cite{MR3163581,MR3302160}. In this paper, one of our main objectives is to prove that this conjecture is true, Hadamard triples always generate self-affine {\it spectral} measures.

\begin{theorem}\label{thmain}
Let $(R,B,L)$ be a Hadamard triple. Then the self-affine measure $\mu(R,B)$ is spectral.
\end{theorem}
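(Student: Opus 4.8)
The plan is to exhibit a concrete candidate spectrum $\Lambda\subset\br^d$ built from the dual digits $L$ and then to prove that the exponentials $E(\Lambda)$ are \emph{complete} in $L^2(\mu)$; orthogonality comes essentially for free. Writing $\fo{\mu}(\xi)=\prod_{j=1}^{\infty}\fo{\delta_B}\big((R^T)^{-j}\xi\big)$ with $\fo{\delta_B}(\xi)=\frac1N\sum_{b\in B}e^{-2\pi i\langle b,\xi\rangle}$, the Hadamard condition on $(R,B,L)$ forces $\fo{\delta_B}$ to vanish on the nonzero differences $R^{-1}(\ell-\ell')$, so that for each $n$ the finite measure $\mu_n=\delta_{R^{-n}(B_n)}$ is spectral with spectrum $L_n^T$. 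I would assemble $\Lambda$ as a set of finite sums $\sum_{j=0}^{k}(R^T)^j\ell_j$ through a maximal-tree (extreme point) construction, so that orthogonality of $E(\Lambda)$ is inherited level by level from the unitarity of $H$ and survives in the limit.

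To attack completeness I would use the standard reformulation: for an orthogonal set $\Lambda$ the quantity $Q_\Lambda(\xi)=\sum_{\lambda\in\Lambda}\abs{\fo\mu(\xi+\lambda)}^2$ satisfies $Q_\Lambda\le 1$ (Bessel), and $\Lambda$ is a spectrum precisely when $Q_\Lambda\equiv 1$. Since $Q_\Lambda(0)=1$ and $Q_\Lambda$ is continuous, the task reduces to ruling out that $Q_\Lambda$ dips below $1$. The decisive ingredient is a \emph{uniform} lower bound on the tail measures: I would seek an $\varepsilon>0$ and a fixed compact neighbourhood of the origin on which $\abs{\fo{\mu_{>n}}(\xi)}\ge\varepsilon$ for every $n$ simultaneously, an \emph{equi-positivity} property of the family $\{\mu_{>n}\}$. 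Given equi-positivity, one can run the Jorgensen--Pedersen/Strichartz completeness machine: the self-affine scaling relates $Q_\Lambda$ at $\xi$ to its values at the $R^T$-rescaled points, and the uniform lower bound prevents Plancherel mass from escaping to infinity under iteration, which forces $Q_\Lambda\equiv 1$ and hence completeness.

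The genuine obstacle is proving equi-positivity when $d>1$, where the zero set of $\fo\mu$ can be a positive-dimensional variety rather than a discrete set, so the one-dimensional argument of {\L}aba--Wang does not transfer. My proposed route is to first reduce to a \emph{quasi-product form}: using integer row/column operations (Smith normal form) to conjugate $R$ into a block-triangular shape adapted to a flag of rational $R$-invariant subspaces, and correspondingly splitting $B$ and $L$ modulo the associated invariant sublattices. In quasi-product form the self-affine measure fibres over a lower-dimensional self-affine measure, which opens the door to an induction on $d$. The hardest single step is showing that equi-positivity is preserved along this fibration and holds across all scales uniformly; this is where the bulk of the technical work, and any subtlety about non-uniformity of the lower bound, will be concentrated. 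Once equi-positivity is secured for the reduced system, the completeness criterion of the previous paragraph closes the argument and yields that $\mu(R,B)$ is spectral.
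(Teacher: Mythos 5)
Your outline overlaps with the paper's strategy in places (orthogonality from the Hadamard structure, a reduction to a block-triangular/quasi-product form, induction on dimension), but it has a genuine gap that the paper's proof is specifically organized around. Every frequency you propose is a finite sum $\sum_j (R^T)^j\ell_j$ with $\ell_j\in L\subset\bz^d$, so your candidate spectrum is always a subset of $\bz^d$. The paper shows (Theorem \ref{th1.1}) that $\mu(R,B)$ has a spectrum in $\bz^d$ \emph{if and only if} the periodic zero set $\mathcal Z=\{\xi:\widehat\mu(\xi+k)=0\ \forall k\in\bz^d\}$ is empty, and Example \ref{example5.3} exhibits a Hadamard triple with $\bz[R,B]=\bz^2$ and $\mathcal Z\neq\ty$; there the spectrum must contain non-integer vectors (of the form $\Lambda\times\frac13\bz$). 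So no amount of equi-positivity or maximal-tree selection can rescue completeness for your $\Lambda$ in that case: the obstruction is not a failure of uniform lower bounds along the tree, it is that a single exponential $e_{\xi_0}$ with $\xi_0\in\mathcal Z$ is orthogonal to \emph{all} integer-frequency exponentials. Relatedly, your reduction to quasi-product form via Smith normal form only normalizes the lattice $\bz[R,B]$; it does not produce the quasi-product splitting of $B$. In the paper that splitting is extracted precisely from the hypothesis $\mathcal Z\neq\ty$, by viewing $\mathcal Z$ as a $\bz^d$-periodic invariant set of the dual dynamical system and invoking the structure theorem of Cerveau--Conze--Raugi to obtain a nontrivial rational $R^T$-invariant subspace $W$; the fibers $B_2(b_1)$ then turn out to be complete residue systems mod $R_2(\bz^{d-r})$, and the fiber measures are handled by self-affine tiling results and Fuglede's theorem for lattices rather than by propagating an equi-positivity bound along the fibration (the fibers are Moran-type measures varying with the base point, and the spectrum is verified only for $\mu_1$-a.e.\ fiber via the Jorgensen--Pedersen identity integrated over the base).

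Two smaller points. First, in the case $\mathcal Z=\ty$ your equi-positivity heuristic is essentially right in spirit, but the relevant uniform lower bound is needed on the dual attractor $T(R^T,L)$, not on a neighbourhood of the origin, and the paper obtains it only after replacing each $J_{n_k}$ by a translate $\widehat{J_{n_k}}\equiv J_{n_k} \pmod{(R^T)^{n_k}\bz^d}$ chosen by a compactness argument; the unmodified sums $\sum_j(R^T)^j\ell_j$ need not satisfy $\inf_k\inf_{\lambda\in\Lambda_k}|\widehat\mu((R^T)^{-m_k}\lambda)|^2>0$. Second, the paper deliberately avoids the Jorgensen--Pedersen criterion in that case, proving instead a frame inequality on step functions (using the no-overlap condition and the isometry $\|H\mathbf w\|=\|\mathbf w\|$), which already yields completeness; your $Q_\Lambda\equiv 1$ route is workable but is not where the difficulty lies. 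The missing idea in your proposal is the dichotomy on $\mathcal Z$ and the dynamical-systems analysis that converts $\mathcal Z\neq\ty$ into the quasi-product structure.
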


\medskip

\subsection{Outline of the Proof} Throughout the paper, we will assume, without loss of generality, that $0\in B\cap L$. The proof of Theorem \ref{thmain} involves three main steps and each individual step is of independent interest.

\medskip

\noindent {\bf Step 1: The No-Overlap condition}

\medskip

\begin{definition}\label{nooverlap}
 We say that the self-affine measure $\mu = \mu(R,B)$ in Definition \ref{defifs} satisfies the {\it no-overlap condition or measure disjoint condition}  if
$$
\mu(\tau_{b}(T(R,B))\cap \tau_{b'}(T(R,B)))=0, \ \text{for all } b\neq b'\in B.
$$
We say that $B$ is {\it a simple digit set for $R$}  if distinct elements of $B$ are not congruent $(\mod R(\bz^d))$.
 \end{definition}
 It is easy to verify that $B$ must be a simple digit set for $R$ if $(R,B,L)$ is a Hadamard triple. We will prove that if the digit set $B$ is simple, then the no-overlap condition is satisfied.  The no-overlap condition is related to the open set condition (OSC) and strong open set condition (SOSC).

\medskip
\begin{definition}\label{OSC}
We say that the iterated function system $\{\tau_b\}_{b\in B}$ satisfies the {\it open set condition} (OSC) if there exists a non-empty open set $U$ such that
$$
\tau_b(U)\cap \tau_{b'}(U) = \emptyset, \ \mbox{and} \ \bigcup_{b\in B}\tau_b(U)\subset U.
$$
The iterated function system $\{\tau_b\}_{b\in B}$ satisfies the {\it strong open set condition} (SOSC) if we can furthermore choose the open set $U$ such that $U\cap T(R,B)\neq \emptyset$.
\end{definition}
These conditions have been well-studied in the case of self-similar measures for which $R = rO$ for some $r>1$ and orthogonal matrix $O$ (see e.g. \cite{Sc,LW93}), but we did not find any such results in the literature for the case self-affine measures. We will first prove the no-overlap condition for the self-affine measure in our interest.

   \begin{theorem}\label{th1.0}
	Let $R$ be a $d\times d$ expansive integer matrix and let $B$ be a simple digit set for $R$. Then the affine iterated functions system associated to $R$ and $B$ satisfies the OSC, SOSC and the no-overlap condition.
\end{theorem}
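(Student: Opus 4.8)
The plan is to read the no-overlap condition as an almost-sure uniqueness statement for digit expansions and to reduce the whole theorem to a single nullity assertion about the boundary of an auxiliary self-affine tile. Writing $\pi((b_k)_{k\ge1})=\sum_{k\ge1}R^{-k}b_k$ for the coding map $B^{\mathbb N}\to T(R,B)$, we have $\mu=\pi_*\mathsf P$ with $\mathsf P$ the uniform Bernoulli measure. Since $B$ is simple, reducing an equation $\sum_j R^jb_j\equiv\sum_j R^jb_j'\ (\mathrm{mod}\ R^n\mathbb Z^d)$ modulo $R$ and dividing repeatedly forces $b_j=b_j'$; hence the digit maps $B^n\to B_n$ are injective, each $\mu_n$ lives on $N^n$ distinct points, and the no-overlap condition is equivalent to $\mu(NU)=0$, where $NU\subseteq T(R,B)$ is the set of points with two distinct codings. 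Because $NU=\bigcup_{w}\tau_w(Z_1)$ is a countable union, with $Z_1=\bigcup_{b\ne b'}\tau_b(T)\cap\tau_{b'}(T)$, it suffices to control first-level overlaps. I would also first reduce to the case where $T(R,B)$ is not contained in a proper affine subspace: the linear span $W$ of $T(R,B)$ is a rational $R$-invariant subspace, $R$ restricts to an expanding map of the lattice $W\cap\mathbb Z^d$, and $B$ remains simple there, so we may assume $\operatorname{span}T(R,B)=\mathbb R^d$.

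Next I would introduce the tile. Complete $B$ to a full residue system $\widetilde B\supseteq B$ of $\mathbb Z^d/R\mathbb Z^d$ and let $\widetilde T=T(R,\widetilde B)$ be the associated self-affine tile, which classically has nonempty interior and tiles $\mathbb R^d$ along a lattice with pairwise measure-disjoint pieces $\tau_{\widetilde b}(\widetilde T)$. With $U=\operatorname{int}\widetilde T$, the maps $\tau_b$ ($b\in B\subseteq\widetilde B$) carry $U$ onto the pairwise disjoint interiors of distinct sub-tiles and satisfy $\bigcup_{b\in B}\tau_b(U)\subseteq U$; this gives the OSC immediately. Everything else hinges on the grid $\mathcal G=\bigcup_{v}(\partial\widetilde T+v)$: if $\mu(\mathcal G)=0$, then for $v\ne 0$ any point of $T(R,B)$ lying in $\widetilde T+v$ must lie on $\mathcal G$, so $\mu(\widetilde T+v)=0$; feeding this into the $k$-fold self-affine identity for $\mu$ shows $\mu(\tau_w(\partial\widetilde T))=0$ for every finite word $w$. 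Since $Z_1\subseteq\bigcup_b\tau_b(\partial\widetilde T)$ (distinct sub-tiles meet only along $\partial\widetilde T$), the countable decomposition of $NU$ then yields $\mu(NU)=0$, i.e.\ the no-overlap condition; and $\mu(T\cap\partial\widetilde T)=0<1$ forces $T(R,B)\cap U\ne\emptyset$, i.e.\ the SOSC.

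Thus the entire statement collapses to the crux $\mu(\partial\widetilde T)=0$ (and, by the same argument, $\mu$ assigns zero mass to every lattice translate of $\partial\widetilde T$), which I expect to be the main obstacle. The difficulty is genuine: $\mu$ is typically singular, so the fact that $\partial\widetilde T$ is Lebesgue-null is useless, and one cannot argue by comparing dimensions — examples with sparse $B$ show that $\dim\mu$ may be strictly smaller than $\dim\partial\widetilde T$ while still $\mu(\partial\widetilde T)=0$. The mechanism is instead dynamical: a point of $T(R,B)$ lies on $\partial\widetilde T$ exactly when, at every scale, its expansion keeps it in a boundary (neighbor-contact) piece, and these boundary pieces are governed by a graph-directed system on the finite set of neighbor vectors $\{v\in\mathbb Z^d\setminus\{0\}:\widetilde T\cap(\widetilde T+v)\ne\emptyset\}$. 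I would set up the corresponding substochastic transfer operator weighted by the Bernoulli probabilities $1/N$ on $B$ and show that staying on the boundary forever is an event of $\mu$-measure zero because this operator has spectral radius strictly less than one; the finite approximations $\mu_n$, together with the bounded-overlap property that each level-$n$ piece $R^{-n}(\gamma+\widetilde T)$ meets only a number of neighbors bounded independently of $n$, give the uniform control needed to pass to the limit.

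Conceptually, what makes all of this work is the simplicity of $B$: the map $b\mapsto b+R\mathbb Z^d$ is injective, so the residue of a digit determines the digit, and the digit sequence of a point can be read off from the itinerary of its image in $\mathbb T^d=\mathbb R^d/\mathbb Z^d$ under the toral endomorphism induced by $R$. Two distinct codings can therefore coexist only when this itinerary meets the boundary between two branches actually used by $B$, which is precisely the tile-boundary locus above; this is the viewpoint I would use to organize the proof that $\mu(\partial\widetilde T)=0$ and to handle the lattice-translate versions uniformly.
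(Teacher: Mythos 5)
Your reduction is sound as far as it goes: the OSC via $U=\operatorname{int}T(R,\widetilde B)$ is exactly the paper's Proposition \ref{proposition2.3}, and your observation that both the SOSC and the no-overlap condition would follow once one knows $\mu(\partial \widetilde T+v)=0$ for every $v\in\bz^d$ is correct. But that last assertion is precisely where the proof has to live, and you do not prove it. The claim that the substochastic transfer operator on the neighbor set has spectral radius strictly less than one is stated, not established, and it is not automatic: each row of that matrix can sum to the full value $N$ (for every digit $b\in B$ there may be an admissible boundary-to-boundary transition), in which case substochasticity gives no decay at all. Indeed, nothing in the OSC alone rules out the a priori possibility that $T(R,B)\subseteq\partial\widetilde T$, i.e.\ that the attractor carries \emph{full} mass on the boundary grid --- excluding exactly this is the content of the strong open set condition, and it is the hard step. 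So your proposal reformulates the theorem (correctly) as a single nullity statement and then asserts that statement; the analogous fact for Lebesgue measure and the full digit set (spectral radius of the contact matrix less than $|\det R|$) is a genuine theorem of Lagarias--Wang/Gr\"ochenig--Haas, and its analogue for a sub-digit-set and the singular measure $\mu$ is essentially equivalent to what you are trying to prove.

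The paper closes this gap by a different mechanism. It first invokes the He--Lau theorem (Theorem \ref{th1.2hl}) that for self-affine IFS the OSC implies the SOSC, so there is an open set $U$ with $T(R,B)\cap U\neq\ty$; this yields a whole cylinder $\tau_{\bf b_0}(T(R,B))\subseteq U$. Then a Borel--Cantelli-type counting argument over words in $B^{nk}\setminus C^k$ (every such word contains the block ${\bf b_0}$, so its cylinder lands inside $U$) gives $\mu(U)\geq 1-(1-p_{\bf b_0})^k\to 1$, hence $\mu(U)=1$ and $\mu(\partial U)=0$; finally $\mu(\partial(\tau_b U))=0$ is forced by the invariance equation together with the OSC, and the no-overlap condition follows. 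If you want to salvage your route, you must either prove the spectral-radius bound (which I expect requires an input equivalent to the SOSC) or import He--Lau's theorem and rerun their probabilistic argument, at which point the detour through $\partial\widetilde T$ becomes unnecessary.
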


In the study of spectral measures, the no overlap condition for a self-affine measures is particularly important since it guarantees that $\mu (\tau_bT(R,B))= 1/N$ and its $k$-th level iterates will have measure $1/N^k$. With the help of this theorem, we can also compute $\int |f|^2d\mu$ for the set of step functions $f$ on the self-affine set $T(R,B)$.

\bigskip

After establishing the no-overlap condition, we can start the proof of Theorem \ref{thmain}. The mutual orthogonality of the exponential functions is not difficult to show. The main challenge is to establish the completeness of the set of exponential functions. We consider the following periodic zero set of the Fourier transform:
 \begin{equation}
{\mathcal Z} := \{\xi\in{\mathbb R}^d: \widehat{\mu}(\xi+k)=0, \ \mbox{for all} \  k\in{\mathbb Z}^d\}
 \label{eqz}
 \end{equation}
We will divide our proof into two cases: (i) ${\mathcal Z}= \ty$ and (ii) ${\mathcal Z} \ne\ty$.

\medskip

\noindent {\bf Step 2: ${\mathcal Z} = \ty$}

\medskip

This case is easier to handle. For a Hadamard triple $(R,B,L)$ and a sequence of positive integers $n_k$, we let $m_k = n_1+...+n_k$. The self-affine measure can be rewritten as
$$
\mu(R,B) = \delta_{R^{-m_1}B_{n_1}}\ast\delta_{R^{-m_2}B_{n_2}}\ast...\ast\delta_{R^{-m_k}B_{n_k}}\ast...
$$
Then we note that if we have another set $J_{n_k}$ of integer vectors, with $J_{n_k}\equiv L^T_{n_k}$ (mod $(R^T)^{n_k}({\mathbb Z}^d)$), then $(R^{n_k}, B_{n_k},J_{n_k})$ still form Hadamard triples.  Using this, we can produce many mutually orthogonal sets of exponential functions with frequencies given by:
\begin{equation}\label{eqLambda_k}
\Lambda_k = J_{n_1}+ (R^T)^{m_1}J_{n_2}+ (R^T)^{m_2} J_{n_3}+...+ (R^T)^{m_{k-1}} J_{n_{k}},
\end{equation}
\begin{equation}\label{eqlambda}
\ \Lambda = \bigcup_{k=1}^{\infty}\Lambda_k.
\end{equation}
We will show that under the assumption ${\mathcal Z}=\ty$, we can pick such a set $\Lambda$ that is indeed also complete, so it is a spectrum. In fact, we have
\begin{theorem}\label{th1.1}
Suppose that $(R,B,L)$ forms a Hadamard triple and $\mu = \mu(R,B)$ is the associated self-affine measure. Then the following are equivalent
	\begin{enumerate}
   \item ${\mathcal Z}=\emptyset$,
		\item  $\mu$ has a spectrum in $\bz^d$.
 \end{enumerate}
In particular, if ${\mathcal Z}=\emptyset$, then  $\mu$ is a spectral measure.
\end{theorem}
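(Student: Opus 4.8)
The plan is to prove the two implications of the equivalence separately; the final ``in particular'' assertion is then immediate, since (ii) produces an honest spectrum and hence exhibits $\mu$ as a spectral measure. Throughout I use the Jorgensen--Pedersen criterion: for a countable set $\Lambda$ for which $E(\Lambda)$ is orthonormal in $L^2(\mu)$, the set $\Lambda$ is a spectrum if and only if
\[
Q_\Lambda(\xi):=\sum_{\lambda\in\Lambda}|\widehat{\mu}(\xi+\lambda)|^2\equiv 1,
\]
and that in general $Q_\Lambda(\xi)\le 1$ whenever $E(\Lambda)$ is orthonormal, with $Q_\Lambda(0)=1$.

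The implication (ii)$\Rightarrow$(i) is short. If $\Lambda\subset\bz^d$ is a spectrum, then $Q_\Lambda\equiv 1$. Were there a point $\xi\in\mathcal{Z}$, then $\widehat{\mu}(\xi+\lambda)=0$ for every $\lambda\in\bz^d$, in particular for every $\lambda\in\Lambda$, giving $Q_\Lambda(\xi)=0$, a contradiction. Hence $\mathcal{Z}=\ty$.

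For (i)$\Rightarrow$(ii) I would first record a quantitative reformulation of $\mathcal{Z}=\ty$. Since $\widehat{\mu}$ is continuous and $\mathcal{Z}=\bigcap_{k\in\bz^d}\{\widehat\mu(\cdot+k)=0\}$ is closed, emptiness of $\mathcal{Z}$ together with a compactness argument on the compact dual attractor $X^\ast=\{\sum_{j\ge1}(R^T)^{-j}\ell_j:\ell_j\in L\}$ yields a finite set $F\subset\bz^d$ and a constant $\delta>0$ with $\sum_{k\in F}|\widehat\mu(\omega+k)|^2\ge\delta$ for all $\omega\in X^\ast$; this is the only place where $\mathcal{Z}=\ty$ is used, and it says that no ``frequency fiber'' over $X^\ast$ is completely annihilated by $\widehat\mu$. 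Next I would build the candidate spectrum $\Lambda=\bigcup_k\Lambda_k$ of the form \eqref{eqLambda_k}--\eqref{eqlambda}, choosing the integers $n_k$ and the sets $J_{n_k}\equiv L^T_{n_k}\pmod{(R^T)^{n_k}(\bz^d)}$ adaptively. Orthonormality of $E(\Lambda)$ is automatic and requires no choice: each $\Lambda_k$ is a spectrum for $\mu_{m_k}=\delta_{R^{-m_k}B_{m_k}}$ because $(R^{m_k},B_{m_k},\Lambda_k)$ is a Hadamard triple, the sets are nested, and the factorization $\widehat{\mu}=\widehat{\mu_{m_k}}\cdot\widehat{\mu_{>m_k}}$ turns orthogonality in $L^2(\mu_{m_k})$ into orthogonality in $L^2(\mu)$. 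Thus $\Lambda\subset\bz^d$ and $Q_\Lambda\le 1$, and everything reduces to proving completeness, i.e.\ $Q_\Lambda\equiv 1$.

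The heart of the matter is this completeness, and it is where I expect the real difficulty. Using $(R^T)^{m_k}$-periodicity of $\widehat{\mu_{m_k}}$ and self-similarity, one gets the exact recursion $Q_{k+1}(\xi)=\sum_{\lambda\in\Lambda_k}|\widehat{\mu_{m_k}}(\xi+\lambda)|^2\,Q_{J_{n_{k+1}}}\!\big((R^T)^{-m_k}(\xi+\lambda)\big)$, where $Q_k:=Q_{\Lambda_k}\nearrow Q_\Lambda$. Equivalently, writing $\nu_k^\xi=\sum_{\lambda\in\Lambda_k}|\widehat{\mu_{m_k}}(\xi+\lambda)|^2\,\delta_{(R^T)^{-m_k}(\xi+\lambda)}$, each $\nu_k^\xi$ is a probability measure supported in $X^\ast$, the passage $k\mapsto k+1$ refines $\nu_k^\xi$ by the Markov kernel $\omega\mapsto (R^T)^{-n_{k+1}}(\omega+j)$ with weights $|\widehat{\mu_{n_{k+1}}}(\omega+j)|^2$, and $Q_k(\xi)=\int|\widehat{\mu}|^2\,d\nu_k^\xi$. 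Since $|\widehat\mu|\le 1$ with equality on $X^\ast$ essentially only at $0$, completeness is equivalent to forcing $\nu_k^\xi$ to concentrate at the origin. The plan is to use the uniform non-vanishing bound to choose $n_{k+1}$ large and $J_{n_{k+1}}$ so that, at every atom $\omega$ simultaneously, a definite proportion of the refined mass is pushed into the region where $|\widehat\mu|$ is close to $1$; this would give $1-Q_{k+1}\le(1-\theta)(1-Q_k)$ for a fixed $\theta>0$, and hence geometric decay of the defect $1-Q_\Lambda$ to $0$. The main obstacle is precisely to realize this uniform capture despite the rigid constraint that $J_{n_k}$ must occupy the prescribed residue classes of $L^T_{n_k}$ modulo $(R^T)^{n_k}(\bz^d)$: one cannot simply place frequencies at the translates in $F$ furnished by $\mathcal{Z}=\ty$. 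Equivalently, in transfer-operator language, $h=1-Q_\Lambda$ is a nonnegative fixed point of the Ruelle operator $(\mathcal{R}f)(\xi)=\sum_{\ell\in L}|M_B(\xi+\ell)|^2 f((R^T)^{-1}(\xi+\ell))$, with $M_B(\xi)=\frac1N\sum_{b\in B}e^{-2\pi i\langle R^{-1}b,\xi\rangle}$ and weights summing to $1$ by the Hadamard/QMF identity, and $h(0)=0$; the difficulty is to rule out, using the uniform bound, the existence of a nonzero such $h$, i.e.\ of an invariant subset of $X^\ast$ on which $Q_\Lambda<1$. Overcoming this via the adaptive choice of $(n_k,J_{n_k})$ is the crux of the argument.
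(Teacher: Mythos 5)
Your proof of (ii)$\Rightarrow$(i) is fine and matches the paper's, and your setup for (i)$\Rightarrow$(ii) (compactness on the dual attractor to get a uniform non-vanishing bound, adaptive choice of $n_k$ and $J_{n_k}$, automatic orthogonality) is on the right track. But the proposal has a genuine gap: you explicitly leave the completeness of $E(\Lambda)$ unproved, sketching only a hoped-for contraction estimate $1-Q_{k+1}\le(1-\theta)(1-Q_k)$ for the Jorgensen--Pedersen defect and naming its realization ``the crux.'' That strategy is not carried out, and the obstacle you flag --- that $J_{n_k}$ is pinned to the residue classes of $L^T_{n_k}$ modulo $(R^T)^{n_k}(\bz^d)$, so one ``cannot simply place frequencies at the translates furnished by $\mathcal Z=\ty$'' --- is in fact not an obstacle. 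The integer correction $k(j)=k_x$ (with $x=(R^T)^{-n_k}j$ in the dual attractor) supplied by the compactness lemma is absorbed by replacing $j$ with $j+(R^T)^{n_k}k(j)$: this stays in the same residue class, so $(R^{n_k},B_{n_k},\widehat{J_{n_k}})$ is still a Hadamard triple and orthogonality survives, while now $(R^T)^{-m_k}\lambda=(R^T)^{-m_k}\lambda'+(R^T)^{-n_k}j+k(j)$ lands in the region where $|\widehat\mu|^2\ge\delta_0$, provided $n_k$ was chosen large enough that $\|(R^T)^{-m_k}\lambda'\|<\epsilon_0$. This yields the single uniform bound $\delta(\Lambda)=\inf_k\inf_{\lambda\in\Lambda_k}|\widehat\mu((R^T)^{-m_k}\lambda)|^2\ge\delta_0>0$, with no mass-transport or Ruelle-operator argument needed.

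The second missing idea is how to convert $\delta(\Lambda)>0$ into completeness. The paper does \emph{not} verify $Q_\Lambda\equiv1$; instead it proves a lower frame bound. For a step function $f=\sum_{{\bf b}\in B_{m_k}}w_{\bf b}{\bf 1}_{T(R,B)_{\bf b}}$, the no-overlap condition gives $\int|f|^2d\mu=N^{-m_k}\|{\bf w}\|^2$ and an exact formula for $\int f\,e_{-\lambda}\,d\mu$, whence
\[
\sum_{\lambda\in\Lambda_k}\Bigl|\int f e_{-\lambda}\,d\mu\Bigr|^2=\frac{1}{N^{m_k}}\sum_{\lambda\in\Lambda_k}|\widehat\mu((R^T)^{-m_k}\lambda)|^2\,\bigl|(H_{m_k}{\bf w})_\lambda\bigr|^2\ \ge\ \delta(\Lambda)\,\frac{\|H_{m_k}{\bf w}\|^2}{N^{m_k}}=\delta(\Lambda)\int|f|^2d\mu,
\]
using unitarity of the Hadamard matrix $H_{m_k}$. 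Density of step functions in $L^2(\mu)$ extends this to all $f$, and a positive lower frame bound already forces completeness (if $f\perp E(\Lambda)$ then $\delta(\Lambda)\|f\|^2\le0$). So the difficulty you locate in ruling out a nonzero fixed point of the transfer operator simply does not arise; to repair your proof you should replace the contraction scheme by the frame-inequality argument on step functions, which requires the no-overlap condition (Theorem \ref{th1.0}) as an ingredient you did not invoke.
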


 Our method for the proof of the completeness of the set of exponential functions differs from all the other existing proofs in literature, see, e.g., \cite{LaWa02,Str98,Str00,MR3055992,Dai}, where the completeness is established by checking the Jorgensen-Pedersen criterion (i.e. $\sum_{\lambda\in\Lambda}|\widehat{\mu}(\xi+\lambda)|^2=1$). Our proof of this theorem with ${\mathcal Z} = \ty$ relies on an approach from matrix analysis which exploits the isometry property of Hadamard matrices, i.e. $\|H{\bf w}\| = \| {\bf w}\|$. This allows us to show that the frame inequalities are satisfied for the set of all step functions, and then, by a density argument, the collection of the exponential functions has to be complete. This argument also gives us sufficient conditions to consider another famous question as to whether Fourier frames can exist for non-spectral self-affine measures, such as the Middle Third Cantor measure (See Subsection \ref{Fourier frame}).

\medskip

\noindent {\bf Step 3: ${\mathcal Z} \neq \ty$}

\medskip

To complete the proof of Theorem \ref{thmain} we have to consider the case ${\mathcal Z}\neq \emptyset$. When $\mathcal Z\neq \ty$, there is an exponential function $e^{2\pi i \langle\xi, x\rangle}$ that is orthogonal to every exponential function with integer frequencies. This implies that none of the subsets of integers can be complete and hence none of the sets $\Lambda$ in \eqref{eqlambda} can be complete.

\medskip

It is possible that ${\mathcal Z}\ne \emptyset$. The simplest example is to consider the interval $[0,2]$ which is generated by the IFS $\tau_0(x) = \frac12x$ and $\tau_2(x)= \frac{1}{2}(x+2)$. In this case, ${\mathcal Z} = {\mathbb Z}+\frac12$. However, this is rather trivial since the greatest common divisor (gcd) of $B = \{0,2\}$ is not 1. In fact, by some conjugation, we can assume the smallest $R$-invariant lattice containing all sets $B_n$, denoted by ${\mathbb Z}[R,B]$ is ${\mathbb Z}^d$. On ${\mathbb R}^1$, it is equivalent to gcd$(B)=1$ and we can settle this case using the result for ${\mathcal Z}= \emptyset$. However, this simple situation ceases to exist when $d>1$ and we can find spectral self-affine measures with ${\mathcal Z}\neq \emptyset$ and ${\mathbb Z}[R,B] = {\mathbb Z}^d$.

\medskip

 To settle this case, our strategy is to identify ${\mathcal Z}$ as an invariant set of some dynamical system, and use the techniques in \cite{CCR}. By doing so, we are able to show that in the case when ${\mathcal Z}\neq \emptyset$ the digit set $B$ will be reduced to a {\it quasi product-form}. Our methods are also similar to the ones used in \cite{LW2}. However, as $B$ is not a complete set of representatives (mod $R({\mathbb Z}^d)$) (as it was in \cite{LW2}), several additional adjustments will be needed.  From the quasi-product form structure obtained, we construct the spectrum directly by induction on the dimension $d$ with the help of the Jorgensen-Pedersen criterion.

 \medskip

\subsection{Fourier frames}\label{Fourier frame}
We say that the self-affine measure $\mu=\mu(R,B)$  admits a Fourier frame $E(\Lambda) = \{e^{2\pi i \langle\lambda,x\rangle}:\lambda\in\Lambda\}$ if there exists $0<A\leq B<\infty$ such that
$$
A\|f\|^2\leq \sum_{\lambda\in\Lambda}|\int f(x)e^{-2\pi i \langle\lambda,x\rangle}d\mu(x)|^2\leq B\|f\|^2, \ \forall \ f\in L^2(\mu).
$$
It is clear that the concept of Fourier frames is a natural generalization of exponential orthonormal bases. Whenever Fourier frames exist, $\mu$ is called a {\it frame spectral measure} and $\Lambda$ is called a {\it frame spectrum}. Frames on a general Hilbert space were introduced by Duffin and Schaeffer \cite{DS52} and are now a fundamental research area in applied harmonic analysis, which is developing rapidly both in theory and in applications. In theory, Fourier frames are related to de Brange's theory in complex analysis \cite{OSANN,Seip2}. In application, people regard frames as ``overcomplete bases" and because of their redundancy, the reconstruction process is more robust to errors in data and it is now widely used in signal transmission and reconstruction.  Reader may refer to \cite{Chr03} for the background of the general frame theory.

\medskip

For the measures which are non-spectral, it is natural to ask the following question.

\medskip

{\bf (Q2)} Can a non-spectral fractal measure still admit some Fourier frames?

\medskip

Some of the fundamental properties of Fourier frames were investigated in \cite{HLL11,DHSW11,DHW11b,DL14}. This question was first proposed by Strichartz \cite[p.212]{Str00}. In particular, there have been discussions whether, specifically, the one-third Cantor measure is frame spectral. We introduce the following condition as a natural generalization of Hadamard triples.

 \begin{definition}\label{ALmost_DEF}
 We say that the pair $(R,B)$ satisfies the {\it almost-Parseval-frame condition} if for any $\epsilon>0$, there exists $n$ and a subset $J_n\subset{\mathbb Z}^d$ such that
 \begin{equation}\label{ALmost}
(1-\epsilon)\sum_{b\in B_n}|w_b|^2\leq \sum_{\lambda\in J_n}\left|\frac{1}{\sqrt{N^n}}\sum_{b\in B_n}w_be^{-2\pi i \langle R^{-n}b, \lambda\rangle}\right|^2\leq (1+\epsilon)\sum_{b\in B_n}|w_b|^2
\end{equation}
for all ${\bf w} = (w_b)_{b\in{B_n}}\in{\mathbb C}^{N^n}$. Equivalently,
$$
(1-\epsilon)\|{\bf w}\|^2\leq \|F_n {\bf w}\|^2\leq (1+\epsilon)\|{\bf w}\|^2
$$
where $F_n= \left[\frac{1}{\sqrt{N^n}}e^{-2\pi i \langle R^{-n}b, \lambda\rangle}\right]_{\lambda\in J_n,b\in B_n}$ and $\|\cdot\|$ denotes the Euclidean norm.
 \end{definition}

Hadamard triples do satisfy this condition (even with $\epsilon=0$) and we  prove:

\begin{theorem}\label{th1.2}
Suppose that $B$ is simple digit set for $R$ and that $(R,B)$ satisfies the almost-Parseval-frame condition. Suppose that  $\mathcal Z=\emptyset$. Then the self-affine measure $\mu=\mu(R,B)$  admits a Fourier frame $E(\Lambda) = \{e^{2\pi i \langle\lambda,x\rangle}:\lambda\in\Lambda\}$ with $\Lambda\subset{\mathbb Z}^d$.
\end{theorem}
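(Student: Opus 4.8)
The plan is to verify the two frame inequalities on the dense subspace of step functions and then extend to all of $L^2(\mu)$. This extension is routine: if $A\|f\|^2\le\sum_{\lambda\in\Lambda}|\langle f,e_\lambda\rangle|^2\le B\|f\|^2$ (with $e_\lambda(x):=e^{2\pi i\langle\lambda,x\rangle}$) holds on a dense subspace, a Fatou/finite-subset argument promotes the upper (Bessel) bound to all of $L^2(\mu)$, after which the lower bound passes to the closure as well. Fix weights $\epsilon_k>0$ with $\prod_k(1-\epsilon_k)=:c_->0$ and $\prod_k(1+\epsilon_k)=:c_+<\infty$; for each $k$ use \eqref{ALmost} to pick a block length $n_k$ and a set $J_{n_k}\ni 0$, set $m_k=n_1+\dots+n_k$, and form $\Lambda=\bigcup_k\Lambda_k$ as in \eqref{eqLambda_k} (read as an indexed family, so repetitions are allowed). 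Since $0\in J_{n_k}$ the $\Lambda_k$ are nested and $\sum_{\lambda\in\Lambda}=\lim_k\sum_{\lambda\in\Lambda_k}$. By Theorem~\ref{th1.0} the no-overlap condition holds, so $\mu(\tau_w(T(R,B)))=N^{-m_k}$, a level-$m_k$ step function $f=\sum_{w\in B_{m_k}}c_w\chi_{\tau_w(T(R,B))}$ has $\|f\|^2=N^{-m_k}\sum_w|c_w|^2$, and a short computation from $\mu=\mu_{m_k}\ast\mu_{>m_k}$ gives, for $\lambda\in\Lambda_k$,
\begin{equation*}
\langle f,e_\lambda\rangle=\widehat\mu((R^T)^{-m_k}\lambda)\,N^{-m_k/2}(U_k\mathbf c)_\lambda,\qquad (U_k)_{\lambda,u}=N^{-m_k/2}e^{-2\pi i\langle x_u,\lambda\rangle},
\end{equation*}
where the atoms of $\mu_{m_k}$ are labelled by $x_u=\sum_{j=1}^k R^{-m_j}u_j$, $u_j\in B_{n_j}$.

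The finite-dimensional input is that $U_k$ is a near-isometry with bounds $\prod_{j\le k}(1\pm\epsilon_j)$. Indeed $\langle x_u,\lambda\rangle\equiv\sum_j\langle\eta_j,u_j\rangle\pmod 1$ with $\eta_j\equiv(R^T)^{-n_j}\ell_j+(R^T)^{-m_j}\lambda_{\le j-1}$, so $e_\lambda$ factors on the atoms as a tensor product of block characters in which the frequency of block $j$ is shifted by a constant depending only on the lower output indices $\ell_1,\dots,\ell_{j-1}$. A constant frequency shift multiplies the block matrix of \eqref{ALmost} on the digit side by a diagonal unitary, hence preserves its near-isometry bounds $1\pm\epsilon_j$; an induction peeling off the last block (whose shift involves only already-fixed frequencies, never the digits being summed) then yields the claim for $U_k$. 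Since $|\widehat\mu|\le 1$, summing the displayed formula over $\Lambda_k$ and letting $k\to\infty$ gives immediately the Bessel bound $\sum_{\lambda\in\Lambda}|\langle f,e_\lambda\rangle|^2\le c_+\|f\|^2$ for step $f$, and therefore for all $f\in L^2(\mu)$.

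For the lower bound fix $f$ at a level $m_{k_0}$ and evaluate $\langle f,e_\lambda\rangle$ at that level for every $\lambda\in\Lambda$. Writing $\lambda=\sigma+(R^T)^{m_{k_0}}\gamma$ with $\sigma=\lambda_{\le k_0}\in\Lambda_{k_0}$ and $\gamma$ in the tail set $\Gamma$ (the $\Lambda$-construction for the shifted block sequence, the natural frame candidate for $\mu_{>m_{k_0}}$), the high part of $\lambda$ contributes an integer phase against the integer atoms, so $\langle f,e_\lambda\rangle=\widehat\mu(\xi_\sigma+\gamma)\,\widehat c(\sigma)$ with $\xi_\sigma=(R^T)^{-m_{k_0}}\sigma$ and $\widehat c(\sigma)=N^{-m_{k_0}/2}(U_{k_0}\mathbf c)_\sigma$. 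The bijection $\Lambda\cong\Lambda_{k_0}\times\Gamma$ then factorizes the frame sum:
\begin{equation*}
\sum_{\lambda\in\Lambda}|\langle f,e_\lambda\rangle|^2=\sum_{\sigma\in\Lambda_{k_0}}|\widehat c(\sigma)|^2\,W(\xi_\sigma),\qquad W(\xi):=\sum_{\gamma\in\Gamma}|\widehat\mu(\xi+\gamma)|^2 .
\end{equation*}
As $\sum_\sigma|\widehat c(\sigma)|^2=N^{-m_{k_0}}\|U_{k_0}\mathbf c\|^2$ is comparable to $\|f\|^2$ with factors $\prod_{j\le k_0}(1\pm\epsilon_j)\in[c_-,c_+]$, everything reduces to a uniform bound $0<A\le W(\xi)\le B$ at the relevant frequencies. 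The upper bound is free: $W(\xi)\le\sum_{\gamma\in\Gamma}|\widehat{\mu_{>m_{k_0}}}(\xi+\gamma)|^2\le c_+$, using $|\widehat{\mu_{m_{k_0}}}|\le1$ and the Bessel bound of the previous paragraph applied to $\mu_{>m_{k_0}}$ and $\Gamma$.

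The main obstacle is the uniform positivity $\inf W>0$, and this is exactly where $\mathcal Z=\emptyset$ is indispensable. Two features help. First, the points that actually occur, $\xi_\sigma=\sum_{j\le k_0}(R^T)^{m_{j-1}-m_{k_0}}\ell_j$, all lie in one fixed bounded set; since $W$ is lower semicontinuous, it suffices to show $W$ has no zero on the closure of that set. Second, $W$ obeys a self-similar transfer identity: splitting off the first tail block $\delta_{R^{-n}B'}$ and writing $\gamma=\ell+(R^T)^{n}\gamma'$ gives $W(\xi)=\sum_{\ell\in J'}p_\ell(\xi)\,W'((R^T)^{-n}(\xi+\ell))$ with $p_\ell(\xi)=|\widehat{\delta_{R^{-n}B'}}(\xi+\ell)|^2$ and $\sum_\ell p_\ell(\xi)\in[1-\epsilon,1+\epsilon]$ by the block near-isometry. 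Thus $W$ is, up to the controlled error $\prod(1\pm\epsilon_j)$, a fixed point of a near-stochastic Ruelle operator for the dual maps $\xi\mapsto(R^T)^{-n}(\xi+\ell)$. If $W$ vanished at some point, iterating this identity would force $\widehat\mu$ to vanish along an entire backward orbit, and the invariance of the vanishing set under these maps, together with the $\mathbb Z^d$-structure of $\Gamma$, would produce a $\mathbb Z^d$-periodic common zero of $\widehat\mu$, i.e.\ a point of $\mathcal Z$. Since $\mathcal Z=\emptyset$ this is impossible, so $W\ge A>0$ uniformly. Feeding $A\le W(\xi_\sigma)\le B$ back into the factorization yields frame bounds $Ac_-$ and $Bc_+$ for every step function, independent of the level $k_0$, and the density argument of the first paragraph then delivers a Fourier frame $E(\Lambda)$ for $L^2(\mu)$ with $\Lambda\subset\mathbb Z^d$. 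I expect this transfer-operator/$\mathcal Z$ step to be the genuinely hard part, the rest being careful bookkeeping around the block near-isometries.
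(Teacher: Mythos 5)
Your setup---the concatenated near-isometries, the step-function computations via the no-overlap condition, and the upper (Bessel) bound---matches the paper's Section~\ref{Section9}, and the factorization $\sum_{\lambda\in\Lambda}|\langle f,e_\lambda\rangle|^2=\sum_{\sigma\in\Lambda_{k_0}}|\widehat c(\sigma)|^2W(\xi_\sigma)$ is a correct reorganization of the same identity the paper uses. The gap is in the lower bound, at exactly the point you flag as the hard part: you assert that a zero of $W(\xi)=\sum_{\gamma\in\Gamma}|\widehat\mu(\xi+\gamma)|^2$ would, via invariance of its zero set under the dual maps and ``the $\mathbb Z^d$-structure of $\Gamma$'', produce a point of $\mathcal Z$. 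This implication is neither proved nor accessible with the paper's machinery. The set $\Gamma$ is in general a sparse proper subset of $\mathbb Z^d$ (when $N<|\det R|$ the sets $J_{n}$ occupy only some of the $|\det R|^{n}$ residue classes modulo $(R^T)^{n}\mathbb Z^d$), so $\widehat\mu(\xi+\gamma)=0$ for all $\gamma\in\Gamma$ is far weaker than $\widehat\mu(\xi+k)=0$ for all $k\in\mathbb Z^d$. The invariant-set theorem that converts invariance into a $\mathbb Z^d$-periodic zero set (Theorem~\ref{thccr}) requires the digit set to be a \emph{complete} set of representatives modulo $R^T(\bz^d)$ and the invariant set to be $\mathbb Z^d$-periodic; neither hypothesis is available for your transfer identity, whose digits are the incomplete sets $J_{n}$ and whose weights satisfy only $\sum_\ell p_\ell(\xi)\in[1-\epsilon,1+\epsilon]$ rather than $\equiv 1$. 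So $\inf W>0$ is not established, and there is no reason it should hold for an arbitrary admissible choice of the $J_{n_k}$.

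The paper closes this gap by a different mechanism, which you should adopt: instead of proving positivity of a tail sum for the given $J_{n_k}$, it \emph{modifies} them. By Proposition~\ref{proposition2.1} one may translate each element of $J_{n_k}$ by an element of $(R^T)^{n_k}\mathbb Z^d$ without affecting the almost-Parseval inequality, and one may assume $(R^T)^{-(n_k+p)}J_{n_k}$ lies in the fixed compact tile $X=T(R^T,\overline L)$ for a complete representative set $\overline L$. Lemma~\ref{lem2.1}---this is precisely where $\mathcal Z=\emptyset$ and compactness enter---yields $\epsilon_0,\delta_0>0$ and, for each $x\in X$, an integer vector $k_x$ with $|\widehat\mu(x+y+k_x)|^2\geq\delta_0$ whenever $\|y\|<\epsilon_0$. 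Choosing the gaps $n_k$ large enough and replacing each $j\in J_{n_{k+1}}$ by $j+(R^T)^{n_{k+1}}k(j)$ as in Proposition~\ref{prop_main2} forces every single term to satisfy $|\widehat\mu((R^T)^{-m_k}\lambda)|^2\geq\delta_0$, i.e.\ $\delta(\Lambda)>0$; the lower frame bound $\delta(\Lambda)\prod_j(1-\epsilon_j)\|f\|^2$ then follows term by term from your displayed identity, with no transfer-operator step needed.
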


\medskip

In fact, we will see that a natural geometric condition will guarantee that $\mathcal Z=\emptyset$. This condition is satisfied for  the one-third Cantor measure. The construction of Fourier frames now turns into a problem of matrix analysis, which is to construct finite sets $J_n$ so that the almost-Parseval-frame condition holds. At this time, we were unable to give a full solution. However, the recent solution of the Kadison-Singer conjecture \cite{MSS} enabled Nitzan, Olevskii, Unlanovskii \cite{NOU} to construct Fourier frames on unbounded sets of finite measures. One of their lemmas gives us a weak solution:

\begin{proposition}\label{prop1.3}
Suppose that $B$ is simple digit set for $R$. There exist  universal constants $0< c_0< C_0<\infty$ such that for all $n$, there exists $J_n$ such that
$$
c_0\sum_{b\in B_n}|w_b|^2\leq \sum_{\lambda\in J_n}\left|\frac{1}{\sqrt{N^n}}\sum_{b\in B_n}w_be^{-2\pi i \langle R^{-n}b, \lambda\rangle}\right|^2\leq C_0\sum_{b\in B_n}|w_b|^2
$$
for all $(w_b)_{b\in B_n}\in{\mathbb C}^{N^n}$.
\end{proposition}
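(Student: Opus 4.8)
The plan is to recast Proposition \ref{prop1.3} as a frame–selection problem for a family of \emph{unit} vectors and then to invoke the selection lemma of Nitzan--Olevskii--Ulanovskii \cite{NOU}, which rests on the Marcus--Spielman--Srivastava solution \cite{MSS} of the Kadison--Singer problem. Fix $n$ and consider, in $\mathbb{C}^{N^n}$, the vectors
$$
\phi_\lambda=\frac{1}{\sqrt{N^n}}\left(e^{-2\pi i\langle R^{-n}b,\lambda\rangle}\right)_{b\in B_n},\qquad \lambda\in\Gamma_n:=\mathbb{Z}^d/(R^T)^n\mathbb{Z}^d ,
$$
which are well defined on classes modulo $(R^T)^n\mathbb{Z}^d$ since $b\in\mathbb{Z}^d$. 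Each $\phi_\lambda$ has unit norm, and the middle quantity in the proposition is exactly $\sum_{\lambda\in J_n}|\langle\mathbf{w},\phi_\lambda\rangle|^2$ (up to a harmless complex conjugation of $\mathbf{w}$). Thus the statement is equivalent to finding $J_n\subset\Gamma_n$ with $c_0 I\preceq\sum_{\lambda\in J_n}\phi_\lambda\phi_\lambda^*\preceq C_0 I$, where $c_0,C_0$ must not depend on $n$.

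Next I would record the underlying tight-frame identity. Since $B$ is a simple digit set for $R$, an induction on $n$ shows that $B_n$ is a simple digit set for $R^n$ with $\#B_n=N^n$; in particular distinct elements of $B_n$ are incongruent modulo $R^n\mathbb{Z}^d$. Summing the rank-one projections and evaluating the resulting character sum over the group $(R^T)^{-n}\mathbb{Z}^d/\mathbb{Z}^d$ of order $|\det R|^n$, the off-diagonal entries vanish by simplicity and the diagonal is constant, so that
$$
\sum_{\lambda\in\Gamma_n}\phi_\lambda\phi_\lambda^*=M^n\,I,\qquad M:=\frac{|\det R|}{N}\ge 1 ,
$$
the inequality $N\le|\det R|$ holding because $B$ is simple. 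Hence $\{\phi_\lambda\}_{\lambda\in\Gamma_n}$ is a unit-norm tight frame of redundancy $M^n$, and equivalently $\{M^{-n/2}\phi_\lambda\}$ is a Parseval frame whose vectors have squared norm $\delta_n:=M^{-n}$; by Naimark dilation it is the image of an orthonormal basis $\{e_\lambda\}$ under an orthogonal projection $P$ onto an $N^n$-dimensional subspace, with $\|Pe_\lambda\|^2=\delta_n$.

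The heart of the argument is the third step. When $M=1$ the $\phi_\lambda$ already form an orthonormal basis and any complete residue system $J_n$ gives $c_0=C_0=1$. When $M>1$ and $n$ is large enough that $\delta_n\le\tfrac12$, I would apply the NOU selection lemma to this projected orthonormal basis: via MSS paving together with a complementary restricted-invertibility step for the lower bound, it produces a subset $J_n\subset\Gamma_n$ of density comparable to $\delta_n$ (so $\#J_n$ is of order $N^n$) such that $\{M^{-n/2}\phi_\lambda\}_{\lambda\in J_n}$ is a frame for $\operatorname{ran}P$ with bounds of order $\delta_n$, with constants depending only on the threshold $\tfrac12$. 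Rescaling by $M^n$ and using $M^n\delta_n=1$ converts this into universal two-sided bounds $c_0,C_0$ for $\{\phi_\lambda\}_{\lambda\in J_n}$ itself. Only finitely many $n$ fail $\delta_n\le\tfrac12$, and for each of those I would take $J_n$ to be a full residue system, which yields the exact operator $M^n I$; enlarging $[c_0,C_0]$ to absorb these finitely many (bounded) values preserves universality.

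I expect the genuine obstacle to be the uniform lower frame bound on the selected subset: the upper paving in the style of Weaver/MSS is comparatively routine, but guaranteeing that a thin subset, of Riesz-basis density, still spans $\mathbb{C}^{N^n}$ with a lower bound independent of $n$ is precisely the delicate content extracted from \cite{MSS} and packaged in \cite{NOU}. The remaining work is bookkeeping: verifying that the bounds furnished by that lemma depend only on $\delta_n\le\tfrac12$, hence, after the $M^n$ rescaling, on nothing at all, so that a single pair $(c_0,C_0)$ serves simultaneously for every $n$.
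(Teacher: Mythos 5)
Your argument is correct and is essentially the paper's own proof: both recognize the vectors $\bigl(\frac{1}{\sqrt{N^n}}e^{-2\pi i\langle R^{-n}b,\lambda\rangle}\bigr)_{b\in B_n}$ as a column-restriction of the unitary character matrix of $\bz^d/(R^T)^n(\bz^d)$ (your Naimark/projected-ONB picture is just the frame-theoretic rephrasing of ``submatrix of an orthonormal matrix with rows of equal norm'') and then apply the Nitzan--Olevskii--Ulanovskii selection lemma built on \cite{MSS}, with the same $L/K=N^n/|\det R|^n$ rescaling. The only cosmetic difference is your separate treatment of the finitely many $n$ with $M^{-n}>\tfrac12$, which the lemma as stated does not require.
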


\medskip

In the proof in Theorem \ref{th1.2}, the idea is to concatenate the sets $J_n$ in order to obtain the frame spectrum $\Lambda$, but for this we need $\epsilon$ in Definition \ref{ALmost_DEF} to be arbitrarily small . We cannot use the same method using just Proposition \ref{prop1.3}. It would be nice if we could construct an {\it increasing} sequence of sets $J_n$, because then the frame spectrum can be obtained as the union of the sets $J_n$. In any case, this proposition sheds some light on the plausibility of the almost-Parseval-frame condition. In fact, if we consider fractal measures that are not self-affine, and we allow some flexibility in the choice if the contraction ratios at different levels, then it can be proved that non-spectral fractal measure with Fourier frames do exist via the almost-Parseval-frame condition \cite{LW16}.

\medskip

We organize our paper as follows: In Section 2, we prove the no-overlap condition for self-affine measures. In Section 3, we study the almost-Parseval-frame condition and concatenation of Hadamard triples. In Section 4, we will prove Theorem \ref{thmain} under the assumption ${\mathcal Z}=\emptyset$. In Section 5, we further reduce our problem to ${\mathbb Z}[R,B] = {\mathbb Z}^d$ and we prove Theorem \ref{thmain}  on ${\mathbb R}^1$. In Section 6, we introduce the techniques from \cite{CCR}. In Section 7, we use these techniques to show that $B$ must be of quasi-product form if ${\mathcal Z}\ne \ty$. In Section 8, we prove Theorem \ref{thmain} in full generality. In Section 9, We study Fourier frames on self-affine measures using the almost-Parseval-frame condition. We end the paper with some open problems in Section 10. Finally, an appendix is given to sharpen the frame bound in Section 9.

%
%

\medskip

		
%
%
%
%
%
%
%
%
%
%
%

\section{The no-overlap condition}

\medskip

This section is devoted to study the no-overlap condition from fractal geometry point of view and note that no Hadamard triple assumption is imposed. Throughout the section, we  will fix the affine IFS given by  an expansive matrix $R$ with integer entries and a simple digit set $B$ for $R$ and $0\in B$. The map is defined by
$$
\tau_b(x) = R^{-1}(x+b), \quad(b\in B,x\in\br^d).
$$
and $T = T(R,B)$ is its attractor.

We introduce some multi-index notation to describe our IFS. Let $B^n= B\times B...\times B$ ($n$ copies) and $\Sigma = \bigcup_{n=1}^{\infty}B^n$. For each ${\bf b} = (b_1,...,b_n)\in B^n$,
$$
\tau_{{\bf b}} (x) =\tau_{b_1}\circ...\circ\tau_{b_n}(x).
$$
Also for any set $A\subset{\mathbb R}^d$, we define $A_{{\bf b}} =\tau_{{\bf b}}(A)$. Given a set of probabilities $0<p_b<1$, $b\in B$, ($\sum_{b\in B}p_b=1$), the associated {\it self-affine measure} is the unique Borel probability measure supported on $T(R,B)$ satisfying the invariance identity
\begin{equation}\label{self-affine measure}
\mu = \sum_{b\in B} p_b \mu_{b},
\end{equation}
where we define $\mu_b (E) = \mu (\tau_{b}^{-1}(E))$, for all Borel sets $E$, see \cite{Hut81}. By iterating this identity, we have
$$
\mu = \sum_{{\bf b}\in B^n} p_{\bf b} \mu_{{\bf b}},
$$
where $p_{{\bf b}} =p_{b_1}....p_{b_n}$ and $\mu_{{\bf b}} (E) = \mu( \tau_{\bf b}^{-1}(E))$ for all Borel sets $E$ and ${\bf b}= (b_1,...,b_n)$. With the definition of OSC and SOSC in Definition \ref{OSC}, the following theorem was proved by He and Lau \cite[Theorem 4.4]{HeL08}, see also \cite{Sc} for self-similar IFSs.

 \begin{theorem}\cite{HeL08}\label{th1.2hl}
 For a self-affine IFS,  the OSC and SOSC are equivalent.
 \end{theorem}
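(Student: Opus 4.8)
The direction SOSC $\Rightarrow$ OSC is immediate, since the SOSC is the OSC together with an extra requirement on the feasible open set; so the entire content is the implication OSC $\Rightarrow$ SOSC, and I would follow the strategy of Schief for self-similar systems, adapting each step to the self-affine (anisotropic) setting.

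First I would reduce to the case where the feasible open set $U$ is bounded. Since $R$ is expansive, $\|R^{-n}\|\to 0$, so one may fix a large ball $B(0,\rho)\supset T$ with $\tau_b(B(0,\rho))\subset B(0,\rho)$ for all $b\in B$ and replace $U$ by the interior of a suitable truncation; the invariance $\bigcup_b\tau_b(U)\subset U$ and the disjointness $\tau_b(U)\cap\tau_{b'}(U)=\emptyset$ are preserved. A feature special to our IFS is that every $\tau_b$ has the same linear part $R^{-1}$: iterating, $\tau_{\mathbf b}(x)=R^{-n}x+\sum_{j=1}^n R^{-j}b_j$ for $\mathbf b=(b_1,\dots,b_n)\in B^n$, so each level-$n$ cylinder $\tau_{\mathbf b}(U)$ is a \emph{translate} of the single set $R^{-n}U$. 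Consequently, for every $n$ the $N^n$ cylinders $\{\tau_{\mathbf b}(U):\mathbf b\in B^n\}$ are pairwise disjoint translates of $R^{-n}U$, all contained in $U$; this rigid translate structure is what lets me control how the pieces cluster.

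The heart of the matter is to produce a point of $T$ lying in a feasible open set. Following Schief, I would first establish the analogue of $\mathcal{H}^s(T)>0$: OSC forces $T$ to be ``large'' in the gauge adapted to the affine maps. Concretely, I would put on $T$ the natural self-affine measure $\mu=\mu(R,B)$ and use the disjoint packing of the $N^n$ translated cylinders inside $U$, together with a uniform bound on how many such cylinders can meet a fixed set, to deduce a positive lower density estimate for $\mu$ in the appropriate (anisotropic) scale; this is exactly He--Lau's generalized dimension playing the role of $s$. Granting this, I would show that under OSC the boundary contributes nothing, i.e. $\mu(T\cap\partial U)=0$, using that the overlap set $O=\bigcup_{\mathbf b\ne\mathbf c,\ |\mathbf b|=|\mathbf c|}\tau_{\mathbf b}(T)\cap\tau_{\mathbf c}(T)$ is $\mu$-null, so $\mu$-a.e.\ point of $T$ has a unique address and hence a neighbourhood basis of cylinders lying inside $U$. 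Since $T\subset\overline U$ gives $T\setminus U\subset\partial U$, this yields $\mu(T\cap U)=1$, and in particular $T\cap U\neq\emptyset$; thus $U$ itself already satisfies the extra SOSC requirement, and the equivalence follows.

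The main obstacle is precisely the anisotropy of $R^{-1}$: unlike the self-similar case there is no single similarity exponent, so neither the value of the Hausdorff dimension nor the identification of the natural measure with a normalized Hausdorff measure is available, and the clean ``$\mathcal{H}^s(T)>0$'' intermediary breaks down. Overcoming this is where the special structure of our IFS is essential: because all maps share the linear part $R^{-1}$ and $R$ has integer entries, the level-$n$ pieces are honest translates of $R^{-n}U$, which supplies the uniform packing and clustering bounds needed to run the lower-density estimate in He--Lau's generalized dimension. The remaining work -- showing that the overlap and the boundary portion $T\cap\partial U$ are negligible for $\mu$ -- is then a density-point argument entirely analogous to the self-similar case.
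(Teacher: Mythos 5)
You should first be aware that the paper does not prove this statement at all: it is imported verbatim from He and Lau \cite{HeL08} (their Theorem 4.4), so there is no internal proof to compare you against and your argument has to stand entirely on its own. As a standalone argument it has the right skeleton --- Schief's scheme for \textsc{osc} $\Rightarrow$ \textsc{sosc}, transported to the affine setting via He--Lau's pseudo-norm and generalized Hausdorff measure --- and the observations that \textsc{sosc} $\Rightarrow$ \textsc{osc} is trivial and that all level-$n$ cylinders are translates of $R^{-n}U$ are correct and genuinely useful. But the two steps that carry all the weight are asserted rather than proved, and one of them is circular as written.

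The first gap is the core quantitative step. Everything hinges on the ``positive lower density estimate for $\mu$ in the appropriate anisotropic scale,'' which is exactly the analogue of Schief's $\mathcal{H}^s(K)>0$ and is the hard part of the theorem. You describe what this estimate should give you, but you never construct the gauge (He--Lau's pseudo-norm adapted to $R$), never define the associated generalized Hausdorff measure, and never run the Moran-type packing argument that extracts the lower bound from the \textsc{osc}; the translate structure of the cylinders makes this plausible but does not make it automatic. The second gap is the deduction $\mu(T\cap\partial U)=0$. You justify it by declaring the overlap set $O$ to be $\mu$-null, but in this paper the $\mu$-nullity of the overlaps is a \emph{consequence} of the \textsc{sosc} (Theorem \ref{proposition2.2}); to obtain it from the \textsc{osc} alone one needs precisely the $0<\mathcal{H}^s_w(T)<\infty$ statement together with an equality-in-subadditivity argument, i.e.\ the machinery you skipped, so as presented this step assumes what is to be proved. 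Moreover, even granting that $\mu$-a.e.\ point has a unique address, the inference to ``a neighbourhood basis of cylinders lying inside $U$'' does not follow: the cylinders $\tau_{\mathbf b}(T)$ are not neighbourhoods of their points and are contained only in $\overline{\tau_{\mathbf b}(U)}\subset\overline U$, so a point with a unique address could still a priori lie in $\partial U$. As it stands the proposal is a reasonable roadmap to He--Lau's proof, not a proof.
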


\medskip

For any set $F$, we denote by  $\overline{F}$, $F^{o}$, $\partial F$ the closure, interior and its boundary respectively.  The following theorem shows that the strong open set condition implies the no-overlap condition. Its proof is motivated by \cite[Lemma 2.2]{DeHL}.

\begin{theorem}\label{proposition2.2}
Suppose that the IFS satisfies the strong open set condition with the open set $U$. Then for the self-affine measure in \eqref{self-affine measure}, one has $\mu(U)=1$ and $\mu(\partial U)=0$. Moreover, $\mu$ satisfies the no-overlap condition.
\end{theorem}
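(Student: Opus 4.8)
The plan is to run everything through the symbolic coding of the IFS. Write $\pi:B^{\bn}\to T$ for the coding map $\pi((b_k)_{k\ge1})=\sum_{k\ge1}R^{-k}b_k=\lim_n\tau_{b_1}\circ\cdots\circ\tau_{b_n}(x_0)$, let $\nu=\prod_{k\ge1}(p_b)_{b\in B}$ be the Bernoulli product measure on $B^{\bn}$, and recall that $\pi_*\nu=\mu$ and that the one-sided shift $\sigma$ is ergodic with respect to $\nu$. First I would record the elementary inclusions coming from SOSC: since $\tau_b(U)\subset U$ for every $b$, iterating gives $\tau_{\bf b}(U)\subset U$ and hence $U\subset\tau_{\bf b}^{-1}(U)$ for every finite word ${\bf b}$; taking complements, $\tau_b^{-1}(U^c)\subset U^c$. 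Choosing any $x_0\in U$ and letting the word length tend to infinity shows $T\subset\overline U$, so $\mu(\overline U)=1$; because $\overline U=U\sqcup\partial U$, the assertions $\mu(U)=1$ and $\mu(\partial U)=0$ are equivalent, and it suffices to prove one of them. Finally, SOSC gives $U\cap T\ne\emptyset$, and since $T=\operatorname{supp}\mu$ and $U$ is open, $\mu(U)>0$.

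To upgrade $\mu(U)>0$ to $\mu(U)=1$ I would exploit ergodicity. The forward invariance $\tau_b^{-1}(U^c)\subset U^c$ together with the intertwining identity $\pi(\sigma\omega)=\tau_{\omega_1}^{-1}(\pi(\omega))$ shows that $A:=\pi^{-1}(U^c)$ satisfies $A\subset\sigma^{-1}(A)$; since $\sigma$ preserves $\nu$, this forces $A=\sigma^{-1}(A)$ modulo $\nu$, and ergodicity of $\sigma$ then gives $\nu(A)\in\{0,1\}$, i.e. $\mu(U^c)\in\{0,1\}$. As $\mu(U)>0$ we conclude $\mu(U^c)=0$, so $\mu(U)=1$ and $\mu(\partial U)=0$. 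Once $\mu(U)=1$ is known, the images $\tau_b(U)$ are pairwise disjoint subsets of $U$ with $\mu(\tau_b(U))\ge p_b\mu(U)=p_b$ and $\sum_b\mu(\tau_b(U))\le\mu(U)=1=\sum_b p_b$, forcing $\mu(\tau_b(U))=p_b$ for every $b$ and $\mu(U\setminus\bigsqcup_b\tau_b(U))=0$; the same computation at level $n$ yields an a.e.\ partition of $T$ into the cells $\tau_{\bf b}(U)$, $|{\bf b}|=n$, of mass $p_{\bf b}$.

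For the no-overlap condition I would use this a.e.\ partition to pin down the coding. Since a.e.\ point lies in exactly one cell $\tau_b(U)$, and $\pi(\omega)=\tau_{\omega_1}(\pi(\sigma\omega))$ with $\pi(\sigma^k\omega)\in U$ for all $k$ on a set of full $\nu$-measure, the first symbol—and inductively every symbol—of a generic coding is determined by the point it codes, so $\pi$ is injective on a set of full $\nu$-measure. Equivalently, using that the $\tau_c(U)$ are disjoint and $\tau_c(\overline U)=\tau_c(U)\sqcup\tau_c(\partial U)$, one has $\tau_b(T)\cap\tau_{b'}(T)\subset\tau_b(\partial U)\cup\tau_{b'}(\partial U)$, which reduces the no-overlap condition to the statement that each boundary image $\tau_b(\partial U)$ is $\mu$-null. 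This last point is the main obstacle: although $\mu(\partial U)=0$, the measure $\mu$ is not translation invariant, so $\mu(\tau_b(\partial U))=\sum_{c\ne b}p_c\,\mu(\partial U+(b-c))$ does not vanish for free, and symmetrically the non-generic codings that witness an overlap all live on a $\nu$-null set whose $\pi$-image need not be $\mu$-null. I expect to close this gap by a renewal/contraction argument in the spirit of \cite[Lemma 2.2]{DeHL}: writing $\theta=\sum_{b\ne b'}\mu(\tau_b(T)\cap\tau_{b'}(T))$ for the total overlap mass, the self-affine invariance $\mu=\sum_c p_c\,\mu\circ\tau_c^{-1}$ should yield an inequality of the form $\theta\le c\,\theta$ with $c<1$ depending on $\min_b p_b$, whence $\theta=0$; this forces $\mu(\tau_b(\partial U))=0$ and the no-overlap condition. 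Verifying that the overlap measure genuinely contracts—carefully accounting for the non-generic codings—is the delicate measure-theoretic heart of the argument.
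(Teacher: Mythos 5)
Your proof of the first two assertions is correct but follows a different route from the paper: you get $\mu(U)=1$ from ergodicity of the shift on the coding space (the set $\pi^{-1}(U^c)$ is forward-invariant, hence has measure $0$ or $1$, and $U\cap T\neq\emptyset$ with $U$ open rules out measure $1$), whereas the paper avoids ergodic theory entirely and instead picks a cylinder $\tau_{\mathbf{b}_0}(T)\subset U$ and shows by a direct renewal computation that the union of words containing $\mathbf{b}_0$ has measure $1-(1-p_{\mathbf{b}_0})^k\to 1$ and lies in $U$. Both are valid; yours is shorter modulo quoting ergodicity of Bernoulli shifts, the paper's is self-contained.

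The no-overlap part, however, has a genuine gap. You correctly reduce the claim to showing $\mu(\tau_b(\partial U))=0$ and correctly observe that this does not follow from $\mu(\partial U)=0$ alone, but you then only \emph{announce} a contraction argument ($\theta\le c\,\theta$ with $c<1$) without establishing it, and it is not clear it works: applying the invariance identity to $\mu(\tau_b(T)\cap\tau_{b'}(T))$ produces terms of the form $\mu\bigl((T+b-c)\cap(T+b'-c)\bigr)$, which are translated overlaps of $T$ with itself rather than a fixed multiple of the original overlap mass, so no inequality $\theta\le c\,\theta$ is visible. The step you are missing is short and does not need any contraction: suppose $\mu(\tau_b(\partial U))>0$; by the invariance identity $\mu(\tau_b(\partial U))=\sum_{b'}p_{b'}\,\mu\bigl(\tau_{b'}^{-1}(\tau_b(\partial U))\bigr)=\sum_{b'}p_{b'}\,\mu(\partial U+b-b')$, and the term $b'=b$ vanishes since $\mu(\partial U)=0$; hence $\mu(\partial U+b-b')>0$ for some $b'\neq b$. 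Since $\mu(U)=1$, this forces $(\partial U+b-b')\cap U\neq\emptyset$, and as $U$ is open and $\partial U\subset\overline U$, also $(U+b-b')\cap U\neq\emptyset$, i.e.\ $\tau_b(U)\cap\tau_{b'}(U)\neq\emptyset$, contradicting the OSC. This is exactly how the paper closes the argument; the point is to use $\mu(U)=1$ (already proved) to convert positive measure of a translated boundary into a nonempty intersection of the \emph{open} pieces, where the OSC applies.
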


\medskip

\begin{proof}
As $T(R,B)\cap U\neq \emptyset$, we can find $x_0\in T(R,B)\cap U$ and $\delta>0$ such that $B_{\delta}(x_0)\subset U$. In particular, there exists ${\bf b_0}\in B^n$, for some $n$ such that $\tau_{\bf b_0}(T(R,B))\subset B_{\delta}(x_0)\subset U$. Let $ C = B^{n}\setminus\{{\bf b_0}\}$  and let
$$
E_k = \bigcup_{{\bf b}\in B^{nk}\setminus C^k} \tau_{{\bf b}} (T(R,B)).
$$
For any ${\bf b} = ({\bf b}_1,...,{\bf b_k})\in B^{nk}\setminus C^k$, there exists at least one $1\leq s\leq k$ such that ${\bf b}_s = {\bf b}_0$. Then
$$
\tau_{{\bf b}}(T(R,B))\subset \tau_{{\bf b}_1...{\bf b}_{s-1}} (\tau_{{\bf b}_0}(T(R,B)))\subset\tau_{{\bf b}_1...{\bf b}_{s-1}} (U).
$$
As $U$ satisfies the open set condition for the IFS $\{\tau_b: b\in B\}$, we have  $\tau_{{\bf b}_1...{\bf b}_{s-1}} (U)\subset U.$ Hence, $E_k\subset U$. Now,
$$
\begin{aligned}
1\geq \mu (U)\geq \mu(E_k) =& \sum_{{\bf b}\in B^{nk}}p_{{\bf b}} \mu(\tau_{{\bf b}}^{-1}(E_k))\\
\geq &\sum_{{\bf b}\in B^{nk}\setminus C^k}p_{{\bf b}} \mu(\tau_{{\bf b}}^{-1}(E_k))\\
\geq& \sum_{{\bf b}\in B^{nk}\setminus C^k}p_{{\bf b}} \mu(\tau_{{\bf b}}^{-1}(\tau_{{\bf b}}(T(R,B)))\\
=& \sum_{{\bf b}\in B^{nk}\setminus C^k}p_{{\bf b}}\\
=& \sum_{{\bf b}\in B^{nk}}p_{{\bf b}}-\sum_{{\bf b}\in C^{k}}p_{{\bf b}}\\
=& 1- (\sum_{{\bf b}\in C}p_{{\bf b}})^k = 1-(1-p_{\bf b_0})^k.
\end{aligned}
$$
As $1-p_{\bf b_0}>0$,  $(1-p_{\bf b_0})^k$ tends to 0 as $k$ tends to infinity. This shows that $\mu(U) =1$. As $T(R,B)\subset \overline{U}$ (because $\cup_b\tau_b(\overline U)\subset \overline U$), we must have $\mu (\overline{U})=1$ and $\mu(\partial U)=0$.

\medskip

For the no-overlap condition, we note that $T(R,B)\subset \overline{U}$. Then $T(R,B)_b\subset ({\overline U})_b = \overline{U_b}$. Hence,
$$
\tau_b(T(R,B))\cap \tau_{b'}(T(R,B))\subseteq  \overline{U_b}\cap \overline{U_{b'}} = (U_b\cap \partial (U_b)) \cup (U_{b'}\cap \partial (U_{b'})).
$$
But $U$ is an open set satisfying the OSC, so $\tau_b(T(R,B))\cap \tau_{b'}(T(R,B))\subseteq \partial (U_b)\cap \partial (U_{b'})$.  The no overlap condition will follow if  we can show that $\mu(\partial{U_b})=0$ for all $b\in B$.

\medskip

Suppose on the contrary that $\mu(\partial{U_b})>0$, we apply \eqref{self-affine measure}) and obtain
$$
0<\mu (\partial{U_b}) = \sum_{b'\in B}p_b \mu (\tau_{b'}^{-1}(\partial{U_b}))
$$
This implies that for some $b'$, $\mu (\tau_{b'}^{-1}(\partial{U_b}))>0$. But $\tau_{b'}^{-1}(\partial{U_b}) = \partial U + Rb-b'$ and $\mu$ is supported  essentially on  $U$, so we have
$$
\mu ((\partial U + Rb-b')\cap U)>0.
$$
As $U$ is open, $U\cap (U+Rb-b')\neq \emptyset$. This implies that $\tau_b\tau_0(U)\cap \tau_0\tau_{b'}(U)\neq \emptyset$ and this contradicts to the open set condition for $U$  (by a translation we can always assume $0\in B$). Hence, $\mu(\partial{U_b})=0$ and this completes the proof.
\end{proof}

\medskip


\begin{proposition}\label{proposition2.3}
Let $R$ be a $d\times d$ integer expansive matrix and $B$ be a simple digit set for $R$. Suppose that $\overline{B}\supset B$ is a complete representative class   ($\mod R{\mathbb Z}^d$). Then  the open set condition for the IFS $\{\tau_b\}_{b\in B}$ is satisfied with open set $T(R, \overline{B})^\circ$.
%
\end{proposition}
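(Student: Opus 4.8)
The plan is to take $U = T(R,\overline{B})^\circ$ and to verify the two requirements of Definition \ref{OSC} directly, namely $\bigcup_{b\in B}\tau_b(U)\subseteq U$ and $\tau_b(U)\cap\tau_{b'}(U)=\emptyset$ for distinct $b,b'\in B$. The whole argument rests on treating $T:=T(R,\overline{B})$ as a self-affine tile. Because $\overline{B}$ is a complete set of coset representatives for $\mathbb{Z}^d/R\mathbb{Z}^d$, the classical theory of integral self-affine tiles (Lagarias--Wang \cite{LaWa96,LaWa97}) guarantees that $T$ has nonempty interior, so that $U\neq\emptyset$ and the Lebesgue measure $\mathcal{L}(T)$ is a positive, finite (since $T$ is compact) number. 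This positivity is exactly what makes the open-set condition nonvacuous, and it is the only external fact I need.

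First I would dispatch the self-containment. Each $\tau_b(x)=R^{-1}(x+b)$ is an affine bijection, hence a homeomorphism of $\mathbb{R}^d$, and homeomorphisms carry interiors to interiors, so $\tau_b(U)=\tau_b(T^\circ)=(\tau_b T)^\circ$. Since $B\subseteq\overline{B}$ and $T=\bigcup_{\bar b\in\overline{B}}\tau_{\bar b}(T)$, we have $\tau_b(T)\subseteq T$ for every $b\in B$, and taking interiors gives $\tau_b(U)\subseteq U$. For the disjointness I would run a measure-counting argument. Each $\tau_{\bar b}$ scales $\mathcal{L}$ by $|\det R|^{-1}$, and a complete residue system has $\#\overline{B}=[\mathbb{Z}^d:R\mathbb{Z}^d]=|\det R|$ elements, so $\sum_{\bar b\in\overline{B}}\mathcal{L}(\tau_{\bar b}(T))=|\det R|\cdot|\det R|^{-1}\mathcal{L}(T)=\mathcal{L}(T)=\mathcal{L}\big(\bigcup_{\bar b}\tau_{\bar b}(T)\big)$. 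For a finite union, equality of the total measure with the sum of the individual measures forces the pieces to be pairwise measure-disjoint, i.e. $\mathcal{L}(\tau_{\bar b}(T)\cap\tau_{\bar b'}(T))=0$ whenever $\bar b\neq\bar b'$.

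To finish, for distinct $b,b'\in B\subseteq\overline{B}$ the open sets $\tau_b(U)=(\tau_bT)^\circ$ and $\tau_{b'}(U)=(\tau_{b'}T)^\circ$ lie inside $\tau_b(T)$ and $\tau_{b'}(T)$, whose intersection is Lebesgue-null; hence $\tau_b(U)\cap\tau_{b'}(U)$ is an open set of measure zero and therefore empty. This establishes the open-set condition with $U=T(R,\overline{B})^\circ$. The conceptually interesting point, and the step I would be most careful about, is that the open set used for the $B$-IFS is the interior of the \emph{larger} attractor $T(R,\overline{B})$: the hypothesis that $\overline{B}$ is a full residue system is precisely what simultaneously guarantees $U\neq\emptyset$ (via \cite{LaWa96,LaWa97}) and produces the exact measure bookkeeping above. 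The main obstacle is thus not the two set-theoretic verifications, which are short, but importing the nonemptiness of $T(R,\overline{B})^\circ$; once that is in hand, everything else is elementary.
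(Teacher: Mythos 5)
Your proof is correct and follows essentially the same route as the paper's: both obtain nonemptiness of $T(R,\overline B)^\circ$ from the Lagarias--Wang theory of integral self-affine tiles, derive $\tau_b(T^\circ)\subset T^\circ$ by taking interiors in the invariance identity, and get disjointness by the same Lebesgue-measure bookkeeping over the full residue system $\overline B$ followed by the observation that a null open set is empty. No substantive differences.
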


\medskip

\begin{proof}
The statement that the open set condition is satisfied for the IFS $\{\tau_b\}_{b\in B}$ with open set $T^\circ(R, \overline{B})$ is probably known, but we present the proof for completeness. Let $T = T(R,\overline{B})$ and note that $T = \bigcup_{b\in \overline{B}}\tau_b(T)$. By taking the interior, we have $T^\circ\supset \bigcup_{b\in \overline{B}}\tau_b(T^\circ)\supset\bigcup_{b\in B}\tau_b(T^\circ)$. Also $T^\circ$ is non-empty, by \cite{LW1}. To see that $\tau_{b}(T^\circ)\cap \tau_{b'}(T^\circ) =\emptyset$, we take Lebesgue measure on  the invariance identity and obtain
$$
\mbox{Leb}(T) = \mbox{Leb}\left(\bigcup_{b\in \overline{B}}\tau_b(T)\right)\leq \sum_{b\in\overline{B}}\mbox{Leb}(\tau_b(T)) = \frac{\#\overline{B}}{|\det(R)|}\mbox{Leb}(T) = \mbox{Leb}(T).
$$
Here Leb$(T)$ denotes the Lebesgue measure of $T$ and $\#\overline{B} = |\det(R)|$ because $\overline{B}$ is a set of complete representatives (mod $R{\mathbb Z}^d$). $\mbox{Leb}(T)$ is non-zero, by \cite{LW1}. Hence,  $$\mbox{Leb}\left(\bigcup_{b\in \overline{B}}\tau_b(T)\right)= \sum_{b\in\overline{B}}\mbox{Leb}(\tau_b(T))$$ and Leb($\tau_b(T)\cap \tau_{b'}(T)$)=0. This implies that $\tau_b(T^\circ)\cap \tau_{b'}(T^\circ)= \emptyset$ since $\tau_b(T^\circ)\cap \tau_{b'}(T^\circ)$  is an open set.
\end{proof}
%

%
%
%

\bigskip

Theorem \ref{th1.0} follows readily from the results above.

\begin{proof}[ Proof of Theorem \ref{th1.0}] Proposition \ref{proposition2.3} shows that the OSC is satisfied, Theorem \ref{th1.2hl} shows that the SOSC is satisfied and then Theorem \ref{proposition2.2} shows that the no-overlap condition holds.
\end{proof}
%
%
%

 \medskip

In the end of this section, we mention that unequally-weighted self affine measures  do not admit any Fourier frames, using one of our previous results.

\begin{theorem}
Let $R$ be an expansive matrix with integer entries and let $B$ be a simple digit set for $R$. Suppose that $\mu$ defined in \eqref{self-affine measure} admits a Fourier frames. Then all $p_b$ are equal.
\end{theorem}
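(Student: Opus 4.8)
The plan is to show that the existence of a Fourier frame forces a uniform, \emph{level-independent}, two-sided bound on the ratio of the masses of any two cylinder sets of the same generation, and then to play this bound off against the exponential growth of such ratios that occurs when the weights are unequal.

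First I would record the geometric input supplied by the no-overlap condition. Since $B$ is simple, Theorem \ref{th1.0} applies, so the IFS satisfies the no-overlap condition; iterating \eqref{self-affine measure} gives $\mu=\sum_{{\bf b}\in B^n}p_{\bf b}\mu_{\bf b}$ with the cylinders $\tau_{\bf b}(T)$ pairwise $\mu$-disjoint and $\mu|_{\tau_{\bf b}(T)}=p_{\bf b}\,\mu_{\bf b}$, where $\mu_{\bf b}=\mu\circ\tau_{\bf b}^{-1}$ is a probability measure. The crucial structural remark is that for a fixed $n$ all the maps $\tau_{\bf b}$, ${\bf b}\in B^n$, share the \emph{same} linear part $R^{-n}$, so any two same-generation cylinders are exact translates of one another: for ${\bf b},{\bf b}'\in B^n$ there is a vector $t$ with $\tau_{\bf b}=(\,\cdot+t\,)\circ\tau_{{\bf b}'}$, whence $\mu_{\bf b}=\mu_{{\bf b}'}(\cdot-t)$.

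Next I would exploit the frame inequality with constants $0<c\le C<\infty$. Fix ${\bf b},{\bf b}'\in B^n$ and the translation $t$ above. Given $f\in L^2(\mu_{{\bf b}'})$ (extended by $0$ to an element of $L^2(\mu)$), set $g=f(\cdot-t)\in L^2(\mu_{\bf b})$. A change of variables shows that $\int |g|^2\,d\mu_{\bf b}=\int|f|^2\,d\mu_{{\bf b}'}=:Q$ and that the frame coefficients satisfy $|\int g\,e^{-2\pi i\langle\lambda,x\rangle}\,d\mu_{\bf b}|=|\int f\,e^{-2\pi i\langle\lambda,x\rangle}\,d\mu_{{\bf b}'}|$ for every $\lambda$, so both functions share the same quantity $S:=\sum_{\lambda}|\int f\,e^{-2\pi i\langle\lambda,x\rangle}\,d\mu_{{\bf b}'}|^2$. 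Because $\mu|_{\operatorname{supp} f}=p_{{\bf b}'}\mu_{{\bf b}'}$ and $\mu|_{\operatorname{supp} g}=p_{\bf b}\mu_{\bf b}$, applying the frame inequality in $L^2(\mu)$ separately to $f$ and to $g$ and dividing out the masses yields $cQ\le p_{{\bf b}'}S\le CQ$ and $cQ\le p_{\bf b}S\le CQ$. Since $\|f\|_{L^2(\mu)}>0$ forces $S>0$ through the lower bound, dividing these gives the estimate $c/C\le p_{\bf b}/p_{{\bf b}'}\le C/c$, valid for \emph{all} $n$ and all ${\bf b},{\bf b}'\in B^n$ with the \emph{same} constants.

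Finally I would apply this with the two homogeneous words ${\bf b}=(b,\dots,b)$ and ${\bf b}'=(b',\dots,b')$ of length $n$, for which $p_{\bf b}/p_{{\bf b}'}=(p_b/p_{b'})^n$. The level-independent bound then reads $c/C\le (p_b/p_{b'})^n\le C/c$ for every $n$, which is impossible unless $p_b=p_{b'}$; as $b,b'\in B$ were arbitrary, all weights coincide. The main obstacle, and the real content, is the second step: extracting a two-sided ratio bound whose constants do \emph{not} deteriorate with the generation $n$. This succeeds precisely because we always compare a function against an exact translate of itself, so a single pair of frame constants controls every level at once; this level-independence (a uniformity property in the spirit of the earlier results we invoke) is exactly what the geometric blow-up of unequal weights cannot survive.
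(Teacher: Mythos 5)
Your argument is correct, and it is genuinely different from what the paper does: after invoking Theorem \ref{th1.0} for the no-overlap condition, the paper simply cites the external result \cite[Theorem 1.5]{DL14} (uniformity of measures admitting Fourier frames), whereas you give a self-contained, elementary proof. Your key observations all check out: same-generation maps $\tau_{\bf b}$ share the linear part $R^{-n}$, so $\mu_{\bf b}=\mu_{{\bf b}'}(\cdot-t)$; no-overlap gives $\mu|_{\tau_{{\bf b}'}(T)}=p_{{\bf b}'}\mu_{{\bf b}'}$ (note $\mu_{\bf c}(\tau_{{\bf c}'}(T))\le p_{\bf c}^{-1}\mu(\tau_{\bf c}(T)\cap\tau_{{\bf c}'}(T))=0$, which is the point where simplicity of the digit set is genuinely used); the frame coefficients with respect to $\mu$ pick up a factor $p_{{\bf b}'}$ while the squared norm picks up only one such factor, so the normalized quantity $p_{{\bf b}'}S$ is pinched between $cQ$ and $CQ$ with constants independent of $n$; and the homogeneous words $(b,\dots,b)$ then force $(p_b/p_{b'})^n$ to stay bounded, hence $p_b=p_{b'}$. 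What your approach buys is transparency and independence from \cite{DL14}: a single pair of frame constants controls every generation because you only ever compare a function with an exact translate of itself, which is exactly the mechanism (translation-invariance of frame bounds versus exponential divergence of unequal weights) that underlies the cited uniformity theorem; what the paper's citation buys is brevity and access to the more general statement proved in \cite{DL14}.
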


\begin{proof}
Note that all these measures satisfies the no-overlap condition by Theorem \ref{th1.0}. By  \cite[Theorem 1.5]{DL14}, all $p_b$ are equal.
\end{proof}

Because of the previous theorem, for the remainder of the paper, we will assume that the self-affine measures have equal weights $p_b=\frac{1}{N}$.

\medskip
 \section{The almost-Parseval-frame conditions and Hadamard triples.}

%
%

\medskip

In this section, we study the almost-Parseval-frame condition in Definition \ref{ALmost_DEF}. First of all, we note that there is no loss of generality to assume $0\in J_n$, because we can replace $w_b$ by $w_be^{2\pi i \langle R^{-n}b, \lambda_0\rangle}$, and \eqref{ALmost} is satisfied with $J_n$ replaced by $J_n-\lambda_0$. 
 %
%
%
%
%

\medskip

\begin{proposition}\label{proposition2.1}
 Suppose that the pair $(R,B)$ satisfies the almost Parseval frame condition and $J_n\subset{\mathbb Z}^d$ is the set satisfying \eqref{ALmost}, with $\epsilon<1$. We have the following:

\medskip

(i)  The elements in $J_n$ have distinct residues modulo $(R^T)^n({\mathbb Z}^d)$.

\medskip

(ii) Let $\widehat{J_n}\equiv J_n$ ($\mod (R^T)^n{\mathbb Z}^d$), then $\widehat{J_n}$ also satisfies \eqref{ALmost}.

\end{proposition}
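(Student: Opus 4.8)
The plan is to recast the almost-Parseval-frame inequality as a statement about a family of \emph{unit} vectors, and then exploit two elementary features of those vectors: their normalization and their invariance under the relevant congruence. For each $\lambda\in J_n$ set
$$
v_\lambda := \frac{1}{\sqrt{N^n}}\left(e^{2\pi i \langle R^{-n}b, \lambda\rangle}\right)_{b\in B_n}\in{\mathbb C}^{N^n}.
$$
Then $(F_n{\bf w})_\lambda = \langle {\bf w}, v_\lambda\rangle$, so $\|F_n{\bf w}\|^2 = \sum_{\lambda\in J_n}|\langle {\bf w}, v_\lambda\rangle|^2$, and \eqref{ALmost} says precisely that $\{v_\lambda\}_{\lambda\in J_n}$ is a frame for ${\mathbb C}^{N^n}$ with bounds $1-\epsilon$ and $1+\epsilon$. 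Two observations drive everything. First, each $v_\lambda$ is a unit vector, since $\|v_\lambda\|^2 = \frac{1}{N^n}\sum_{b\in B_n}1 = 1$. Second, the entries of $v_\lambda$ depend on $\lambda$ only through its residue modulo $(R^T)^n({\mathbb Z}^d)$: if $\lambda' = \lambda + (R^T)^n k$ with $k\in{\mathbb Z}^d$, then $\langle R^{-n}b, (R^T)^n k\rangle = \langle b, k\rangle \in {\mathbb Z}$ for every $b\in B_n\subset{\mathbb Z}^d$, whence $v_{\lambda'} = v_\lambda$.

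For part (i) I would argue by contradiction using only the upper frame bound. Suppose distinct $\lambda,\lambda'\in J_n$ satisfy $\lambda\equiv\lambda'\pmod{(R^T)^n({\mathbb Z}^d)}$; by the congruence-invariance above, $v_\lambda = v_{\lambda'}$. Testing the upper bound in \eqref{ALmost} against ${\bf w}=v_\lambda$ gives
$$
\sum_{\mu\in J_n}|\langle v_\lambda, v_\mu\rangle|^2 \leq (1+\epsilon)\|v_\lambda\|^2 = 1+\epsilon.
$$
The left-hand sum contains the two distinct terms $\mu=\lambda$ and $\mu=\lambda'$, each equal to $|\langle v_\lambda, v_\lambda\rangle|^2 = \|v_\lambda\|^4 = 1$, so the left side is at least $2$. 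Hence $2\leq 1+\epsilon$, contradicting $\epsilon<1$. Therefore distinct elements of $J_n$ have distinct residues.

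For part (ii) the congruence-invariance does all the work. Since $\widehat{J_n}\equiv J_n\pmod{(R^T)^n{\mathbb Z}^d}$ and, by part (i), $J_n$ has distinct residues, there is a bijection $\lambda\mapsto\widehat\lambda$ between $J_n$ and $\widehat{J_n}$ with $\widehat\lambda\equiv\lambda$. By the second observation $v_{\widehat\lambda}=v_\lambda$ for every $\lambda$, so the family $\{v_{\widehat\lambda}\}_{\widehat\lambda\in\widehat{J_n}}$ is literally the same collection of vectors as $\{v_\lambda\}_{\lambda\in J_n}$; equivalently, $\widehat{F_n}$ has exactly the same rows as $F_n$. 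Consequently $\|\widehat{F_n}{\bf w}\| = \|F_n{\bf w}\|$ for all ${\bf w}$, and \eqref{ALmost} holds verbatim for $\widehat{J_n}$ with the same $\epsilon$.

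The computations here are short, so there is no serious analytic obstacle; the only point requiring care is identifying the correct reformulation. The crux is recognizing that $\|v_\lambda\|=1$ together with the choice of test vector ${\bf w}=v_\lambda$ converts the upper frame bound into the clean inequality $2\leq 1+\epsilon$, and that the hypothesis $\epsilon<1$ is exactly what is needed to exclude repeated residues. Once this is in place, part (ii) is immediate, since all entries of the matrix are invariant under the congruence.
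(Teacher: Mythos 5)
Your proof is correct and is essentially the paper's argument in normalized form: the authors also test the upper bound in \eqref{ALmost} against the exponential vector $w_b=e^{2\pi i\langle R^{-n}b,\lambda''\rangle}$ (your $\sqrt{N^n}\,v_{\lambda''}$), observe that the two congruent frequencies each contribute a full $N^n$ to the left-hand side, and conclude $2N^n\le(1+\epsilon)N^n$, contradicting $\epsilon<1$; part (ii) is likewise the same one-line congruence-invariance observation. The frame-theoretic repackaging via the unit vectors $v_\lambda$ is a clean way to present it but does not change the substance.
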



\begin{proof}
(i) Suppose on the contrary that we can find  $\lambda',\lambda''\in J_n$ such that $\lambda'$ and $\lambda''$ are in the same equivalence class modulo $(R^T)^n{\mathbb Z}^d$. Let $w_b= e^{2\pi i \langle R^{-n}b,\lambda''\rangle}$, for all $b\in B_n$,  and plug it in \eqref{ALmost}. From the upper bound, we have
$$
2N^n+\sum_{\lambda\in J_n\setminus\{\lambda',\lambda''\}}\left|\frac{1}{\sqrt{N^n}}\sum_{b\in B_n}w_be^{-2\pi i \langle R^{-n}b, \lambda\rangle}\right|^2\leq (1+\epsilon)N^n.
$$
This implies that
$$
\sum_{\lambda\in J_n\setminus\{\lambda',\lambda''\}}\left|\frac{1}{\sqrt{N^n}}\sum_{b\in B_n}w_be^{-2\pi i \langle R^{-n}b, \lambda\rangle}\right|^2\leq (\epsilon-1)N^n<0
$$
which is a contradiction.
(ii) follows immediately  from $\langle R^{-n}b,\lambda+(R^{T})^nk\rangle = \langle R^{-1}b,\lambda\rangle$ for all $b\in B_n$, $\lambda\in J_n$ and $k\in {\mathbb Z}^d$.
\end{proof}

Assuming that the almost-Parseval-frame condition is satisfied, we consider sequences $\epsilon_k<1$ such that $\sum_k\epsilon_k<\infty$ and let $n_k$ and $J_{n_k}$ be the associated quantities satisfying
$$
(1-\epsilon_k)\sum_{b\in B_{n_k}}|w_b|^2\leq \sum_{\lambda\in J_{n_k}}\left|\sum_{b\in B_{n_k}}\frac{1}{\sqrt{N^{n_k}}}w_be^{-2\pi i \langle R^{-n_k}b, \lambda\rangle} \right|^2\leq (1+\epsilon_k)\sum_{d\in\D_{n_k}}|w_d|^2.
$$
Letting $m_k = n_1+n_2+...+n_{k}$, we consider the $\Lambda_k$ and $\Lambda$ defined in \eqref{eqLambda_k} and \eqref{eqlambda}, i.e.,
$$
\Lambda_k = J_{n_1}+ (R^T)^{m_1}J_{n_2}+ (R^T)^{m_2} J_{n_3}+...+ (R^T)^{m_{k-1}} J_{n_{k}},
\  \Lambda = \bigcup_{k=1}^{\infty}\Lambda_k.
$$
 Note that the digit sets  $B_{m_1}\subset B_{m_2}\subset...$  satisfy
$$
B_{m_{k+1}} =  R^{n_{k+1}}B_{m_k}+ B_{n_{k+1}}, B_{m_1} = B_{n_1}.
$$

\begin{proposition}\label{Prop_Approach_2.1}
With the notations above, we have
$$
c_k\|{\bf w}\|^2\leq \sum_{\lambda\in{\Lambda_{{k}}}}\left|\frac{1}{\sqrt{N^{m_k}}}\sum_{b\in B_{m_k}}w_be^{-2\pi i \langle R^{-m_k}b,\lambda\rangle}\right|^2\leq C_k\|{\bf w}\|^2
$$
where $c_k = \prod_{j=1}^{k}(1-\epsilon_j)$ and $C_k = \prod_{j=1}^{k}(1+\epsilon_j)$.
\end{proposition}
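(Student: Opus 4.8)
The plan is to prove the claim by induction on $k$. The base case $k=1$ is exactly the defining almost-Parseval-frame inequality for $n_1$, since $m_1=n_1$, $\Lambda_1=J_{n_1}$ and $B_{m_1}=B_{n_1}$, so that $c_1=1-\epsilon_1$ and $C_1=1+\epsilon_1$. For the inductive step I would exploit the two-scale structure of both index sets. On the digit side, the identity $B_{m_{k+1}}=R^{n_{k+1}}B_{m_k}+B_{n_{k+1}}$ together with the simplicity of $B$ (which forces $\#B_n=N^n$ and unique digit expansions) yields a bijection $B_{m_k}\times B_{n_{k+1}}\to B_{m_{k+1}}$, $(b',b'')\mapsto R^{n_{k+1}}b'+b''$. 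On the frequency side, $\Lambda_{k+1}=\Lambda_k+(R^T)^{m_k}J_{n_{k+1}}$, and I would first check that this sum is direct, i.e. that $(\gamma,\eta)\mapsto\gamma+(R^T)^{m_k}\eta$ from $\Lambda_k\times J_{n_{k+1}}$ onto $\Lambda_{k+1}$ is a bijection. This I would obtain by proving, as an auxiliary induction, that $\Lambda_k$ has distinct residues modulo $(R^T)^{m_k}\mathbb{Z}^d$: reducing a coincidence modulo $(R^T)^{m_k}\mathbb{Z}^d$ isolates $\gamma$ (inductive hypothesis), and the remaining congruence forces $\eta_1\equiv\eta_2$ modulo $(R^T)^{n_{k+1}}\mathbb{Z}^d$, hence $\eta_1=\eta_2$ by Proposition \ref{proposition2.1}(i).

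Writing $b=R^{n_{k+1}}b'+b''$ and $\lambda=\gamma+(R^T)^{m_k}\eta$ under these bijections, I would expand the phase $\langle R^{-m_{k+1}}b,\lambda\rangle$ into its four cross terms. Two of them collapse to the ``clean'' phases $\langle R^{-m_k}b',\gamma\rangle$ (the level-$k$ phase) and $\langle R^{-n_{k+1}}b'',\eta\rangle$ (the level-$(k+1)$ phase); one term equals $\langle b',\eta\rangle\in\mathbb{Z}$ and so disappears modulo $1$; and the last term is the genuine coupling term $\langle R^{-m_{k+1}}b'',\gamma\rangle$, which does not simplify. Consequently the summand does not factor as a pure Kronecker product of $F_{m_k}$ and $F_{n_{k+1}}$, and this coupling term is the main obstacle.

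The resolution is that the coupling term enters only through the unimodular factor $\theta_\gamma(b''):=e^{-2\pi i\langle R^{-m_{k+1}}b'',\gamma\rangle}$, which is invisible to every $\ell^2$-norm appearing in the frame inequalities. Concretely, setting $g_\gamma(b''):=\frac{1}{\sqrt{N^{m_k}}}\sum_{b'\in B_{m_k}}w_{(b',b'')}e^{-2\pi i\langle R^{-m_k}b',\gamma\rangle}$, the inner sum factors as $\frac{1}{\sqrt{N^{n_{k+1}}}}\sum_{b''}\big[\theta_\gamma(b'')g_\gamma(b'')\big]e^{-2\pi i\langle R^{-n_{k+1}}b'',\eta\rangle}$. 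For each fixed $\gamma$ I would apply the almost-Parseval bound for $(R,B)$ at level $n_{k+1}$ to the vector $(\theta_\gamma(b'')g_\gamma(b''))_{b''}$; since $|\theta_\gamma(b'')|=1$, its squared norm is $\sum_{b''}|g_\gamma(b'')|^2$, so summing the resulting inequalities over $\eta$ bounds the $\eta$-sum between $(1-\epsilon_{k+1})\sum_{b''}|g_\gamma(b'')|^2$ and $(1+\epsilon_{k+1})\sum_{b''}|g_\gamma(b'')|^2$.

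Finally I would sum over $\gamma\in\Lambda_k$, interchange the order of summation, and apply the inductive hypothesis for each fixed $b''$ to the vector $(w_{(b',b'')})_{b'\in B_{m_k}}$, giving $c_k\sum_{b'}|w_{(b',b'')}|^2\le\sum_{\gamma}|g_\gamma(b'')|^2\le C_k\sum_{b'}|w_{(b',b'')}|^2$. Summing over $b''$ and using $\sum_{b',b''}|w_{(b',b'')}|^2=\|{\bf w}\|^2$ then yields $(1-\epsilon_{k+1})c_k\|{\bf w}\|^2\le S\le(1+\epsilon_{k+1})C_k\|{\bf w}\|^2$, where $S$ denotes the middle quantity of the statement at level $k+1$. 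Since $c_{k+1}=(1-\epsilon_{k+1})c_k$ and $C_{k+1}=(1+\epsilon_{k+1})C_k$, the induction closes. The only genuinely delicate points are the directness of the two decompositions and the observation that the coupling phase drops out of the norms; everything else is bookkeeping with the normalizing constants.
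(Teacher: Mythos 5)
Your proof is correct and follows essentially the same route as the paper's: induction on $k$, the two-scale decompositions of $B_{m_{k+1}}$ and $\Lambda_{k+1}$, factoring the inner sum so that the level-$n_{k+1}$ almost-Parseval bound is applied to the vector $\left(\theta_\gamma(b'')g_\gamma(b'')\right)_{b''}$ whose norm is unaffected by the unimodular coupling phase, and then interchanging sums and invoking the inductive hypothesis for each fixed $b''$. The only difference is that you explicitly verify the directness of the two decompositions (on the frequency side via an auxiliary induction using Proposition \ref{proposition2.1}(i)), a point the paper asserts with the phrase ``we can write uniquely'' but does not prove.
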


\begin{proof}
We prove this by induction on $k$. The inequality for $k=1$ is the almost-Parseval-frame condition with $B_{n_1}$ and $J_{n_1}$. Assuming the inequality is proved for $k$, we now establish it for $k+1$. We consider the upper bound inequality. If $b\in B_{m_{k+1}}$ and $\lambda\in\Lambda_{k+1}$, we can write uniquely $b=R^{n_{k+1}}b_1+b_2$ and $\lambda = \lambda_1+(R^{T})^{m_{k}}\lambda_2$ where $\lambda_1\in{\Lambda}_k$, $\lambda_2\in J_{n_{k+1}}$, $b_1\in B_{m_k}$ and $b_2\in B_{n_{k+1}}$. For any vectors ${\bf w} = (w_{b})_{b\in B_{m_{k+1}}} = (w_{b_1b_2})_{b_1\in B_{m_{k}},b_2\in B_{n_{k+1}}}$, we have
$$
\begin{aligned}
&\sum_{\lambda\in\Lambda_{{k+1}}}\left|\frac{1}{\sqrt{N^{m_{k+1}}}}\sum_{b\in{B_{m_{k+1}}}}w_be^{-2\pi i \langle R^{-m_{k+1}}b,\lambda \rangle}\right|^2\\
=&\sum_{\lambda_1\in\Lambda_k}\sum_{\lambda_2\in J_{n_{k+1}}}\left|\frac{1}{\sqrt{N^{m_{k+1}}}}\sum_{b_2\in{B}_{n_{k+1}}}\sum_{b_1\in{B_{m_{k}}}}w_{b_1b_2}e^{-2\pi i   \langle R^{-m_{k+1}}(R^{n_{k+1}}b_1+b_2),\lambda_1+(R^{T})^{m_{k}}\lambda_2\rangle}\right|^2\\
=&\sum_{\lambda_1\in{\Lambda}_k}\sum_{\lambda_2\in J_{n_{k+1}}}\left|\frac{1}{\sqrt{N^{n_{k+1}}}}\sum_{b_2\in{B}_{n_{k+1}}}e^{-2\pi i \langle R^{-n_{k+1}}b_2,\lambda_2\rangle}\sum_{b_1\in{B_{m_{k}}}}\frac{1}{\sqrt{N^{m_{k}}}}w_{b_1b_2}e^{-2\pi i \langle R^{-m_k}b_1+R^{-m_{k+1}}b_2,\lambda_1\rangle}\right|^2\\
\leq& (1+\epsilon_{k+1})\sum_{\lambda_1\in{\Lambda}_k} \sum_{b_2\in{B}_{n_{k+1}}}\left|\frac{1}{\sqrt{N^{m_{k}}}}\sum_{b_1\in{B_{m_{k}}}}w_{b_1b_2}e^{-2\pi i \langle R^{-m_k}b_1+R^{-m_{k+1}}b_2,\lambda_1\rangle}\right|^2\\
=&(1+\epsilon_{k+1})\sum_{b_2\in{B}_{n_{k+1}}}\sum_{\lambda_1\in{\Lambda}_k}  \left|\frac{1}{\sqrt{N^{m_{k}}}}\sum_{b_1\in{B_{m_{k}}}}w_{b_1b_2}e^{-2\pi i \langle R^{-m_k}b_1,\lambda_1\rangle}\right|^2 \ (\mbox{as} \ \left|e^{-2\pi i \langle R^{-m_{k+1}}b_2,\lambda_1\rangle}\right|=1 )\\
\leq& (1+\epsilon_{k+1})C_{k}\sum_{b_2\in{B_{n_{k+1}}}}\sum_{b_1\in{B_{m_{k}}}}|w_{b_1b_2}|^2= C_{k+1}\|{\bf w}\|^2.\\
\end{aligned}
$$
The proof for the lower bound is similar.
\end{proof}

\medskip

Now, we turn to study Hadamard triples $(R,B,L)$ as defined in \eqref{Hadamard triples} in the introduction. We first remark that the elements of $B$ must be in distinct residue classed modulo $R({\mathbb Z}^d)$, because $H$ must have mutually orthogonal rows. This implies that
 \begin{equation}
 \sum_{\ell\in L} e^{2\pi i \langle R^{-1}(b-b'),\ell\rangle}=0, \ \mbox{if} \ b\neq b'.
 \end{equation}
 If $b= b'+ Rk$ for some $k\in {\mathbb Z}^d$, the sum above is equal to $\#L\neq 0$. Similarly, the elements $L$ must be in distinct residue class modulo $R^T{\mathbb Z}^d$.  As $H$ is a unitary matrix, it is clear that we have $\|H{\bf w}\| = \|{\bf w}\|$ for all ${\bf w}\in {\mathbb C}^N$. i.e.
$$
\sum_{\ell\in L} \left|\sum_{b\in B} w_{b} \frac{1}{\sqrt{N}}e^{-2\pi i \langle R^{-1}b,\ell\rangle}\right|^2 = \sum_{b\in B}|w_b|^2.
$$
From this, we will conclude in Corollary \ref{corollary2.3} that $(R,B)$ satisfies the almost-Parseval-frame condition (with $\epsilon=0$!). We also need to consider towers of Hadamard triples. Using the definition of $B_n$ in \eqref{B_n} and $L^T_n$ in \eqref{L_n}, from Proposition \ref{Prop_Approach_2.1}, we have the following corollary.

\begin{corollary}\label{corollary2.3}
(i) Suppose that $(R,B,L)$ is a Hadamard triple. Then for all $k\ge 1$, $(R^k, B_k, L^T_k)$ are Hadamard triples.

\medskip

(ii) Suppose that $(R^{n_i}, B_{n_i}, J_{n_i})$, $i=1,2,..$, are Hadamard triples, then for all $k\ge 1$, $(R^{m_k}, B_{m_k}, \Lambda_k)$ are Hadamard triples where $\Lambda_k$ are defined in \eqref{eqLambda_k}.
\end{corollary}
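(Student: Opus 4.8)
The plan is to obtain both statements as immediate consequences of Proposition~\ref{Prop_Approach_2.1}, exploiting the fact that the Hadamard (unitarity) condition is exactly the almost-Parseval-frame condition with $\epsilon=0$. Note first that (i) is the special case of (ii) in which every $n_i=1$, $B_{n_i}=B$, and $J_{n_i}=L$: then $m_k=k$, $B_{m_k}=B_k$, and the set $\Lambda_k$ of \eqref{eqLambda_k} becomes $L+R^TL+\dots+(R^T)^{k-1}L=L_k^T$. So I would prove (ii) and recover (i) by this specialization.

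For (ii), the first step is the observation made just before the corollary. Since the defining matrix of each Hadamard triple $(R^{n_i},B_{n_i},J_{n_i})$ is unitary, it preserves norms, which written out is
\[
\sum_{\lambda\in J_{n_i}}\left|\frac{1}{\sqrt{N^{n_i}}}\sum_{b\in B_{n_i}}w_be^{-2\pi i\langle R^{-n_i}b,\lambda\rangle}\right|^2=\sum_{b\in B_{n_i}}|w_b|^2 .
\]
This is precisely inequality~\eqref{ALmost} for $(R^{n_i},B_{n_i},J_{n_i})$ with $\epsilon_i=0$. The constant sequence $\epsilon_i\equiv 0$ trivially satisfies $\epsilon_i<1$ and $\sum_i\epsilon_i<\infty$, so I would feed it into Proposition~\ref{Prop_Approach_2.1}. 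Its conclusion then gives $c_k=\prod_{j=1}^k(1-\epsilon_j)=1=\prod_{j=1}^k(1+\epsilon_j)=C_k$, that is,
\[
\sum_{\lambda\in\Lambda_k}\left|\frac{1}{\sqrt{N^{m_k}}}\sum_{b\in B_{m_k}}w_be^{-2\pi i\langle R^{-m_k}b,\lambda\rangle}\right|^2=\|{\bf w}\|^2
\]
for every ${\bf w}\in\mathbb{C}^{N^{m_k}}$. Hence the matrix $F_{m_k}=\bigl[\frac{1}{\sqrt{N^{m_k}}}e^{-2\pi i\langle R^{-m_k}b,\lambda\rangle}\bigr]_{\lambda\in\Lambda_k,b\in B_{m_k}}$ is an isometry, i.e.\ $F_{m_k}^*F_{m_k}=I$.

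It remains to upgrade ``isometry'' to ``unitary'', which is the only genuine (if modest) step: I must verify that $F_{m_k}$ is square, namely $\#\Lambda_k=\#B_{m_k}$. Because $B$ is a simple digit set for $R$, the representation $\sum_{j=0}^{m_k-1}R^jb_j$ is unique, so $\#B_{m_k}=N^{m_k}$. An isometry is injective, so its domain dimension cannot exceed its codomain dimension: $\#B_{m_k}\le\#\Lambda_k$. Conversely $\Lambda_k=J_{n_1}+(R^T)^{m_1}J_{n_2}+\dots+(R^T)^{m_{k-1}}J_{n_k}$ is a sumset, so $\#\Lambda_k\le\prod_{i=1}^k\#J_{n_i}$; and since each $(R^{n_i},B_{n_i},J_{n_i})$ is a Hadamard triple, $\#J_{n_i}=\#B_{n_i}=N^{n_i}$, whence $\prod_{i=1}^k\#J_{n_i}=N^{m_k}=\#B_{m_k}$. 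Squeezing gives $\#\Lambda_k=\#B_{m_k}$, so $F_{m_k}$ is a square isometry and therefore unitary; its entrywise conjugate is then also unitary, which is exactly the statement that $(R^{m_k},B_{m_k},\Lambda_k)$ is a Hadamard triple. This gives (ii), and (i) follows by the specialization above.

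Since Proposition~\ref{Prop_Approach_2.1} does the analytic heavy lifting, the main obstacle is precisely this dimension count. The clean way through it is to obtain $\#\Lambda_k=\#B_{m_k}$ for free by trapping it between the isometry bound $\#B_{m_k}\le\#\Lambda_k$ and the sumset bound $\#\Lambda_k\le\prod_i\#J_{n_i}$; this avoids proving directly that the sumset representation of elements of $\Lambda_k$ is unique, though that uniqueness also holds a posteriori, using that the elements of each $J_{n_i}$ occupy distinct residue classes modulo $(R^T)^{n_i}\mathbb{Z}^d$ as in Proposition~\ref{proposition2.1}(i).
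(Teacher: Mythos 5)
Your proof is correct and follows essentially the same route as the paper: both reduce (i) to the case $n_i=1$, $J_{n_i}=L$ of (ii), and both invoke Proposition~\ref{Prop_Approach_2.1} with $\epsilon_i\equiv 0$ to obtain the Parseval identity $\|F_{m_k}{\bf w}\|=\|{\bf w}\|$ for all ${\bf w}\in\mathbb{C}^{N^{m_k}}$. The only (harmless) divergence is in the final linear-algebra step: the paper upgrades the isometry to unitarity by plugging in the test vector $w_b=e^{2\pi i\langle R^{-m_k}b,\lambda'\rangle}$ to exhibit the mutual orthogonality of the rows, whereas you do it by the dimension count $\#B_{m_k}\le\#\Lambda_k\le\prod_{i}\#J_{n_i}=N^{m_k}=\#B_{m_k}$ and the fact that a square isometry is unitary; both are routine and valid.
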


\begin{proof}
Suppose that $(R,B,L)$ is a Hadamard triple. Then we take $n_i=1$ and $J_{n_i} = L$. We have $\Lambda_k = L^T_k$. Proposition \ref{Prop_Approach_2.1} implies that
\begin{equation}\label{eq3.1a}
\sum_{\lambda\in{L^T_k}}\left|\sum_{b\in B_{k}}\frac{1}{\sqrt{N^k}}w_be^{-2\pi i \langle R^{-k}b,\lambda\rangle}\right|^2= \|{\bf w}\|^2, \ \forall {\bf w}\in {\mathbb C}^{N^k}.
\end{equation}
Similarly, if $(R^{n_i}, B_{n_i}, J_{n_i})$, $i=1,2,..$, are Hadamard triples, we also have
\begin{equation}\label{eq3.2a}
\sum_{\lambda\in{\Lambda_{{k}}}}\left|\sum_{b\in B_{m_k}}\frac{1}{\sqrt{N^{m_k}}}w_be^{-2\pi i \langle R^{-m_k}b,\lambda\rangle}\right|^2=\|{\bf w}\|^2 \ \forall {\bf w}\in {\mathbb C}^{N^{m_k}}.
\end{equation}
From \eqref{eq3.1a}, we fix $\lambda'\in L^T_k$ and put $w_b = e^{2\pi i \langle R^{-m_k}b,\lambda'\rangle}$, for all $b\in B$. As the term in the sum that corresponds to $\lambda'$ is equal to $N^k$, which is also $\|{\bf w}\|^2$, we obtain that
 $$
\sum_{\lambda\in{L^T_k\setminus\{\lambda'\}}}\left|\sum_{b\in B_{k}}\frac{1}{\sqrt{N^k}}e^{2\pi i \langle R^{-k}b,(\lambda'-\lambda)\rangle}\right|^2= 0.
 $$
This shows that the matrix $\left[e^{2\pi i \langle R^{-k}b,\ell\rangle}\right]_{\ell\in L^T_k, b\in B_n}$ has mutually orthogonal rows and hence  $(R^k, B_k, L^T_k)$ are Hadamard triples. From a similar argument using \eqref{eq3.2a}, we obtain also that $(R^{m_k}, B_{m_k}, \Lambda_k)$ are Hadamard triples.
\end{proof}

\medskip

\section{The case ${\mathcal Z}=\ty$}

In this section, we study the spectral properties of self-affine measures when the set ${\mathcal Z}$ defined in \eqref{eqz} is empty, we prove Theorem \ref{th1.1}. Recall that, for a given expansive integer matrix $R$ and a set $B$ of distinct residue modulo $R{\mathbb Z}^d$, the self-affine measures we are studying satisfy
$$
\mu(E) = \sum_{b\in B}\frac{1}{N}\mu(\tau_b^{-1}(E)).
$$
where $\tau_b(x) = R^{-1}(x+b)$.

Suppose that $(R,B,L)$ forms a Hadamard triple; we have shown that $(R^n,B_n, L^T_n)$ form Hadamard triples. Moreover, if $J_n\equiv L^T_n$ (mod $(R^{T})^n({\mathbb Z}^d)$), then $(R^n,B_n, J_n)$ also forms a Hadamard triple. Now, our goal is to show that some set $\Lambda$ defined as in  \eqref{eqLambda_k} and \eqref{eqlambda} will be a spectrum or frame spectrum for our measure.  Given a sequence of positive integers $n_1,n_2,...$. Recall again that
\begin{equation}\label{eqlambda_Section4}
\Lambda_k = J_{n_1}+ (R^T)^{m_1}J_{n_2}+ (R^T)^{m_2} J_{n_3}+...+ (R^T)^{m_{k-1}} J_{n_{k}},
 \  \Lambda = \bigcup_{k=1}^{\infty}\Lambda_k,
\end{equation}
where $m_k= n_1+...+n_k$, and $0\in J_{n_i}$ for all $i$ is assumed without loss of generality.

\medskip

For the self-affine measure $\mu = \mu(R,B)$,  the Fourier transform can be computed by iterating the invariance identity \eqref{self-affine measure} and we have
$$
\widehat{\mu}(\xi) =  M_B((R^T)^{-1}\xi)\widehat{\mu}((R^T)^{-1}\xi) = ...=\prod_{j=1}^{n}M_B((R^{T})^{-j}\xi)\widehat{\mu}((R^T)^{-n}\xi).
$$
where $M_B(\xi) = \widehat{\delta_{B}}(\xi)= \frac{1}{N}\sum_{b\in B}e^{-2\pi i \langle b,\xi\rangle}$. Note that if $B_n $ is the set in \eqref{B_n},
$$
M_{B_n}(\xi) = \frac{1}{N^n}\sum_{b\in B^n}e^{-2\pi i \langle b,\xi\rangle}= \prod_{j=0}^{n-1}M_{B}((R^{T})^j\xi).
$$
This implies that
\begin{equation}\label{eq4.1}
\widehat{\mu}(\xi) = M_{B_n}((R^{T})^{-n}\xi)\widehat{\mu}((R^T)^{-n}\xi), \ \forall n\ge 1.
\end{equation}
The following lemma is well known and easy to prove.
 \begin{lemma}\label{lem-1}
The following statements are equivalent:
\begin{enumerate}
\item $(R^n,B_n,J_n)$ forms a Hadamard triple.
\item $\delta_{R^{-n}B}$ is a spectral measure with spectrum $J_n$.
\item $M_{B}((R^{T})^{-n} (\lambda-\lambda'))=0$ for all $\lambda\ne\lambda'\in J_n$.
\item For all $\xi\in{\mathbb R}^d$,
$$
\sum_{{\bf \ell}\in J_n}|M_{B_n}(\tau_{\bf \ell}(\xi))|^2\equiv 1,
$$
where $\tau_{\bf \ell}(x) = (R^T)^{-n}(x+{\bf \ell}).$
\end{enumerate}
\end{lemma}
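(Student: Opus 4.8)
The four statements form the standard dictionary linking a Hadamard triple, the spectrality of an associated atomic measure, the vanishing relations for the mask, and the Jorgensen--Pedersen periodization identity; my plan is to collapse all of them to finite-dimensional linear algebra on $L^2(\mu_n)$, where $\mu_n=\delta_{R^{-n}B_n}$. (Here I read statements (ii) and (iii) with the digit set $B_n$ of $R^n$, since the cardinality $\#J_n=N^n$ forces the measure and mask to be those of $R^n$ rather than $R$.) First I would record that $B$ simple for $R$ implies that $B_n$ consists of $N^n$ vectors in distinct residue classes modulo $R^n({\mathbb Z}^d)$, by an induction that peels off one digit modulo $R({\mathbb Z}^d)$ at a time; hence $\mu_n$ has exactly $N^n$ atoms and $\dim L^2(\mu_n)=N^n$. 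The one computation driving everything is
\[
\widehat{\mu_n}(\eta)=\frac{1}{N^n}\sum_{b\in B_n}e^{-2\pi i\langle R^{-n}b,\eta\rangle}=M_{B_n}((R^T)^{-n}\eta),
\]
so that, writing $e_\lambda(x)=e^{2\pi i\langle\lambda,x\rangle}$, one has $\langle e_\lambda,e_{\lambda'}\rangle_{L^2(\mu_n)}=\widehat{\mu_n}(\lambda'-\lambda)=M_{B_n}((R^T)^{-n}(\lambda'-\lambda))$.

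For (i)$\Leftrightarrow$(iii)$\Leftrightarrow$(ii) I would argue as follows. The matrix $H_n=\frac{1}{\sqrt{N^n}}\left[e^{2\pi i\langle R^{-n}b,\ell\rangle}\right]_{\ell\in J_n,\,b\in B_n}$ is square of size $N^n$, so it is unitary exactly when its rows are orthonormal; the diagonal inner products equal $1$ automatically, while the off-diagonal ones are precisely the numbers $M_{B_n}((R^T)^{-n}(\lambda-\lambda'))$, whose vanishing is (iii). This yields (i)$\Leftrightarrow$(iii). By the inner-product formula above, (iii) is exactly the assertion that the family $E(J_n)=\{e_\lambda:\lambda\in J_n\}$ is orthogonal in $L^2(\mu_n)$; since it consists of $N^n$ nonzero vectors in an $N^n$-dimensional space, orthogonality is equivalent to being an orthonormal basis, i.e.\ to $\mu_n$ being spectral with spectrum $J_n$. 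Hence (iii)$\Leftrightarrow$(ii).

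For the equivalence with (iv) I would use Parseval's identity. Since $\tau_\ell(\xi)=(R^T)^{-n}(\xi+\ell)$, the formula above gives $M_{B_n}(\tau_\ell(\xi))=\widehat{\mu_n}(\xi+\ell)=\langle e_{-\xi},e_\ell\rangle_{L^2(\mu_n)}$, so
\[
\sum_{\ell\in J_n}\bigl|M_{B_n}(\tau_\ell(\xi))\bigr|^2=\sum_{\ell\in J_n}\bigl|\langle e_{-\xi},e_\ell\rangle\bigr|^2,
\]
while $\|e_{-\xi}\|_{L^2(\mu_n)}^2=1$ because $|e_{-\xi}|\equiv1$. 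Thus (iv) is exactly Parseval's identity for every exponential $e_{-\xi}$. If $E(J_n)$ is an orthonormal basis then Parseval holds for all vectors, giving (iv); conversely, the exponentials $\{e_\xi:\xi\in{\mathbb R}^d\}$ span the finite-dimensional space $L^2(\mu_n)$ (the atoms are distinct, so a Vandermonde argument separates them), so Parseval on this spanning family makes $E(J_n)$ a Parseval frame, and a Parseval frame of $N^n$ vectors in an $N^n$-dimensional space is an orthonormal basis. Hence (iv)$\Leftrightarrow$(ii).

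Because $\mu_n$ is atomic with finitely many atoms, the whole argument takes place in $L^2(\mu_n)\cong{\mathbb C}^{N^n}$, so the genuine difficulty of the Jorgensen--Pedersen criterion---proving completeness of an \emph{infinite} exponential system for the limit measure $\mu$---does not appear. The only steps demanding attention are the bookkeeping that $B$ simple gives $B_n$ exactly $N^n$ distinct residues modulo $R^n({\mathbb Z}^d)$ (so that ``orthogonal'' can be upgraded to ``basis''), and the spanning property of exponentials invoked in the converse to (iv); both are routine, which is exactly why the lemma is described as well known and easy.
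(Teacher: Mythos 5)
Your overall strategy---collapsing everything to linear algebra in the $N^n$-dimensional space $L^2(\mu_n)$ with $\mu_n=\delta_{R^{-n}B_n}$, after noting that $B$ simple forces $B_n$ to occupy $N^n$ distinct residues modulo $R^n(\bz^d)$---is the right one, and your reading of (ii) and (iii) with $B_n$ in place of $B$ is correct and consistent with how the paper later uses the lemma (it is applied with $M_{B_{m_k}}$ in the proof of Lemma 4.2). The paper gives no proof at all, saying only that (i)--(iii) follow from the definitions and that (iv) is ``the Parseval identity applied to $e^{2\pi i\xi x}$,'' which is exactly your direction (ii)$\Rightarrow$(iv). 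Your arguments for (i)$\Leftrightarrow$(iii), (iii)$\Leftrightarrow$(ii) (orthogonality plus dimension count), and (ii)$\Rightarrow$(iv) are all sound.

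The gap is in (iv)$\Rightarrow$(ii), where you assert that Parseval's identity on a spanning family makes $E(J_n)$ a Parseval frame. As a general principle this is false: writing $G$ for the frame operator, the set $\{v:\langle (G-I)v,v\rangle=0\}$ is a quadric cone, not a subspace, so vanishing on a spanning set cannot be propagated by polarization. For instance, in $\bc^2$ the system $\{(1,0),(1,0)\}$ satisfies $\sum_j|\langle v,f_j\rangle|^2=\|v\|^2$ on the spanning set $\{v:|v_1|=|v_2|\}$ without being a Parseval frame. Your argument can be rescued, but only by using extra structure: in the basis of normalized point masses at the atoms $R^{-n}b$, the matrix of $G$ has entries $\frac{1}{N^n}\sum_{\ell\in J_n}e^{2\pi i\langle \ell,R^{-n}(b'-b)\rangle}$ depending only on $b'-b$, so the vanishing for all $\xi$ of the trigonometric polynomial $\langle (G-I)e_{-\xi},e_{-\xi}\rangle$, whose coefficient at each frequency $R^{-n}(b-b')$ is a sum of \emph{equal} entries of $G-I$, does force $G=I$. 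The standard and much shorter fix avoids this entirely: evaluate (iv) at $\xi=-\lambda_0$ for $\lambda_0\in J_n$; the term $\ell=\lambda_0$ equals $|M_{B_n}(0)|^2=1$, so all the remaining nonnegative terms must vanish, which is precisely statement (iii). Combined with your dimension count (iii)$\Rightarrow$(ii) and the trivial (ii)$\Rightarrow$(iv), this closes the cycle of implications.
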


The first three equivalences follow directly from the definitions and the last equivalence follows from the Parseval identity applied to the function $e^{2\pi i \xi x}$ (see e.g. \cite{LaWa02,DJ07d}). We will omit the details of the proof.

 \medskip

 \begin{lemma}\label{lem0}
 Suppose that  $(R^{n_i}, B_{n_i}, J_{n_i})$, $i=1,2,..$, are Hadamard triples. Then for the set $\Lambda$ defined in \eqref{eqlambda_Section4}, the corresponding set of exponential functions $E(\Lambda)$ is a mutually orthogonal set for $\mu$.
 \end{lemma}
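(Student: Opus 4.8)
The plan is to reduce the orthogonality of $E(\Lambda)$ in $L^2(\mu)$ to the vanishing of $\widehat{\mu}$ on the difference set, and then exploit the tower of Hadamard triples furnished by Corollary \ref{corollary2.3}(ii). For $\lambda,\lambda'\in\Lambda$, orthogonality of $e^{2\pi i\langle\lambda,x\rangle}$ and $e^{2\pi i\langle\lambda',x\rangle}$ in $L^2(\mu)$ is exactly the statement
\[
\int_{\mathbb{R}^d} e^{2\pi i\langle\lambda-\lambda',x\rangle}\,d\mu(x)=\widehat{\mu}(\lambda'-\lambda)=0,
\]
so it suffices to prove $\widehat{\mu}(\lambda-\lambda')=0$ for every pair of distinct $\lambda,\lambda'\in\Lambda$ (the two directions coincide since $\mu$ is real, whence $\widehat\mu(-\xi)=\overline{\widehat\mu(\xi)}$).

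First I would record that the sets $\Lambda_k$ are nested. Since $0\in J_{n_i}$ for every $i$, choosing the last block to be $0$ in the defining sum \eqref{eqlambda_Section4} exhibits $\Lambda_k\subseteq\Lambda_{k+1}$; hence any two elements $\lambda\ne\lambda'$ of $\Lambda=\bigcup_k\Lambda_k$ already lie in a common $\Lambda_k$ once $k$ is large. Fix such a $k$. By Corollary \ref{corollary2.3}(ii), $(R^{m_k},B_{m_k},\Lambda_k)$ is a Hadamard triple, so in particular $\Lambda_k$ consists of $N^{m_k}$ vectors in distinct residue classes modulo $(R^T)^{m_k}\mathbb{Z}^d$, and the rows of the associated unitary matrix are orthonormal. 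Orthogonality of the rows indexed by $\lambda$ and $\lambda'$ is precisely $\sum_{b\in B_{m_k}}e^{2\pi i\langle R^{-m_k}b,\lambda-\lambda'\rangle}=0$, that is,
\[
M_{B_{m_k}}\big((R^T)^{-m_k}(\lambda-\lambda')\big)=0.
\]

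Finally I would invoke the scaling identity \eqref{eq4.1} with $n=m_k$, namely $\widehat{\mu}(\xi)=M_{B_{m_k}}((R^T)^{-m_k}\xi)\,\widehat{\mu}((R^T)^{-m_k}\xi)$, evaluated at $\xi=\lambda-\lambda'$. The first factor vanishes by the previous step and the second is bounded in modulus by $\widehat\mu(0)=1$, so $\widehat{\mu}(\lambda-\lambda')=0$, giving the desired orthogonality. The argument is essentially routine once Corollary \ref{corollary2.3}(ii) is in hand; the only point needing care is the reduction to a single finite level $k$, which is exactly what the nestedness $\Lambda_k\subseteq\Lambda_{k+1}$ (hence the hypothesis $0\in J_{n_i}$) provides. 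No Jorgensen--Pedersen completeness criterion enters here, since only mutual orthogonality, and not completeness, is asserted.
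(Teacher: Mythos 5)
Your proof is correct and follows essentially the same route as the paper's: both use the nestedness of the $\Lambda_k$ (from $0\in J_{n_i}$) to place $\lambda,\lambda'$ in a common $\Lambda_k$, invoke Corollary \ref{corollary2.3}(ii) to get the Hadamard triple $(R^{m_k},B_{m_k},\Lambda_k)$, deduce $M_{B_{m_k}}((R^T)^{-m_k}(\lambda-\lambda'))=0$ (the paper cites Lemma \ref{lem-1} for this, you derive it directly from row orthogonality), and conclude via the factorization \eqref{eq4.1}. No substantive difference.
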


 \begin{proof}
 This lemma is indeed a well-known fact  (See e.g.  \cite[Theorem 2.7]{Str00}). Note that $\Lambda_k$ in \eqref{eqlambda_Section4} is an increasing sequence of finite sets, because $0\in J_{n_i}$ for all $i$. Take some distinct $\lambda,\lambda'\in \Lambda$, we choose $k$ so that $\lambda,\lambda'\in \Lambda_k$. By Corollary \ref{corollary2.3}, we know that $(R^{m_k},B_{m_k},\Lambda_k)$ are  Hadamard triples. By Lemma \ref{lem-1}, $M_{B_{m_k}}((R^T)^{-m_k}(\lambda-\lambda'))=0$. Hence, $\widehat{\mu}(\lambda-\lambda')=0$ from  \eqref{eq4.1}.
 \end{proof}

\medskip

We now establish the Fourier frame inequality which implies the completeness of our set of exponentials. The idea is to consider step functions on $T(R,B)$. There is a natural one-to-one correspondence between $B^n$ in Section 2 and $B_n$ in \eqref{B_n}, by identifying ${\bf b}:=(b_0,...,b_{n-1})$ and $b:= \sum_{j=0}^{n-1}R^j b_j$. With an abuse of notation, these two will be used interchangeably. As we are dealing with Hadamard triples, $B$ is a simple digit set for $R$, so that by Theorem \ref{th1.0}, the no-overlap condition is satisfied.

Let ${\mathcal S}_n$ denote the set of all step functions at level $n$ on $T(R,B)$, i.e.,
$$
{\mathcal S}_n = \left\{\sum_{{\bf b}\in B^n}w_{\bf b}{\bf 1}_{T(R,B)_{\bf b}}: w_{\bf b}\in{\mathbb C}\right\}.
$$
Here ${\bf 1}_{T(R,B)_{\bf b}}$ denotes the characteristic function of $T(R,B)_{\bf b}$. It is well known that the set
\begin{equation}\label{eqS}
{\mathcal S} = \bigcup_{n=1}^{\infty} {\mathcal S}_n
\end{equation}
is a dense set of $L^2(\mu)$, but we provide a proof for completeness. Moreover, by iterating the invariance equation $$T(R,B) = \bigcup_{b\in B}\tau_{b}(T(R,B)),$$ it is easy to see that ${\mathcal S}_1\subset{\mathcal S}_2\subset{\mathcal S}_3\subset....$.

\begin{lemma}\label{lem1}
${\mathcal S}$ forms a dense set of $L^2(\mu)$.
Suppose that $f=\sum_{{b}\in B_n}w_{ b}{\bf 1}_{T(R,B)_{\bf b}}\in{\mathcal S}_n$ and $\mu= \mu(R,B)$. Then
\begin{equation}\label{eq4.2}
\int|f|^2d\mu = \frac{1}{N^n}\sum_{{ b}\in B_n}|w_{ b}|^2
\end{equation}
and
\begin{equation}\label{eq4.3}
\int f(x)e^{-2\pi i \xi x}d\mu(x) = \frac{1}{N^n}\widehat{\mu}((R^T)^{-n}\xi)\sum_{{ b}\in B_n}w_{ b}e^{-2\pi i \langle R^{-n}{b}, \xi\rangle}.
\end{equation}
\end{lemma}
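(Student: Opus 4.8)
The plan is to treat the three assertions in turn, with the no-overlap condition of Theorem \ref{th1.0} doing the heavy lifting for the two integral identities and the contractivity of $R^{-1}$ driving the density statement. For density, I would first recall that $\mu$ is a regular Borel probability measure supported on the compact set $T(R,B)$, so $C(T(R,B))$ is dense in $L^2(\mu)$ and it suffices to approximate a continuous $g$ uniformly by elements of ${\mathcal S}$. Since $\tau_{\bf b}$ is affine with linear part $R^{-n}$ for ${\bf b}\in B^n$, each cell satisfies $\operatorname{diam} T(R,B)_{\bf b}\le \|R^{-n}\|\,\operatorname{diam} T(R,B)$, and $\|R^{-n}\|\to 0$ because $R$ is expansive. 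Given $\epsilon>0$, uniform continuity of $g$ lets me choose $n$ so large that $g$ varies by less than $\epsilon$ on every cell; assigning to each cell the value of $g$ at one of its points (resolving the measure-zero overlaps by any Borel selection) produces $f\in{\mathcal S}_n$ with $\|g-f\|_\infty\le\epsilon$, hence $\|g-f\|_{L^2(\mu)}\le\epsilon$ since $\mu$ is a probability measure.

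For \eqref{eq4.2}, I would use that the no-overlap condition makes the cells $\{T(R,B)_{\bf b}\}_{{\bf b}\in B^n}$ pairwise $\mu$-null on overlaps with $\mu(T(R,B)_{\bf b})=N^{-n}$, as recorded after Theorem \ref{th1.0}. Expanding $|f|^2=\sum_{\bf b}|w_{\bf b}|^2{\bf 1}_{T(R,B)_{\bf b}}$ up to a $\mu$-null set and integrating then gives $\int|f|^2\,d\mu=\sum_{\bf b}|w_{\bf b}|^2\mu(T(R,B)_{\bf b})=N^{-n}\sum_{b\in B_n}|w_b|^2$.

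For \eqref{eq4.3}, I would split $\int f\, e^{-2\pi i\langle\xi,x\rangle}\,d\mu=\sum_{\bf b}w_{\bf b}\int_{T(R,B)_{\bf b}}e^{-2\pi i\langle\xi,x\rangle}\,d\mu$. The no-overlap condition together with the $n$-fold invariance $\mu=\sum_{{\bf b}'}N^{-n}\mu_{{\bf b}'}$ identifies $\mu$ restricted to $T(R,B)_{\bf b}$ with $N^{-n}\mu_{\bf b}=N^{-n}\mu\circ\tau_{\bf b}^{-1}$ up to a $\mu$-null set. A change of variables then gives $\int_{T(R,B)_{\bf b}}e^{-2\pi i\langle\xi,x\rangle}\,d\mu=N^{-n}\int e^{-2\pi i\langle\xi,\tau_{\bf b}(y)\rangle}\,d\mu(y)$; writing $\tau_{\bf b}(y)=R^{-n}(y+b)$ under the identification ${\bf b}\leftrightarrow b=\sum_j R^j b_j$ and using $\langle\xi,R^{-n}y\rangle=\langle (R^T)^{-n}\xi,y\rangle$, the factor $e^{-2\pi i\langle R^{-n}b,\xi\rangle}$ pulls out and the remaining integral is exactly $\widehat{\mu}((R^T)^{-n}\xi)$. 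Summing over ${\bf b}$ yields \eqref{eq4.3}.

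I expect the only genuinely delicate point to be the reduction $\mu|_{T(R,B)_{\bf b}}=N^{-n}\mu_{\bf b}$, equivalently $\mu_{{\bf b}'}(T(R,B)_{\bf b})=0$ for ${\bf b}'\neq{\bf b}$, which is precisely where the no-overlap condition is essential: without it the cells could carry extra mass from competing branches and both \eqref{eq4.2} and \eqref{eq4.3} would fail. Everything else, including the diameter estimate and the uniform-to-$L^2$ passage, is routine once Theorem \ref{th1.0} is in place.
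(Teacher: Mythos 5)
Your proposal is correct and follows essentially the same route as the paper: density via uniform continuity and shrinking cells, \eqref{eq4.2} via $\mu(T(R,B)_{\bf b})=N^{-n}$ from the no-overlap condition, and \eqref{eq4.3} via the iterated invariance identity in which only the diagonal term ${\bf b}'={\bf b}$ survives. The only cosmetic difference is that you phrase the key step as the identification $\mu|_{T(R,B)_{\bf b}}=N^{-n}\mu\circ\tau_{\bf b}^{-1}$ up to a null set, while the paper writes out the same cancellation inside the sum $\frac{1}{N^n}\sum_{{\bf b}'}\int{\bf 1}_{T(R,B)_{\bf b}}(\tau_{{\bf b}'}(x))e^{-2\pi i\langle\xi,\tau_{{\bf b}'}(x)\rangle}\,d\mu(x)$.
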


\begin{proof}
Take first a continuous function $f$ on $T(R,B)$
and $\epsilon > 0$. Since $T(R,B)$ is compact, the function $f$ is uniformly continuous. We can find $m$ large enough such that the diameter of all sets Ą$\tau_{\bf b}(T(R,B))$, ${\bf b}\in B^m$, is small enough so that $ |f(x) - f(y)| <\epsilon$ for all
$x,y\in\tau_{\bf b}(T(R,B))$ and all ${\bf b} \in B^m$. Consider $g = \sum_{{\bf b}\in B^m}f(\tau_{\bf b}(0)){\bf 1}_{T(R,B)_{\bf b}}$. It is easy to see that $\sup_{x\in T(R,B)}|f(x)-g(x)|<\epsilon$. Hence, ${\mathcal S}$ is uniformly dense in $C(T(R,B))$. As $\mu$ is a regular Borel measure, ${\mathcal S}$ is dense in $L^2(\mu).$

\medskip

The no-overlap condition and the invariance equation for $\mu$ implies that
$$
\mu(T(R,B)_{\bf b}) = \sum_{b'\in B_n}\frac{1}{N^{n}}\mu (\tau_{b'}^{-1}(\tau_{b}(T(R,B)))) = \frac{1}{N^{n}}.
$$
 for all ${\bf b}\in B^n$. This implies \eqref{eq4.2} immediately. To prove \eqref{eq4.3},
\begin{equation}\label{eq4.4.}
\int f(x)e^{-2\pi i \xi x}d\mu(x) = \sum_{{\bf b}\in B^n} w_{{\bf b}}\int_{\tau_{\bf b}(T(R,B))}e^{-2\pi i \langle\xi,x\rangle}d\mu(x).
\end{equation}
Note that
$$
\int_{\tau_{\bf b}(T(R,B))}e^{-2\pi i \langle\xi,x\rangle}d\mu(x) = \frac{1}{N^n}\sum_{{\bf b}'\in B^n}\int{\bf 1}_{\tau_{{\bf b}}(T(R,B))}(\tau_{{\bf b}'}(x))e^{-2\pi i \langle\xi,\tau_{{\bf b}'}(x)\rangle}d\mu(x).
$$
By the no overlap condition, the only non-zero term in the summation above is the one corresponding to  ${\bf b}={\bf b}'$. This implies that
$$
\int_{\tau_{\bf b}(T(R,B))}e^{-2\pi i \langle\xi,x\rangle}d\mu(x) =  \frac{1}{N^n}\int e^{-2\pi i \langle\xi,\tau_{{\bf b}}(x)\rangle}d\mu(x) = \frac{1}{N^n}e^{-2\pi i \langle\xi,R^{-n}{\bf b}\rangle}\widehat{\mu}((R^{T})^{-n}\xi).
$$
Combining this with \eqref{eq4.4.}, we obtain \eqref{eq4.3}.
\end{proof}

\medskip

For the sets $\Lambda_k$ and $\Lambda$ we defined in \eqref{eqlambda_Section4}, we consider the following quantity.
\begin{equation}
\delta (\Lambda) = \inf_{k}\inf_{\lambda\in\Lambda_k}|\widehat{\mu}((R^T)^{-m_k}\lambda)|^2
\label{eqdeltaa}
\end{equation}

\medskip

The following theorem gives a sufficient condition for $\Lambda$ to be a spectrum and this sufficient condition will be realized for some choice of the set $\Lambda$ under the condition ${\mathcal Z}=\ty$.

\begin{theorem}\label{prop_main1}
Suppose that $(R,B,L)$ is a Hadamard triple and let the set $
\Lambda$ be as in \eqref{eqlambda_Section4}. Assume that
\begin{equation}\label{eqdelta}
\delta(\Lambda): = \inf_{k\ge1}\inf_{\lambda\in\Lambda_k}|\widehat{\mu}((R^T)^{-m_k}\lambda)|^2>0
\end{equation}
Then $\Lambda$ is a spectrum for $L^2(\mu(R,B))$.
\end{theorem}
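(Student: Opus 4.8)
The plan is to deduce that $\Lambda$ is a spectrum from two ingredients. By Lemma \ref{lem0} the family $E(\Lambda)$ is already mutually orthogonal, and since $\mu$ is a probability measure each $e_\lambda$ has unit norm, so $E(\Lambda)$ is an orthonormal set; only completeness remains. I would obtain completeness from a \emph{lower frame bound}: once I show
$$\delta(\Lambda)\,\|f\|_{L^2(\mu)}^2\le\sum_{\lambda\in\Lambda}\left|\int f\,e^{-2\pi i\langle\lambda,x\rangle}\,d\mu\right|^2\quad\text{for every }f\in L^2(\mu),$$
the hypothesis $\langle f,e_\lambda\rangle=0$ for all $\lambda$ forces $f=0$, which is exactly the completeness criterion. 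As $\delta(\Lambda)>0$ by \eqref{eqdelta}, everything hinges on this inequality, and since $\mathcal{S}=\bigcup_n\mathcal{S}_n$ is dense by Lemma \ref{lem1}, it suffices to prove it on $\mathcal{S}$ and then pass to the limit.

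The heart of the argument is a direct computation on step functions that exploits the exact isometry property of the Hadamard matrices recorded in \eqref{eq3.2a}. Fix $f\in\mathcal{S}$; because $\mathcal{S}_1\subset\mathcal{S}_2\subset\cdots$, I may represent $f=\sum_{b\in B_{m_k}}w_b\,{\bf 1}_{T(R,B)_{\bf b}}\in\mathcal{S}_{m_k}$ for all large $k$. Applying \eqref{eq4.3} at level $m_k$ gives
$$\left|\int f\,e^{-2\pi i\langle\lambda,x\rangle}\,d\mu\right|^2=\frac{1}{N^{m_k}}\,\bigl|\widehat{\mu}((R^T)^{-m_k}\lambda)\bigr|^2\left|\frac{1}{\sqrt{N^{m_k}}}\sum_{b\in B_{m_k}}w_b e^{-2\pi i\langle R^{-m_k}b,\lambda\rangle}\right|^2.$$
The factor $|\widehat{\mu}((R^T)^{-m_k}\lambda)|^2$ is bounded below by $\delta(\Lambda)$ (by \eqref{eqdelta}) and above by $1$ (since $\mu$ is a probability measure). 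Summing over $\lambda\in\Lambda_k$ and using the isometry \eqref{eq3.2a}, whose right-hand side equals $\sum_{b\in B_{m_k}}|w_b|^2$, together with the norm identity \eqref{eq4.2} which gives $\frac{1}{N^{m_k}}\sum_b|w_b|^2=\|f\|_{L^2(\mu)}^2$, I obtain
$$\delta(\Lambda)\,\|f\|^2\le\sum_{\lambda\in\Lambda_k}\left|\int f\,e^{-2\pi i\langle\lambda,x\rangle}\,d\mu\right|^2\le\|f\|^2.$$
Since the $\Lambda_k$ increase, the middle sum is nondecreasing in $k$ and bounded by $\|f\|^2$, so letting $k\to\infty$ yields the same two-sided bound with $\Lambda_k$ replaced by $\Lambda$, valid for every $f\in\mathcal{S}$.

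Finally I would upgrade the lower bound from $\mathcal{S}$ to all of $L^2(\mu)$. The analysis operator $f\mapsto(\langle f,e_\lambda\rangle)_{\lambda\in\Lambda}$ is bounded into $\ell^2(\Lambda)$ because $E(\Lambda)$ is orthonormal, so both sides of $\delta(\Lambda)\|f\|^2\le\sum_\lambda|\langle f,e_\lambda\rangle|^2$ are continuous in $f$; as the inequality holds on the dense subspace $\mathcal{S}$, it persists on all of $L^2(\mu)$. This gives completeness, and combined with the orthonormality from Lemma \ref{lem0} shows that $E(\Lambda)$ is an orthonormal basis, i.e. $\Lambda$ is a spectrum. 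The proof is short because the difficulty has been front-loaded into the earlier lemmas: the only genuine leverage is the Hadamard isometry \eqref{eq3.2a}, and the single obstruction to the conclusion would be $\delta(\Lambda)=0$, precisely what \eqref{eqdelta} rules out. The one point that requires care is the bookkeeping that lets a fixed step function be represented simultaneously at every sufficiently deep level $m_k$, so that \eqref{eq4.3} and \eqref{eq3.2a} may be applied at that level.
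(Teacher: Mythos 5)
Your proposal is correct and follows essentially the same route as the paper: compute $\langle f,e_\lambda\rangle$ for step functions via the no-overlap identities \eqref{eq4.2}--\eqref{eq4.3}, sandwich $|\widehat\mu((R^T)^{-m_k}\lambda)|^2$ between $\delta(\Lambda)$ and $1$, invoke the Hadamard isometry to get the two-sided bound over $\Lambda_k$, and let $k\to\infty$. The only (welcome) refinement is that you make explicit the density step from $\mathcal S$ to $L^2(\mu)$ via boundedness of the analysis operator, which the paper leaves implicit.
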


\begin{proof}
We now show the completeness by showing that the following frame bounds hold: for any $f\in L^2(\mu)$,
\begin{equation}\label{eq2.1i}
\delta(\Lambda)\|f\|^2\leq \sum_{\lambda\in\Lambda}\left|\int f(x)e^{-2\pi i \langle\lambda,x\rangle}d\mu(x)\right|^2\leq \|f\|^2.
\end{equation}
The positive lower bound implies the completeness. Now for any $f = \sum_{{\bf b}\in {B}_{m_k}}w_{\bf b}{\bf 1}_{\tau_{\bf b}(R,B)}\in {\mathcal S}$, Lemma \ref{lem1} shows that
\begin{equation}\label{eq3.1}
\int|f|^2d\mu = \frac{1}{N^{m_k}}\sum_{{\bf b}\in {B}_{m_k}}|w_{\bf b}|^2 =  \frac{1}{N^{m_k}}\|{\bf w}\|^2
\end{equation}
where ${\bf w} = (w_{\bf b})_{{\bf b}\in B_{m_k}}$ and
\begin{equation}\label{eq3.2}
\int f(x)e^{-2\pi i \ip{\lambda}{x}}d\mu(x) = \frac{1}{N^{m_k}}\widehat{\mu}((R^T)^{-n}\lambda)\sum_{{\bf b}\in {B}_{m_k}}w_{\bf b} e^{-2\pi i  \ip{R^{-{m_k}}{\bf b}}{ \lambda}}
\end{equation}
which means that
\begin{equation}\label{eq3.3}
\sum_{\lambda\in\Lambda_{k}}\left|\int f(x)e^{-2\pi i \ip{\lambda}{x}}d\mu(x)\right|^2 = \frac{1}{N^{m_k}} \sum_{\lambda\in\Lambda_{k}}|\widehat{\mu}((R^T)^{-{m_k}}\lambda)|^2\left|\sum_{{\bf b}\in {B}_{m_k}}\frac{1}{\sqrt{N^{m_k}}}w_{\bf b}e^{-2\pi i  \ip{R^{-{m_k}}{\bf b}}{\lambda}}\right|^2
\end{equation}
From the definition of $\delta(\Lambda)$, $\delta(\Lambda)\le|\widehat{\mu}((R^T)^{-{m_k}}\lambda)|^2\le 1$ and we thus obtain
$$
 \frac{1}{N^{m_k}}\delta(\Lambda)\|H_{m_k}{\bf w}\|^2\le \sum_{\lambda\in\Lambda_k}\left|\int f(x)e^{-2\pi i \ip{\lambda}{x}}d\mu(x)\right|^2 \le  \frac{1}{N^{m_k}}\|H_{m_k}{\bf w}\|^2,
$$
where $H_{m_k}$ is the Hadamard matrix obtained from the Hadamard triple $(R^{m_k}, B_{m_k}, \Lambda_k)$. So we have $\|H_{m_k}{\bf w}\| = \|{\bf w}\|$ and hence
$$
\delta(\Lambda)\int|f|^2d\mu\le \sum_{\lambda\in\Lambda_k}\left|\int f(x)e^{-2\pi i \ip{\lambda}{x}}d\mu(x)\right|^2 \le \int|f|^2d\mu .
$$
As ${\mathcal S}_{m_k}\subset{\mathcal S}_{m_{\ell}}$ for any $\ell\ge k$, we have
$$
\delta(\Lambda)\int|f|^2d\mu\le \sum_{\lambda\in\Lambda_\ell}\left|\int f(x)e^{-2\pi i \ip{\lambda}{x}}d\mu(x)\right|^2 \le \int|f|^2d\mu,
$$
for all $\ell\ge k$. By letting $\ell$ go to infinity, we have \eqref{eq2.1i}.
\end{proof}

\medskip

\begin{remark} As we will see, actually, in equation \eqref{eq2.1i} we will have indeed the Parseval identity. However, since $\delta(\Lambda)>0$, the frame inequality  is enough to guarantee completeness which, because of the orthogonality, is equivalent to the Parseval identity. In the appendix, we will show directly that Parseval identity holds.
\end{remark}
\medskip

The following proposition shows that some $\Lambda$ will satisfy $\delta(\Lambda)>0$.

\begin{proposition}\label{prop_main2}
Suppose that ${\mathcal Z} = \emptyset$. Then there exists $\Lambda$ built as in \eqref{eqLambda_k} and \eqref{eqlambda} such that $\delta(\Lambda)>0$.
\end{proposition}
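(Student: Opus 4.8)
The plan is to reduce the statement to a uniform lower bound on $\widehat{\mu}$ coming from the hypothesis ${\mathcal Z}=\emptyset$, and then to build $\Lambda$ by a greedy induction in which the freedom to choose the representatives of $J_{n_k}$ (modulo $(R^T)^{n_k}{\mathbb Z}^d$) is used to steer the relevant arguments of $\widehat{\mu}$ into the region where it is bounded away from $0$.

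First I would extract the uniform bound. Set $g(\xi)=\sup_{m\in{\mathbb Z}^d}|\widehat{\mu}(\xi+m)|$. Since $\widehat{\mu}$ is continuous, $g$ is lower semicontinuous, and it is plainly ${\mathbb Z}^d$-periodic, so it descends to the compact torus ${\mathbb R}^d/{\mathbb Z}^d$. The hypothesis ${\mathcal Z}=\emptyset$ says exactly that $g(\xi)>0$ for every $\xi$; as a lower semicontinuous function on a compact set attains its infimum, there is a constant $c_0>0$ with $\sup_{m\in{\mathbb Z}^d}|\widehat{\mu}(\xi+m)|>c_0$ for all $\xi\in{\mathbb R}^d$, so for every $\xi$ one may pick $m\in{\mathbb Z}^d$ with $|\widehat{\mu}(\xi+m)|>c_0$. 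Because $\mu$ is compactly supported, $\widehat{\mu}$ is Lipschitz; let $C$ be a Lipschitz constant and put $\eta=c_0/(2C)$.

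Next I would set up the recursion governing $\delta(\Lambda)$. For $\lambda=\lambda'+(R^T)^{m_{k-1}}\ell\in\Lambda_k$ with $\lambda'\in\Lambda_{k-1}$ and $\ell\in J_{n_k}$, writing $\xi(\lambda):=(R^T)^{-m_k}\lambda$ one computes
\[
\xi(\lambda)=(R^T)^{-n_k}\bigl(\xi(\lambda')+\ell\bigr).
\]
The crucial observation is that replacing $\ell$ by $\ell+(R^T)^{n_k}m$ (the only freedom allowed while keeping $(R^{n_k},B_{n_k},J_{n_k})$ a Hadamard triple) shifts $\xi(\lambda)$ by the integer vector $m$, i.e. moves $\widehat{\mu}(\xi(\lambda))$ to a full-period translate. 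I then construct $n_k$ and $J_{n_k}$ inductively. Suppose $\Lambda_{k-1}$ is already chosen; being finite, $X_{k-1}:=\{\xi(\lambda'):\lambda'\in\Lambda_{k-1}\}$ lies in a ball of some finite radius $\rho_{k-1}$. Choose $n_k$ so large that $\|(R^T)^{-n_k}\|\,\rho_{k-1}<\eta$. For each residue class of $L^T_{n_k}$ modulo $(R^T)^{n_k}{\mathbb Z}^d$, pick a representative $\ell_0$, apply the uniform bound to the point $(R^T)^{-n_k}\ell_0$ to get $m\in{\mathbb Z}^d$ with $|\widehat{\mu}((R^T)^{-n_k}\ell_0+m)|>c_0$, and take $\ell:=\ell_0+(R^T)^{n_k}m$ as the chosen element of $J_{n_k}$. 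For the class of $0$ one may take $\ell=0$ since $\widehat{\mu}(0)=1>c_0$, which keeps $0\in J_{n_k}$ and hence the $\Lambda_k$ increasing. To verify the bound, for such $\lambda$ one has
\[
\xi(\lambda)=(R^T)^{-n_k}\xi(\lambda')+(R^T)^{-n_k}\ell_0+m,
\]
so $\xi(\lambda)$ differs from the center $(R^T)^{-n_k}\ell_0+m$ by the vector $(R^T)^{-n_k}\xi(\lambda')$ of norm at most $\|(R^T)^{-n_k}\|\,\rho_{k-1}<\eta$. The Lipschitz estimate then gives $|\widehat{\mu}(\xi(\lambda))|>c_0-C\eta=c_0/2$ for every $\lambda\in\Lambda_k$ and every $k$, whence $\delta(\Lambda)\ge(c_0/2)^2>0$.

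The main obstacle is precisely the coupling hidden in this construction: a single representative $\ell$ must simultaneously serve every $\lambda'\in\Lambda_{k-1}$, so one cannot tune the argument of $\widehat{\mu}$ separately for each descendant. This is what forces the two ingredients to be used together, and it is resolved by exploiting that $n_k$ may be taken arbitrarily large \emph{after} $\Lambda_{k-1}$ is fixed: the contraction $(R^T)^{-n_k}$ crushes the finite (hence bounded) spread of $X_{k-1}$ below the Lipschitz threshold $\eta$, so that one integer shift $m$ moving the common center $(R^T)^{-n_k}\ell_0$ into $\{|\widehat{\mu}|>c_0\}$ automatically keeps every $\xi(\lambda)$ in the ball on which $|\widehat{\mu}|\ge c_0/2$.
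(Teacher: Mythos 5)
Your proof is correct, and its core construction coincides with the paper's: you build $\Lambda$ inductively, choosing $n_k$ so large that $(R^T)^{-n_k}$ crushes the bounded finite set $\{(R^T)^{-m_{k-1}}\lambda' : \lambda'\in\Lambda_{k-1}\}$ below a fixed threshold, and you use the only available freedom --- translating each element of $J_{n_k}$ by $(R^T)^{n_k}(\bz^d)$, which preserves the Hadamard triple property and the form \eqref{eqLambda_k}--\eqref{eqlambda} --- to move the common ``center'' $(R^T)^{-n_k}\ell_0$ to an integer translate where $|\widehat\mu|$ is bounded below. Where you genuinely diverge is in how the uniform lower bound is obtained. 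The paper's Lemma \ref{lem2.1} works on the compact dual attractor $X=T(R^T,L)$, which contains all the normalized points $(R^T)^{-(n+p)}J_n$, and extracts uniform $\epsilon_0,\delta_0$ by continuity of $\widehat\mu$ plus a finite subcover of $X$; you instead note that $g(\xi)=\sup_{m\in\bz^d}|\widehat\mu(\xi+m)|$ is $\bz^d$-periodic and lower semicontinuous, hence attains its infimum on the torus, and ${\mathcal Z}=\ty$ forces that infimum to be positive, giving a constant $c_0>0$ valid at \emph{every} $\xi\in\br^d$; the neighborhood version then comes from the Lipschitz constant of $\widehat\mu$ (legitimate, since $\mu$ is compactly supported). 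This variant dispenses with the dual fractal and the covering argument entirely, at no cost. The remaining details check out: keeping $0\in J_{n_k}$ makes the $\Lambda_k$ increasing, the case $\ell=0$ is covered since $\widehat\mu(0)=1>c_0$, and the estimate $|\widehat\mu((R^T)^{-m_k}\lambda)|>c_0/2$ for all $\lambda\in\Lambda_k$ and all $k$ yields $\delta(\Lambda)\geq (c_0/2)^2>0$.
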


\medskip

We now give the proof of this proposition. We start with a lemma.

\begin{lemma}\label{lem2.1}
Suppose that ${\mathcal Z} = \emptyset$ and let $X$ be any compact set on ${\mathbb R}^d$. Then there exist $\epsilon_0>0$, $\delta_0>0$ such that for all $x\in X$, there exists $k_x\in{\mathbb Z}^d$ such that for all $y\in\br^d$ with $\|y\|<\epsilon_0$, we have $|\widehat\mu(x+y+k_x)|^2\geq \delta_0$. In addition, we can choose $k_0=0$ if $0\in X$.
\end{lemma}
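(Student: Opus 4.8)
The plan is to combine the continuity of the Fourier transform $\widehat{\mu}$ with a compactness argument on $X$. Two elementary facts drive everything. First, since $\mu$ is a compactly supported probability measure, $\widehat{\mu}(\xi) = \int e^{-2\pi i \langle \xi, x\rangle}\, d\mu(x)$ is continuous (indeed real-analytic) on $\br^d$, and $\widehat{\mu}(0) = \mu(\br^d) = 1$. Second, the hypothesis $\mathcal{Z} = \ty$ unwinds to the statement that for every $\xi \in \br^d$ there is at least one $k \in \mathbb{Z}^d$ with $\widehat{\mu}(\xi + k) \neq 0$; this is the only place where the assumption enters.

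First I would produce, for each fixed $x \in X$, a local bound. Since $x \notin \mathcal{Z}$, choose $k_x \in \mathbb{Z}^d$ with $\widehat{\mu}(x + k_x) \neq 0$ (and, when $x = 0$, take $k_0 = 0$, which is legitimate because $\widehat{\mu}(0) = 1 \ne 0$). By continuity of $\widehat{\mu}$ there are a radius $r_x > 0$ and a constant $\delta_x > 0$ such that $|\widehat{\mu}(z + k_x)|^2 \geq \delta_x$ for all $z$ in the ball $B_{2r_x}(x)$. The open balls $\{B_{r_x}(x) : x \in X\}$ cover $X$, so compactness yields a finite subcover $B_{r_{x_1}}(x_1), \dots, B_{r_{x_m}}(x_m)$ with associated integer vectors $k_{x_1}, \dots, k_{x_m}$ and constants $\delta_{x_1}, \dots, \delta_{x_m}$. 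I would then set $\epsilon_0 = \min_{1\le i \le m} r_{x_i}$ and $\delta_0 = \min_{1 \le i \le m} \delta_{x_i}$, both strictly positive.

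Now, given an arbitrary $x \in X$, it lies in some $B_{r_{x_i}}(x_i)$; assign $k_x := k_{x_i}$. For any $y$ with $\|y\| < \epsilon_0 \le r_{x_i}$, the triangle inequality gives $\|(x + y) - x_i\| \le \|x - x_i\| + \|y\| < r_{x_i} + r_{x_i} = 2r_{x_i}$, so $x + y \in B_{2 r_{x_i}}(x_i)$ and hence $|\widehat{\mu}(x + y + k_{x_i})|^2 \ge \delta_{x_i} \ge \delta_0$, which is exactly the asserted bound. For the addendum, when $0 \in X$ I ensure the point $0$ is assigned to the covering ball centered at $0$, for which $k_{x_i} = 0$ was chosen in the first step; this delivers $k_0 = 0$.

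This is a routine continuity-plus-compactness argument and I do not anticipate a genuine obstacle; the only point requiring a little care is the factor-of-two in the radii, which is what upgrades the finitely many pointwise-on-a-ball bounds into a single uniform pair $(\epsilon_0, \delta_0)$ valid simultaneously for the base point $x$ and for the $\epsilon_0$-slack in $y$. The role of $\mathcal{Z} = \ty$ is likewise clean: without it some $x \in X$ might have $\widehat{\mu}(x + k) = 0$ for every $k \in \mathbb{Z}^d$, and then no choice of $k_x$ could possibly work.
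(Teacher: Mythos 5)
Your proof is correct and follows essentially the same route as the paper's: pick $k_x$ pointwise using $\mathcal Z=\ty$, get a local lower bound by continuity, extract a finite subcover of balls of half the radius (your factor-of-two in the radii is exactly the paper's device), and take minima. The handling of $k_0=0$ via $\widehat\mu(0)=1$ also matches.
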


\begin{proof}
As ${\mathcal Z} = \emptyset$, for all $x\in X$ there exists $k_x\in\Gamma$ such that $\widehat\mu(x+k_x)\neq 0$. Since $\widehat\mu$ is continuous, there exists an open ball $B(x,\epsilon_x)$ and $\delta_x>0$ such that $|\widehat\mu(y+k_x)|^2\geq\delta_x$ for all $y\in B(x,\epsilon_x)$. Since $X$ is compact, there exist $x_1,\dots,x_m\in X$ such that
$$X\subset\bigcup_{i=1}^mB(x_i,\frac{\epsilon_{x_i}}2).$$
Let $\delta:=\min_i\delta_{x_i}$ and $\epsilon:=\min_i\frac{\epsilon_{x_i}}{2}$. Then, for any $x\in X$, there exists $i$ such that $x\in B(x_i,\frac{\epsilon_{x_i}}2)$. If $\|y\|<\epsilon$, then $x+y\in B(x_i,\epsilon_{x_i})$, so $|\widehat \mu(x+y+k_{x_i})|^2\geq \delta$, and therefore we can redefine $k_x$ to be $k_{x_i}$ to obtain the conclusion. Clearly, we can choose $k_0=0$ if $0\in X$ since $\widehat{\mu}(0)=1$.
\end{proof}

\medskip

\noindent{\it Proof of Proposition \ref{prop_main2}.}  Suppose that $(R,B,L)$ is a Hadamard triple. Then we take $X = T(R^T,L)$, the self-affine set generated by $R^T$ and digit set $L$, which was called the {\it dual fractal} in \cite{JP98}.

Define $J_{n} = L+R^{T}L+...+(R^T)^{n-1}L$.  
 By the definition of self-affine sets, 
$$
(R^{T})^{-(n+p)}J_{n}\subset X , \quad(n\in{\mathbb N},p\geq 0).
$$
Fix the $\epsilon_0$ and $\delta_0$ in Lemma \ref{lem2.1}. We now construct the sets $\Lambda_k$ and $\Lambda$ as in \eqref{eqLambda_k} and \eqref{eqlambda}, by replacing the sets $J_{n_k}$ by some sets $\widehat J_{n_k}$ to guarantee that the number $\delta(\Lambda)$ in \eqref{eqdelta} is positive.

\medskip

 We first start with $\Lambda_{0}: = \{0\} $ and $m_0=n_0=0$. Assuming that $\Lambda_k$ has been constructed, we first choose our $n_{k+1}>n_k$ so that
\begin{equation}\label{eq4.5}
\|(R^T)^{-(n_{k+1}+p)}\lambda\|<\epsilon_0 , \ \forall \ \lambda\in\Lambda_k, p\geq0.
\end{equation}
We then define $m_{k+1} = m_k+n_{k+1}$ and
$$
\Lambda_{k+1} = \Lambda_k+(R^T)^{m_{k}}\widehat{J_{n_{k+1}}}
$$
where
$$
\widehat{J_{n_{k+1}}} = \{j+(R^T)^{n_{k+1}}k(j): j\in J_{n_{k+1}}, \ k(j)\in{\mathbb Z}^d  \}
$$
where $k(j)$ is chosen to be $k_x$ from Lemma \ref{lem2.1}, with $x = (R^{T})^{-n_{k+1}}j  \in X$. As $0\in J_{n_{k}}$ and $k_x=0$ if $x=0$, the sets $\Lambda_k$ are of the form \eqref{eqLambda_k} and  form an increasing sequence. For these sets $\Lambda_k$, we claim that the associated $\Lambda$ in \eqref{eqlambda} satisfies $\delta(\Lambda)>0$.

\medskip

To justify the claim, we note that if $\lambda\in \Lambda_{k}$, then
$$
\lambda = \lambda'+(R^T)^{m_{k-1}}j  +(R^T)^{m_{k}}k(j),
$$
where $\lambda'\in \Lambda_{k-1}$, $j\in J_{n_{k}}$. This means that
$$
(R^T)^{-m_k}\lambda = (R^T)^{-m_k}\lambda'+(R^T)^{-n_k}j+ k(j).
$$
By \eqref{eq4.5}, $\|(R^T)^{-m_k}\lambda'\|<\epsilon_0$. From Lemma \ref{lem2.1}, since $(R^T)^{-n_k}j\in X$, we must have $|\widehat{\mu}((R^T)^{-m_k}\lambda)|^2\geq\delta_0>0$. As $\delta_0$ is independent of $k$, the claim is justified and hence this completes the proof of the proposition.
\qquad$\Box$.

Combining Theorem \ref{prop_main1} and Proposition \ref{prop_main2}, we settle the case ${\mathcal Z}=\emptyset$.

\begin{proof}[Proof of Theorem \ref{th1.1}]
 To prove Theorem \ref{th1.1}, suppose first that ${\mathcal Z}=\emptyset$. We take the sets $(J_{n_i})$ in Proposition \ref{prop_main2} so that $(R^{n_i},B_{n_i},J_{n_i})$ are Hadamard triples and $\delta(\Lambda)>0$. Then, $\Lambda$ is a spectrum for $\mu(R,B)$. It is clearly a subset of ${\mathbb Z}^d$ since all sets $J_{n_i}$ are so. Hence, $\mu(R,B)$ is a spectral measure with a spectrum in ${\mathbb Z}^d$.

  \medskip

  Conversely, if ${\mathcal Z}\neq\emptyset$, then there exists $\xi_0\in{\mathcal Z}$ such that ${\widehat{\mu}}(\xi_0+k)=0$ for all $k\in{\mathbb Z}^d$. Denote $e_{\xi}(x) = e^{2\pi i \langle\xi,x\rangle}$. We have
$$\langle e_{\xi_0},e_k\rangle = 0, \ \forall \ k\in{\mathbb Z}^d
$$
This means that the exponentials $E(\Lambda)$ cannot be complete in $L^2(\mu)$ whenever $\Lambda$ is a subset of ${\mathbb Z}^d$. Hence, there is no spectrum in ${\mathbb Z}^d$ for $\mu$.
\end{proof}

\medskip

\section{A further reduction}

Let $R$ be an expansive matrix with integer entries on ${\mathbb R}^d$. An $R$-invariant lattice is a lattice $\Lambda$ such that $R(\Lambda)\subset \Lambda$.  We define ${\mathbb Z}[R,B] $ to be the smallest $R$-invariant lattice containing all $B_n =  B+ RB+...+R^{n-1}B$. In this section, our goal is to show that proving our main Theorem (Theorem \ref{thmain}), it suffices to show it for the case ${\mathbb Z}[R,B] = {\mathbb Z}^d$ (See Proposition \ref{przrb}).
\medskip

\begin{definition}\label{defconj}
Let $R_1,R_2$ be $d\times d$ integer matrices, and the finite sets $B_1,B_2,L_1,L_2$ be in $\bz^d$.
We say that two triples $(R_1, B_1, L_1)$ and $(R_2, B_2, L_2)$ are {\it conjugate} (through the matrix $M$) if there exists an integer matrix $M$ such that $R_2 = MR_1M^{-1}$, $B_2 = MB_1$ and
$L_2 = (M^T)^{-1}L_1$.
\end{definition}

\medskip

\begin{proposition}\label{prconj}
Suppose that $(R_1, B_1, L_1)$ and $(R_2, B_2, L_2)$ are two conjugate triples, through the matrix $M$. Then
\medskip

(i) If $(R_1,B_1,L_1)$ is a Hadamard triple then so is $(R_2,B_2,L_2)$.

\medskip

(ii)The measure $\mu(R_1,B_1)$ is spectral with spectrum $\Lambda$ if and only if $\mu(R_2,B_2)$ is spectral with spectrum $(M^T)^{-1}\Lambda$.
\end{proposition}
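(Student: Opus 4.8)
The plan is to reduce both parts to a single change of variables implemented by the invertible matrix $M$. For part (i), I would substitute the conjugation relations directly into the entries of the Hadamard matrix. Writing a generic entry for $(R_2,B_2,L_2)$ with $b_2=Mb_1$, $\ell_2=(M^T)^{-1}\ell_1$ and $R_2^{-1}=MR_1^{-1}M^{-1}$, the inner product in the exponent collapses:
\[
\langle R_2^{-1}b_2,\ell_2\rangle=\langle MR_1^{-1}M^{-1}(Mb_1),(M^T)^{-1}\ell_1\rangle=\langle R_1^{-1}b_1,M^T(M^T)^{-1}\ell_1\rangle=\langle R_1^{-1}b_1,\ell_1\rangle.
\]
Since $M$ and $(M^T)^{-1}$ are bijective linear maps, $b_1\mapsto Mb_1$ and $\ell_1\mapsto(M^T)^{-1}\ell_1$ are cardinality-preserving bijections $B_1\to B_2$ and $L_1\to L_2$. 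Hence the Hadamard matrix of $(R_2,B_2,L_2)$ equals that of $(R_1,B_1,L_1)$ up to a relabeling of rows and columns, and unitarity is inherited verbatim.

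For part (ii), the key step is to show that $\mu_2:=\mu(R_2,B_2)$ is the pushforward of $\mu_1:=\mu(R_1,B_1)$ under the map $x\mapsto Mx$. I would obtain this from the intertwining of the two iterated function systems: writing $\tau^{(i)}_b(x)=R_i^{-1}(x+b)$, a direct computation gives
\[
\tau^{(2)}_{Mb}(Mx)=R_2^{-1}(Mx+Mb)=MR_1^{-1}M^{-1}M(x+b)=M\,\tau^{(1)}_b(x),
\]
so $M$ conjugates $\{\tau^{(1)}_b\}_{b\in B_1}$ to $\{\tau^{(2)}_b\}_{b\in B_2}$. By the uniqueness of the self-affine measure satisfying the invariance identity \eqref{self-affine}, the pushforward $M_*\mu_1$ satisfies that identity for $(R_2,B_2)$ and therefore equals $\mu_2$. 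Equivalently, one verifies this at the level of Fourier transforms, using $M_{B_2}(\xi)=M_{B_1}(M^T\xi)$ together with $(R_2^T)^{-j}=(M^T)^{-1}(R_1^T)^{-j}M^T$, which yields $\widehat{\mu_2}(\xi)=\widehat{\mu_1}(M^T\xi)$, again the signature of the pushforward.

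With the pushforward identity in hand, I would introduce the operator $U\colon L^2(\mu_2)\to L^2(\mu_1)$ defined by $(Uf)(y)=f(My)$. Because $\mu_2=M_*\mu_1$, the change-of-variables formula gives $\int|f(My)|^2\,d\mu_1(y)=\int|f(x)|^2\,d\mu_2(x)$, so $U$ is a surjective isometry, hence unitary. The final computation is that $U$ carries the candidate exponentials for $\mu_2$ to the prescribed ones for $\mu_1$: for $\lambda\in\Lambda$,
\[
(Ue_{(M^T)^{-1}\lambda})(y)=e^{2\pi i\langle(M^T)^{-1}\lambda,My\rangle}=e^{2\pi i\langle\lambda,y\rangle}=e_\lambda(y),
\]
so $U$ maps $E\big((M^T)^{-1}\Lambda\big)\subset L^2(\mu_2)$ bijectively onto $E(\Lambda)\subset L^2(\mu_1)$. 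Since a unitary operator sends orthonormal bases to orthonormal bases, $E(\Lambda)$ is an orthonormal basis of $L^2(\mu_1)$ if and only if $E\big((M^T)^{-1}\Lambda\big)$ is one of $L^2(\mu_2)$, which is precisely the claimed equivalence; the converse direction requires no separate argument since conjugating through $M^{-1}$ simply reverses the roles of the two triples. I do not expect a genuine obstacle here: the entire proposition is a bookkeeping exercise in change of variables, and the only point demanding care is the correct placement of transposes and inverses, so that the factor $M^T$ cancels against $(M^T)^{-1}$ both in the Hadamard entry computation and in the exponential intertwining.
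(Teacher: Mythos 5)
Your proposal is correct: the entry computation for (i) and the pushforward identity $\mu(R_2,B_2)=M_*\mu(R_1,B_1)$ together with the induced unitary $U$ for (ii) are exactly the ``simple computation'' the paper defers to (it cites \cite[Proposition 3.4]{DJ07d} without writing it out). Your placement of the transposes and inverses is right throughout, so nothing further is needed.
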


\begin{proof}
The proof follows from a simple computation, see e.g. \cite[Proposition 3.4]{DJ07d}.
\end{proof}

\medskip

\begin{proposition}\label{przrb}
\begin{itemize}
\item If the lattice $\bz[R,B]$ is not full-rank, then the dimension can be reduced. More precisely, there exists $1\leq r<d$ and a unimodular matrix $M\in GL(n,\bz)$ such that $M(B)\subset \bz^r\times\{0\}$ and
\begin{equation}
MRM^{-1}=\begin{bmatrix}
A_1& C\\
0& A_2
\end{bmatrix}
\label{eqzrb1}
\end{equation}
where $A_1\in M_r(\bz)$, $C\in M_{r,d-r}(\bz), A_2\in M_{d-r}(\bz)$. In addition,  $M(T(R,B))\subset \br^r\times \{0\}$ and the Hadamard triple $(R,B,L)$ is conjugate to the Hadamard triple $(MRM^{-1},MB,(M^T)^{-1}L)$, which is a triple of lower dimension.

\medskip

\item If the lattice $\bz[R,B]$ is full rank but not $\bz^d$, then  the system $(R,B,L)$ is conjugate to one $(\tilde R,\tilde B,\tilde L)$ for which $\bz[\tilde R,\tilde B]=\bz^d$. Moreover, the conjugation matrix $M$ can be chosen such that ${\mathbb Z}[R,B] = M({\mathbb Z}^d)$. 
    \end{itemize}
\end{proposition}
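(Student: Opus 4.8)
The plan is to make the abstract lattice $\bz[R,B]$ concrete and then reduce everything to standard structure theory for sublattices of $\bz^d$. Since $0\in B$ and both $R$ and $B$ are integral, the subgroup $G$ of $\bz^d$ generated by $\{R^jb:\ j\ge0,\ b\in B\}$ is $R$-invariant and contains every $B_n$; conversely, any $R$-invariant subgroup containing all $B_n$ must contain these generators, so $G=\bz[R,B]$. As a subgroup of $\bz^d$, $G$ is free abelian of some rank $r\le d$, i.e. a lattice, and ``full rank'' means $r=d$. The crucial structural fact, used in both bullets, is that the $R$-invariance $R\,G\subseteq G$ together with $B\subset G$ is exactly what keeps the conjugated matrices integral.

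For the first bullet ($r<d$) I would pass to the saturation $\overline G=\{v\in\bz^d:\ nv\in G\ \text{for some integer}\ n\ge1\}$, a primitive (pure) sublattice of rank $r$ that is still $R$-invariant, since $nv\in G$ forces $n(Rv)=R(nv)\in RG\subseteq G$, whence $Rv\in\overline G$. A rank-$r$ primitive sublattice extends to a $\bz$-basis of $\bz^d$ (its quotient is torsion-free, hence free, so the defining sequence splits), so there is a unimodular $M\in GL(d,\bz)$ with $M(\overline G)=\bz^r\times\{0\}$. The real span $V=\overline G\otimes\br$ is $R$-invariant and $M(V)=\br^r\times\{0\}$, so the integral matrix $MRM^{-1}$ preserves $\br^r\times\{0\}$ and therefore takes the block form \eqref{eqzrb1}; moreover $M(B)\subseteq M(\overline G)=\bz^r\times\{0\}$. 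Since $V$ is also $R^{-1}$-invariant, every point $\sum_{k\ge1}R^{-k}b_k$ of $T(R,B)$ lies in $V$, giving $M(T(R,B))\subseteq\br^r\times\{0\}$. Proposition \ref{prconj} then shows $(MRM^{-1},MB,(M^T)^{-1}L)$ is again a Hadamard triple with the same spectral status; and because $MB\subset\bz^r\times\{0\}$ while the integral, expansive top-left block $A_1$ (its eigenvalues lie among those of $R$) represents the restriction of $MRM^{-1}$ to $\br^r\times\{0\}$, projecting onto the first $r$ coordinates yields a genuine $r$-dimensional Hadamard triple with which $M$ identifies $\mu(R,B)$. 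This is the promised reduction of dimension.

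For the second bullet ($r=d$ but $G\ne\bz^d$) I would take an integer basis matrix $M\in M_d(\bz)$ of $G$, so that $\bz[R,B]=G=M(\bz^d)$ with $|\det M|=[\bz^d:G]>1$, and conjugate through $M^{-1}$ in the sense of Definition \ref{defconj}, i.e. set $\tilde R=M^{-1}RM$, $\tilde B=M^{-1}B$, $\tilde L=M^TL$. Integrality is precisely where $R$-invariance enters: $\tilde B=M^{-1}B$ is integral because $B\subset G=M(\bz^d)$, $\tilde R=M^{-1}RM$ is integral because $R\,M(\bz^d)=RG\subseteq G=M(\bz^d)$, and $\tilde L=M^TL$ is integral trivially. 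A direct computation of generators gives $\bz[\tilde R,\tilde B]=M^{-1}\bz[R,B]=M^{-1}G=\bz^d$, and Proposition \ref{prconj} preserves the Hadamard and spectral properties, completing the reduction to $\bz[\tilde R,\tilde B]=\bz^d$.

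The only delicate points are the identification of $\bz[R,B]$ with the concrete generated subgroup and the integrality of the conjugated data, which is not automatic for an arbitrary rational change of basis and depends on $R\,\bz[R,B]\subseteq\bz[R,B]$ and $B\subset\bz[R,B]$. I expect the main obstacle to be the saturation step in the first bullet: one must replace $G$ by $\overline G$ to obtain a unimodular (rather than merely rational) $M$, which is what makes the block matrix in \eqref{eqzrb1} integral and simultaneously places $B$ and $T(R,B)$ in the coordinate subspace. Once $M$ is constructed, all remaining claims follow from Proposition \ref{prconj} and elementary linear algebra.
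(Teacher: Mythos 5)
Your proposal is correct and follows essentially the same route as the paper: in the non-full-rank case both arguments reduce the $R$-invariant (rational) span of $\bz[R,B]$ to the coordinate subspace $\br^r\times\{0\}$ via a unimodular $M$ and then invoke Proposition \ref{prconj}, and in the full-rank case both conjugate by a basis matrix $M$ of the lattice, with $R$-invariance and $B\subset\bz[R,B]$ supplying the integrality of $\tilde R$ and $\tilde B$. Your only deviation is cosmetic: you construct $M$ by saturating the lattice and extending a primitive sublattice to a $\bz$-basis, where the paper simply cites \cite{Sch86} for the existence of such an $M$.
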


\begin{proof}
If the lattice $\bz[R,B]$ is not full-rank, then it spans a proper rational subspace (i.e., having a basis with rational components) $V$ of $\br^d$ of dimension $r$. Since $\bz[R,B]$ is invariant under $R$, it follows that $RV\subset V$ and since $R$ is invertible, the dimensions must match so $RV=V$.  Then there is a unimodular matrix $M\in GL(n,\bz)$ that maps $V$ into the first $r$ coordinate axes, that is $MV=\br^r\times\{0\}$, see e.g. \cite[Theorem 4.1 and Corollary 4.3b] {Sch86}. Then also $MB\subset \br^r\times\{0\}$. Since
$$T(R,B)=\left\{\sum_{n=1}^\infty R^{-n}b_n : b_n\in B\mbox{ for all }b\in B\right\},$$
we get that $T(R,B)$ is in $V$ so $MT(R,B)\subset \br^r\times\{0\}$.

\medskip

The subspace $\br^r\times \{0\}$ is invariant for $MRM^{-1}$ and this implies that $M$ has the form in \eqref{eqzrb1}. Since $M$ is unimodular $M^{-1}$ is also an integer matrix so $MRM^{-1}$ is an integer matrix. The other statements follow by a simple computation.

\medskip

If $\bz[R,B]$ is full rank but not $\bz^d$ then $\bz[R,B]=M\bz^d$ for some invertible integer matrix $M$. If $\{e_j\}$ are the canonical vectors in $\br^d$, then $RMe_j\in\bz[R,B]$ so $RMe_j=M\tilde r_j$ for some $r_j\in\bz^d$. So $RM=M\tilde R$ for an integer matrix $\tilde R$, i.e., $\tilde R=M^{-1}RM$. Since $B\subset\bz[R,B]=M\bz^d$, there exists $\tilde B$ in $\bz^d$ such that $B=M\tilde B$ so $\tilde B=M^{-1}B$.
We have $M^{-1}R^kB=\tilde R^k\tilde B$ so $\bz[\tilde R,\tilde B]=M^{-1}\bz[R,B]=\bz^d$. The other statements follow from an easy computation.
\end{proof}

\medskip

We now  provide a proof of Theorem \ref{thmain} in the case of dimension one, giving us another proof among others from the literature \cite{LaWa02,DJ06}. By rescaling, there is no loss of generality if we assume that $\gcd(B)=1$. Note also that $\gcd(B)=1$ is equivalent to $\bz[R,B]=\bz$.

\begin{theorem}\label{example5.0}
Suppose that $R$ is an integer and $(R,B,L)$ forms a Hadamard triple in  ${\mathbb R}^1$ with $\gcd(B)=1$. Then the associated self-similar measure $\mu(R,B)$ satisfies ${\mathcal Z} = \emptyset$, with $\mathcal Z$ defined in \eqref{eqz}, and is spectral with a spectrum in ${\mathbb Z}$.
\end{theorem}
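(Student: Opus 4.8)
The plan is to reduce the whole statement to the single assertion $\mathcal Z=\ty$: once that is proved, Theorem \ref{th1.1} immediately gives that $\mu(R,B)$ is spectral with a spectrum contained in $\mathbb{Z}$, which is everything claimed. So I concentrate on showing $\mathcal Z=\ty$ in dimension one under $\gcd(B)=1$. For the analytic set-up I would note that in dimension one $R^T=R$, so iterating \eqref{eq4.1} yields the infinite product $\widehat{\mu}(\xi)=\prod_{j\ge 1}M_{B}(\xi/R^{j})$, where $M_B$ is a $1$-periodic trigonometric polynomial with only finitely many zeros modulo $1$. Thus $\xi$ is a zero of $\widehat\mu$ exactly when some factor $M_{B}(\xi/R^{j})$ vanishes, and $\xi\in\mathcal Z$ precisely when the entire coset $\xi+\mathbb{Z}$ is contained in this zero set. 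Since $\mathcal Z$ is closed and $\mathbb{Z}$-periodic, I pass to $\overline{\mathcal Z}:=\mathcal Z\ (\operatorname{mod}1)\subset\mathbb{T}$, a compact set that does not contain $0$ because $\widehat\mu(0)=1$.

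The engine is the orthogonality identity coming from the Hadamard structure. Using the one-step factorization $\widehat\mu(\eta)=M_B(\eta/R)\widehat\mu(\eta/R)$ together with the $1$-periodicity of $M_B$, I would establish the following dichotomy: for $\xi\in\mathcal Z$ and each residue $\rho\in\{0,\dots,|R|-1\}$, either $M_{B}((\xi+\rho)/R)=0$ (the residue is \emph{masked}) or $(\xi+\rho)/R\in\mathcal Z$. Because $(R,B,L)$ is a Hadamard triple, Lemma \ref{lem-1}(iv) with $n=1$ gives $\sum_{\ell\in L}|M_{B}(R^{-1}(\xi+\ell))|^{2}=1$, so at least one residue arising from $L$ is \emph{not} masked; that child then lies in $\mathcal Z$ and, by taking the largest term, can be chosen with $|M_B|\ge 1/\sqrt N$. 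Iterating, every point of $\overline{\mathcal Z}$ admits an infinite non-masked backward orbit under $x\mapsto Rx\ (\operatorname{mod}1)$ that stays inside $\overline{\mathcal Z}$; equivalently $\overline{\mathcal Z}$ is preimage-rich, $\overline{\mathcal Z}\subseteq R\,\overline{\mathcal Z}\ (\operatorname{mod}1)$.

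The clean finish assumes a genuine periodic orbit $\eta_{0}\to\cdots\to\eta_{p-1}\to\eta_{0}$ inside $\overline{\mathcal Z}$ along non-masked edges. Such a cycle consists of rationals whose denominators divide $R^{p}-1$ and are therefore coprime to $R$, so multiplication by $R$ permutes them. Consequently the arguments $\eta_{0}/R^{j}\ (\operatorname{mod}1)$ appearing in $\widehat\mu(\eta_{0})=\prod_{j\ge1}M_{B}(\eta_{0}/R^{j})$ range exactly over the finite set $\{\eta_{0},\dots,\eta_{p-1}\}$. But $\eta_{0}\in\mathcal Z$ forces $\widehat\mu(\eta_{0})=0$, so some factor $M_{B}(\eta_{i})=0$, contradicting that every point of the cycle is non-masked. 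Hence no such cycle exists, and I would conclude $\overline{\mathcal Z}=\ty$, i.e. $\mathcal Z=\ty$.

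I expect the real difficulty to be the extraction of that cycle, namely showing that a nonempty $\overline{\mathcal Z}$ must contain a $(\operatorname{mod}1)$ periodic point of $x\mapsto Rx$ reachable through non-masked edges. This is exactly where $\gcd(B)=1$ should enter, since it is equivalent to $M_{B}$ having no period $1/t$ with $t\ge 2$; this rules out the degenerate sublattice configurations and should force the non-masked backward dynamics to close up, the only cycle consistent with both non-masking and $\widehat\mu(0)=1$ being the trivial one $\{0\}$, which is excluded. Concretely I would either prove that $\overline{\mathcal Z}$ is finite (whereupon the directed graph of non-masked edges, having out-degree at least one at every vertex, automatically contains a cycle) or else use the explicit countable description $\overline{\mathcal Z}\subseteq\{R^{j}a\ (\operatorname{mod}1): j\ge1,\ M_B(a)=0\}$ forced by the first vanishing factor, and then eliminate every nontrivial possibility by invoking $\gcd(B)=1$.
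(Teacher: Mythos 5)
Your reduction to proving $\mathcal Z=\ty$, the transition dichotomy (for $\xi\in\mathcal Z$ and $\ell\in L$, either $M_B(\tau_\ell(\xi))=0$ or $\tau_\ell(\xi)\in\mathcal Z$), and the use of the identity $\sum_{\ell\in L}|M_B(\tau_\ell(\xi))|^2=1$ to produce non-masked children all match the paper's proof of Theorem \ref{example5.0}. But your concluding contradiction is based on a false identity. If $\eta_0\to\eta_1\to\cdots\to\eta_{p-1}\to\eta_0$ is a non-masked cycle for the inverse branches $\tau_\ell(x)=(x+\ell)/R$, the factors of $\widehat\mu(\eta_0)=\prod_{j\ge1}M_B(\eta_0/R^j)$ are \emph{not} the values of $M_B$ at the cycle points: the $j$-th factor is evaluated at the real number $\eta_0/R^j$, whose denominator is $(R^p-1)R^j$, whereas the cycle point reached after $j$ steps is $(\eta_0+\ell_1+R\ell_2+\cdots)/R^j$, a \emph{different} preimage of $\eta_0$ under $x\mapsto R^jx\pmod 1$ whenever some $\ell_i\neq 0$ (e.g.\ $R=4$, $\eta_0=1/3$, $\tau_1(1/3)=1/3$, yet $\eta_0/4^j=1/(3\cdot 4^j)\to 0$). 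The correct relation obtained by going around the cycle is $\widehat\mu(\eta_0+k)=\bigl(\prod_i M_B(\eta_i)\bigr)\,\widehat\mu(\eta_0+e)$ for suitable integers $k,e$, which is perfectly consistent with $\widehat\mu$ vanishing on the whole coset $\eta_0+\bz$; no contradiction follows. Tellingly, your contradiction never uses $\gcd(B)=1$, and the hypothesis is essential (Example \ref{example5.3} shows that the analogous statement fails in higher dimensions even with $\bz[R,B]=\bz^d$).

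You also leave open the extraction of the cycle, and this is precisely where the paper's argument does its real work, by a different mechanism. The paper observes that all non-masked descendants $Y_n$ of a point $\xi_0\in\mathcal Z$ lie in a fixed bounded interval (of radius $|\xi_0|+\operatorname{diam}T(R,L)$) and inside the zero set of the entire function $\widehat\mu$, hence in a \emph{finite} set; since distinct admissible words give distinct points, $\#Y_n$ is nondecreasing and must stabilize. After stabilization each point has exactly one non-masked child, so the identity $\sum_{\ell\in L}|M_B(\tau_\ell(\xi_n))|^2=1$ forces $|M_B(\tau_{\ell_0}(\xi_n))|=1$; equality in the triangle inequality (with $0\in B$) gives $b\,\tau_{\ell_0}(\xi_n)\in\bz$ for all $b\in B$, and only here does $\gcd(B)=1$ enter, forcing $\tau_{\ell_0}(\xi_n)\in\bz$ and contradicting $\mathcal Z\cap\bz=\ty$. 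To repair your proof you would need both this finiteness/stabilization step (or a substitute) and a correct use of the uniqueness of the surviving child; the cycle identity as you state it cannot be salvaged.
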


\begin{proof}
We can assume $0\in B$. Suppose that ${\mathcal Z}\neq \emptyset$. As $\widehat{\mu}(0)=1$, ${\mathcal Z}\cap {\mathbb Z}=\emptyset$. Then we pick $\xi_0\in{\mathcal Z}$ and $\xi_0\not\in{\mathbb Z}$. We claim the following fact is true: For any $\ell\in L$,
\begin{equation}\label{eq5.1}
 M_B(\tau_{\ell}(\xi_0))\neq 0,  \ \Rightarrow \ \tau_{\ell}(\xi_0)\in {\mathcal Z},
  \end{equation}
	where $\tau_{\ell}(x)=R^{-1}(x+\ell)$, $x\in\br$.

Indeed, by considering $k$ of the form $ \ell+R e$ and $e\in{\mathbb Z}$, we have
$$
\begin{aligned}
0=\widehat{\mu}(\xi_0+k) =& M_B(R^{-1}(\xi_0+\ell+R e))\widehat{\mu}(R^{-1}(\xi_0+\ell+R e)) \\
=&  M_B(\tau_{\ell}(\xi_0))\widehat{\mu}(\tau_{\ell}(\xi_0)+ e)\\
\end{aligned}
$$
As $M_B(\tau_{\ell}(\xi_0))\neq 0 $, we must have $\widehat{\mu}(\tau_{\ell}(\xi_0)+ e)=0$ for all $e\in\bz$ and hence $\tau_{\ell}(\xi_0)\in{\mathcal Z}$. With this fact in mind, we define $Y_0=\{\xi_0\}$ and define inductively the set $Y_n$ by
$$
Y_n = \{\tau_{\ell}(\xi): \ell\in L, \ \xi\in Y_{n-1}, \ M_{B}(\tau_{\ell}(\xi))\neq 0\}.
$$
By \eqref{eq5.1}, $Y_n\subset{\mathcal Z}$ and $Y_n\cap {\mathbb Z}=\emptyset$. From the fact that $(R,B,L)$ is a Hadamard triple and Lemma \ref{lem-1}, we have
\begin{equation}\label{eqPa}
\sum_{\ell\in L}|M_B(\tau_{\ell}(\xi))|^2\equiv 1.
\end{equation}
This means that all the sets $Y_n$ are non-empty.  Also if $\xi_n\in Y_n$, then $\xi_n= \tau_{\ell_n}\circ...\circ\tau_{\ell_1}(\xi_0) = R^{-n}(\xi_0+\ell_1+...+R^{n-1}\ell_n)$. This means $|\xi_n|\leq |\xi_0|+D$, where $D = \mbox{diam}(T(R,L))$. Hence, $\sup_n \{|\xi_n|:\xi_n\in Y_n\}$ is bounded. We also notice that for different $\ell_0\ell_1\dots \ell_n\neq \ell_0'\ell_1'\dots \ell_n'$ the corresponding $\xi_n$ and $\xi_n'$ are different, since $L$ is a simple digit set for $R$. Therefore the cardinality of $Y_n$ is increasing.

\medskip

 On ${\mathbb R}^{1}$, $\widehat{\mu}$ has only finitely many zeros in a bounded set. Therefore, there exists $n_0$ such that for all $n\ge n_0$, the cardinality of $Y_n$ becomes a constant. This means that when $n\geq n_0$, each $\xi_n$ has only one offspring $\xi_{n+1} =\tau_{\ell_0}(\xi_n)$, i.e , there is only one $l_0\in L$ such that $M_B(\tau_{l_0}(\xi_n))\neq 0$ and so $M_B(\tau_{\ell}(\xi_n))=0$ for all $\ell_n\neq0$. From  \eqref{eqPa}, $|M_B(\tau_{\ell_0}(\xi_n)) |= \left|\frac{1}{N}\sum_{b\in B}e^{2\pi i b\tau_\ell(\xi_n)}\right|=1$. This implies we have equality in a triangle inequality, and since $0\in B$, we get that $b\tau_{\ell_0}(\xi_n)\in {\mathbb Z}$ for all $b\in B$. As gcd$(B)=1$, we can take $m_b\in{\mathbb Z}$ such $\sum_{b\in B} bm_b=1$ and this forces $\tau_{\ell_0}(\xi_n) = \sum_{b\in B} m_b (b\tau_{\ell_0}(\xi_n))\in{\mathbb Z}$. This is a contradiction, since $\tau_{\ell_0}(\xi_n)\in\mathcal Z$ and $\mathcal Z\cap \bz=\ty$.
\end{proof}
\medskip

Thus, in dimension one $\bz[R,B]=\bz$ implies that $\mathcal Z=\ty$. In the end of this section, we show that the implication is no longer true in higher dimensions.  We illustrate this possibility, when ${\mathbb Z}[R,B] = {\mathbb Z}^2$ and ${\mathcal Z}\ne \emptyset$, with a simple example.

\begin{example}\label{example5.3}
Let $R = \left[
           \begin{array}{cc}
             4 & 0 \\
             1 & 2 \\
           \end{array}
         \right]
$, $$B= \left\{\left[
                \begin{array}{c}
                  0 \\
                  0 \\
                \end{array}
              \right], \left[
                \begin{array}{c}
                  0 \\
                  3\\
                \end{array}
              \right], \left[
                \begin{array}{c}
                  1 \\
                  0 \\
                \end{array}
              \right], \left[
                \begin{array}{c}
                  1 \\
                  3 \\
                \end{array}
              \right]
\right\} \ \mbox{and} \
 L= \left\{\left[
                \begin{array}{c}
                  0 \\
                  0 \\
                \end{array}
              \right], \left[
                \begin{array}{c}
                  2 \\
                  0\\
                \end{array}
              \right],  \left[
                \begin{array}{c}
                  0 \\
                  1 \\
                \end{array}
              \right], \left[
                \begin{array}{c}
                  2 \\
                  1 \\
                \end{array}
              \right]
\right\}.$$ Then $(R,B,L)$ forms a  Hadamard triple and ${\mathbb Z}[R,B] = {\mathbb Z}^2$. However, the set  defined in \eqref{eqz} ${\mathcal Z}\neq \emptyset$ for the measure $\mu = \mu(R,B)$.
\end{example}

\begin{proof}
It is a direct check to see $(R,B,L)$ forms a  Hadamard triple. As $R\left[
                \begin{array}{c}
                  1 \\
                  0 \\
                \end{array}
              \right] = \left[
                \begin{array}{c}
                  4 \\
                  1 \\
                \end{array}
              \right]$, and the vectors $\left[
                \begin{array}{c}
                  4 \\
                  1 \\
                \end{array}
              \right]$,$\left[
                \begin{array}{c}
                  1 \\
                  0 \\
                \end{array}
              \right]$ generate ${\mathbb Z}^2$, we have that  ${\mathbb Z}[R,B] = {\mathbb Z}^2$.  As $$M_B(\xi_1,\xi_2) = \frac{1}{4}(1+e^{2\pi i \xi_1})(1+e^{2\pi i 3\xi_2}),$$ it follows that the zero set of $M_B$, denoted by $Z(M_B)$, is equal to
$$
Z(M_B) = \left\{\left[
                \begin{array}{c}
                 \frac12+n \\
                  y \\
                \end{array}
              \right]: n\in{\mathbb Z}, y\in {\mathbb R}\right\}\cup \left\{\left[
                \begin{array}{c}
                  x \\
                  \frac16+\frac13 n  \\
                \end{array}
              \right]:x\in {\mathbb R}, n\in{\mathbb Z}\right\}.
$$
Let $(R^T)^j = \left[
           \begin{array}{cc}
             4^j & a_j \\
              0& 2^j \\
           \end{array}
         \right]$, for some $a_j\in{\mathbb Z}$. As $\widehat{\mu}(\xi) = \prod_{j=1}^{\infty}M_B((R^T)^{-j}(\xi))$, the zero set of $\widehat{\mu}$, denoted by $Z(\widehat{\mu})$, is equal to
$$\begin{aligned}
Z(\widehat{\mu}) =& \bigcup_{j=1}^{\infty}(R^{T})^{j} Z(M_B) \nonumber\\
=&\bigcup_{j=1}^{\infty}\left\{\left[
                \begin{array}{c}
                 4^j(\frac12+n)+a_jy \\
                  2^jy \\
                \end{array}
              \right]: n\in{\mathbb Z}, y\in {\mathbb R}\right\}\cup \left\{\left[
                \begin{array}{c}
                  4^jx+a_j(\frac16+\frac13 n) \\
                  2^{j}\left(\frac16+\frac13 n\right)  \\
                \end{array}
              \right]:x\in {\mathbb R}, n\in{\mathbb Z}\right\}.
\end{aligned}
$$
We claim that the points in  $\left[
                \begin{array}{c}
                 0 \\
                  \frac13 \\
                \end{array}
              \right]+{\mathbb Z}^2$ are in $Z(\widehat{\mu})$ which shows ${\mathcal Z}\neq \emptyset$. Indeed, for any $\left[
                \begin{array}{c}
                 m \\
                  \frac13+n \\
                \end{array}
              \right]$, $m,n\in{\mathbb Z}$, we can write it as $\left[
                \begin{array}{c}
                 m \\
                  \frac{1+3n}{3} \\
                \end{array}
              \right]$. We now rewrite the second term in the union in $Z(\widehat{\mu})$ as ${\mathbb R}\times \{\frac{2^{j-1}(1+2n)}{3}\}$. As any integer can be written as $2^j p$, for some $j\ge 0$ and odd number $p$, this means that $\left[
                \begin{array}{c}
                 m \\
                  \frac{1+3n}{3} \\
                \end{array}
              \right]\in Z(\widehat{\mu})$, justifying the claim. As ${\mathcal Z}\neq\emptyset,$ this shows that there is no spectrum in ${\mathbb Z}^2$ for this measure.

					The measure $\mu(R,B)$ is spectral. In fact,
              $$
T(R,B) = \bigcup_{x\in K_1}\{x\}\times ([0,3]+g(x)),
$$
where $K_1$ is the Cantor set of $1/4$ contraction ratio and digit $\{0,1\}$ and $g:[0,1]\rightarrow{\mathbb R}$ is a measurable function obtained from the off-diagonal entries of powers of the inverse of $R$.


We can see that $\mu(R,B)$ in the previous example is indeed spectral since it is the one-fourth Cantor set on the $x$-direction and it has a bunch of translated fibers equal to the interval $[0,3]$ on the $y$-direction. We can form $\Lambda\times (\frac13{\mathbb Z})$ as our spectrum, where $\Lambda$ is any spectra for the one-fourth Cantor set. In fact, we will show that this is always the situation in general case when ${\mathcal Z}\ne \emptyset$ and ${\mathbb Z}[R,B] = {\mathbb Z}^d$

\end{proof}

\medskip
\section{Periodic Invariant sets of a dynamical system}
In this section, we introduce a dynamical system associated to ${\mathcal Z}$ following some techniques from \cite{CCR}. By analyzing  this dynamical system, we can reduce the digit set $B$ to a quasi-product form, as we will see in the next section.

Using Propositions \ref{prconj} and \ref{przrb}, we can make the following assumption:

\medskip

{\bf Assumption.} We assume in the sequel that $\bz[R,B]=\bz^d$.

\medskip

We start with the following definition.

\begin{definition}\label{definv}
Let $u\geq0$ be an entire function on $\br^d$, i.e., real analytic on $\br^d$. Let $\cj L$ be a simple digit set for $R^T$. Suppose that
\begin{equation}
\sum_{\ell\in \cj L}u((R^T)^{-1}(x+\ell))>0,\quad(x\in\br^d)
\label{eqdefinv1}
\end{equation}

Define the maps
$$\tau_{\ell}(x)=(R^T)^{-1}(x+\ell),\quad (\ell\in\cj L,x\in\br^d).$$

A closed set $K$ in $\br^d$ is called {\it $u$-invariant (with respect to the system $(u, R^T,\cj L)$)} if, for all $x\in K$ and all $\ell\in\cj L$
$$
 u\left(\tau_{\ell}(x)\right)>0 \ \Longrightarrow \ \tau_{\ell}(x)\in K.
 $$
We say that the transition, using $\ell$, from $x$ to $\tau_{\ell}(x)$ is possible, if $\ell\in \cj L$ and $u\left((R^{T})^{-1}(x+\ell)\right)>0$. We say that $K$ is ${\mathbb Z}^d$-periodic if $K+n=K$ for all $n\in{\mathbb Z}^d$.
\end{definition}

\medskip

We say that a (vector) subspace $W$ of ${\mathbb R}^d$ is a {\it rational subspace} if $W$ has a basis of vectors with rational components. The following theorem follows from Proposition 2.5, Theorem 2.8 and Theorem 3.3 in \cite{CCR}.
\begin{theorem}\label{thccr}\cite{CCR}
Let $\cj L$ be a complete set of representatives $(\mod R^T(\bz^d))$.
Let $u\geq0$ be an entire function on $\br^d$ and let $K$ be  a closed $u$-invariant $\bz^d$-periodic set different from $\br^d$. Suppose in addition that $g$ is an entire function which is zero on $K$. Then

\medskip

(i) there exists a point $x_0\in\br^d$, such that $(R^T)^mx_0\equiv x_0(\mod \ \bz^d)$ for some integer $m\geq1$, and

 \medskip

 (ii) a proper rational subspace $W$ (may equal $\{0\}$)  such that $R^{T}(W) = W$ and the union
$$
\mathcal S=\bigcup_{k=0}^{m-1}((R^T)^kx_0+W+\bz^d)
$$
is invariant and $g$ is zero on $\mathcal S$.

Moreover, all possible transitions from a point in $(R^T)^kx_0+W+\bz^d$, $1\leq k\leq m$, lead to a point in $(R^T)^{k-1}x_0+W+\bz^d$.
\end{theorem}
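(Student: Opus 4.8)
The plan is to treat $(u,R^T,\cj L)$ as an expanding dynamical system on the torus and to locate inside the invariant set $K$ a rigid, affinely flat, periodic skeleton. Write $T=R^T$; since $R$ is expanding and integral, $T$ induces an expanding endomorphism of $\mathbb{T}^d=\br^d/\bz^d$ whose inverse branches are exactly the contractions $\tau_\ell$, $\ell\in\cj L$. The hypothesis $\sum_{\ell\in\cj L}u(\tau_\ell(x))>0$ guarantees that at every point at least one transition is possible, so starting from any $x\in K$ one generates an infinite admissible backward orbit $x=y_0,\ y_1=\tau_{\ell_1}(y_0),\ y_2=\tau_{\ell_2}(y_1),\dots$ lying entirely in $K$ (using $u$-invariance at each step). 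Reducing mod $\bz^d$, the classes satisfy $T\cj{y_n}=\cj{y_{n-1}}$, i.e. we have an infinite backward $T$-orbit trapped in the compact proper set $\pi(K)\subset\mathbb{T}^d$, where $\pi$ is the quotient. We may assume $g\not\equiv0$ (otherwise the only content is the structural part, and we replace $g$ by any nonzero entire function vanishing on $K$); then $Z(g):=\{g=0\}$ is a proper real-analytic subvariety containing $K$.

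First I would prove (i). Because $Z(g)$ is real analytic, in any fundamental domain of $\bz^d$ it meets only finitely many analytic strata; recording which stratum each $\cj{y_n}$ lies in and applying the pigeonhole principle yields infinitely many returns to a single stratum. Feeding this recurrence into the expanding relation $T\cj{y_n}=\cj{y_{n-1}}$ upgrades it to a genuine periodic class, i.e. a point $x_0$ with $T^m x_0\equiv x_0\pmod{\bz^d}$ for some $m\ge1$. Such an $x_0$ is automatically rational, since $(T^m-I)x_0\in\bz^d$ with $T^m-I$ a nonsingular integer matrix, so $x_0\in(T^m-I)^{-1}\bz^d\subset\Q^d$. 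In dimension one this is exactly the mechanism of Theorem~\ref{example5.0}, where finiteness of the zeros of $\widehat{\mu}$ in a bounded set forces the backward tree to stabilize to a single periodic branch.

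The heart of the argument, and the step I expect to be hardest, is constructing the subspace $W$ in (ii) by a real-analytic rigidity argument. Let $\sigma=\tau_{\ell_m}\circ\cdots\circ\tau_{\ell_1}$ be the admissible branch of $T^{-m}$ that returns the orbit to the class of $x_0$; it is a contraction with fixed point $x_0$ and linear part $T^{-m}$, and since $u>0$ is an open condition it maps the local piece of $Z(g)$ through $x_0$ into itself. A real-analytic manifold invariant under a linear contraction fixing a point on it must coincide, near that point, with its affine tangent space; hence this local piece equals $x_0+W$ with $W$ its tangent space and $T^m W=W$. Replacing $W$ by the (still proper) smallest $T$-invariant subspace containing the tangent directions along the whole orbit gives $TW=W$. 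Rationality of $W$ follows because the resulting set $\mathcal{S}$ is closed and $\bz^d$-periodic, so $W+\bz^d$ is closed, and a subspace with $W+\bz^d$ closed is necessarily spanned by integer vectors. Properness, $W\ne\br^d$, holds since $W=\br^d$ would force $g$ to vanish on a neighbourhood, hence $g\equiv0$.

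Finally I would assemble $\mathcal{S}=\bigcup_{k=0}^{m-1}(T^k x_0+W+\bz^d)$ and verify the remaining claims. From $TW=W$ and $T^m x_0\equiv x_0$ one gets $T(T^k x_0+W+\bz^d)=T^{k+1}x_0+W+\bz^d$, so an admissible transition, being a branch of $T^{-1}$, sends level $k$ into level $k-1$ cyclically; this is the ``moreover'' statement and shows $\mathcal{S}$ is invariant. That $g$ vanishes on all of $\mathcal{S}$ is bootstrapped from the local flatness on $x_0+W$ using the invariance relation (an admissible transition out of a zero of $g$ stays a zero) together with $\bz^d$-periodicity. The two places demanding the most care are the rigidity step, where analyticity is indispensable to rule out curved invariant pieces, and the propagation of admissibility along the period, where the non-periodicity of $u$ must be controlled using only the positivity of $\sum_{\ell}u(\tau_\ell(\cdot))$.
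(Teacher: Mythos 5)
The paper does not actually prove this statement: it is imported wholesale from \cite{CCR} (``follows from Proposition 2.5, Theorem 2.8 and Theorem 3.3 in \cite{CCR}''), so I am judging your sketch against what a correct argument must contain rather than against an in-paper proof. The peripheral parts of your plan are fine: backward orbits generated via $u$-invariance and the positivity of $\sum_{\ell}u(\tau_\ell(\cdot))$ do stay in $K$; a point with $(R^T)^m x_0\equiv x_0\ (\mathrm{mod}\ \bz^d)$ is automatically rational since $(R^T)^m-I$ is a nonsingular integer matrix; $W\neq\br^d$ because $g\not\equiv 0$; and the ``moreover'' clause is indeed routine once $\mathcal S$ exists. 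But both of the central steps have genuine gaps.

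First, the passage from recurrence to periodicity: in dimension $d>1$ the analytic strata of $Z(g)$ in a fundamental domain are positive-dimensional, so infinitely many returns of $\cj{y_n}$ to the same stratum do not produce two equal points mod $\bz^d$, and the expanding relation $T\cj{y_n}=\cj{y_{n-1}}$ does not ``upgrade'' stratum-recurrence to a periodic class. (Your one-dimensional model, Theorem \ref{example5.0}, works precisely because there the strata are isolated points.) Second, and more seriously, the rigidity lemma on which you build $W$ is false as stated: a real-analytic manifold invariant under a linear contraction fixing one of its points need not be affine near that point. Take $T^{-m}=\mathrm{diag}(1/2,1/4)$ (inverse of the expanding integer matrix $\mathrm{diag}(2,4)$) and $M=\{(t,t^2):t\in\br\}=Z(y-x^2)$; then $M$ is invariant, passes through the fixed point $0$, and is nowhere affine. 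What rules this out in the theorem is exactly the hypothesis your rigidity step never uses: $K$ is $\bz^d$-periodic and closed. In \cite{CCR} the subspace $W$ is produced by playing the lattice periodicity against the expanding map --- iterating $T$ spreads the $\bz^d$-translates of $K$ so that $Z(g)$ becomes invariant under translation by a set of vectors accumulating densely in a subspace, and only then does analyticity force invariance under all of $W$. Without that global mechanism, the local tangent-space argument cannot be repaired, so the construction of $W$ (and hence of $\mathcal S$) is not established by your proposal.
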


\medskip

Let $(R,B,L)$ be a Hadamard triple and we aim to apply Theorem \ref{thccr} to our set ${\mathcal Z}$ in \eqref{eqz}. We define the function

\begin{equation}
u_B(x)=\left|\frac{1}{N}\sum_{b\in B}e^{2\pi i\ip{b}{x}}\right|^2,\quad(x\in\br^d).
\label{eq1.12.1}
\end{equation}
Recall that, taking the Fourier transform of the invariance equation \eqref{self-affine}, we can compute explicitly the Fourier transform of $\mu = \mu(R,B)$ as
\begin{equation}
|\widehat\mu(\xi)|^2=u_B((R^T)^{-1}\xi)|\widehat\mu((R^T)^{-1}(\xi))|^2,\quad(x\in\br^d).
\label{eq1.12.2}
\end{equation}
Iterating \eqref{eq1.12.2}, we obtain
\begin{equation}
|\widehat\mu(x)|^2=\prod_{n=1}^\infty u_B((R^T)^{-n}x),\quad(x\in\br^d),
\label{eq1.12.3}
\end{equation}
and the convergence in the product is uniform on compact sets. See e.g. \cite{DJ07d}. It is well known that both $u_B$ and $|\widehat{\mu}|^2$ are entire functions on ${\mathbb R}^d$.

\begin{proposition}\label{pr1.13}
Suppose that $(R,B,L)$ forms a Hadamard triple
and $\bz[R,B]=\bz^d$. Let $\cj L$ be a complete set of representatives $(\mod R^T(\bz^d))$ containing $L$. Suppose that the set
$$\mathcal Z:=\left\{\xi\in \br^d : \widehat\mu (\xi+k)=0\mbox{ for all }k\in\bz^d\right\}$$
is non-empty. Then

 \medskip

 (i) $\mathcal Z$ is $u_B$-invariant.

 \medskip
 (ii) There exist a point $x_0\in\br^d$ such that $(R^T)^mx_0\equiv x_0(\mod R^T(\bz^d))$, for some integer $m\geq 1$.

  \medskip
  (iii) There exists a proper rational subspace $W\neq\{0\}$ of $\br^d$ such that $R^{T}(W)=W$ and the union
$$\mathcal S=\bigcup_{k=0}^{m-1}((R^T)^kx_0+W+\bz^d)$$
is $u_B$-invariant and is contained in $\mathcal Z$.

\medskip
Moreover, all possible transitions  from a point in $(R^T)^kx_0+W+\bz^d$, $1\leq k\leq m$, lead to a point in $(R^T)^{k-1}x_0+W+\bz^d$.

\end{proposition}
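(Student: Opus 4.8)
The plan is to apply Theorem \ref{thccr} to the dynamical system $(u_B,R^T,\cj L)$ with the closed set $K=\mathcal Z$ and the entire function $g=|\widehat\mu|^2$. First I would check the hypotheses. The set $\mathcal Z$ is closed because $\widehat\mu$ is continuous, it is $\bz^d$-periodic by its very definition, and it is a proper subset of $\br^d$ since $\widehat\mu(0)=1$ forces $0\notin\mathcal Z$. The function $u_B$ is a nonnegative entire function, and the positivity requirement \eqref{eqdefinv1} holds because $L\subseteq\cj L$ and the Hadamard property gives, via the last item of Lemma \ref{lem-1} with $n=1$, the identity $\sum_{\ell\in L}u_B((R^T)^{-1}(x+\ell))=1$, so the full sum over $\cj L$ is at least $1$. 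Finally $g=|\widehat\mu|^2$ is entire and vanishes on $\mathcal Z$ (take $k=0$ in the definition of $\mathcal Z$).

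The $u_B$-invariance claim (i) is the higher-dimensional analogue of \eqref{eq5.1}, and I would prove it directly: if $\xi\in\mathcal Z$ and $u_B(\tau_\ell(\xi))>0$ for some $\ell\in\cj L$, then, writing $k=\ell+R^Te$ with $e\in\bz^d$ (note $\cj L\subset\bz^d$, so $k\in\bz^d$) and using the one-step recursion $\widehat\mu(\eta)=M_B((R^T)^{-1}\eta)\widehat\mu((R^T)^{-1}\eta)$ at $\eta=\xi+\ell+R^Te$, one gets
\[
0=\widehat\mu(\xi+\ell+R^Te)=M_B(\tau_\ell(\xi))\,\widehat\mu(\tau_\ell(\xi)+e),
\]
where I used the $\bz^d$-periodicity of $M_B$. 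Since $u_B(\tau_\ell(\xi))=|M_B(\tau_\ell(\xi))|^2>0$, the factor $M_B(\tau_\ell(\xi))$ is nonzero, so $\widehat\mu(\tau_\ell(\xi)+e)=0$ for every $e\in\bz^d$, i.e.\ $\tau_\ell(\xi)\in\mathcal Z$. With (i) in hand, Theorem \ref{thccr} yields the periodic point $x_0$ of (ii) and a proper rational $R^T$-invariant subspace $W$ (a priori possibly $\{0\}$) such that $|\widehat\mu|^2=0$ on $\mathcal S=\bigcup_{k=0}^{m-1}((R^T)^kx_0+W+\bz^d)$; since $\mathcal S$ is $\bz^d$-periodic this upgrades to $\mathcal S\subseteq\mathcal Z$, and the stated transition structure is exactly the ``moreover'' assertion.

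The main obstacle is ruling out $W=\{0\}$, which is the only conclusion not delivered verbatim by Theorem \ref{thccr} and the only place where $\bz[R,B]=\bz^d$ enters. Suppose $W=\{0\}$, so $\mathcal S$ is a finite union of cosets $C_k=(R^T)^kx_0+\bz^d$ forming a single cycle. I would first show the cycle is \emph{extreme}, meaning $|M_B|=1$ at each cycle point. Fix the representative $y_k=(R^T)^kx_0$. Any two transitions $\ell,\ell'\in L$ from $y_k$ that both land in $C_{k-1}$ satisfy $(R^T)^{-1}(\ell-\ell')\in\bz^d$, hence $\ell=\ell'$ because $L$ is a simple digit set for $R^T$; since, by the cycle property, every $\ell\in L$ with $M_B(\tau_\ell(y_k))\neq0$ lands in $C_{k-1}$, there is exactly one such $\ell$, and then the Parseval identity $\sum_{\ell\in L}|M_B(\tau_\ell(y_k))|^2=1$ forces $|M_B(\tau_\ell(y_k))|^2=1$. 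By $\bz^d$-periodicity of $M_B$ this gives $|M_B(y_{k-1})|=1$, so $|M_B(y_k)|=1$ for all $k$. Because $0\in B$, equality in the triangle inequality forces $\langle b,y_k\rangle\in\bz$ for all $b\in B$ and all $k$; in particular $\langle R^kb,x_0\rangle=\langle b,(R^T)^kx_0\rangle\in\bz$. Using $(R^T)^mx_0\equiv x_0\pmod{\bz^d}$ to pass beyond $k=m-1$, an induction shows $\langle R^jb,x_0\rangle\in\bz$ for every $j\geq0$ and $b\in B$, so $\langle z,x_0\rangle\in\bz$ for all $z$ in the lattice generated by the sets $B_n$, namely $\bz[R,B]=\bz^d$. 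Hence $x_0\in\bz^d$, whence $0\in C_0\subseteq\mathcal S\subseteq\mathcal Z$ and $\widehat\mu(0)=0$, contradicting $\widehat\mu(0)=1$. Therefore $W\neq\{0\}$, completing (iii).
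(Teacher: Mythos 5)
Your proposal is correct and follows essentially the same route as the paper: prove $u_B$-invariance of $\mathcal Z$ via the one-step refinement identity, feed $(u_B,R^T,\cj L)$ and $g$ into Theorem \ref{thccr}, and rule out $W=\{0\}$ by showing each cycle point admits a unique possible transition (forcing $|M_B|=1$ there), whence $\langle R^jb,x_0\rangle\in\bz$ for all $j,b$, so $x_0\in\bz[R,B]^\perp=\bz^d$, contradicting $\widehat\mu(0)=1$. The only cosmetic differences are your choice of $g=|\widehat\mu|^2$ instead of $\widehat\mu$ and running the uniqueness-of-transition argument directly over $L$ rather than first over $\cj L$.
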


\begin{proof}

We first prove  that $\mathcal Z$ is $u_B$-invariant. Take $x\in\mathcal Z$ and $\ell\in\cj L$ such that $u_B((R^T)^{-1}(x+\ell))>0$. Let $k\in\bz^d$. We have, with \eqref{eq1.12.2},
$$
\begin{aligned}
0=|\widehat\mu(x+\ell+R^Tk)|^2=&
u_B((R^T)^{-1}(x+\ell+R^Tk))|\widehat\mu((R^T)^{-1}(x+\ell+R^Tk))|^2\\
=&u_B((R^T)^{-1}(x+\ell))|\widehat\mu((R^T)^{-1}(x+\ell)+k)|^2.\\
\end{aligned}
$$
Therefore, $\widehat\mu((R^T)^{-1}(x+\ell)+k)=0$ for all $k\in\bz^d$. So $(R^T)^{-1}(x+\ell)$ is in $\mathcal Z$, and this shows that $\mathcal Z$ is $u_B$-invariant.

\medskip

Since $(R,B,L)$ form a Hadamard triple, by the Parseval identity in Lemma \ref{lem-1}(iv),
\begin{equation}\label{eqqmf}
\sum_{\ell\in L}u_B((R^T)^{-1}(x+\ell))=1,\quad(x\in\br^d),
\end{equation}
Hence,
\begin{equation}
\sum_{\ell\in \cj L}u_B((R^T)^{-1}(x+\ell))>0,\quad(x\in\br^d).
\label{eqqmf2}
\end{equation}

We can apply Theorem \ref{thccr} with $u=u_B$ and $g=\widehat\mu$ to obtain all other conclusions except the non-triviality of $W$. We now check that $W\neq\{0\}$. Suppose $W=\{0\}$. First we show that for $1\leq k\leq m$ there is a unique $\ell\in \cj L$ such that
$u_B((R^T)^{-1}((R^T)^kx_0+\ell)>0$. Equation \eqref{eqqmf2} shows that there exists at least one such $\ell$. Assume that we have two different $\ell$ and $\ell'$ in $\cj L$ with this property. Then the transitions are possible, so
\begin{equation}\label{eq1.12}
(R^T)^{-1}((R^T)^kx_0+\ell)\equiv (R^T)^{k-1}x_0\equiv (R^T)^{-1}((R^T)^kx_0+\ell') \ (\mod  \ (\bz^d)).
\end{equation}
But then $\ell\equiv \ell'(\mod R^T(\bz^d))$ and this is impossible since $\cj L$ is a complete set of representatives.

\medskip

Recall that we assume that $0\in B$. From \eqref{eqqmf}, and since the elements in $L$ are distinct $(\mod R^T(\bz^d))$, we see that there is exactly one $\ell_k\in L$ such that $u_B((R^T)^{-1}((R^T)^kx_0+\ell_k))>0$. Therefore $u_B((R^T)^{-1}((R^T)^kx_0+\ell_k))=1$. But then, by \eqref{eq1.12}, $u_B((R^T)^{k-1}x_0)=1$. We have
$$
\left|\sum_{b\in B}e^{2\pi i \ip{b}{(R^{T})^{k-1}x_0}}\right| = N.
$$
 As $\#B = N$ and $0\in B$, we have equality in the triangle inequality, and we get that $e^{2\pi i \ip{b}{(R^T)^{k-1}x_0}}=1$ for all $b\in B$. Then $\ip{R^{k-1}b}{x_0}\in{\mathbb Z}$ for all $b\in B$, $1\leq k\leq m$. Because $(R^T)^m x_0\equiv x_0(\mod\bz^d)$, we get that $\ip{R^kb}{x_0}\in{\mathbb Z}$ for all $k\geq 0$ and thus
$$x_0\in\bz[R,B]^\perp:=\{x\in\br^d : \ip{\lambda}{x}\in\bz\mbox{ for all }\lambda\in \bz[R,B]\}.$$ Since $\bz[R,B]=\bz^d$, this means that $x_0\in \bz^d$. But $x_0\in\mathcal Z$, so $1=\widehat\mu(0)=\widehat\mu(x_0-x_0)=0$, which is a contradiction. This shows $W\neq\{0\}$.
\end{proof}

\medskip

Because $W\neq \{0\}$, we will see that we can conjugate $R$ through some $M$ so that, after conjugation, $(R,B,L)$ has a upper triangular structure.

\begin{proposition}\label{pr1.14}
Suppose that $(R,B,L)$ forms a Hadamard triple and ${\mathbb Z}[R,B] = {\mathbb Z}^d$ and let $\mu = \mu(R,B)$ be the associated self-affine measure $\mu=\mu(R,B)$. Suppose that the set
$$
\mathcal Z:=\left\{x\in \br^d : \widehat\mu (x+k)=0\mbox{ for all }k\in\bz^d\right\},
$$
is non-empty. Then there exists an integer unimodular matrix $M$ such that the following assertions hold:

\begin{enumerate}
	\item The matrix $\tilde R:=MRM^{-1}$ is of the form
	\begin{equation}
	\tilde R=\begin{bmatrix} R_1&0\\ C &R_2\end{bmatrix},
	\label{eq1.14.1}
	\end{equation}
	with $R_1\in M_r(\bz)$, $R_2\in M_{d-r}(\bz)$ expansive integer matrices and  $C\in M_{(d-r)\times r}(\bz)$.
	\item If $\tilde B=MB$ and $\tilde L=(M^T)^{-1}L$, then $(\tilde R, \tilde B,\tilde L)$ is a Hadamard triple.
	\item The measure $\mu(R,B)$ is spectral with spectrum $\Lambda$ if and only if the measure $\mu(\tilde R,\tilde B)$ is spectral with spectrum $(M^T)^{-1}\Lambda$.
	\item There exists $y_0\in\br^{d-r}$ such that $(R_2^T)^my_0\equiv y_0(\mod R_2^T(\bz^d))$ for some integer $m\geq 1$ such that the union
	$$\tilde {\mathcal S}=\bigcup_{k=0}^{m-1}(\br^r\times \{(R_2^T)^ky_0\}+\bz^d)$$
	is contained in the set
	$$\tilde{\mathcal Z}:=\left\{x\in \br^d : \widehat{\tilde \mu} (x+k)=0\mbox{ for all }k\in\bz^d\right\},$$
	where $\tilde \mu=\mu(\tilde R,\tilde B)$. The set $\tilde{\mathcal S}$ is invariant (with respect to the system $(u_{\tilde B},\tilde R^T,\tilde {\cj L})$, where $\tilde {\cj L}$ is a complete set of representatives $(\mod \tilde R^T\bz^d)$. In addition, all possible transitions from a point in $\br^r\times \{(R_2^T)^ky_0\}+\bz^d$, $1\leq k\leq m$ lead to a point in $\br^r\times \{(R_2^T)^{k-1}y_0\}+\bz^d$.
\end{enumerate}
\end{proposition}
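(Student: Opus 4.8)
The plan is to treat Proposition \ref{pr1.13} as a black box and convert its conclusions into the stated block structure by a single unimodular conjugation, transporting every object through explicit change-of-variable formulas. Proposition \ref{pr1.13} already supplies the analytic content: a proper nonzero rational subspace $W$ with $R^T(W)=W$, an integer $m\ge1$, a point $x_0$ with $(R^T)^mx_0\equiv x_0\ (\mod R^T(\bz^d))$, and the $u_B$-invariant union $\mathcal S=\bigcup_{k=0}^{m-1}((R^T)^kx_0+W+\bz^d)\subset\mathcal Z$ together with its one-step transition pattern. The remaining work is linear-algebraic bookkeeping. For \emph{Step 1} I set $r=\dim W$ (so $1\le r\le d-1$); since $W$ is rational, the result cited in \cite{Sch86} (already used for Proposition \ref{przrb}) yields a unimodular $P\in GL(d,\bz)$ with $P(W)=\br^r\times\{0\}$. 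I then take $M:=(P^T)^{-1}$, again unimodular, so that $(M^T)^{-1}W=PW=\br^r\times\{0\}$; this is the conjugation that straightens $W$ into the first $r$ coordinates on the frequency side, matching the free direction of $\tilde{\mathcal S}$.

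For \emph{Step 2} I use $\tilde R^T=(M^T)^{-1}R^TM^T$ and $R^T(W)=W$ to compute $\tilde R^T(\br^r\times\{0\})=\br^r\times\{0\}$; an integer matrix fixing this coordinate subspace is block upper triangular, so $\tilde R=MRM^{-1}$ has the lower-triangular shape \eqref{eq1.14.1}, with $R_1,R_2$ integer (because $M$ is unimodular) and expansive (their eigenvalues are among the eigenvalues of $R$, all of modulus $>1$). Conclusions (2) and (3) are then immediate from Proposition \ref{prconj} applied to the conjugate triples $(R,B,L)$ and $(\tilde R,\tilde B,\tilde L)=(MRM^{-1},MB,(M^T)^{-1}L)$.

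For \emph{Step 3} the key transport formulas are $\widehat{\tilde\mu}(\xi)=\widehat\mu(M^T\xi)$ (since $T(\tilde R,\tilde B)=M\,T(R,B)$ and hence $\tilde\mu=\mu\circ M^{-1}$) and $u_{\tilde B}(x)=u_B(M^Tx)$; combined with $M^T\tilde\tau_\ell(x)=\tau_{M^T\ell}(M^Tx)$ they show that $x\mapsto M^Tx$ conjugates the system $(u_{\tilde B},\tilde R^T,\tilde{\cj L})$, with $\tilde{\cj L}:=(M^T)^{-1}\cj L$, to $(u_B,R^T,\cj L)$. Consequently $\tilde{\mathcal Z}=(M^T)^{-1}\mathcal Z$ and $\tilde{\mathcal S}:=(M^T)^{-1}\mathcal S$ is $u_{\tilde B}$-invariant and contained in $\tilde{\mathcal Z}$. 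Writing $\tilde x_0=(M^T)^{-1}x_0=(x_0',y_0)$ and using $(M^T)^{-1}(R^T)^k=(\tilde R^T)^k(M^T)^{-1}$ with the upper-triangular shape of $\tilde R^T$, the last $d-r$ coordinates of $(\tilde R^T)^k\tilde x_0$ are exactly $(R_2^T)^ky_0$, while the free direction $\br^r\times\{0\}$ absorbs the first $r$ coordinates; this rewrites $\tilde{\mathcal S}$ as $\bigcup_{k=0}^{m-1}(\br^r\times\{(R_2^T)^ky_0\}+\bz^d)$. The congruence $(R_2^T)^my_0\equiv y_0$ follows by projecting $(\tilde R^T)^m\tilde x_0\equiv\tilde x_0\ (\mod\tilde R^T\bz^d)$ through the quotient map $\br^d\to\br^{d-r}$, which intertwines $\tilde R^T$ with $R_2^T$, and the transition pattern descends in the same way.

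\emph{Main obstacle.} Because Proposition \ref{pr1.13} already performs the dynamical heavy lifting, the only genuine danger is the transpose/side bookkeeping: one must keep straight that $\mathcal Z$ lives on the frequency side (so conjugation acts by $(M^T)^{-1}$, not $M$), that $W$ must be straightened into the \emph{first} $r$ coordinates to match the free direction of $\tilde{\mathcal S}$, and that $\tilde R$ is block lower triangular precisely when $\tilde R^T$ is block upper triangular. I expect the step most prone to sign and transpose errors to be verifying that a single $M$ simultaneously produces the block form of $\tilde R$, preserves the Hadamard and spectral data, and carries $(\mathcal S,\mathcal Z)$ onto $(\tilde{\mathcal S},\tilde{\mathcal Z})$ with the correct indexing; this becomes routine once the formulas $\widehat{\tilde\mu}(\xi)=\widehat\mu(M^T\xi)$ and $u_{\tilde B}(x)=u_B(M^Tx)$ are in hand.
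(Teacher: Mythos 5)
Your proposal is correct and follows essentially the same route as the paper: invoke Proposition \ref{pr1.13}, straighten the rational $R^T$-invariant subspace $W$ with a unimodular matrix via \cite{Sch86} to obtain the block-triangular form, deduce (ii)--(iii) from Proposition \ref{prconj}, and transport $(x_0,\mathcal S,\mathcal Z)$ through the conjugation to get (iv). In fact you are more careful than the paper's own one-paragraph proof on the one delicate point — choosing $M=(P^T)^{-1}$ so that the frequency-side subspace $W$ lands in $\br^r\times\{0\}$ under $(M^T)^{-1}$ rather than under $M$ — which the paper glosses over.
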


\begin{proof}
We use Proposition \ref{pr1.13} and we have $x_0$ and a rational subspace $W\neq\{0\}$ invariant for $R$ with all properties in Proposition \ref{pr1.13}. By \cite[Theorem 4.1 and Corollary 4.3b]{Sch86}, there exists a unimodular matrix $M$ such that $MW=\br^r\times \{0\}$. Under the conjugation with the matrix $M$, the matrix $\tilde R$ will have the form (\ref{eq1.14.1}) since $W$ is an invariant subspace.  properties (ii)-(iii) follows from Proposition \ref{prconj}. Finally, properties (iv) follows from Proposition \ref{pr1.13}(iii) by identifying $W$ as $\br^r\times \{0\}$ and taking $y_0$ as the second component of $Mx_0$.

\end{proof}

\medskip

\section{The quasi-product form}

Using Proposition \ref{pr1.14}, we can replace $(R,B,L)$ by $(\tilde R,\tilde B, \tilde L)$ and make the assumptions that $R$ has the form in \eqref{eq1.14.1}, so
\begin{equation}
R=\begin{bmatrix}
R_1& 0\\ C& R_2
\end{bmatrix},
\label{eqi}
\end{equation}
and it satisfies the property (iv) in Proposition \ref{pr1.14}:

\medskip
 {\bf Assumption (iv).}
There exists $y_0\in\br^{d-r}$ such that $(R_2^T)^my_0\equiv y_0(\mod \ R_2^T(\bz^d))$ for some integer $m\geq 1$ and such that the union
	$$ {\mathcal S}=\bigcup_{k=0}^{m-1}(\br^r\times \{(R_2^T)^ky_0\}+\bz^d)$$
	is contained in the set
	$${\mathcal Z}:=\left\{x\in \br^d : \widehat{ \mu} (x+k)=0\mbox{ for all }k\in\bz^d\right\},$$
	where $ \mu=\mu( R, B)$. The set ${\mathcal S}$ is invariant (with respect to the system $(u_{ B}, R^T, {\cj L})$, where $ {\cj L}$ is a complete set of representatives $(\mod  R^T(\bz^d))$. In addition, all possible transitions from a point in $\br^r\times \{(R_2^T)^ky_0\}+\bz^d$, $1\leq k\leq m$ lead to a point in $\br^r\times \{(R_2^T)^{k-1}y_0\}+\bz^d$.

\medskip

In this section, we will prove that, in our case, which is ${\mathcal Z}\neq \emptyset$, the Hadamard triple is conjugate to one that has a quasi-product form.

\medskip

We use $A\times B$ to denote the Cartesian product of $A$ and $B$ so that $A\times B = \{(a,b):a\in A, b\in B\}$. We first introduce the following notations.

\begin{definition}\label{def1.15}
For a vector $x\in \br^d$, we write it as $x=(x^{(1)},x^{(2)})^T$ with $x^{(1)}\in \br^r$ and $x^{(2)}\in\br^{d-r}$. We denote by $\pi_1(x)=x^{(1)}$, $\pi_2(x)=x^{(2)}$. For a subset $A$ of $\br^d$, and $x_1\in\br^r$, $x_2\in \br^{d-r}$, we denote by
$$A_2({x_1}):=\{y\in\br^{d-r} :(x_1,y)^T\in A\},\quad A_1({x_2}):=\{x\in\br^r : (x,x_2)^T\in A\}.$$
\end{definition}

   Our main theorem in this section is as follows:

\begin{theorem}\label{th_quasi}
Suppose that $$R=\begin{bmatrix}
R_1& 0\\ C& R_2
\end{bmatrix},
$$$(R,B,L_0)$ is a Hadamard triple that satisfies the Assumption (iv), and let $\mu = \mu(R,B)$ be the associated self-affine measure and ${\mathcal Z}\neq \emptyset$. Then the set $B$ has the following quasi-product form:
\begin{equation}
B=\left\{(u_i,v_i+Qc_{i,j})^T : 1\leq i\leq N_1, 1\leq j\leq|\det R_2|\right\},
\label{eq1.18.1}
\end{equation}
where
 \begin{enumerate}
 \item $N_1 = N/|\det R_2|$, \item $Q$ is a $(d-r)\times (d-r)$ integer matrix with $|\det Q|\geq 2$ and $R_2Q=Q\tilde R_2$ for some $(d-r)\times(d-r)$ integer matrix $\widetilde{R_2}$,
     \item the set $\{Qc_{i,j}: 1\leq j\leq |\det R_2|\}$ is a complete set of representatives $(\mod R_2(\bz^{d-r}))$, for all $1\leq i\leq N_1$.
\end{enumerate}
\medskip

Moreover, one can find some set $L\equiv L_0 (\mod \ R^T(\mathbb Z^d))$ so that $(R,B,L)$ is a Hadamard triple and $(R_1,\pi_1(B), L_1(\ell_2))$ and $(R_2,B_2(b_1),\pi_2(L))$ are Hadamard triples on ${\mathbb R}^r$ and ${\mathbb R}^{d-r}$ respectively, for all $\ell_2\in\pi_2(L)$ and all $b_1\in\pi_1(B)$.
\end{theorem}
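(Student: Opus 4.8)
The plan is to convert the hypothesis $\mathcal Z\neq\ty$, packaged as the invariant set $\mathcal S$ together with the transition rule of Assumption (iv), into precise information about the zeros of the mask $m_B(x)=\frac1N\sum_{b\in B}e^{2\pi i\langle b,x\rangle}$, and then read off the fibered structure of $B$ by harmonic analysis on the finite group $\bz^{d-r}/R_2\bz^{d-r}$. Throughout I write $y_k=(R_2^T)^ky_0$ for the cycle points, and I group $B$ by its distinct first coordinates $u_1,\dots,u_{N_1}$, setting $S_i=\{s:(u_i,s)^T\in B\}$ and $P_i(\eta)=\sum_{s\in S_i}e^{2\pi i\langle s,\eta\rangle}$, so that $m_B(x^{(1)},x^{(2)})=\frac1N\sum_i e^{2\pi i\langle u_i,x^{(1)}\rangle}P_i(x^{(2)})$.

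First I would exploit the transition rule. Fix $x$ with $\pi_2(x)=y_k$; then $\pi_2(\tau_\ell(x))=y_{k-1}+(R_2^T)^{-1}\pi_2(\ell)$, so a transition via $\ell\in\cj L$ lands in the $y_{k-1}$-fiber iff $\pi_2(\ell)\in R_2^T\bz^{d-r}$. Since Assumption (iv) says every possible transition from the $y_k$-fiber leads to the $y_{k-1}$-fiber, any $\ell$ with $\pi_2(\ell)\notin R_2^T\bz^{d-r}$ must be non-possible, i.e. $u_B(\tau_\ell(x))=0$. As $x^{(1)}$ ranges over $\br^r$ the first coordinate of $\tau_\ell(x)$ sweeps all of $\br^r$ while its second coordinate stays equal to $\theta_{\ell,k}:=y_{k-1}+(R_2^T)^{-1}\pi_2(\ell)$; hence $m_B(\cdot,\theta_{\ell,k})\equiv0$ on $\br^r$. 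The linear independence of the characters $e^{2\pi i\langle u_i,\cdot\rangle}$ then forces $P_i(\theta_{\ell,k})=0$ for every $i$. Letting $\ell$ run over a complete residue system modulo $R^T\bz^d$, so that $\pi_2(\ell)$ runs over all residues mod $R_2^T\bz^{d-r}$, this yields $P_i(y_j+\theta)=0$ for every $i$, every cycle point $y_j$, and every nonzero $\theta\in(R_2^T)^{-1}\bz^{d-r}/\bz^{d-r}$.

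Next I would run harmonic analysis on $G^{*}=(R_2^T)^{-1}\bz^{d-r}/\bz^{d-r}$. Grouping $S_i$ by residue class $\rho$ mod $R_2\bz^{d-r}$, the map $\theta\mapsto P_i(y_j+\theta)$ has the character expansion $\sum_\rho W^{(i,j)}_\rho\,e^{2\pi i\langle\rho,\theta\rangle}$ with $W^{(i,j)}_\rho=\sum_{s\in S_i,\,s\equiv\rho}e^{2\pi i\langle s,y_j\rangle}$, and vanishing at all nonzero $\theta$ forces $W^{(i,j)}_\rho$ to be independent of $\rho$. Here the simple-digit-set hypothesis is decisive: two elements of $B$ sharing the first coordinate $u_i$ differ by $(0,s-s')^T$, which lies in $R\bz^d$ iff $s\equiv s'\pmod{R_2\bz^{d-r}}$, so each residue class occurs at most once in $S_i$. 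Then $|W^{(i,j)}_\rho|\in\{0,1\}$; were some class empty, the common value would be $0$ and every class would be empty, contradicting $S_i\neq\ty$. Hence every class is represented exactly once, so each $S_i$ is a complete set of representatives mod $R_2\bz^{d-r}$ with $\#S_i=|\det R_2|$, giving $N_1=N/|\det R_2|$ and items (1),(3). Moreover the constancy of $W^{(i,j)}_\rho$ now reads $e^{2\pi i\langle s,y_j\rangle}=e^{2\pi i\langle s',y_j\rangle}$ for all $s,s'\in S_i$ and all $j$, i.e. every difference lies in the dual lattice $\Gamma:=\{t\in\bz^{d-r}:\langle t,(R_2^T)^ky_0\rangle\in\bz,\ 0\le k\le m-1\}$. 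Thus $S_i$ sits in a single coset $v_i+\Gamma$; writing $\Gamma=Q\bz^{d-r}$ for an integer matrix $Q$ gives $s=v_i+Qc_{i,j}$. Since $y_0\notin\bz^{d-r}$ we have $\Gamma\neq\bz^{d-r}$, so $|\det Q|\ge2$, and since $R_2^T$ permutes the $y_j$ modulo $\bz^{d-r}$ one checks $R_2\Gamma\subset\Gamma$, which is exactly $R_2Q=Q\tilde R_2$ with $\tilde R_2=Q^{-1}R_2Q$ integer. This establishes the quasi-product form.

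It remains to produce the compatible $L$ and the two factor Hadamard triples, which I expect to be the main obstacle. With $B$ in quasi-product form I would start from the unitary matrix $H_0$ of $(R,B,L_0)$, index its columns by the pairs $(i,j)$ and its rows by $\ell\in L_0$, and use orthogonality together with the surjection $\pi_2:\bz^d/R^T\bz^d\to\bz^{d-r}/R_2^T\bz^{d-r}$ to show that $\pi_2(L_0)$ meets each residue class mod $R_2^T\bz^{d-r}$ exactly $N_1$ times and that, inside each class, the first coordinates of the corresponding $\ell$'s pair unitarily with $\{u_i\}$. Replacing each $\ell$ by a representative of its class mod $R^T\bz^d$ (which leaves $H$ unchanged) I would take $\pi_2(L)$ to be one representative per class, a complete residue system mod $R_2^T\bz^{d-r}$, and adjust the first coordinates so that $H$ factors as a tensor product of the matrix of $(R_1,\pi_1(B),L_1(\ell_2))$ and that of $(R_2,B_2(b_1),\pi_2(L))$. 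The per-coset phases $e^{2\pi i\langle R_2^{-1}v_i,\ell^{(2)}\rangle}$ and $e^{2\pi i\langle Du_i,\ell^{(2)}\rangle}$ arising from the off-diagonal block $C$ (with $D=-R_2^{-1}CR_1^{-1}$) only contribute diagonal unitary factors, so they do not affect the Hadamard property of the two blocks. The delicate point, and the crux of the argument, is to make these choices simultaneously compatible for every fiber $b_1\in\pi_1(B)$ and every $\ell_2\in\pi_2(L)$, so that a single set $L$ realizes both factor triples at once; this is precisely where the coupling through $C$ and through the shifts $v_i$ must be controlled, using that all fibers $S_i$ are complete residue systems lying in cosets of the common lattice $Q\bz^{d-r}$.
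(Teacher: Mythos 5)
Your derivation of the quasi-product form of $B$ (items (i)--(iii)) is correct and follows essentially the paper's own route: Assumption (iv) forces $u_B$ to vanish on the whole slice $\br^r\times\{y_{k-1}+(R_2^T)^{-1}\pi_2(\ell)\}$ whenever $\pi_2(\ell)\notin R_2^T(\bz^{d-r})$, linear independence of the characters $e^{2\pi i\langle u_i,\cdot\rangle}$ isolates the fiber sums $P_i$, and Fourier analysis on $\bz^{d-r}/R_2(\bz^{d-r})$ (your coefficients $W^{(i,j)}_\rho$ are exactly the Fourier coefficients of the function $f$ in the proof of Lemma \ref{lem1.18}) shows each fiber $S_i$ is a complete residue system on which $\langle\cdot\,,y_j\rangle$ is constant mod $\bz$; the lattice $\Gamma$, the matrix $Q$ and the relation $R_2Q=Q\tilde R_2$ then come out as in the paper. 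Two small omissions you should repair: $\Gamma$ must be shown to be full rank (this follows because $(R_2^T)^m y_0\equiv y_0$ forces the $y_j$ to be rational, so $\tilde m e_i\in\Gamma$ for a common denominator $\tilde m$), and $y_0\notin\bz^{d-r}$ needs a reason (the points $(0,y_j)^T$ lie in $\mathcal Z$, which would contradict $\widehat\mu(0)=1$ if $y_j$ were integral).

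The genuine gap is the ``Moreover'' clause: you explicitly leave the ``delicate point'' of simultaneous compatibility unresolved, so that part of the theorem is not proved. The route you sketch --- forcing $H$ to factor as a tensor product after adjusting first coordinates --- aims at a stronger statement than the theorem asserts, and the obstruction you worry about is real for a tensor factorization: the entry of $H$ at the pair $\bigl((\ell_1,\ell_2),(u_i,v_i+Qc_{i,j})\bigr)$ carries the phase $e^{2\pi i\langle -R_2^{-1}CR_1^{-1}u_i+R_2^{-1}v_i,\;\ell_2\rangle}$, which depends jointly on $i$ and $\ell_2$ and is not a product of diagonal factors. The paper sidesteps all of this. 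First, each $\ell'\in L_0$ is replaced by $\ell'+R^T(0,(R_2^T)^{-1}(\ell_2-\ell_2'))^T$ so that elements whose second coordinates are congruent mod $R_2^T(\bz^{d-r})$ acquire literally equal second coordinates; this changes nothing in $H$ (Lemma \ref{lem1.15-}). Then the Hadamard property of $(R_1,\pi_1(B),L_1(\ell_2))$ follows by taking the orthogonality relation $\sum_{b\in B}e^{2\pi i\langle R^{-1}b,\ell-\ell'\rangle}=0$ for $\ell\neq\ell'$ with $\pi_2(\ell)=\pi_2(\ell')=\ell_2$: the second coordinates cancel, leaving $\sum_{b_1\in\pi_1(B)}\#B_2(b_1)\,e^{2\pi i\langle R_1^{-1}b_1,\ell_1-\ell_1'\rangle}=0$, and since you have already shown $\#B_2(b_1)=|\det R_2|$ for every $b_1$, the weight is constant and this is exactly the Hadamard condition. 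Dually, $(R_2,B_2(b_1),\pi_2(L))$ is Hadamard once $\#L_1(\ell_2)$ is shown constant, which follows from the counting $N=\sum_{\ell_2}\#L_1(\ell_2)\leq\#\pi_2(L)\cdot\#\pi_1(B)=N$. No tensor factorization, and no simultaneous choice across fibers, is needed.
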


In establishing Theorem \ref{th_quasi}, we need a series of lemmas. We will assume

\medskip

\mbox{\bf (A1)}: \ \mbox{ Hadamard triple $(R,B,L)$ satisfies \eqref{eqi} and Assumption (iv)}

\medskip

 First, the following lemma allows us to replace $L$ by an equivalent set $L'$ with certain injectivity property.

\begin{lemma}\label{lem1.15-}
Suppose that {\bf (A1)} holds. Then there exists set $L'$ and a complete set of representatives $\cj L'$ such that $(R,B,L')$ is a Hadamard triple and the following property holds:
\begin{equation}\label{eq1.15.0}
\ell,\ell'\in L' \ \mbox{(or $\in \cj L'$) and} \ \pi_2(\ell)\equiv \pi_2(\ell') \ \left(\mod \  R_2^T(\bz^{d-r})\right) \ \ \Longrightarrow \  \ \pi_2(\ell)=\pi_2(\ell').
\end{equation}
\end{lemma}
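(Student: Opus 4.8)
The plan is to exploit the block-triangular shape of $R$ in \eqref{eqi} to normalize the second coordinates of $L$ one residue class at a time, using the freedom to replace each $\ell$ by any vector congruent to it modulo $R^T(\bz^d)$. Two preliminary facts make this freedom available. Writing $R^T=\begin{bmatrix}R_1^T&C^T\\0&R_2^T\end{bmatrix}$, for any $k=(k^{(1)},k^{(2)})^T\in\bz^d$ we have $R^Tk=(R_1^Tk^{(1)}+C^Tk^{(2)},\,R_2^Tk^{(2)})^T$, so $\pi_2(R^Tk)=R_2^Tk^{(2)}$ and hence $\pi_2(R^T(\bz^d))=R_2^T(\bz^{d-r})$; consequently $\pi_2$ descends to a well-defined surjection $\bz^d/R^T(\bz^d)\to\bz^{d-r}/R_2^T(\bz^{d-r})$. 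Moreover, the Hadamard property is insensitive to the choice of representatives modulo $R^T(\bz^d)$: replacing $\ell$ by $\ell+R^Tk$ gives $\langle R^{-1}b,\ell+R^Tk\rangle=\langle R^{-1}b,\ell\rangle+\langle b,k\rangle$ with $\langle b,k\rangle\in\bz$, so the matrix $H$ in \eqref{Hadamard triples} is unchanged entrywise; likewise being a complete set of representatives $(\mod R^T(\bz^d))$ depends only on residue classes.

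With these in hand, I would fix once and for all a representative $v_c\in\bz^{d-r}$ for each class $c\in\bz^{d-r}/R_2^T(\bz^{d-r})$ and define a normalization map $\phi$ as follows. Given $\ell=(\ell^{(1)},\ell^{(2)})^T$, let $c$ be the class of $\ell^{(2)}$ and write $\ell^{(2)}=v_c+R_2^Tm$ with $m\in\bz^{d-r}$ uniquely determined; then set
\begin{equation*}
\phi(\ell)=\ell-R^T\begin{bmatrix}0\\ m\end{bmatrix}=\begin{bmatrix}\ell^{(1)}-C^Tm\\ v_c\end{bmatrix}.
\end{equation*}
By construction $\phi(\ell)\equiv\ell\pmod{R^T(\bz^d)}$ and $\pi_2(\phi(\ell))=v_c$. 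Starting from the given complete set of representatives $\cj L\supset L$, I would put $L'=\phi(L)$ and $\cj L'=\phi(\cj L)$. Since $\phi$ preserves residue classes modulo $R^T(\bz^d)$, distinct residues remain distinct, so $L'$ consists of $N$ distinct residues and $(R,B,L')$ is a Hadamard triple by the entrywise equality of the two matrices, while $\cj L'$ is again a complete set of representatives; moreover $L'\subset\cj L'$ because $L\subset\cj L$.

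Finally, property \eqref{eq1.15.0} is immediate: every second coordinate occurring in $L'$ or in $\cj L'$ is one of the chosen $v_c$, and two of these representatives that are congruent modulo $R_2^T(\bz^{d-r})$ lie in the same class and are therefore equal. I expect no serious obstacle; the only point requiring care is the coordinate bookkeeping verifying that subtracting $R^T(0,m)^T$ alters the second coordinate exactly by $R_2^Tm$ while keeping the vector in its $R^T(\bz^d)$-class, which is precisely what the block-triangular form of $R^T$ guarantees.
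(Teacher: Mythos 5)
Your proof is correct and follows essentially the same route as the paper's: both replace each $\ell$ by $\ell - R^T(0,m)^T$ for a suitable $m\in\bz^{d-r}$ so that second coordinates agreeing $(\mod R_2^T(\bz^{d-r}))$ become equal, and both note that such replacements preserve residues $(\mod R^T(\bz^d))$ and hence the Hadamard matrix entrywise. The only cosmetic difference is that you normalize to a globally fixed set of class representatives $v_c$, whereas the paper aligns each equivalence class to one of its own members.
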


\begin{proof}
 From $L$ (or ${\cj L}$), we define an equivalence relation  by
 $$
 \ell\sim\ell' \ \mbox{if and only if} \  \pi_2(\ell)\equiv \pi_2(\ell') \ \left(\mod \  R_2^T(\bz^{d-r})\right).
  $$
  From each of the equivalence class, we fix an $\ell=(\ell_1,\ell_2)^T\in L$. Suppose that  $\ell'=(\ell_1',\ell_2')^T \sim \ell=(\ell_1,\ell_2)^T$ but $\ell_2\ne \ell_2'$. We replace $\ell'$ by another representative
$$
\ell''=\ell'+R^T(0,(R_2^T)^{-1}(\ell_2-\ell_2'))^T\in\bz^d.
$$
 Then $\ell''\equiv \ell' (\mod R^T({\bz^{d}}))$ and
 $\pi_2(\ell'')=\ell_2= \pi_2(\ell).$  Define $L'$ to be the set of elements with all these replacements. Then $L\equiv L'$ $(\mod R^T({\bz^{d}}))$ and $(R,B,L')$ is a Hadamard triple since $\ip{R^{-1}b}{R^{T}m}$ $\in{\mathbb Z}$ for any $m\in{\mathbb Z}^d$. Furthermore, for all elements in the same equivalence class, $\pi_2(\ell)$ are all equal. \eqref{eq1.15.0} is thus satisfied  and we obtain our lemma.
\end{proof}

\medskip

To simplify the notation, in what follows we relabel $L'$ by $L$ and $\cj L'$ by $\cj L$ so that $L$ and $\cj L$ have the property \eqref{eq1.15.0}. We write it as follows:
\medskip

\mbox{\bf (A2)}: \  $L,{\cj L}$ \ \mbox{satisfy}:
$$
\ell,\ell'\in L \ \mbox{(or $\in \cj L$) and} \ \pi_2(\ell)\equiv \pi_2(\ell') \ \left(\mod \  R_2^T(\bz^{d-r})\right) \ \ \Longrightarrow \  \ \pi_2(\ell)=\pi_2(\ell').
$$
\medskip

\begin{lemma}\label{lem1.15}
Suppose that \mbox{\bf (A1)} and \mbox{\bf (A2)} holds. Then
\begin{enumerate}
%
%
\item For every $b_1\in \pi_1(B)$ and $b_2\neq b_2'$ in $B_2(b_1)$,
\begin{equation}
\sum_{\ell_2\in \pi_2(L)}\# L_1(\ell_2)e^{2\pi i \ip{R_2^{-1}(b_2-b_2')}{\ell_2}}=0.
\label{eq1.15.1}
\end{equation}
Also, for all $b_1\in \pi_1(B)$,  $\#B_2(b_1)\leq \#\pi_2(L)$  and the elements in $B_2(b_1)$ are not congruent $\mod R_2(\bz^{d-r})$.

\medskip

\item For every $\ell_2\in \pi_2(L)$ and $\ell_1\neq \ell_1'$ in $L_1(\ell_2)$,
\begin{equation}
\sum_{b_1\in\pi_1(B)}\# B_2(b_1)e^{2\pi i \ip{R_1^{-1}b_1}{(\ell_1-\ell_1')}}=0.
\label{eq1.15.2}
\end{equation}
Also, for all $\ell_2\in\pi_2(L)$, $\#L_1(\ell_2)\leq \#\pi_1(B)$ and the elements in $L_1(\ell_2)$ are not congruent $\left(\mod \  R_1^T(\bz^{r})\right)$.

\medskip
\item The set $\pi_2(\cj L)$ is a complete set of representatives $(\mod  \ R_2^T(\bz^{d-r}))$ and, for every $\ell_2\in \pi_2(\cj L)$, the set $\cj L_1(\ell_2)$ is a complete set of representatives $(\mod \  R_1^T(\bz^r))$.
\end{enumerate}
\end{lemma}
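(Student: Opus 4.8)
The computational core is the block structure of the maps. From $R=\begin{bmatrix}R_1&0\\C&R_2\end{bmatrix}$ one reads off
$$
R^{-1}=\begin{bmatrix}R_1^{-1}&0\\-R_2^{-1}CR_1^{-1}&R_2^{-1}\end{bmatrix},\qquad R^T=\begin{bmatrix}R_1^T&C^T\\0&R_2^T\end{bmatrix}.
$$
The one fact I would extract is that the coupling block $-R_2^{-1}CR_1^{-1}$ only ties the first coordinate of a digit to the second coordinate of a frequency, so it disappears in any pairing of a vector supported on one block with a frequency difference supported on the complementary block. Writing $b=(b_1,b_2)^T$ and $\ell=(\ell_1,\ell_2)^T$, this gives the two identities
$$
\langle R^{-1}(0,b_2-b_2')^T,\ell\rangle=\langle R_2^{-1}(b_2-b_2'),\ell_2\rangle,\qquad \langle R^{-1}b,(\ell_1-\ell_1',0)^T\rangle=\langle R_1^{-1}b_1,\ell_1-\ell_1'\rangle,
$$
each depending on only one projection. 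These are the only genuine computations.

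For (1), fix $b_1\in\pi_1(B)$ and pick $b_2\ne b_2'$ in $B_2(b_1)$, so that $b=(b_1,b_2)^T$ and $b'=(b_1,b_2')^T$ are distinct elements of $B$. Unitarity of $H$ gives $\sum_{\ell\in L}e^{2\pi i\langle R^{-1}(b-b'),\ell\rangle}=0$, and by the first identity the summand depends only on $\pi_2(\ell)$; regrouping over the fibers $L_1(\ell_2)$ produces exactly \eqref{eq1.15.1}. If moreover $b_2\equiv b_2'\ (\mod R_2(\bz^{d-r}))$, then every exponential in \eqref{eq1.15.1} equals $1$ and the left side collapses to $\sum_{\ell_2}\#L_1(\ell_2)=\#L=N\ne0$, a contradiction; hence distinct elements of $B_2(b_1)$ are incongruent mod $R_2(\bz^{d-r})$. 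For the cardinality bound I would read \eqref{eq1.15.1} as an orthogonality relation: the vectors in $\bc^{\#\pi_2(L)}$ with entries $\sqrt{\#L_1(\ell_2)}\,e^{2\pi i\langle R_2^{-1}b_2,\ell_2\rangle}$, one for each $b_2\in B_2(b_1)$, are pairwise orthogonal with common squared norm $\#L=N>0$, so there are at most $\#\pi_2(L)$ of them. Part (2) is entirely symmetric, using instead $\sum_{b\in B}e^{2\pi i\langle R^{-1}b,\ell-\ell'\rangle}=0$ for $\ell\ne\ell'$, the second identity, and incongruence mod $R_1^T(\bz^r)$.

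Part (3) needs the normalization (A2) and a counting argument rather than orthogonality, since $\cj L$ is in general strictly larger than $L$ and $(R,B,\cj L)$ is not a square Hadamard system. First, (A2) forces distinct elements of $\pi_2(\cj L)$ into distinct classes mod $R_2^T(\bz^{d-r})$, so $\#\pi_2(\cj L)\le|\det R_2|$. Second, for fixed $\ell_2$, if $(\ell_1,\ell_2)^T,(\ell_1',\ell_2)^T\in\cj L$ with $\ell_1\equiv\ell_1'\ (\mod R_1^T(\bz^r))$, then their difference $(\ell_1-\ell_1',0)^T$ lies in $R^T(\bz^d)$ (the lower block of $R^T$ forces the $\bz^{d-r}$ parameter to vanish, and the upper block then realizes $\ell_1-\ell_1'$), so, $\cj L$ being a set of distinct representatives mod $R^T(\bz^d)$, we get $\ell_1=\ell_1'$ and thus $\#\cj L_1(\ell_2)\le|\det R_1|$. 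Then
$$
|\det R|=\#\cj L=\sum_{\ell_2\in\pi_2(\cj L)}\#\cj L_1(\ell_2)\le\#\pi_2(\cj L)\,|\det R_1|\le|\det R_2|\,|\det R_1|=|\det R|
$$
forces equality everywhere, so $\#\pi_2(\cj L)=|\det R_2|$ and $\#\cj L_1(\ell_2)=|\det R_1|$ for every $\ell_2$; combined with the two distinctness facts this says precisely that $\pi_2(\cj L)$ is a complete set of representatives mod $R_2^T(\bz^{d-r})$ and each $\cj L_1(\ell_2)$ is a complete set of representatives mod $R_1^T(\bz^r)$.

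The main difficulty here is bookkeeping rather than any deep step: one must check that the coupling $C$ really drops out in each pairing (which is exactly why the triangular form \eqref{eqi} is indispensable), and, in (3), that the chain of inequalities is tight so that every ``$\le$'' upgrades to the equality delivering the complete-residue-system conclusions. Assumption (iv) of \textbf{(A1)} does not seem to enter the argument directly; it is the $\pi_2$-normalization \textbf{(A2)} that keeps the counting in (3) honest.
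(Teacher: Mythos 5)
Your proposal is correct and follows essentially the same route as the paper's proof: the orthogonality relation $\sum_{\ell\in L}e^{2\pi i\langle R^{-1}(b-b'),\ell\rangle}=0$ regrouped over the fibers $L_1(\ell_2)$ to get \eqref{eq1.15.1}, the reading of that identity as orthogonality of the vectors $\bigl(\sqrt{\#L_1(\ell_2)}\,e^{2\pi i\langle R_2^{-1}b_2,\ell_2\rangle}\bigr)_{\ell_2}$ to bound $\#B_2(b_1)$, the symmetric argument for (2), and the counting chain forced to equality by $|\det R|=|\det R_1||\det R_2|$ for (3). Your closing observation is also accurate: this particular lemma uses only the Hadamard property, the triangular form of $R$, and \textbf{(A2)}; Assumption (iv) enters only in the subsequent lemmas.
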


\begin{proof}
We prove (i).
Take $b_1\in\pi_1(B)$ and $b_2\neq b_2'$ in $B_2(b_1)$, from the mutual orthogonality, we have
$$
\begin{aligned}
0=&\sum_{\ell_2\in\pi_2(L)}\sum_{\ell_1\in L_1(\ell_2)}e^{2\pi i \ip{R^{-1}(0,b_2-b_2')^T}{ (\ell_1,\ell_2)^T}}=\sum_{\ell_2\in\pi_2(L)}\sum_{\ell_1\in L_1(\ell_2)}e^{2\pi i \ip{R_2^{-1}(b_2-b_2')}{ \ell_2}}\\
=&\sum_{\ell_2\in\pi_2(L)}\#L_1(\ell_2)e^{2\pi i \ip{R_2)^{-1}(b_2-b_2')}{\ell_2}}.
\end{aligned}
$$
This shows \eqref{eq1.15.1} and that the rows of the matrix
$$
\left(\sqrt{\# L_1(\ell_2)}e^{2\pi i \ip{(R_2^T)^{-1}b_2}{\ell_2}}\right)_{\ell_2\in \pi_2(L),b_2\in B_2(b_1)}
$$
are orthogonal. Therefore $\#B_2(b_1)\leq \#\pi_2(L)$ for all $b_1\in \pi_1(B)$. Equation \eqref{eq1.15.1} implies that the elements in $B_2(b_1)$ cannot be congruent $\mod R_2^T(\bz^{d-r})$

\medskip

(ii) follows from an analogous computation as in (i).

\medskip

For (iii), the elements in $\pi_2(\cj L)$ are not congruent $(\mod R_2^T(\bz^{d-r}))$, since \eqref{eq1.15.0} is satisfied. If $\ell_2\in \pi_2(\cj L)$ and $\ell_1,\ell_1'\in \cj L_1(l_2)$ are congruent $(\mod R_1^T(\bz^r))$ then $(\ell_1,\ell_2)^T\equiv (\ell_1',\ell_2)^T$$(\mod \ R^T(\bz^d))$. Thus, $\ell_1=\ell_1'$, as $\cj L$ is a complete set of representatives of $R^T({\mathbb Z}^d)$. From these, we have $\#\pi_2(L)\leq |\det R_2|$ and, for all $\ell_2\in \pi_2(\cj L)$, $\#L_1(\ell_2)\leq |\det R_1|$. Since
$$|\det R|=|\det R_1||\det R_2|\geq \sum_{\ell_2\in \pi_2(\cj L)}\#\cj L_1(\ell_2)=\#\cj L=|\det R|,$$
we must have equalities in all inequalities and we get that the sets are indeed {\it complete} sets of representatives.
\end{proof}

\medskip

\begin{lemma}\label{lem1.16}
Suppose that \mbox{\bf (A1)} and \mbox{\bf (A2)} holds. Let $1\leq j\leq m$. If the  the transition from $(x,(R_2^T)^jy_0)^T$ is possible with the digit $\ell\in \cj L$, then $\pi_2(\ell)=0$.
\end{lemma}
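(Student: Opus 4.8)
The plan is to follow only the second coordinate of the transition, exploiting the block-triangular form \eqref{eqi} of $R$. Write $\ell=(\ell_1,\ell_2)^T$, so $\pi_2(\ell)=\ell_2$. Since $R$ has the block-lower-triangular form \eqref{eqi}, its transpose $R^T$ is block-upper-triangular, and hence $(R^T)^{-1}$ is again block-upper-triangular with lower-right block $(R_2^T)^{-1}$ and vanishing lower-left block. The key consequence is that the $\pi_2$-component of $\tau_\ell=(R^T)^{-1}(\cdot+\ell)$ decouples completely from the first coordinate and from $C$:
$$
\pi_2\!\left(\tau_\ell\!\left((x,(R_2^T)^jy_0)^T\right)\right)=(R_2^T)^{-1}\!\left((R_2^T)^jy_0+\ell_2\right)=(R_2^T)^{j-1}y_0+(R_2^T)^{-1}\ell_2 .
$$

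Next I would feed this into Assumption (iv). The starting point $(x,(R_2^T)^jy_0)^T$ lies in the stratum $\br^r\times\{(R_2^T)^jy_0\}+\bz^d$, and $1\le j\le m$, so the hypothesis that the transition with $\ell$ is possible places the image in $\br^r\times\{(R_2^T)^{j-1}y_0\}+\bz^d$. Projecting onto the last $d-r$ coordinates, this stratum is exactly $\br^r\times\left((R_2^T)^{j-1}y_0+\bz^{d-r}\right)$, so its $\pi_2$-image is $(R_2^T)^{j-1}y_0+\bz^{d-r}$. Comparing with the displayed formula gives $(R_2^T)^{-1}\ell_2\in\bz^{d-r}$, that is $\ell_2\in R_2^T(\bz^{d-r})$, i.e. $\pi_2(\ell)\equiv 0\ (\mod R_2^T(\bz^{d-r}))$.

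Finally I would upgrade this congruence to an equality by means of the injectivity property \mbox{\bf (A2)}. Under our standing convention $0\in L\subseteq\cj L$, so $0\in\cj L$ with $\pi_2(0)=0$; since $\ell,0\in\cj L$ satisfy $\pi_2(\ell)\equiv\pi_2(0)\ (\mod R_2^T(\bz^{d-r}))$, property \eqref{eq1.15.0} forces $\pi_2(\ell)=\pi_2(0)=0$, which is the assertion. I do not expect a genuine obstacle in this lemma: it is a direct coordinate computation. The only steps demanding care are reading off from \eqref{eqi} that the second-coordinate dynamics is governed by $R_2^T$ alone, independently of $C$ and of $\ell_1$ (this is precisely where the block-triangular form is essential), and handling the endpoint $j=m$ via the periodicity relation $(R_2^T)^my_0\equiv y_0\ (\mod R_2^T(\bz^{d-r}))$ so that Assumption (iv) indeed applies for every $1\le j\le m$.
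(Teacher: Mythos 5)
Your argument is correct and is essentially the paper's own proof: both use the block-upper-triangular form of $(R^T)^{-1}$ to read off that the second coordinate of $\tau_\ell$ is $(R_2^T)^{j-1}y_0+(R_2^T)^{-1}\ell_2$, compare with the target stratum from Assumption (iv) to get $\ell_2\equiv 0\ (\mod R_2^T(\bz^{d-r}))$, and then invoke \textbf{(A2)} to conclude $\ell_2=0$. Your version is slightly more explicit than the paper's in spelling out that \textbf{(A2)} is applied against the element $0\in L\subseteq\cj L$ and in noting the role of the periodicity $(R_2^T)^m y_0\equiv y_0$ at the endpoint $j=m$, but the route is the same.
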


\begin{proof}
If the transition is possible with digit $\ell=(\ell_1,\ell_2)^T$, then, by
Assumption (iv),  $$(R^T)^{-1}((x,(R_2^T)^jy_0)^T+(\ell_1,\ell_2)^T)\equiv (y,(R_2^T)^{j-1}y_0)^T(\mod\bz^d),$$ for some $y\in\br^r$, and therefore $(R_2^T)^{-1}\ell_2\equiv 0 \ (\mod \ \bz^{d-r})$, so $\ell_2\equiv 0 \ (\mod R_2^T(\bz^{d-r}))$. By \mbox{\bf (A2)}, $\ell_2=0$.
\end{proof}

\medskip

\begin{lemma}
Suppose that \mbox{\bf (A1)} and \mbox{\bf (A2)} holds. Let $y_j:=(R_2^T)^jy_0$, $1\leq j\leq m$. Then, for all $x\in\br^r$ and all $\ell=(\ell_1,\ell_2)\in \cj L$ with $\pi_2(\ell)=\ell_2\neq 0$, we have that, for all $b_1\in\pi_1(B)$,
\begin{equation}
\sum_{b_2\in B_2(b_1)}e^{2\pi i \ip{b_2}{ (R_2^T)^{-1}(y_j+ \ell_2)}}=0.
\label{eq1.17.1}
\end{equation}
\end{lemma}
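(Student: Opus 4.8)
The plan is to derive the identity directly from Lemma~\ref{lem1.16} (the immediately preceding lemma) combined with the block-triangular shape of $R$ and the linear independence of exponential characters. Fix $x\in\br^r$, a digit $\ell=(\ell_1,\ell_2)^T\in\cj L$ with $\ell_2\neq0$, and an index $1\leq j\leq m$, and consider the point $p=(x,y_j)^T$ together with the candidate transition from $p$ using $\ell$. Since $\pi_2(\ell)=\ell_2\neq0$, the contrapositive of Lemma~\ref{lem1.16} says this transition is \emph{not} possible; because $\ell\in\cj L$, the only way a transition can fail is that $u_B((R^T)^{-1}(p+\ell))=0$. Hence $u_B(\xi)=0$ where $\xi:=(R^T)^{-1}(p+\ell)$, and the crucial observation is that this holds for \emph{every} $x\in\br^r$.

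Next I would compute $\xi$ using $R=\begin{bmatrix}R_1&0\\C&R_2\end{bmatrix}$, so that $(R^T)^{-1}$ is block upper triangular with lower-right block $(R_2^T)^{-1}$ and vanishing lower-left block. This yields
$$\pi_2(\xi)=(R_2^T)^{-1}(y_j+\ell_2),\qquad \pi_1(\xi)=(R_1^T)^{-1}(x+\ell_1)-(R_1^T)^{-1}C^T(R_2^T)^{-1}(y_j+\ell_2).$$
The structural point is that $\pi_2(\xi)$ is \emph{independent} of $x$, whereas $\pi_1(\xi)$ is an affine bijective image of $x$; thus as $x$ ranges over $\br^r$, $\pi_1(\xi)$ ranges over all of $\br^r$ while $\pi_2(\xi)$ remains fixed at $(R_2^T)^{-1}(y_j+\ell_2)$.

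I would then expand $u_B(\xi)=0$, which is equivalent to $\sum_{b\in B}e^{2\pi i\ip{b}{\xi}}=0$. Writing $b=(b_1,b_2)^T$ with $b_1\in\pi_1(B)$ and $b_2\in B_2(b_1)$, and splitting the inner product across the two blocks, the sum factors as
$$\sum_{b_1\in\pi_1(B)}e^{2\pi i\ip{b_1}{\pi_1(\xi)}}\left(\sum_{b_2\in B_2(b_1)}e^{2\pi i\ip{b_2}{(R_2^T)^{-1}(y_j+\ell_2)}}\right)=0.$$
Denoting the inner sum by $S(b_1)$ (which, by the previous paragraph, does not depend on $x$), and setting $\eta=\pi_1(\xi)$, we obtain $\sum_{b_1\in\pi_1(B)}S(b_1)e^{2\pi i\ip{b_1}{\eta}}=0$ for all $\eta\in\br^r$. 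Since the elements of $\pi_1(B)$ are distinct, the characters $\{e^{2\pi i\ip{b_1}{\cdot}}\}_{b_1\in\pi_1(B)}$ are linearly independent as functions on $\br^r$, forcing $S(b_1)=0$ for every $b_1\in\pi_1(B)$, which is precisely \eqref{eq1.17.1}.

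The genuinely delicate step is the first one: converting "the transition is impossible for all $x$" into "$u_B(\xi)=0$ identically in $x$", so that the single scalar freedom in $x$ can be exploited. Note that \eqref{eq1.17.1} itself does not involve $x$; it is exactly the freedom to vary $x$ that drives the proof, and the vanishing lower-left block of $R$ is what decouples $\pi_2(\xi)$ from $x$ and lets the character-independence argument isolate each $S(b_1)$. Beyond verifying that the vanishing of $u_B$ is uniform in $x$, I expect no real obstacle, since the remaining steps are a routine block computation followed by the standard independence of distinct exponential characters.
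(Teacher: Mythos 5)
Your proposal is correct and follows essentially the same route as the paper: invoke Lemma \ref{lem1.16} to conclude $u_B((R^T)^{-1}((x,y_j)^T+\ell))=0$ for every $x\in\br^r$, use the block upper-triangular form of $(R^T)^{-1}$ to see that the second component of the argument is $(R_2^T)^{-1}(y_j+\ell_2)$ independently of $x$, and then let $x$ vary and apply linear independence of distinct exponential characters to isolate each inner sum $S(b_1)$. The only cosmetic slip is calling the freedom in $x$ ``scalar'' (it is a vector in $\br^r$), which does not affect the argument.
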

\begin{proof}
We have that $(R^T)^{-1}$ is of the form $$(R^T)^{-1}=\begin{bmatrix}
(R_1^T)^{-1}& D\\ 0& (R_2^T)^{-1}
\end{bmatrix}.
$$
Then, for all $x\in\br^r$ and all $\ell=(\ell_1,\ell_2)\in \cj L$ with $\pi_2(\ell)=\ell_2\neq 0$, we have that $u_B((R^T)^{-1}((x,y_j)^T+(\ell_1,\ell_2)^T))=0$, because such transitions are not possible by Lemma \ref{lem1.16}. Then
$$
\sum_{b_1\in \pi_1(B)}\sum_{b_2\in B_2(b_1)}e^{2\pi i (\ip{b_1}{ (R_1^T)^{-1}(x+l_1)+D(y_1+l_2)}+\ip{b_2}{ (R_2^T)^{-1}(y_j+l_2)})}=0.
$$
Since $x$ is arbitrary,  it follows that
$$
\sum_{b_1\in\pi_1(B)}e^{2\pi i \ip{b_1}{x}}\sum_{b_2\in B_2(b_1)}e^{2\pi i \ip{b_2}{(R_2^T)^{-1}(y_j+l_2)}}=0 \ \mbox{for all}  \ x\in{\mathbb R}^d.
$$
Therefore, by linear independence of exponential functions, we obtain \eqref{eq1.17.1}.
\end{proof}

\medskip

\begin{lemma}\label{lem1.18}
Suppose that \mbox{\bf (A1)} and \mbox{\bf (A2)} holds. For every $b_1\in\pi_1(B)$, the set $B_2(b_1)$ is a complete set of representatives $(\mod R_2(\bz^{d-r}))$. Therefore $\#B_2(b_1)=|\det R_2|=\#\pi_2(L)$ and
$(R_2,B_2(b_1),\pi_2(L))$ is a Hadamard triple. Also, for every $\ell_2\in \pi_2(L)$, $\#L_1(\ell_2)=\#\pi_1(B)=\frac{N}{|\det R_2|}=:N_1$ and
$(R_1,\pi_1(B),L_1(\ell_2))$ is a Hadamard triple. 
\end{lemma}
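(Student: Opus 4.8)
The plan is to reduce the whole lemma to the single numerical claim that $\#B_2(b_1)=|\det R_2|$ for every $b_1\in\pi_1(B)$. Once this is known, everything else is bookkeeping built on Lemma \ref{lem1.15}: since Lemma \ref{lem1.15}(i) already guarantees that the elements of $B_2(b_1)$ are pairwise incongruent $(\mod R_2(\bz^{d-r}))$, the equality of cardinalities immediately promotes $B_2(b_1)$ to a complete set of representatives.

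I expect the main obstacle to be extracting this count from the invariance relation \eqref{eq1.17.1}, and my approach is a norm comparison in $\bc^{|\det R_2|}$. Fix $b_1$ and, using $\#\pi_2(\cj L)=|\det R_2|$ from Lemma \ref{lem1.15}(iii) together with \mbox{\bf (A2)}, set
$$c_{b_2}=\left(e^{2\pi i\ip{b_2}{(R_2^T)^{-1}\ell_2}}\right)_{\ell_2\in\pi_2(\cj L)}\in\bc^{|\det R_2|},\qquad b_2\in B_2(b_1).$$
Since $\pi_2(\cj L)$ is a complete set of representatives $(\mod R_2^T(\bz^{d-r}))$ and the elements of $B_2(b_1)$ are incongruent $(\mod R_2(\bz^{d-r}))$, the character sum $\ip{c_{b_2}}{c_{b_2'}}=\sum_{\ell_2\in\pi_2(\cj L)}e^{2\pi i\ip{b_2-b_2'}{(R_2^T)^{-1}\ell_2}}$ vanishes for $b_2\ne b_2'$; hence the $c_{b_2}$ are mutually orthogonal with $\|c_{b_2}\|^2=|\det R_2|$. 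Now fix any $j$ with $1\le j\le m$, write $\alpha_{b_2}=e^{2\pi i\ip{b_2}{(R_2^T)^{-1}y_j}}$ (a unimodular scalar), and form $C=\sum_{b_2\in B_2(b_1)}\alpha_{b_2}c_{b_2}$. Its coordinate at $\ell_2$ is $C_{\ell_2}=\sum_{b_2}e^{2\pi i\ip{b_2}{(R_2^T)^{-1}(y_j+\ell_2)}}$, which is exactly the left-hand side of \eqref{eq1.17.1}, so $C_{\ell_2}=0$ for all $\ell_2\ne0$ and $C$ is supported on the single coordinate $\ell_2=0$ (note $0\in\pi_2(\cj L)$ since $0\in L\subset\cj L$). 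Computing $\|C\|^2$ two ways,
$$\#B_2(b_1)\,|\det R_2|=\sum_{b_2}|\alpha_{b_2}|^2\|c_{b_2}\|^2=\|C\|^2=|C_0|^2\le\left(\#B_2(b_1)\right)^2,$$
yields $|\det R_2|\le\#B_2(b_1)$. Combined with $\#B_2(b_1)\le\#\pi_2(L)\le|\det R_2|$ from Lemma \ref{lem1.15}(i),(iii), all inequalities are equalities, so $\#B_2(b_1)=|\det R_2|=\#\pi_2(L)$ and $B_2(b_1)$ is a complete set of representatives.

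It remains to read off the Hadamard statements. From $\#\pi_2(L)=|\det R_2|$ and $\pi_2(L)\subset\pi_2(\cj L)$ I get that $\pi_2(L)$ is itself a complete set of representatives $(\mod R_2^T(\bz^{d-r}))$, so the orthogonality of the $c_{b_2}$ proved above is exactly the statement that the square matrix $\frac1{\sqrt{|\det R_2|}}\left[e^{2\pi i\ip{R_2^{-1}b_2}{\ell_2}}\right]_{\ell_2\in\pi_2(L),\,b_2\in B_2(b_1)}$ has orthonormal columns, i.e. is unitary; thus $(R_2,B_2(b_1),\pi_2(L))$ is a Hadamard triple. For the $R_1$ side, summing $N=\sum_{b_1\in\pi_1(B)}\#B_2(b_1)=\#\pi_1(B)\,|\det R_2|$ gives $\#\pi_1(B)=N/|\det R_2|=N_1$, and then $N=\sum_{\ell_2\in\pi_2(L)}\#L_1(\ell_2)\le|\det R_2|\cdot N_1=N$ using Lemma \ref{lem1.15}(ii); equality forces $\#L_1(\ell_2)=N_1=\#\pi_1(B)$ for every $\ell_2\in\pi_2(L)$. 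Finally, because $\#B_2(b_1)=|\det R_2|$ is now independent of $b_1$, the relation \eqref{eq1.15.2} collapses to $\sum_{b_1\in\pi_1(B)}e^{2\pi i\ip{R_1^{-1}b_1}{\ell_1-\ell_1'}}=0$ for distinct $\ell_1,\ell_1'\in L_1(\ell_2)$, which is precisely the row-orthogonality of the square matrix $\frac1{\sqrt{N_1}}\left[e^{2\pi i\ip{R_1^{-1}b_1}{\ell_1}}\right]_{\ell_1\in L_1(\ell_2),\,b_1\in\pi_1(B)}$; hence $(R_1,\pi_1(B),L_1(\ell_2))$ is a Hadamard triple, finishing the proof.
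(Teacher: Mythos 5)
Your proposal is correct and follows essentially the same route as the paper: both proofs rest on equation \eqref{eq1.17.1}, interpreted as saying that the Fourier transform (on the group $\bz^{d-r}/R_2(\bz^{d-r})$) of the unimodular function supported on $B_2(b_1)$ is concentrated at $0$, and then deduce $\#B_2(b_1)=|\det R_2|$ before finishing with the identical counting and orthogonality bookkeeping. The only cosmetic difference is that you extract the cardinality bound via Plancherel plus the triangle inequality on the surviving coordinate, whereas the paper applies the inversion formula to conclude the function is constant and hence supported on the whole group.
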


\begin{proof}
Let $b_1\in\pi_1(B)$.  We know from Lemma \ref{lem1.15}(i) that the elements of $B_2(b_1)$ are not congruent $(\mod \ R_2(\bz^{d-r}))$. We can identify $B_2(b_1)$ with a subset of the group $\bz^{d-r}/R_2(\bz^{d-r})$. The dual group is $\bz^{d-r}/R_2^T(\bz^{d-r})$ which we can identify with $\pi_2(\cj L)$.
For a function $f$ on $\bz^{d-r}/R_2(\bz^{d-r})$, the Fourier transform is
$$\hat f(\ell_2)=\frac{1}{\sqrt{|\det R_2|}}\sum_{b_2\in \bz^{d-r}/R_2(\bz^{d-r})} f(b_2)e^{-2\pi i\ip{b_2}{(R_2^T)^{-1}\ell_2}},\quad (\ell_2\in \pi_2(\cj L)).$$

Let $1\leq j\leq m$. Consider the function
\begin{equation}\label{f(b)}
f(b_2)=\left\{\begin{array}{cc}e^{-2\pi i\ip{b_2}{(R_2^T)^{-1}y_j}},&\mbox{ if }b_2\in B_2(b_1)\\
0,&\mbox{ if }b_2\in (\bz^{d-r}/R_2(\bz^{d-r}))\setminus B_2(b_1).\end{array}\right.
\end{equation}
Then equation \eqref{eq1.17.1} shows that $\hat f(\ell_2)=0$ for $\ell_2\in \pi_2(\cj L)$, $\ell_2\neq 0$. Thus $\hat f=c\cdot\chi_{\{0\}}$ for some constant $c$ and by $\hat{f}(0) = c$,
\begin{equation}\label{c}
c=\frac{1}{\sqrt{|\det R_2|}}\sum_{b_2\in B_2(b_1)}e^{-2\pi i \ip{b_2}{(R_2^T)^{-1}(y_j)}}.
\end{equation}

\medskip

 Now we apply the inverse Fourier transform and we get
$$f(b_2)=\frac{1}{\sqrt{|\det R_2|}}\sum_{\ell_2\in \pi_2(\cj L)}c\chi_{\{0\}}(\ell_2)e^{2\pi i \ip{b_2}{ (R_2^T)^{-1}\ell_2}}=\frac{c}{\sqrt{|\det R_2|}}.$$
So $f(b_2)$ is constant and therefore $B_2(b_1)=\bz^{d-r}/R_2(\bz^{d-r})$, which means that $B_2(b_1)$ is a complete set of representatives and $\#B_2(b_1)=|\det R_2|$. Since the elements in $\#\pi_2(L)$ are not congruent $(\mod R_2^T(\bz^{d-r}))$, we get that $\# \pi_2(L)\leq |\det R_2|$, and with Lemma \ref{lem1.15} (i), it follows that $\#\pi_2(L)=\#B_2(b_1)=|\det R_2|$. In particular, $\pi_2(L)$ is a complete set of representatives $(\mod R_2^T(\bz^{d-r}))$, so $(R_2,B_2(b_1),\pi_1(L))$ form a Hadamard triple.

\medskip

Since $\sum_{b_1\in \pi_1(B)}\#B_2(b_1)=N$, we get that $\#\pi_1(B)=N/|\det R_2|$. With Lemma \ref{lem1.15}(ii), we have
$$N=\sum_{\ell_2\in\pi_2(L)}\#L_1(\ell_2)\leq \#\pi_2(L)\pi_1(B)=N.$$
Therefore we have equality in all inequalities so $\#L_1(\ell_2)=\#\pi_1(B)=N/|\det R_2|$. Then \eqref{eq1.15.2} shows that $(R_1,\pi_1(B),L_1(\ell_2))$ is a Hadamard triple for all $\ell_2\in \pi_2(L)$.
\end{proof}

\medskip

\begin{proof}[Proof of Theorem \ref{th_quasi}] Using Lemma \ref{lem1.15-}, we can find $L$ so that the Hadamard triple  $(R,B,L)$ satisfies both {\bf (A1)} and {\bf (A2)}. By Lemma \ref{lem1.18}, we know that $B$ must have the form
 $$
 \bigcup_{b_1\in\pi_1(B)} \{b_1\}\times B_2(b_1)
 $$
 where $\#\pi_1(B) = N_1$ and $B_2(b_1)$ is a set of complete representative (mod $R_2^T({\mathbb Z}^{d-r})$). By enumerating elements $\pi_1(B)  = \{u_1,...,u_{N_1}\}$ and $B_2(u_{i}) = \{d_{i,1},...,d_{i,|\det R_2|}\}$, we can write
 $$
 B =\left\{(u_i,d_{i,j})^T : 1\leq i\leq N_1, 1\leq j\leq|\det R_2|\right\}.
 $$
It suffices to show $d_{i,j}$ are given by $v_i+Qc_{i,j}$ where $Q$ has the properties (ii) and (iii) in the theorem. From the equation \eqref{f(b)} and the fact that $f$ is a constant,
we have, for $b_2\in B_2(b_1)$ and $b_1\in\pi_1(B)$,
$$
e^{-2\pi i \ip{b_2}{(R_2^T)^{-1}y_j}}=f(b_2)=\frac{c}{\sqrt{|\det R_2|}},
$$
which implies (from \eqref{c}) that
$$
\frac{1}{|\det R_2|}\sum_{b_2'\in B_2(b_1)}e^{2\pi i \ip{(b_2-b_2')}{(R_2^T)^{-1}y_j}}=1.
$$
By applying the triangle inequality to the sum above, we see that we must have
$$
e^{2\pi i\ip{b_2-b_2'}{(R_2^T)^{-1}y_j}}=1,
$$
which means
\begin{equation}\label{b_2-b_2'}
\ip{b_2-b_2'}{(R_2^T)^{-1} y_j}\in\bz \  \mbox{for all} \ b_2,b_2'\in B_2(b_1), b_1\in \pi_1(B), 1\leq j\leq m.
\end{equation} Here we recall that  $y_j = (R_2^T)^{j}y_0$.

\medskip

Define now the lattice
$$\Gamma:=\{x\in\bz^{d-r} : \ip{x}{(R_2^T)^{-1}y_j}\in{\mathbb Z},  \ \forall \ 1\leq j\leq m\}.$$

\medskip

We first claim that the lattice $\Gamma$ is of full-rank. Indeed, since $(R_2^T)^my_j\equiv y_j(\mod \bz^{d-r})$, it follows that all the points $(R_2^T)^{-1}(y_j)$ have only rational components. Let $\tilde m$ be a common multiple for all the denominators of all the components of the vectors $(R_2^T)^{-1}(y_j)$, $1\leq j\leq m$. If $\{e_i\}$ are the canonical vectors in $\br^{d-r}$, then $\ip{\tilde me_i}{ (R_2^T)^{-1}(y_j)}\in\bz$ so $\tilde m e_i\in\Gamma$, and thus $\Gamma$ is full-rank.

\medskip

Next we prove that $\Gamma$ is a proper sublattice of $\bz^{d-r}$. The vectors $y_j$ are not in $\bz^{d-r}$ because the points $(0,y_j)$ are contained in $\mathcal Z$, by Assumption (iv), so $\widehat\mu((0,y_j)^T+k)=0$ for all $k\in\bz^d$, and that would contradict the fact that $\widehat\mu(0)=1$. This implies that the vectors $(R_2^T)^{-1}(y_j)$ are not in $\bz^{d-r}$ so $\Gamma$ is a proper sublattice of $\bz^{d-r}$.

\medskip

Since $\Gamma$ is a full-rank lattice in $\bz^{d-r}$, there exists an invertible matrix with integer entries $Q$ such that $\Gamma=Q(\bz^{d-r})$, and since $\Gamma$ is a proper sublattice, it follows that $|\det Q|>1$ so $|\det Q|\geq 2$.
In addition, we know from \eqref{b_2-b_2'} that, for all $u_i\in \pi_1(B)$ and $d_{i,j},d_{i,j'}\in B_2(u_i)$, $d_{i,j}-d_{i,j'}\in\Gamma$. Therefore, if we fix an element $v_i=d_{i,j_0}\in B_2(a_i)$, then all the elements in $B_2(a_i)$ are of the form $d_{i,j}=v_i+Qc_{i,j}$ for some $c_{i,j}\in\bz^{d-r}$. The fact that $B_2(a_i)$ is a complete set of representatives $(\mod R_2(\bz^{d-r}))$ (Lemma \ref{lem1.18}) implies that  the set of the corresponding elements $Qc_{i,j}$ is also a complete set of representatives $(\mod R_2(\bz^{d-r}))$. This shows (iii).

\medskip

It remains to show $R_2Q = Q\widetilde{R_2}$ for some for some $(d-r)\times(d-r)$ integer matrix $\widetilde{R_2}$.  Indeed, if $x\in\Gamma$, and $0\leq j\leq m-1$, then
$\ip{R_2x}{(R_2^T)^jy_0}=\ip{x}{(R_2^T)^{j+1}y_0}\in\bz$, since $(R_2^T)^my_0\equiv y_0 \ (\mod\bz^{d-r})$. So $R_2x\in\Gamma$. Then, for the canonical vectors $e_i$, there exist $\tilde e_i\in\bz^{d-r}$ such that $R_2Qe_i=Q\widetilde{e_i}$. Let $\widetilde{R_2}$ be the matrix with columns $\widetilde{e_i}$. Then $R_2Q=Q\widetilde{R_2}$.

\medskip

Finally, since {\bf (A2)} is satisfied, the Hadamard triple properties of both  $(R_1,\pi_1(B), L_1(\ell_2))$ and $(R_2,B_2(b_1),\pi_1(L))$ on ${\mathbb R}^r$ and ${\mathbb R}^{d-r}$ respectively are direct consequences of Lemma \ref{lem1.18}.
\end{proof}
\medskip

\section{Proof of the theorem}
In this section, we will prove our main theorem. We first need to study the spectral property of Hadamard triples that are in the quasi-product form.
Suppose now the pair $(R,B)$ is in the quasi-product form
\begin{equation}\label{R_4.1}R=\begin{bmatrix}
R_1&0\\
C&R_2
\end{bmatrix}\end{equation}
\begin{equation}
B=\left\{(u_i,d_{i,j})^T : 1\leq i\leq N_1, 1\leq j\leq N_2:=|\det R_2|\right\},
\label{eq1.23.1}
\end{equation}
and $\{d_{i,j}:1\leq j\leq N_2\}$  is a complete set of representatives $(\mod R_2\bz^{d-r})$ (in fact, $d_{i,j} = v_i +Qc_{i,j}$ as in Theorem \ref{th_quasi}). We will show that the measure $\mu=\mu(R,B)$ has a quasi-product structure.

\medskip

Note that we have
$$R^{-1}=\begin{bmatrix}
R_1^{-1}&0\\
-R_2^{-1}CR_1^{-1}&R_2^{-1}
\end{bmatrix}$$
and, by induction,
$$R^{-k}= \begin{bmatrix}
R_1^{-k}&0\\
D_k&R_2^{-k}
\end{bmatrix},\mbox{ where }D_k:=-\sum_{l=0}^{k-1}R_2^{-(l+1)}CR_1^{-(k-l)}.$$
 For the invariant set $T(R,B)$, we can express it as a set of infinite sums,
 $$
 T(R,B)=\left\{\sum_{k=1}^\infty R^{-k}b_k : b_k\in B\right\}.
 $$
Therefore any element $(x,y)^T\in T(R,B)$ can be written in the following form
$$x=\sum_{k=1}^\infty R_1^{-k}u_{i_k}, \quad y=\sum_{k=1}^\infty D_ku_{i_k}+\sum_{k=1}^\infty R_2^{-k}d_{i_k,j_k}.$$
Let $X_1$ be the attractor (in $\br^r)$ associated to the IFS defined by the pair $(R_1,\pi_1(B)=\{u_i :1\leq i\leq N_1\})$ (i.e. $X_1=T(R_1,\pi_1(B))$). Let $\mu_1$ be the (equal-weight) invariant measure associated to this pair.

\medskip

For each sequence $\omega=(i_1i_2\dots)\in\{1,\dots,N_1\}^{\bn} = \{1,\dots, N_1\}\times\{1,\dots, N_1\}\times...$, define
\begin{equation}\label{eqxomega}
x(\omega)=\sum_{k=1}^\infty R_1^{-k}u_{i_k}.
\end{equation}

 By Lemma \ref{lem1.18},  $(R_1,\pi_1(B))$ forms Hadamard triple with some $L_1(\ell_2)$. Thus, the measure $\mu(R_1,\pi_1(B))$ has the no-overlap property by Theorem \ref{th1.0}. It implies that there is a bijective correspondence, up to measure zero, between the set $\Omega_1:=\{1,\dots,N_1\}^{\bn}$ and $X_1$ (See \cite{Ki} for detail). Hence,  for $\mu_1$-a.e. $x\in X_1$, there is a unique $\omega$ such that $x(\omega)=x$. Because of the correspondence, we define the $\omega$ as $\omega(x)$. Furthermore, the measure $\mu_1$ on $X_1$ is the pull-back of the product measure on $\Omega_1$ which assigns equal probabilities $\frac1{N_1}$ to each digit.

\medskip

For $\omega=(i_1i_2\dots)$ in $\Omega_1$, define
$$\Omega_2(\omega):=\{(d_{i_1,j_1}d_{i_2,j_2}\dots d_{i_n,j_n}\dots) : j_k\in \{1,\dots,N_2\}\mbox{ for all }k\in\bn\}.$$
For $\omega\in\Omega_1$, define $g(\omega):=\sum_{k=1}^\infty D_ka_{i_k}$ and $g(x):=g(\omega(x))$, for $x\in X_1$.  Also $\Omega_2(x):=\Omega_2(\omega(x))$.
For $x\in X_1$, define
$$X_2(x):=X_2(\omega(x)):=\left\{\sum_{k=1}^\infty R_2^{-k}d_{i_k,j_k}: j_k\in\{1,\dots,N_2\}\mbox{ for all }k\in\bn\right\}.$$
Note that the attractor $T(R,B)$ has the following form

$$T(R,B)=\{(x,g(x)+y)^T: x\in X_1,y\in X_2(x)\}.$$

\medskip

For $\omega\in \Omega_1$,  consider the product probability measure $\mu_\omega$, on $\Omega_2(\omega)$, which assigns equal probabilities $\frac{1}{N_2}$ to each digit $d_{i_k,j_k}$ at level $k$.
Next, we define the measure $\mu_\omega^2$ on $X_2(\omega)$. Let
$r_\omega:\Omega_2(\omega)\rightarrow X_2(\omega)$,
$$r_\omega(d_{i_1,j_1}d_{i_2,j_2}\dots)=\sum_{k=1}^\infty R_2^{-k}d_{i_k,j_k}.$$
Define $\mu_x^2:=\mu_{\omega(x)}^2:=\mu_{\omega(x)}\circ r_{\omega(x)}^{-1}$.

\medskip

Note that the measure $\mu_x^2$ is the infinite convolution product $\delta_{R_2^{-1}B_2(i_1)}\ast\delta_{R_2^{-2}B_2(i_2)}\ast\dots$, where $\omega(x)=(i_1i_2\dots)$, $B_2(i_k):=\{d_{i_k,j} : 1\leq j\leq N_2\}$ and $\delta_A:=\frac{1}{\#A}\sum_{a\in A}\delta_a$, for a subset $A$ of $\br^{d-r}$. The following lemmas were proved in \cite{DJ07d}.

\begin{lemma}\label{lem1.23}\cite[Lemma 4.4]{DJ07d}
For any bounded Borel functions on $\br^d$,
$$\int_{T(R,B)}f\,d\mu=\int_{X_1}\int_{X_2(x)}f(x,y+g(x))\,d\mu_x^2(y)\,d\mu_1(x).$$
\end{lemma}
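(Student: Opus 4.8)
The plan is to realize $\mu$ as the push-forward of a product measure under the coding map and then disintegrate that product measure according to the splitting of each digit into its two components. Let $\Sigma = B^{\bn}$ and let $\pi:\Sigma\to T(R,B)$ be the address map $\pi(b_1b_2\dots)=\sum_{k=1}^\infty R^{-k}b_k$. Since $\mu$ is the equal-weight self-affine measure, it is the push-forward $\mu=\nu\circ\pi^{-1}$, where $\nu$ is the infinite product measure on $\Sigma$ assigning mass $\frac1N$ to each digit; this standard symbolic description follows from the invariance identity \eqref{self-affine measure} and the uniqueness of the invariant measure. Hence $\int_{T(R,B)}f\,d\mu=\int_\Sigma f(\pi(\sigma))\,d\nu(\sigma)$ for every bounded Borel $f$.

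First I would reparametrize the symbolic space using the quasi-product form \eqref{eq1.23.1}. Because the elements of $B$ are distinct, the correspondence $(u_i,d_{i,j})^T\leftrightarrow(i,j)$ is a bijection between $B$ and $\{1,\dots,N_1\}\times\{1,\dots,N_2\}$, and choosing a digit of $B$ uniformly is the same as choosing $i$ and $j$ independently and uniformly. Writing a point of $\Sigma$ as a pair $\omega=(i_1i_2\dots)\in\Omega_1$ and $\omega'=(j_1j_2\dots)\in\Omega':=\{1,\dots,N_2\}^{\bn}$, this identifies $(\Sigma,\nu)$ with the product $(\Omega_1\times\Omega',P_1\times P_2)$, where $P_1,P_2$ are the uniform product measures. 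Using the block lower-triangular form of $R^{-k}$ recorded above, the coding map becomes
$$\pi(\sigma)=\Big(x(\omega),\ g(\omega)+\textstyle\sum_{k=1}^\infty R_2^{-k}d_{i_k,j_k}\Big)^T,$$
so that the first coordinate and $g$ depend only on $\omega$. For fixed $\omega$ the identification $(j_k)_k\mapsto(d_{i_k,j_k})_k$ carries $P_2$ onto the measure $\mu_\omega$ on $\Omega_2(\omega)$, and then $r_\omega$ carries $\mu_\omega$ onto $\mu_\omega^2=\mu_\omega\circ r_\omega^{-1}$ on $X_2(\omega)$.

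Next I would apply Fubini's theorem on $\Omega_1\times\Omega'$. For fixed $\omega$, integrating in $\omega'$ against $P_2$ and pushing forward by $r_\omega$ gives
$$\int_{\Omega'}f\big(x(\omega),g(\omega)+r_\omega(\omega')\big)\,dP_2(\omega')=\int_{X_2(\omega)}f\big(x(\omega),g(\omega)+y\big)\,d\mu_\omega^2(y),$$
whence $\int_{T(R,B)}f\,d\mu=\int_{\Omega_1}\int_{X_2(\omega)}f(x(\omega),g(\omega)+y)\,d\mu_\omega^2(y)\,dP_1(\omega)$. It then remains to transport the outer integral from $\Omega_1$ to $X_1$. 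Here is where the no-overlap condition enters: by Lemma \ref{lem1.18} the triple $(R_1,\pi_1(B),L_1(\ell_2))$ is Hadamard, so $\pi_1(B)$ is a simple digit set for $R_1$, and Theorem \ref{th1.0} yields the no-overlap condition for $\mu_1=\mu(R_1,\pi_1(B))$. This guarantees that $\omega\mapsto x(\omega)$ is a measure isomorphism of $(\Omega_1,P_1)$ onto $(X_1,\mu_1)$ up to a null set; in particular $\mu_1=P_1\circ x(\cdot)^{-1}$, and $g(\omega)$, $X_2(\omega)$, $\mu_\omega^2$ descend $\mu_1$-almost everywhere to the well-defined objects $g(x)$, $X_2(x)$, $\mu_x^2$ attached to $x=x(\omega)$. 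Substituting $x=x(\omega)$ yields the claimed formula.

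I expect the only genuinely delicate step to be this last transfer from the symbolic parameter $\omega$ to the geometric parameter $x$: the quantities $g$, $\omega\mapsto\mu_\omega^2$ and $\omega\mapsto X_2(\omega)$ are well defined as functions of $x$ only off the (measure-zero) overlap set, so one must invoke the no-overlap property of $(R_1,\pi_1(B))$ precisely to discard that null set and make the change of variables $\omega\mapsto x$ legitimate. The push-forward representation of $\mu$, the product structure of $\nu$, and the Fubini interchange are all routine once the quasi-product reparametrization is in place.
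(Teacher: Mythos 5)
Your argument is correct and is essentially the route the paper intends: the paper itself only cites \cite{DJ07d} for this lemma, but the paragraphs preceding it set up exactly your ingredients --- the identification of $\mu$ with the push-forward of the Bernoulli measure under the coding map, the product splitting of each digit $(u_i,d_{i,j})^T$ into independent uniform choices of $i$ and $j$, and the measure-zero-exceptional bijection $\Omega_1\leftrightarrow X_1$ furnished by the no-overlap property of $(R_1,\pi_1(B))$ via Lemma \ref{lem1.18} and Theorem \ref{th1.0}. Your Fubini-plus-change-of-variables completion of that setup is the standard proof of this disintegration formula, including the one genuinely delicate point you correctly isolate (descending $g$, $X_2(\omega)$ and $\mu_\omega^2$ from the symbolic parameter to the geometric one off a null set).
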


\begin{lemma}\label{lem1.24}\cite[Lemma 4.5]{DJ07d}
If $\Lambda_1$ is a spectrum for the measure $\mu_1$, then
$$F(y):=\sum_{\lambda_1\in\Lambda_1}|\widehat\mu(x+\lambda_1,y)|^2=\int_{X_1}|\widehat\mu_s^2(y)|^2\,d\mu_1(s),\quad(x\in\br^r,y\in\br^{d-r}).$$
\end{lemma}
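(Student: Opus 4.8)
The plan is to reduce the identity to Parseval's theorem in $L^2(\mu_1)$ applied to a single auxiliary function on $X_1$, using the fiber decomposition of $\mu$ provided by Lemma \ref{lem1.23}.

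First I would compute the Fourier transform of $\mu$ in terms of the transversal fiber measures. Applying Lemma \ref{lem1.23} to the exponential $f(x,y)=e^{-2\pi i(\ip{\xi_1}{x}+\ip{\xi_2}{y})}$ (and writing the $X_1$-integration variable as $s$ to avoid the clash with the free variable $x$ in the statement), one may pull the $s$-dependent factors out of the inner integral to obtain
$$
\widehat{\mu}(\xi_1,\xi_2)=\int_{X_1}e^{-2\pi i(\ip{\xi_1}{s}+\ip{\xi_2}{g(s)})}\,\widehat{\mu_s^2}(\xi_2)\,d\mu_1(s),\qquad(\xi_1\in\br^r,\ \xi_2\in\br^{d-r}),
$$
where the inner integral $\int_{X_2(s)}e^{-2\pi i\ip{\xi_2}{y}}\,d\mu_s^2(y)$ is exactly $\widehat{\mu_s^2}(\xi_2)$ by definition.

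Next I would specialize $\xi_1=x+\lambda_1$, $\xi_2=y$, and factor out $e^{-2\pi i\ip{\lambda_1}{s}}$. Setting
$$
h(s):=e^{-2\pi i(\ip{x}{s}+\ip{y}{g(s)})}\,\widehat{\mu_s^2}(y),
$$
a bounded (hence $L^2(\mu_1)$) function of $s$ for fixed $x,y$, I recognize
$$
\widehat{\mu}(x+\lambda_1,y)=\int_{X_1}h(s)\,e^{-2\pi i\ip{\lambda_1}{s}}\,d\mu_1(s)=\ip{h}{e_{\lambda_1}}_{L^2(\mu_1)},
$$
where $e_{\lambda_1}(s)=e^{2\pi i\ip{\lambda_1}{s}}$. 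Since $\Lambda_1$ is a spectrum for $\mu_1$, the family $\{e_{\lambda_1}:\lambda_1\in\Lambda_1\}$ is an orthonormal basis of $L^2(\mu_1)$, so Parseval's identity gives
$$
F(y)=\sum_{\lambda_1\in\Lambda_1}|\widehat{\mu}(x+\lambda_1,y)|^2=\sum_{\lambda_1\in\Lambda_1}|\ip{h}{e_{\lambda_1}}|^2=\|h\|_{L^2(\mu_1)}^2=\int_{X_1}|h(s)|^2\,d\mu_1(s).
$$
Because the exponential prefactor in $h$ has modulus one, $|h(s)|^2=|\widehat{\mu_s^2}(y)|^2$, which yields the claimed formula and, as a byproduct, shows that the expression does not depend on $x$.

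The computation is essentially routine; the only points requiring care are the measurability and integrability needed so that Lemma \ref{lem1.23} and Parseval apply. Boundedness of $h$ follows from $|\widehat{\mu_s^2}(y)|\le 1$ (each $\mu_s^2$ is a probability measure) together with $\mu_1$ being a probability measure, and the measurability of $s\mapsto\widehat{\mu_s^2}(y)$ follows from the explicit infinite-convolution description $\mu_s^2=\delta_{R_2^{-1}B_2(i_1)}\ast\delta_{R_2^{-2}B_2(i_2)}\ast\dots$ and the measurable dependence of the digits on $\omega(s)$. The conceptual heart of the argument is simply the observation that summing over the spectrum $\Lambda_1$ collapses, via Parseval, precisely the $X_1$-direction, leaving only the transversal contribution of the fiber measures $\mu_s^2$.
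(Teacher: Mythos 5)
Your argument is correct: the disintegration formula of Lemma \ref{lem1.23} applied to an exponential expresses $\widehat\mu(x+\lambda_1,y)$ as the $L^2(\mu_1)$-inner product of the bounded function $h$ with $e_{\lambda_1}$, and Parseval for the orthonormal basis $E(\Lambda_1)$ immediately yields the identity. The paper does not reprove this lemma but cites \cite[Lemma 4.5]{DJ07d}, and your proof is essentially the same disintegration-plus-Parseval argument used there, so there is nothing further to add.
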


\medskip

We recall also the Jorgensen-Pedersen Lemma for checking when a countable set is a spectrum for a measure.

\begin{lemma}\label{lemJP}\cite{JP98}
Let $\mu$ be a compactly supported probability measure. Then $\Lambda$ is a spectrum for $L^2(\mu)$ if and only if
$$
\sum_{\lambda\in\Lambda}|\widehat{\mu}(\xi+\lambda)|^2\equiv 1.
$$
\end{lemma}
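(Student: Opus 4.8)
The plan is to deduce the stated criterion from the behaviour of the unit–norm system $E(\Lambda)=\{e_\lambda:\lambda\in\Lambda\}$ in $L^2(\mu)$, where $e_\lambda(x)=e^{2\pi i\ip{\lambda}{x}}$. The bridge between the abstract orthonormal–basis property and the analytic quantity $Q(\xi):=\sum_{\lambda\in\Lambda}|\widehat\mu(\xi+\lambda)|^2$ is the elementary identity
$$
\widehat\mu(\xi+\lambda)=\int e^{-2\pi i\ip{\xi+\lambda}{x}}\,d\mu(x)=\ip{e_{-\xi}}{e_\lambda},
$$
so that $Q(\xi)=\sum_{\lambda\in\Lambda}|\ip{e_{-\xi}}{e_\lambda}|^2$, while $\|e_{-\xi}\|_{L^2(\mu)}^2=\int 1\,d\mu=1$ since $\mu$ is a probability measure. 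Everything reduces to comparing these two numbers.

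First I would settle the forward direction. If $\Lambda$ is a spectrum, then $E(\Lambda)$ is an orthonormal basis, so Parseval's identity $\|f\|^2=\sum_{\lambda\in\Lambda}|\ip{f}{e_\lambda}|^2$ holds for every $f\in L^2(\mu)$. Applying it to $f=e_{-\xi}$ and using the identity above yields $Q(\xi)=\|e_{-\xi}\|^2=1$ for every $\xi\in\br^d$, which is exactly the claimed condition.

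For the converse, assume $Q\equiv 1$. The first step is to extract orthonormality at no cost: evaluating $Q$ at $\xi=-\lambda_0$ for a fixed $\lambda_0\in\Lambda$ gives $\sum_{\lambda\in\Lambda}|\widehat\mu(\lambda-\lambda_0)|^2=1$, and since the single term $\lambda=\lambda_0$ already contributes $|\widehat\mu(0)|^2=1$, every remaining term must vanish; hence $\ip{e_{\lambda_0}}{e_\lambda}=\widehat\mu(\lambda-\lambda_0)=0$ for $\lambda\neq\lambda_0$, and $E(\Lambda)$ is orthonormal. The second step is completeness. For an orthonormal system one has Bessel's identity $\sum_{\lambda\in\Lambda}|\ip{g}{e_\lambda}|^2=\|Pg\|^2\le\|g\|^2$, where $P$ denotes the orthogonal projection onto $\Span E(\Lambda)$, with equality precisely when $g\in\Span E(\Lambda)$. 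Taking $g=e_{-\xi}$, the hypothesis $Q(\xi)=1=\|e_{-\xi}\|^2$ forces equality, so $e_{-\xi}\in\Span E(\Lambda)$ for every $\xi\in\br^d$.

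Finally I would close with a density argument. Because $\mu$ is compactly supported, the $*$-algebra generated by $\{e_\xi:\xi\in\br^d\}$ contains the constants, separates the points of $\supp{\mu}$, and is closed under conjugation (as $\overline{e_\xi}=e_{-\xi}$); by Stone--Weierstrass it is uniformly dense in $C(\supp{\mu})$ and hence dense in $L^2(\mu)$. Since every $e_{-\xi}$ already lies in the closed subspace $\Span E(\Lambda)$, that subspace must be all of $L^2(\mu)$, giving completeness. Combined with the orthonormality obtained above, this shows $E(\Lambda)$ is an orthonormal basis, i.e. $\Lambda$ is a spectrum. I expect the only genuinely delicate point to be the clean recovery of orthonormality from $Q\equiv 1$ via the evaluation $\xi=-\lambda_0$; the remaining steps are routine bookkeeping with Bessel's inequality and the standard Stone--Weierstrass density of exponentials on a compact support.
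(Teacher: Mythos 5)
Your proof is correct. The paper does not prove this lemma at all --- it is quoted from \cite{JP98} as a known criterion --- so there is no in-paper argument to compare against; your route (Parseval applied to $e_{-\xi}$ for the forward direction; for the converse, extracting orthonormality from $Q(-\lambda_0)=1$ together with $\widehat{\mu}(0)=1$, then using the equality case of Bessel's inequality to place every $e_{-\xi}$ in the closed span of $E(\Lambda)$, and finally Stone--Weierstrass on the compact support to upgrade this to completeness) is the standard proof of the Jorgensen--Pedersen criterion and is complete as written.
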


We need the following key proposition.

\begin{proposition}\label{lem1.25}
For the quasi-product form given in \eqref{R_4.1} and \eqref{eq1.23.1}, there exists a lattice $\Gamma_2$ such that for $\mu_1$-almost every $x\in X_1$, the set $\Gamma_2$ is a spectrum for the measure $\mu_x^2$.
\end{proposition}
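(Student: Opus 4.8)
The plan is to exploit the factorization $d_{i,j}=v_i+Qc_{i,j}$ from Theorem \ref{th_quasi} together with the intertwining $R_2Q=Q\widetilde{R_2}$ in order to peel off the matrix $Q$ and reduce $\mu_x^2$ to a measure built from \emph{complete} digit sets. Writing $\omega(x)=(i_1i_2\dots)$ and using $R_2^{-k}Q=Q\widetilde{R_2}^{-k}$, each factor splits as $\delta_{R_2^{-k}B_2(i_k)}=\delta_{R_2^{-k}v_{i_k}}\ast\delta_{Q\widetilde{R_2}^{-k}C_{i_k}}$, where $C_i:=\{c_{i,j}:1\le j\le N_2\}$. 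Hence $\mu_x^2=\delta_{v(x)}\ast Q_\ast\rho_x$, with $v(x)=\sum_k R_2^{-k}v_{i_k}$, $Q_\ast$ the push-forward by $Q$, and $\rho_x=\delta_{\widetilde{R_2}^{-1}C_{i_1}}\ast\delta_{\widetilde{R_2}^{-2}C_{i_2}}\ast\cdots$. Since the Jorgensen--Pedersen criterion (Lemma \ref{lemJP}) depends only on $|\widehat{\mu_x^2}|$, the translation by $v(x)$ is irrelevant, and a lattice $\Lambda_0$ is a spectrum for $\rho_x$ if and only if $(Q^T)^{-1}\Lambda_0$ is a spectrum for $\mu_x^2$. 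Thus it suffices to produce a single lattice $\Lambda_0$ that is a spectrum of $\rho_x$ for $\mu_1$-a.e. $x$ and then set $\Gamma_2=(Q^T)^{-1}\Lambda_0$.

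First I would record that each $C_i$ is a complete set of representatives $(\mod \widetilde{R_2}\bz^{d-r})$: if $c_{i,j}-c_{i,j'}=\widetilde{R_2}m$ then $Q(c_{i,j}-c_{i,j'})=Q\widetilde{R_2}m=R_2Qm\in R_2\bz^{d-r}$, which contradicts property (3) of Theorem \ref{th_quasi} unless $j=j'$; and $\#C_i=N_2=|\det\widetilde{R_2}|$. Consequently the partial product $\rho_{x,n}=\delta_{\widetilde{R_2}^{-1}C_{i_1}}\ast\cdots\ast\delta_{\widetilde{R_2}^{-n}C_{i_n}}$ is the uniform measure on $\widetilde{R_2}^{-n}E_n$, where $E_n$ is a complete residue system $(\mod\widetilde{R_2}^n\bz^{d-r})$; equivalently $\widetilde{R_2}^{-n}E_n$ is a complete set of coset representatives of $\widetilde{R_2}^{-n}\bz^{d-r}/\bz^{d-r}$. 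This is exactly the self-affine (Moran) tile situation with complete digit sets, so $\rho_x$ is the normalized Lebesgue measure on the attractor $Y_2(x)=T(\widetilde{R_2},(C_{i_k})_k)$, and for $\mu_1$-a.e. $\omega$, namely those in which every symbol $i\in\{1,\dots,N_1\}$ occurs infinitely often, $Y_2(x)$ tiles $\br^{d-r}$ by the single lattice $\Lambda_C:=\bz[\widetilde{R_2},\bigcup_i C_i]$. The relevant no-overlap input is Theorem \ref{th1.0}.

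Granting this, Fuglede's theorem for lattice tilings (a set tiles $\br^{d-r}$ by a lattice $\Lambda$ if and only if the dual lattice $\Lambda^\ast=\{\xi:\langle\xi,\lambda\rangle\in\bz\text{ for all }\lambda\in\Lambda\}$ is a spectrum for its Lebesgue measure) yields that $\Lambda_0:=\Lambda_C^\ast$ is a spectrum for $\rho_x$ for a.e. $x$. Transporting back, $\Gamma_2=(Q^T)^{-1}\Lambda_C^\ast=(Q\Lambda_C)^\ast$ is the desired lattice, and one checks $Q\Lambda_C=\bz[R_2,\bigcup_i(B_2(u_i)-v_i)]$, so that $\Gamma_2$ is the dual of the $R_2$-invariant lattice generated by the second-coordinate digit differences; in Example \ref{example5.3} this returns $\tfrac13\bz$. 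The orthogonality half of the criterion can moreover be seen directly: for any complete residue system $S$ $(\mod\widetilde{R_2}^T\bz^{d-r})$ the matrix $\big[e^{2\pi i\langle\widetilde{R_2}^{-1}c,s\rangle}\big]_{c\in C_i,\,s\in S}$ is the character table of the finite abelian group $\bz^{d-r}/\widetilde{R_2}\bz^{d-r}$, hence automatically unitary, so $(\widetilde{R_2},C_i,S)$ is a Hadamard triple and the associated identity $\sum_{s\in S}\big|M_{C_i}((\widetilde{R_2}^T)^{-1}(\xi+s))\big|^2\equiv1$ holds.

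The hard part will be the uniformity in $x$: the digit sets $C_{i_k}$ change from level to level, so I must guarantee that one fixed lattice serves as a spectrum for almost every $\mu_x^2$, and that $\rho_x$ really is Lebesgue measure on a genuine multiplicity-one $\Lambda_C$-tile rather than a singular measure or a multiple cover. A robust way to secure the ``almost every'' conclusion, and to avoid a delicate pointwise tiling analysis, is an averaging argument: verify that $E(\Gamma_2)$ is orthonormal in $L^2(\mu_x^2)$ for a.e. $x$ (so that $\sum_{\gamma\in\Gamma_2}|\widehat{\mu_x^2}(y+\gamma)|^2\le1$ by Bessel), then integrate this defect over $x\in X_1$ against $\mu_1$ using Lemma \ref{lem1.24}, with $\mu_1=\mu(R_1,\pi_1(B))$ spectral by the induction on dimension (its Hadamard structure being supplied by Lemma \ref{lem1.18}). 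Once the average is shown to equal $1$ for every $y$, an integrand bounded by $1$ with integral $1$ must equal $1$ for $\mu_1$-a.e. $x$ and, by continuity on a countable dense set of $y$, for all $y$ simultaneously, which is precisely the Jorgensen--Pedersen criterion for $\mu_x^2$.
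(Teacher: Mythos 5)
Your reduction via $Q$ is correct and useful: peeling off $Q$ does turn each $B_2(i_k)$ into a complete residue system $C_{i_k}$ $(\mod \widetilde{R_2}\bz^{d-r})$, the identity $\mu_x^2=\delta_{v(x)}\ast Q_\ast\rho_x$ holds, and your candidate lattice is essentially the right one (orthogonality of $E(\bz^{d-r})$ in $L^2(\rho_x)$ even holds for \emph{every} $x$, since for $\xi=(\widetilde{R_2}^T)^js$ with $s\notin \widetilde{R_2}^T\bz^{d-r}$ the $(j+1)$-st factor $M_{C_{i_{j+1}}}((\widetilde{R_2}^T)^{-(j+1)}\xi)$ is a nontrivial character sum over $\bz^{d-r}/\widetilde{R_2}\bz^{d-r}$ and vanishes). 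The gap is entirely in the completeness half, and neither of your two routes closes it. The tiling route asserts that for a.e.\ $x$ the Moran set $Y_2(x)$ is a multiplicity-one tile for one fixed lattice $\Lambda_C$; this is exactly the hard content, it is not covered by the Lagarias--Wang theory (which treats a \emph{fixed} complete digit set, not level-dependent ones), and it is actually false for exceptional $x$: already in one dimension with $\widetilde{R_2}=2$ and $C_2=\{0,3\}$, the constant sequence gives $\rho_x=\frac13\chi_{[0,3]}\,dx$, for which $\bz$ is orthogonal but not complete. A measure-theoretic tiling identity $\sum_k\rho_x(E+k)=\mathrm{Leb}(E)$ does follow from your complete-residue observation by a Riemann-sum argument, but that only forces the periodization of the density to be $1$, not the density to be an indicator of a fundamental domain, which is what spectrality with the dual lattice requires.

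The averaging route is circular as stated. By Lemma \ref{lem1.24}, the average you need,
$$\int_{X_1}\sum_{\gamma_2\in\Gamma_2}|\widehat{\mu_s^2}(y+\gamma_2)|^2\,d\mu_1(s)\;=\;\sum_{\gamma_2\in\Gamma_2}\sum_{\lambda_1\in\Lambda_1}|\widehat{\mu}(x+\lambda_1,y+\gamma_2)|^2,$$
and by Lemma \ref{lemJP} the right-hand side is identically $1$ precisely when $\Lambda_1\times\Gamma_2$ is a spectrum for $\mu$ --- which is the conclusion of Theorem \ref{thmain} in this case, i.e.\ what you are ultimately trying to prove. You give no independent way to evaluate the average. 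The paper breaks this circle with an auxiliary system: it replaces the first component by $(N_1,\{0,1,\dots,N_1-1\})$, so that $B^\dagger$ becomes a \emph{complete} digit set for $R^\dagger$ and $\mu^\dagger=\mu(R^\dagger,B^\dagger)$ is normalized Lebesgue measure on a self-affine tile (by \cite{LW2}); a separate geometric claim shows the tiling lattice can be taken of the form $\bz\times\tilde\Gamma_2$, so Fuglede's theorem gives $\mu^\dagger$ the product spectrum $\bz\times\Gamma_2$ \emph{independently} of the induction. The averaging argument is then legitimate for $\mu^\dagger$, and the conclusion transfers to $\mu_x^2$ because the second-component fiber measures of the two systems coincide under the symbolic identification $\Psi$. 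To repair your proof you need some analogue of this step --- an independently established spectral or tiling statement for an object whose fibers are the $\mu_x^2$ (or the $\rho_x$) --- rather than either of the two arguments you propose.
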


\begin{proof}
First we replace the first component $(R_1,\pi_1(B))$ by a more convenient pair which allows us to use the theory of self-affine tiles from \cite{LW2}. Define
$$R^\dagger:=\begin{bmatrix}
N_1&0\\
0&R_2
\end{bmatrix},$$
$$B^\dagger=\left\{(i,d_{i,j})^T: 1\leq i\leq N_1-1, 1\leq j\leq |\det R_2|\right\}.$$
We will use the super-script $\dagger$ to refer to the pair $(R^\dagger,B^\dagger)$.

\medskip

As $d_{i,j}$ is a complete residue ($\mod R_2(\bz^{d-r})$), the set $B^\dagger$ is a complete set of representatives $(\mod R^\dagger(\bz^{d-r+1}))$. By \cite[Theorem 1.1]{LW2}, $\mu^\dagger$ is the normalized Lebesgue measure on $T(R^\dagger,B^\dagger)$ and
this tiles $\br^{d-r+1}$ with some lattice $\Gamma^{\ast}\subset{\mathbb Z}^{d-r+1}$. The attractor $X_1^{\dagger}$ corresponds to the pair $(N_1,\{0,1,\dots,N_1-1\})$ so $X_1^\dagger$ is $[0,1]$ and $\mu_1^\dagger$ is the Lebesgue measure on $[0,1]$.  We make the following claim:

 \medskip

 {\it Claim:} the set $T(R^\dagger, B^\dagger)$  tiles $\br^{d-r+1}$ with a set of the form $\bz\times \tilde{\Gamma}_2$, where $\tilde\Gamma_2$ is a lattice in $\br^{d-r}$.

 \medskip

\noindent {\it Proof of claim:} This claim was established implicitly in the proof of Theorem 1.1, in section 7, p.101 of \cite{LW2}, but we present a proof here for completeness. Let $\Gamma^{\ast}$ be the lattice on ${\mathbb R}^{d-r+1}$ which is a tiling set of $T(R^\dagger, B^\dagger)$. We observe that the orthogonal projection of  $T(R^\dagger, B^\dagger)$ onto the first coordinate is $[0,1]$. Hence, for any $\gamma\in\Gamma^{\ast}$, the orthogonal projection of $T(R^\dagger, B^\dagger)+\gamma$ is $[0,1]+\gamma_1$, where $\gamma= (\gamma_1,\gamma_2)^T$. As $\Gamma^{\ast}\subset{\mathbb Z}^{d-r+1}$, the projections $[0,1]+\gamma_1$ are measure disjoint for different $\gamma_1$'s. Therefore, the tiling of $T(R^\dagger, B^\dagger)$ by $\Gamma^{\ast}$ naturally divides up into cylinders:
$$
U(\gamma_1): = ([0,1]+\gamma_1)\times {\mathbb R}^{d-r}.
$$
Focusing on one of the cylinders, say $U(0)$, this cylinder is tiled by $\tilde{\Gamma}$ where
$$
\tilde{\Gamma} = \Gamma\cap (\{0\}\times {\mathbb Z}^{d-r}).
$$
As ${\mathbb R}^{d-r+1} = \bigcup_{\gamma_1}U(\gamma_1)$, this means $T(R^\dagger, B^\dagger)$  also  tiles by ${\mathbb Z}\times\tilde{\Gamma}_2$, where $\tilde\Gamma_2$ is the second component of $\tilde\Gamma$. This completes the proof of the claim.
\medskip

 Because of the claim, it follows from the well-known result of Fuglede \cite{Fug74} that $\mu^\dagger$ has a spectrum of the form $\bz\times\Gamma_2$, with $\Gamma_2$ the dual lattice of $\tilde\Gamma_2$.

\medskip

We prove that $\Gamma_2$ is an orthogonal set for the measure ${\mu^\dagger}_s^2$ for $\mu_1^\dagger$-almost every $s\in X_1^\dagger$. Indeed, for $\gamma_2\neq 0$  in $\Gamma_2$, since $\bz\times\Gamma_2$ is a spectrum for $\mu^\dagger$, we have for all $\lambda_1\in\bz$, with Lemma \ref{lem1.23},
$$0=\int_{T(R^\dagger,B^\dagger)}e^{-2\pi i \ip{(\lambda_1,\gamma_2)}{(x,y)}}\,d\mu^\dagger(x,y)=\int_{X_1^\dagger}\int_{{X^\dagger}_2(x)}e^{-2\pi i\ip{(\lambda_1,\gamma_2)}{(x,y)}}\,d{\mu^\dagger}_x^2(y)\,d\mu_1^\dagger(x)$$
$$=\int_{X_1^\dagger}e^{-2\pi i\lambda_1x}\int_{{X^\dagger}_2(x)}e^{-2\pi i\ip{\gamma_2}{ y}}\,d{\mu^\dagger}_x^2(y)\,d\mu^\dagger_1(x).$$
This implies that that
$$\int_{{X^\dagger}_2(x)}e^{-2\pi i \ip{\gamma_2}{y}}\,d{\mu^\dagger}_x^2(y)=0,$$
for all $\gamma_2\in\Gamma_2\setminus\{0\}$ for $\mu^\dagger_1$-a.e. $x\in X_1^\dagger$. This means that $\Gamma_2$ is an orthogonal sequence for ${\mu^\dagger}_x^2$ for $\mu^\dagger_1$-a.e. $x\in X_1^\dagger$ so
\begin{equation}
\sum_{\gamma_2\in\Gamma_2}|\widehat{{\mu^\dagger}_x^2}(y+\gamma_2)|^2\leq 1,\quad(y\in\br^{d-r}),
\label{eq1.25.1}
\end{equation}
for $\mu^\dagger_1$-a.e. $x\in X_1^\dagger$.
With Lemma \ref{lem1.24}, we have
$$1=\sum_{\gamma_2\in\Gamma_2}\sum_{\lambda_1\in\bz}|\widehat\mu^\dagger(x+\lambda_1,y+\gamma_2)|^2=\int_{X^\dagger_1}\sum_{\gamma_2\in\Gamma_2}|\widehat{{\mu^\dagger}_s^2}(y+\gamma_2)|^2\,d\mu^\dagger_1(s).$$
With \eqref{eq1.25.1}, we have
\begin{equation}
\sum_{\gamma_2\in\Gamma_2}|\widehat{{\mu^\dagger}_s^2}(y+\gamma_2)|^2= 1,\quad(y\in\br^{d-r}),
\label{eq1.25.2}
\end{equation}
for $\mu_1^\dagger$-a.e. $s\in X_1^\dagger$,
which means that $\Gamma_2$ is a spectrum for almost every measure ${\mu^\dagger}_s^2$ by Lemma \ref{lemJP}.

\medskip

Now, we are switching back to our original pair $(R,B)$. Note that we have the maps $x:\Omega_1\rightarrow X_1$ and $x^\dagger:\Omega_1\rightarrow X_1^\dagger$, defined by $\omega\mapsto x(\omega)$ as above in \eqref{eqxomega}, and analogously for $x^\dagger$. The maps are measure preserving bijections. Let $\Psi:X_1\rightarrow X_1^\dagger$ be the composition $\psi=x^\dagger\circ x^{-1}$. i.e.
 $$
 \Psi\left(\sum_{j=1}^{\infty} R_1^{-j}u_j\right) = \sum_{j=1}^{\infty} N_1^{-j}j.
 $$
 Consider the measure $\nu (E) = \mu_1^{\dagger}(\Psi(E))$ for Borel set $E$ in $T(R_1,\pi_1(B))$. Because of the no-overlap condition, we can check easily that $\nu$ and $\mu_1$ agree on all the cylinder sets of $T(R_1,\pi_1(B))$. i.e.
 $$
 \nu(\tau_{i_1}\circ...\circ\tau_{i_n}(T(R_1,\pi_1(B)))) = \frac{1}{N^n} =  \mu_1(\tau_{i_1}\circ...\circ\tau_{i_n}(T(R_1,\pi_1(B))))
 $$
 for all $i_1,...,i_n\in\{0,1,...,N_1-1\}$. This shows that $\nu = \mu_1$ and therefore $\mu_1(E) = \mu_1^{\dagger}(\Psi(E))$ for any Borel set $E$. Consider the set
 $$
 {\mathcal N} = \{x\in T(R_1,\pi_1(B)): \Gamma_2 \ \mbox{is not a spectrum} \ \mbox{for} \ \mu_x^2 \}
 $$
 Then
 $$
 \Psi({\mathcal N}) = \{\Psi(x)\in X_1^{\dagger}: \Gamma_2 \ \mbox{is not a spectrum} \ \mbox{for} \ \mu_x^2 \}
 $$
Note also that, on the second component, the two pairs $(R,B)$ and $(R^\dagger,B^\dagger)$ are the same, more precisely $X_2(x)=X_2^\dagger(\Psi(x))$ and $\mu_x^2={\mu^\dagger}_{\Psi(x)}^2$ for all $x\in X_1$. This means that
$$
 \Psi({\mathcal N}) = \{\Psi(x)\in X_1^{\dagger}: \Gamma_2 \ \mbox{is not a spectrum} \ \mbox{for} \ \mu_{\Psi(x)}^2 \}
$$
which has $\mu_1^{\dagger}$-measure 0, by the arguments in the previous paragraph. Hence, $\mu_1(E) = \mu_1^{\dagger}(\Psi(E))=0$ and this completes the proof.
\end{proof}

\medskip

\begin{proof}[Proof of Theorem \ref{thmain}]

To prove Theorem \ref{thmain},  we use induction on the dimension $d$. We know from \cite{LaWa02,DJ06} that the result is true in dimension one (See also Theorem \ref{example5.0}). Assume it is true for any dimensions less than $d$.

\medskip

First, after some conjugation as in Proposition \ref{przrb}, we can assume that $\bz[R,B]=\bz^d$. Next, if the set $\mathcal Z=\emptyset$, then the result follows from Theorem \ref{th1.1}. Suppose now that ${\mathcal Z}\neq\emptyset$. Then, by Proposition \ref{pr1.14} and Theorem \ref{th_quasi}, we can conjugate with some matrix so that $(R,B)$ are of the quasi-product form given in \eqref{R_4.1} and \eqref{eq1.23.1}.

\medskip

By Theorem \ref{th_quasi}, $(R_1,\pi_1(B), L_1(\ell_2))$ forms a Hadamard triple with some $L$ on ${\mathbb R}^{r}$ where $1\leq r<d$. By induction hypothesis, the measure $\mu_1$ is spectral.
Let $\Lambda_1$ be a spectrum for $\mu_1$. By Proposition \ref{lem1.25}, there exists $\Gamma_2$ such that $\Gamma_2$ is a spectrum for $\mu_x^2$ for $\mu_1$-almost everywhere $x$.  Then we have, with \eqref{eq1.25.2}, and Lemma \ref{lem1.24},
$$\sum_{\gamma_2\in\Gamma_2}\sum_{\lambda_1\in\Lambda_1}|\widehat\mu(x+\lambda_1,y+\gamma_2)|^2=\int_{X_1}\sum_{\gamma_2\in\Gamma_2}|\widehat{{\mu}_s^2}(y+\gamma_2)|^2\,d\mu_1(s)=\int_{X_1}1d\mu_1(s)=1.$$
This means that $\Lambda_1\times\Gamma_2$ is a spectrum for $\mu$ by Lemma \ref{lemJP} and this completes the whole proof of Theorem \ref{thmain}.
\end{proof}

\medskip

\section{Fourier frames}\label{Section9}

In this section, we discuss how Fourier frames can be constructed if we have the almost-Parseval-frame property (Definition \ref{ALmost_DEF}) for the affine pair $(R,B)$. Indeed, this follows from a similar procedure as in the proof of Theorem \ref{th1.1}. Assuming that the almost-Parseval-frame condition is satisfied, we consider sequences $\epsilon_k$ such that $\sum_k\epsilon_k<\infty$ and let $n_k$ and $J_{n_k}$ be the associated sets satisfying
\begin{equation}\label{J}
(1-\epsilon_k)\sum_{b\in B_{n_k}}|w_b|^2\leq \sum_{\lambda\in J_{n_k}}\left|\sum_{b\in B_{n_k}}\frac{1}{\sqrt{N^{n_k}}}w_be^{-2\pi i \langle R^{-n_k}b, \lambda\rangle} \right|^2\leq (1+\epsilon_k)\sum_{b\in B_{n_k}}|w_b|^2.
\end{equation}
Letting $m_k = n_1+n_2+...+n_{k}$, we consider
$$
\Lambda_k = J_{n_1}+ (R^T)^{m_1}J_{n_2}+ (R^T)^{m_2} J_{n_3}+...+ (R^T)^{m_{k-1}} J_{n_{k}},
\ \Lambda = \bigcup_{k=1}^{\infty}\Lambda_k..
$$
 We have the following theorem analogous to Theorem \ref{prop_main1}.

\medskip

\begin{theorem}
Suppose that $B$ is a simple digit set for $R$. Let $\mu=\mu(R,B)$ be the associated self-affine measure with equal weights. Assume that the almost-Parseval-frame condition is satisfied and that
$$
\delta (\Lambda) := \inf_{k}\inf_{\lambda\in\Lambda_k}|\widehat{\mu}((R^T)^{-m_k}\lambda)|^2>0
$$
Then the set $E(\Lambda): = \{e^{2\pi i \langle\lambda,x\rangle}:\lambda\in \Lambda\}$ is a Fourier frame for $L^2(\mu)$ with
\begin{equation}\label{Ffrane}
c\delta(\Lambda)\|f\|^2\leq \sum_{\lambda\in\Lambda}\left|\int f(x)e^{-2\pi i \lambda x}d\mu(x)\right|^2\leq C\|f\|^2
\end{equation}
where $c = \prod_{j=1}^{\infty}(1-\epsilon_j)$ and $C = \prod_{j=1}^{\infty}(1+\epsilon_j)$.
\end{theorem}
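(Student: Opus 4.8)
The plan is to run the proof of Theorem \ref{prop_main1} essentially verbatim, with the exact Hadamard isometry $\|H_{m_k}\mathbf{w}\|=\|\mathbf{w}\|$ replaced by the two-sided estimate of Proposition \ref{Prop_Approach_2.1}. Since $B$ is a simple digit set for $R$, Theorem \ref{th1.0} gives the no-overlap condition, so Lemma \ref{lem1} is available: the step functions $\mathcal{S}=\bigcup_n\mathcal{S}_n$ are dense in $L^2(\mu)$, and for $f=\sum_{\mathbf{b}\in B_{m_k}}w_{\mathbf{b}}\mathbf{1}_{T(R,B)_{\mathbf{b}}}\in\mathcal{S}_{m_k}$ one has $\int|f|^2\,d\mu=N^{-m_k}\|\mathbf{w}\|^2$ together with the formula \eqref{eq4.3} for $\int f(x)e^{-2\pi i\langle\lambda,x\rangle}\,d\mu(x)$. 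These two identities are the only structural inputs needed.

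Fixing $k$ and proceeding as in the derivation of \eqref{eq3.3}, I would write
$$\sum_{\lambda\in\Lambda_k}\left|\int f(x)e^{-2\pi i\langle\lambda,x\rangle}\,d\mu(x)\right|^2=\frac{1}{N^{m_k}}\sum_{\lambda\in\Lambda_k}|\widehat{\mu}((R^T)^{-m_k}\lambda)|^2\left|\frac{1}{\sqrt{N^{m_k}}}\sum_{\mathbf{b}\in B_{m_k}}w_{\mathbf{b}}e^{-2\pi i\langle R^{-m_k}\mathbf{b},\lambda\rangle}\right|^2.$$
Because $\delta(\Lambda)\leq|\widehat{\mu}((R^T)^{-m_k}\lambda)|^2\leq1$ for every $\lambda\in\Lambda_k$, these scalar bounds pull out of the sum, and Proposition \ref{Prop_Approach_2.1} sandwiches the remaining quantity between $c_k\|\mathbf{w}\|^2$ and $C_k\|\mathbf{w}\|^2$. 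Combined with $\int|f|^2\,d\mu=N^{-m_k}\|\mathbf{w}\|^2$ this gives $c_k\,\delta(\Lambda)\int|f|^2\,d\mu\leq\sum_{\lambda\in\Lambda_k}|\int f e^{-2\pi i\langle\lambda,\cdot\rangle}\,d\mu|^2\leq C_k\int|f|^2\,d\mu$. The crucial point is that $\sum_k\epsilon_k<\infty$ forces $c_k=\prod_{j=1}^k(1-\epsilon_j)$ to decrease to $c=\prod_{j=1}^\infty(1-\epsilon_j)>0$ and $C_k=\prod_{j=1}^k(1+\epsilon_j)$ to increase to $C<\infty$, so $c\leq c_k$ and $C_k\leq C$ uniformly in $k$, giving $c\,\delta(\Lambda)\int|f|^2\,d\mu\leq\sum_{\lambda\in\Lambda_k}|\int f e^{-2\pi i\langle\lambda,\cdot\rangle}\,d\mu|^2\leq C\int|f|^2\,d\mu$. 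Since $\mathcal{S}_{m_k}\subset\mathcal{S}_{m_\ell}$ for $\ell\geq k$, the same bounds hold with $\Lambda_k$ replaced by any $\Lambda_\ell$; letting $\ell\to\infty$ and invoking monotone convergence on the nonnegative terms over the increasing union $\Lambda=\bigcup_\ell\Lambda_\ell$ yields \eqref{Ffrane} for every $f\in\mathcal{S}$.

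Finally I would pass from $\mathcal{S}$ to all of $L^2(\mu)$ by density. The upper bound shows the analysis operator $\Theta f:=(\int f e^{-2\pi i\langle\lambda,\cdot\rangle}\,d\mu)_{\lambda\in\Lambda}$ satisfies $\|\Theta f\|_{\ell^2}^2\leq C\|f\|^2$ on the dense set $\mathcal{S}$, hence extends continuously to $L^2(\mu)$; since each coordinate functional $f\mapsto\int f e^{-2\pi i\langle\lambda,\cdot\rangle}\,d\mu$ is $L^2$-continuous, the extension coincides with $\Theta$, and the upper inequality holds for all $f$. For the lower bound, given $f\in L^2(\mu)$ and $f_n\in\mathcal{S}$ with $f_n\to f$, continuity of $\Theta$ gives $\|\Theta f_n\|_{\ell^2}\to\|\Theta f\|_{\ell^2}$ while $\|f_n\|\to\|f\|$, so taking limits in $c\,\delta(\Lambda)\|f_n\|^2\leq\|\Theta f_n\|_{\ell^2}^2$ produces $c\,\delta(\Lambda)\|f\|^2\leq\|\Theta f\|_{\ell^2}^2$.

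The argument is largely routine once Proposition \ref{Prop_Approach_2.1} is in hand; the one genuine subtlety—and the step most likely to cause trouble—is the stability of the \emph{lower} frame bound under the density limit. A lower frame bound need not survive an $L^2$-limit in general, but here it does precisely because the frame vectors $e_\lambda$ are fixed and $\Theta$ is already known to be bounded, so $\|\Theta f_n\|_{\ell^2}\to\|\Theta f\|_{\ell^2}$; equally essential is the uniform-in-$k$ control $c\leq c_k$, $C_k\leq C$ afforded by $\sum_k\epsilon_k<\infty$, which is what keeps the final frame bounds $c\,\delta(\Lambda)$ and $C$ independent of the truncation level.
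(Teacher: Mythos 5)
Your proposal is correct and follows essentially the same route as the paper's own proof: replace the exact Hadamard isometry in Theorem \ref{prop_main1} by the two-sided bound of Proposition \ref{Prop_Approach_2.1}, use Lemma \ref{lem1} on step functions, exploit the uniform bounds $c\leq c_k$ and $C_k\leq C$ coming from $\sum_k\epsilon_k<\infty$, and pass to the limit over the nested sets $\mathcal{S}_{m_k}$ and $\Lambda_\ell$. The only difference is that you spell out the final density argument (boundedness of the analysis operator and continuity of both sides of the lower inequality), which the paper leaves implicit; that is a welcome but not substantively different addition.
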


\medskip

\begin{proof}
The proof is analogous to the proof of Theorem \ref{prop_main1}. We check that the step functions on the self-affine sets ${\mathcal S}: = \bigcup_{k=1}^{\infty}{\mathcal S}_k$   satisfies the frame inequality. By Proposition \ref{Prop_Approach_2.1}, for any $k\geq 1$
$$
c_k\sum_{b\in B_{m_{k}}}|w_b|^2\leq \sum_{\lambda\in\Lambda_k}\left| \frac{1}{\sqrt{N^{m_k}}}\sum_{b\in B_{m_k}}w_be^{-2\pi i \langle R^{-m_k}b,\lambda\rangle}\right|^2\leq C_k\sum_{b\in B_{m_{k}}}|w_b|^2
$$
where $c_k = \prod_{j=1}^{k}(1-\epsilon_j)$ and $C_k = \prod_{j=1}^{k}(1+\epsilon_j)$. In view of Lemma \ref{lem1},
$$
\sum_{\lambda\in\Lambda_{m_{k}}}\left|\int f(x)e^{-2\pi i \langle\lambda, x\rangle}d\mu(x)\right|^2 =\frac{1}{N^{m_k}}\left| \frac{1}{\sqrt{N^{m_k}}}\widehat{\mu}((R^{T})^{-{m_k}}\lambda)\sum_{b\in B_{m_{k}}}w_de^{-2\pi i \langle R^{-n}b,\lambda\rangle}\right|^2.
$$
As $\delta(\Lambda)\leq |\widehat{\mu}((R^T)^{-{m_k}}\lambda)|^2\leq 1$, Lemma \ref{lem1} implies that this term is bounded above by $C\|f\|^2$ and bounded below by $c\delta(\Lambda)\|f\|^2$,
$$
c\delta(\Lambda)\|f\|^2\leq\sum_{\lambda\in\Lambda_{m_{k}}}\left|\int f(x)e^{-2\pi i \langle\lambda, x\rangle}d\mu(x)\right|^2\leq C\|f\|^2.
$$
But since ${\mathcal S}_{m_k}\subset{\mathcal S}_{m_{\ell}}$ for any $\ell\geq k$, we will have
$$
c\delta(\Lambda)\|f\|^2\leq\sum_{\lambda\in\Lambda_{m_{\ell}}}\left|\int f(x)e^{-2\pi i \langle\lambda, x\rangle}d\mu(x)\right|^2\leq C\|f\|^2, \ f\in{\mathcal S}_{m_k}.
$$
This shows the frame inequality holds by letting $\ell$ go to infinity.
\end{proof}

\medskip

\begin{proof}[Proof of Theorem \ref{th1.2}.]
It suffices to show that under the assumption that ${\mathcal Z}=\emptyset$, we can find some $\Lambda$ such that $\delta(\Lambda)>0$. This proof will be analogous to Proposition \ref{prop_main2}.

\medskip

 Let $\overline{L}$ be a complete set of representatives (mod $R^T({\mathbb Z}^d)$) and let $X= T(R^T,\overline{L})$, the self-affine tile generated by $R^T$ and $\overline{L}$. Since the almost-Parseval-frame condition is satisfied, we can pick the sets $J_{n_i}$ as in \eqref{J}, with  bounds $1-\epsilon_i,1+\epsilon_i$ and $\sum_{i}\epsilon_i<\infty$. The elements of ${J_{n_i}}$ are in distinct residue classes (mod $(R^T)^{n_i}{\mathbb Z}^d$) by Proposition \ref{proposition2.1}(i). By Proposition \ref{proposition2.1}(ii), we may assume $J_{n_i}\subset \overline{L}+R^T \overline{L}+...+(R^T)^{n_i-1}\overline{L}$.  Thus, by the definition of $X$,
$$
(R^{T})^{-(n_i+p)}J_{n_i}\subset X ,\quad(p\geq 0).
$$
Using this $X$, the rest of the proof is the same as in Proposition \ref{prop_main2}.
\end{proof}

\medskip

Next we present some sufficient geometric conditions that guarantee that $\mathcal Z=\emptyset$.

\begin{definition}\label{T-SOSC}
We say that the IFS $\{\tau_b\}_{b\in B}$ satisfies the {\it $T$-strong open set condition} (denoted in short by {\it $T$-SOSC}) if there exists a complete set of representatives $(\mod R(\bz^d))$, $\overline{B}$ such that $B\subset\overline{B}$, $T(R,B)\cap T^\circ\neq \emptyset$ where $T = T(R,\overline{B})$. ($T^\circ$ denotes the interior).
\end{definition}

As shown in \cite{LW1}, if $\overline B$ is a complete set of representatives $(\mod R(\bz^d))$ then $T(R,\overline B)$ tiles $\br^d$ by some lattice so it is a {\it self-affine} tile. In particular, we say that $T$ is a {\it ${\mathbb Z}^d$-tile} if $T$ is a translational tile with ${\mathbb Z}^d$ as a tiling set.

\medskip

\begin{theorem}\label{th9.1}
	Let $\mu=\mu(R,B)$ be the associated self-affine measure. Consider the following conditions:	
	\begin{enumerate}
   \item The affine IFS associated with $R$ and $B$  satisfies the ($T$-SOSC) with $T$ a ${\mathbb Z}^d$-tile.
		\item For all $k\neq k'$ in $\bz^d$, $\mu((T(R,B)+k)\cap(T(R,B)+k'))=0$.
		\item The set ${\mathcal Z} =\emptyset.$
	\end{enumerate}
Then we have the following implications (i) $\Rightarrow$ (ii) $\Rightarrow$ (iii).
   \end{theorem}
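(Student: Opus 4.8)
The plan is to prove the two implications separately: (i)$\Rightarrow$(ii) is a piece of fractal geometry that plugs directly into the no-overlap machinery of Section 2, while (ii)$\Rightarrow$(iii) is a folding argument onto the torus turning a geometric disjointness into the completeness of $\{e_k\}_{k\in\bz^d}$, from which $\mathcal Z=\ty$ follows. Throughout I write $e_\xi(x)=e^{2\pi i\langle\xi,x\rangle}$.

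For (i)$\Rightarrow$(ii): the $T$-SOSC supplies a complete set of representatives $\overline B\supseteq B$ $(\mod R\bz^d)$ with $T:=T(R,\overline B)$ a $\bz^d$-tile and $T(R,B)\cap T^\circ\neq\ty$. By Proposition \ref{proposition2.3} the IFS $\{\tau_b\}_{b\in B}$ satisfies the OSC with open set $U=T^\circ$, and $T(R,B)\cap T^\circ\neq\ty$ is exactly the SOSC for this $U$; hence Theorem \ref{proposition2.2} yields $\mu(T^\circ)=1$ and $\mu(\partial T)=0$, so $\mu$ is concentrated on $T^\circ$. I will use two standard facts about self-affine tiles from \cite{LW1}: $T$ is regular closed, $T=\overline{T^\circ}$, and, since $T$ is a $\bz^d$-tile, the interiors $\{T^\circ+m\}_{m\in\bz^d}$ are pairwise disjoint. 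Now fix $k\neq k'$ and set $S=(T(R,B)+k)\cap(T(R,B)+k')$. Since $\mu(T^\circ)=1$, $\mu(S)=\mu(S\cap T^\circ)$, so it suffices to show $S\cap T^\circ=\ty$. If $x\in S\cap T^\circ$ then $x-k,x-k'\in T(R,B)\subseteq T$, so $x\in(T+k)\cap(T+k')$; as $S$ is symmetric in $k,k'$ we may assume $k\neq 0$, whence $x\in T^\circ\cap(T+k)=T^\circ\cap(\partial T+k)$ by disjointness of interiors. But regular-closedness gives $T+k=\overline{T^\circ+k}$, so $\partial T+k\subseteq\overline{T^\circ+k}$, while a small ball about any $x\in T^\circ$ lies in $T^\circ$ and is disjoint from $T^\circ+k$, forcing $x\notin\overline{T^\circ+k}$. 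Thus $S\cap T^\circ=\ty$ and $\mu(S)=0$, which is (ii).

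For (ii)$\Rightarrow$(iii): let $\Phi:\br^d\to\mathbb{T}^d=\br^d/\bz^d$ be the canonical projection and $\bar\mu:=\Phi_*\mu$. Taking $k'=0$ in (ii) shows $\bigcup_{m\neq0}\big(T(R,B)\cap(T(R,B)+m)\big)$ is $\mu$-null, which says precisely that $\Phi$ is injective $\mu$-almost everywhere on $T(R,B)$. Consequently $Uf:=f\circ\Phi$ is a unitary isomorphism $L^2(\bar\mu)\to L^2(\mu)$ (an isometry by definition of the push-forward, and onto because $\Phi$ is a.e. injective), and it sends $\bar e_k(t)=e^{2\pi i\langle k,t\rangle}$ to $e_k$ for every $k\in\bz^d$. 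Since trigonometric polynomials are dense in $C(\mathbb{T}^d)$, hence in $L^2(\bar\mu)$, the family $\{\bar e_k\}$ is complete in $L^2(\bar\mu)$, so $\{e_k\}$ is complete in $L^2(\mu)$. Finally, if $\mathcal Z\neq\ty$ pick $\xi_0\in\mathcal Z$; because $\widehat\mu(-\eta)=\overline{\widehat\mu(\eta)}$ the set $\mathcal Z$ is symmetric, so $\langle e_{\xi_0},e_k\rangle_{L^2(\mu)}=\widehat\mu(k-\xi_0)=0$ for all $k\in\bz^d$, while $\|e_{\xi_0}\|^2=1$. This contradicts completeness, so $\mathcal Z=\ty$, which is (iii).

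The bookkeeping of which earlier results feed each step is routine; the genuinely delicate point is the boundary analysis in (i)$\Rightarrow$(ii). The hypothesis gives $\mu(\partial T)=0$ directly, but since $\mu$ is singular one cannot deduce that the tiling overlaps $(\partial T+m)\cap T^\circ$ are $\mu$-null from the fact that they are Lebesgue-null. The resolution is that $\mu$ lives on $T^\circ$ together with regular-closedness of $T$, which makes these interior-boundary overlaps literally empty rather than merely null. Thus the step that most needs care is isolating and citing precisely from \cite{LW1} that $T(R,\overline B)=\overline{T(R,\overline B)^\circ}$; once that is in hand the rest is formal.
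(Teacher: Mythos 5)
Your proof is correct, and the first implication is essentially the paper's argument: both pass through Proposition \ref{proposition2.3} and Theorem \ref{proposition2.2} to get $\mu(T^\circ)=1$ and $\mu(\partial T)=0$, and then use the $\bz^d$-tiling of $T=T(R,\overline B)$ to push the overlap $(T(R,B)+k)\cap(T(R,B)+k')$ into translates of $\partial T$; your extra observation that regular-closedness of $T$ makes $S\cap T^\circ$ literally empty is a slightly sharper version of the paper's appeal to $(\partial T+n)\cap T^\circ=\emptyset$ from \cite{LW1}. Where you genuinely diverge is (ii)$\Rightarrow$(iii). The paper stays on $\br^d$: it forms the null set $\mathcal N=\bigcup_{\alpha\neq 0}\bigl(T(R,B)\cap(T(R,B)-\alpha)\bigr)$, takes compact $\mathcal K\subset T(R,B)\setminus\mathcal N$ on which the integer exponentials separate points, and invokes Stone--Weierstrass plus inner regularity to conclude that $E(\bz^d)$ is complete in $L^2(\mu)$. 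You instead push $\mu$ forward to the torus and convert a.e.-injectivity of the projection $\Phi$ into a unitary equivalence $L^2(\bar\mu)\cong L^2(\mu)$ carrying $\bar e_k$ to $e_k$, where completeness on the torus is immediate. Both arguments hinge on exactly the same null set; the paper's version buys you a proof using only Stone--Weierstrass and inner regularity, while yours is conceptually cleaner but hides one nontrivial step in the assertion that $U$ is \emph{onto}: a.e.-injectivity of the Borel map $\Phi$ gives surjectivity of the composition operator only after an argument (Lusin--Souslin, or an exhaustion of $T(R,B)\setminus\mathcal N$ by compacta on which $\Phi$ is a homeomorphism onto its image, combined with the fact that the range of an isometry is closed). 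That point is true and standard, but as written it is asserted rather than proved, and it is precisely the place where the paper's compact-exhaustion/Stone--Weierstrass route does the work explicitly. The concluding step, that $\xi_0\in\mathcal Z$ yields $e_{\xi_0}\perp e_k$ for all $k\in\bz^d$, matches the paper (and you do not actually need the symmetry of $\mathcal Z$, since $\widehat\mu(k-\xi_0)=\overline{\widehat\mu(\xi_0-k)}$ and $\xi_0-k$ already runs over $\xi_0+\bz^d$).
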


\medskip

\begin{proof}

Suppose that (i) holds. Suppose that ($T$-SOSC) is satisfied for $\{\tau_b\}_{b\in B}$ with $T=T(R,\overline B)$. By the invariance property of $T$, we get that the SOSC condition is satisfied with open set $T^\circ$. By Theorem \ref{proposition2.2} taking  $U=T^\circ$,  we get that $\mu(T^\circ)=1$, $\mu(\partial T)=0$ and that the IFS satisfies the no overlap condition holds. We note that for all $n\in \bz^d$, $\mu(\partial T + n) = \mu((\partial T +n)\cap T^\circ) = 0$ (as $T$ is a ${\mathbb Z}^d$-tile so $(\partial T+n)\cap T^\circ=\ty$, see \cite{LW1}). Moreover, $T(R,B)\subset T$ and $T$ tiles by $\bz^d$ implies that for any $n\neq n'$ in $\bz^d$,
$$
(T(R,B)+n)\cap (T(R,B)+n')\subseteq (T+n)\cap (T+n') = ( \partial T+n)\cap (\partial T+n').
$$
Hence,  $\mu((T(R,B)+n)\cap (T(R,B)+n')) \leq \mu (( \partial T+n)\cap (\partial T+n'))=0$.

\medskip

If (ii) holds, consider the set
$$\mathcal N:=\{x\in T(R,B): \mbox{ There exists $y\in T(R,B)$, $y\neq x$ such that $e^{2\pi i\langle n,x-y\rangle}=1$ for all $n\in{\mathbb Z}^d$}\}$$
$$=\{x\in T(R,B) : \mbox{ There exists $y\in T(R,B)$ such that $-\alpha:=x-y\in{\mathbb Z}^d$}\}$$
$$=\bigcup_{\alpha\in{\mathbb Z}^d}\{x\in T(R,B): x+\alpha\in T(R,B)\} = \bigcup_{\alpha\in{\mathbb Z}^d}(T(R,B)\cap (T(R,B)-\alpha)).$$
By hypothesis,  $\mathcal N$ has measure zero.

\medskip
Now take $\mathcal K$ to be an arbitrary compact subset of $T(R,B)\setminus\mathcal N$. The collection of exponential functions $ E({\mathbb Z}^d) : =\{e^{2\pi i \langle n,x\rangle} : n\in{\mathbb Z}^d\}$ separates points in $\mathcal K$, therefore, by Stone-Weierstrass theorem, we get that $E({\mathbb Z}^d)$ spans $L^2(\mathcal K,\mu)$, and since $\mathcal K$ was arbitrary close to $T(R,B)$ in measure, we get that these exponentials span $L^2(T(R,B),\mu)$. Hence, for $\xi\in {\mathbb R}^d$ we cannot have $\widehat\mu(\xi+ n)=0$ for all $n\in{\mathbb Z}^d$, because that would imply that $e^{2\pi i \langle \xi,x\rangle}$ is orthogonal to all $e^{2\pi i \langle n,x\rangle}$ for all $ n\in{\mathbb Z}^d$, which contradicts the completeness. This shows ${\mathcal Z}=\emptyset$.
\end{proof}

\medskip

The previous theorem leads us to a simple corollary in ${\mathbb R}^1$.
\begin{corollary}
Let $N\ge 2$ be a positive integer. Suppose that $B\subset \left\{0,1,...,N-1\right\}$. Consider the IFS $\tau_b(x) = \frac1N(x+b), \ b\in B$. Suppose that the almost-Parseval-frame condition is satisfied for this self-similar IFS. Then the corresponding (equal-weighted) self-similar measure admits a Fourier frame.
\end{corollary}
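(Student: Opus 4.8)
The plan is to derive this corollary as a direct special case of Theorem~\ref{th1.2}. That theorem requires three inputs for the pair $(R,B)$ with $R=N$: that $B$ be a simple digit set for $R$, that the almost-Parseval-frame condition hold, and that $\mathcal{Z}=\emptyset$; it then produces a Fourier frame $E(\Lambda)$ with $\Lambda\subset\mathbb{Z}$. The almost-Parseval-frame condition is assumed in the hypothesis, and the simplicity of $B$ is immediate: since $B\subset\{0,1,\dots,N-1\}$, distinct elements of $B$ are distinct integers within a single period and hence lie in distinct residue classes modulo $N\mathbb{Z}=R(\mathbb{Z})$. Thus the only thing left to verify is that $\mathcal{Z}=\emptyset$, and once this is done the conclusion follows instantly.

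To obtain $\mathcal{Z}=\emptyset$ I would appeal to the implication (i)$\Rightarrow$(iii) of Theorem~\ref{th9.1}, so it suffices to check the $T$-SOSC of Definition~\ref{T-SOSC} with $T$ a $\mathbb{Z}$-tile. The natural choice is $\overline{B}=\{0,1,\dots,N-1\}$, which is a complete set of representatives modulo $N\mathbb{Z}$ and contains $B$. For this choice the attractor $T=T(N,\overline{B})=\{\sum_{k\ge1}N^{-k}b_k : b_k\in\{0,\dots,N-1\}\}$ is exactly the set of points of the unit interval written in base $N$, so $T=[0,1]$; this tiles $\mathbb{R}$ by integer translations, so $T$ is a $\mathbb{Z}$-tile and $T^{\circ}=(0,1)$. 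The $T$-SOSC therefore reduces to the single geometric assertion $T(R,B)\cap(0,1)\neq\emptyset$.

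The hard part, such as it is, will be this last intersection, that is, ruling out the possibility that the attractor $T(R,B)$ collapses onto the boundary $\{0,1\}$ of $[0,1]$. I would dispose of the degenerate case $\#B=1$ first: there $\mu$ is a single Dirac mass, $L^2(\mu)$ is one-dimensional, and any single nonzero exponential is already an orthonormal basis and hence a frame, so the statement is trivial. When $\#B\ge2$, choose two distinct digits $b<b'$ in $B$; restricting the coding $(b_k)\mapsto\sum_{k\ge1}N^{-k}b_k$ to sequences with entries in $\{b,b'\}$ is injective apart from the countably many standard base-$N$ ambiguities, so $T(R,B)$ is uncountable. Since $T(R,B)\subset[0,1]$ and $T(R,B)\setminus(0,1)\subset\{0,1\}$ is finite, the attractor must meet the open interval $(0,1)$, which is precisely the $T$-SOSC. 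With $\mathcal{Z}=\emptyset$ established, Theorem~\ref{th1.2} applies and delivers the Fourier frame.
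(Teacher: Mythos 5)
Your proof is correct and follows essentially the same route as the paper: verify the $T$-SOSC with $\overline{B}=\{0,1,\dots,N-1\}$ and $T=[0,1]$, then invoke Theorem~\ref{th9.1} to get $\mathcal{Z}=\emptyset$ and conclude via Theorem~\ref{th1.2}. The only difference is that you explicitly justify $T(R,B)\cap(0,1)\neq\emptyset$ (and dispose of the degenerate case $\#B=1$), a point the paper's one-line proof leaves implicit.
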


\begin{proof}
For the IFS, the attractor is contained in $[0,1]$, which is the self-similar set generated by $\overline{B} = \{0,1,...,N-1\}$. Hence, (T-SOSC) is satisfied. The conclusion follows from Theorem \ref{th9.1} and Theorem \ref{th1.2}.
\end{proof}

In particular, the Middle-Third Cantor measure satisfies (T-SOSC), hence ${\mathcal Z}=\emptyset$ is satisfied.

\medskip

\section{Open problems}
One of the major open problems in the study of Fourier analysis on fractals is to see whether the non-spectral self-affine measures are still frame-spectral (See (Q2) in the introduction). The idea of almost-Parseval-frame towers turns this problem into a problem of matrix analysis. Given an integral expanding matrix $R$ and a set of simple digits $B$ with $N=\#B<|\det R|$, the almost-Parseval-frame conditions can be reformulated equivalently as {\it for any $\epsilon>0$, there exists $n\in{\mathbb N}$ and a set of $L_n\subset {\mathbb Z}^d$ such that the matrix}
$$
F_n (B_n,L_n) = \frac{1}{\sqrt{N^n}}\left(e^{2\pi i \langle R^{-n} b,\ell\rangle}\right)_{\ell\in L_n,b\in B_n}
$$
{\it satisfies}
$$
(1-\epsilon)\|{\bf w}\|^2\le \|F_n{\bf w}\|^2\le(1+\epsilon)\|{\bf w}\|^2
$$
{\it for any vectors ${\bf w}\in {\mathbb C}^{N^n}$. (Recall that $B_n = B+RB+...+R^{n-1}B$)}

\medskip

We observe that if we let ${\overline{B_n}}$ and $\overline{L_n}$ be respectively the complete representative class  (mod $R^n({\mathbb Z}^d))$ and  (mod $(R^T)^n({\mathbb Z}^d))$. Then the matrix
$$
\overline{F_n}: = \frac{1}{\sqrt{|\det R|^n}}\left(e^{2\pi i \langle R^{-n} b,\ell\rangle}\right)_{\ell\in \overline{L_n},b\in \overline{B_n}}
$$
forms a unitary matrix. i.e.
$$
\|\overline{F_n}{\bf w}\| = \|{\bf w}\|, \ \forall {\bf w}\in{\mathbb C}^{|\det R|^n}
$$
 As $B_n\subset {\overline{B_n}}$, we can take the vectors ${\bf w}$ such that they are zero on the coordinates which are not in $B_n$. This implies that
 $$
\| F_n(B_n,\overline{L_n}){\bf w}\| = \frac{|\det R|^n}{N^n}\|{\bf w}\|.
 $$
 In other words,
 $$
 \sum_{\lambda\in \overline{L_n}}\left|\sum_{b\in B_n} w_b \frac{1}{\sqrt{N^n}}e^{-2\pi i \langle R^{-n}b,\lambda\rangle}\right|^2 = \frac{|\det R|^n}{N^n}\sum_{b\in B_n}|w_b|^2.
 $$
 This shows that the collection of vectors $\{ \left(\frac{1}{\sqrt{N^n}}e^{-2\pi i \langle R^{-n}b,\lambda\rangle}\right)_{b\in  B_n}:\lambda\in \overline{L_n}\}$ forms a tight frame for ${\mathbb C}^{N^n}$ with frame bound $\frac{|\det R|^n}{N^n}$. Our problem is to extract a subset $L_n$ from $\overline{L_n}$ such that we have an almost tight frame with frame constant nearly 1. This reminds us of the Kadison-Singer problem that was open for over 50 years and solved recently in \cite{MSS}.

 \medskip

\begin{theorem}\cite[Corollary 1.5]{MSS}
Let $r$ be a positive integer and let $u_1,...,u_m\in{\mathbb C}^d$ such that
$$
\sum_{i=1}^m|\langle w, u_i\rangle|^2 = \|w\|^2 \ \forall w\in {\mathbb C}^d
$$
and $\|u_i\|\leq \delta$ for all $i$. Then there exists a partition $S_1,...,S_r$ of $\{1,...,m\}$ such that
$$
\sum_{i\in S_j}|\langle w, u_i\rangle|^2 \le  \left(\frac{1}{\sqrt{r}}+\sqrt{\delta}\right)^2\|w\|^2 \ \forall w\in {\mathbb C}^d.
$$
 \end{theorem}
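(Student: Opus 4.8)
The statement is quoted verbatim from \cite{MSS}, so any proof must reproduce the resolution of the Kadison--Singer problem; I will only sketch the route. The plan is to first reduce the partitioning claim to a statement about sums of independent rank-one random matrices, and then to establish that statement by the method of interlacing polynomials together with a multivariate barrier argument.

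The first step is a tensor/block reduction to a random model. For each $i$ let $k_i$ be uniform on $\{1,\dots,r\}$, chosen independently across $i$, set $S_j=\{i:k_i=j\}$, and define random vectors $v_i=\sqrt{r}\,(e_{k_i}\otimes u_i)\in\mathbb{C}^{rd}$, where $e_1,\dots,e_r$ is the standard basis of $\mathbb{C}^r$. A direct computation gives $\mathbb{E}[v_iv_i^*]=I_r\otimes(u_iu_i^*)$, so $\sum_i\mathbb{E}[v_iv_i^*]=I_r\otimes(\sum_i u_iu_i^*)=I_{rd}$, while $\|v_i\|^2=r\|u_i\|^2\le r\delta$. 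Moreover $\sum_i v_iv_i^*=r\sum_{j=1}^r(e_je_j^*)\otimes\big(\sum_{i\in S_j}u_iu_i^*\big)$ is $r$ times a block-diagonal positive matrix, so $\big\|\sum_i v_iv_i^*\big\|=r\max_j\big\|\sum_{i\in S_j}u_iu_i^*\big\|$. Hence it suffices to prove the random-matrix statement: if $v_1,\dots,v_m$ are independent with $\sum_i\mathbb{E}[v_iv_i^*]=I$ and $\mathbb{E}\|v_i\|^2\le\epsilon$, then $\lambda_{\max}\big(\sum_i v_iv_i^*\big)\le(1+\sqrt{\epsilon})^2$ with positive probability. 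Applying this with $\epsilon=r\delta$ yields $\max_j\big\|\sum_{i\in S_j}u_iu_i^*\big\|\le (1+\sqrt{r\delta})^2/r=(1/\sqrt{r}+\sqrt{\delta})^2$, which is the claimed bound for at least one outcome, i.e.\ for some partition.

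The second step controls $\lambda_{\max}$ via interlacing families. I would index the characteristic polynomials $\chi\!\left[\sum_i v_iv_i^*\right](x)$ by the finitely many outcomes and show they form an \emph{interlacing family}: the core lemma of \cite{MSS} says that in such a family there is always an outcome whose polynomial has largest root no larger than the largest root of the average polynomial $\mathbb{E}\,\chi$. The real-rootedness needed at every node of the averaging tree comes from the theory of real stable polynomials, because $\mathbb{E}\,\chi\!\left[\sum_i v_iv_i^*\right]$ equals the mixed characteristic polynomial $\mu[A_1,\dots,A_m](x)=\prod_{i}(1-\partial_{z_i})\,\det\!\big(xI+\sum_i z_iA_i\big)\big|_{z=0}$ with $A_i=\mathbb{E}[v_iv_i^*]$, and the operators $1-\partial_{z_i}$ preserve real stability, whence the diagonal restriction is real-rooted.

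The third and hardest step is the quantitative bound on the largest root of $\mu[A_1,\dots,A_m]$ under the constraints $\sum_i A_i=I$ and $\operatorname{tr}A_i\le\epsilon$. Here I would run the multivariate barrier argument: starting from a point above the joint root region and using the barrier functions $\Phi^j_p(z)=(\partial_{z_j}p)(z)/p(z)$, one tracks how far the ``barrier point'' must be advanced each time an operator $1-\partial_{z_i}$ is applied so as to stay above all roots, then reads off the final univariate root bound after optimizing the starting point. This barrier analysis is the main obstacle: one must control the cumulative root shift produced by the $m$ derivative operators and tune the initial barrier location so that the bound assembles to exactly $(1+\sqrt{\epsilon})^2$. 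The supporting real-stability bookkeeping that keeps every averaged polynomial real-rooted is the other substantial ingredient, but the sharp constant is forced precisely by this barrier estimate.
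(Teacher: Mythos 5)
This statement is not proved in the paper at all: it is imported verbatim (up to a transcription slip, see below) from \cite{MSS} as Corollary~1.5 of the Kadison--Singer resolution, so there is no in-paper argument to compare yours against. What you have written is a faithful reconstruction of the actual Marcus--Spielman--Srivastava proof architecture. Your Step~1 is carried out correctly and completely: the random block vectors $v_i=\sqrt{r}\,(e_{k_i}\otimes u_i)$ do satisfy $\sum_i\mathbb{E}[v_iv_i^*]=I_{rd}$, the block-diagonal identity $\bigl\|\sum_i v_iv_i^*\bigr\|=r\max_j\bigl\|\sum_{i\in S_j}u_iu_i^*\bigr\|$ is right, and the arithmetic $(1+\sqrt{r\delta})^2/r=(1/\sqrt{r}+\sqrt{\delta})^2$ correctly converts the random-matrix bound into the partition bound. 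Note that your computation $\|v_i\|^2=r\|u_i\|^2\le r\delta$ implicitly uses the hypothesis $\|u_i\|^2\le\delta$, which is what \cite{MSS} actually assumes; the paper's transcription ``$\|u_i\|\le\delta$'' appears to be a typo, and you have silently (and correctly) reverted to the original normalization.

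Steps~2 and~3, however, remain genuine sketches rather than proofs: you correctly identify the three load-bearing ingredients (the interlacing-family lemma reducing the existence claim to the expected characteristic polynomial, the identification of that expectation with the mixed characteristic polynomial together with real-stability preservation under $1-\partial_{z_i}$, and the multivariate barrier estimate that produces the constant $(1+\sqrt{\epsilon})^2$), but none of these is established in your write-up --- each is the statement of a substantial theorem in \cite{MSS}. Since the theorem in question is the solution of a fifty-year-old problem, this is a reasonable level of detail for a citation-backed result, and your outline would serve as an accurate roadmap; but it should not be mistaken for a self-contained proof, and the paper itself wisely does not attempt one.
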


 \medskip

 This statement says that we can partition a tight frame into $r$ subsets such that the frame constant of each partition is almost $1/r$. Iterating this process allowed Nitzan et al \cite{NOU} to establish the existence of Fourier frames on any unbounded sets of finite measure. One of their lemmas states:

\begin{lemma}\label{lem6.1}
\cite[Lemma 3]{NOU} Let $A$ be an $K\times L$ matrix and $J\subset \{1,...,K\}$, we denote by $A(J)$ the sub-matrix of $A$ whose rows belong to the index $J$. Then there exist universal constants $c_0,C_0>0$ such that whenever $A$ is a $K\times L$ matrix, which is a sub-matrix of some $K\times K$ orthonormal matrix, such that all of its rows have equal $\ell^2$-norm, one can find a subset $J\subset\{1,...,K\}$ such that
$$
c_0\frac{L}{K}\|{\bf w}\|^2\leq \|A(J){\bf w}\|^2\leq C_0\frac{L}{K}\|{\bf w}\|^2 ,\ \forall {\bf w}\in {\mathbb C}^n.
$$
\end{lemma}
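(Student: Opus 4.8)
The plan is to translate the statement into the language of finite frames and then drive it by iterating the Marcus--Spielman--Srivastava theorem \cite[Corollary 1.5]{MSS} quoted above. Write $U$ for the $K\times K$ orthonormal matrix of which $A$ is a submatrix; since $A$ consists of $L$ of the columns of $U$, those columns are orthonormal in $\mathbb{C}^K$, so $A^*A=I_L$. Denoting by $v_i\in\mathbb{C}^L$ the conjugate of the $i$-th row of $A$, this is exactly the statement $\sum_{i=1}^K v_iv_i^*=I_L$, i.e. $\{v_i\}_{i=1}^K$ is a Parseval frame for $\mathbb{C}^L$, and $\|A(J)w\|^2=\sum_{i\in J}|\langle w,v_i\rangle|^2$. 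The equal-norm hypothesis together with $\sum_i\|v_i\|^2=\operatorname{tr}(I_L)=L$ forces $\|v_i\|^2=L/K$ for every $i$. Thus the lemma reduces to producing $J\subset\{1,\dots,K\}$ with $c_0\frac LK\,I_L\preceq \sum_{i\in J}v_iv_i^*\preceq C_0\frac LK\,I_L$.

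The engine is a single reduction step. Suppose $S\subset\{1,\dots,K\}$ is such that $T:=\sum_{i\in S}v_iv_i^*$ satisfies $aI\preceq T\preceq bI$ with $a>0$. Normalizing $u_i:=T^{-1/2}v_i$ gives $\sum_{i\in S}u_iu_i^*=I$ and $\|u_i\|^2\le \|v_i\|^2/a=(L/K)/a$, so the theorem applies with $\delta=\sqrt{(L/K)/a}$. Applying it with a fixed small integer $r$ (e.g. $r=4$) produces a partition $S=S_1\cup\dots\cup S_r$ with $\sum_{i\in S_j}u_iu_i^*\preceq\beta I$ for every $j$, where $\beta=(r^{-1/2}+\sqrt\delta)^2$. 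Keeping one block $S_1$ and using $\sum_{i\in S_1}u_iu_i^*=I-\sum_{j\ge2}\sum_{i\in S_j}u_iu_i^*\succeq(1-(r-1)\beta)I$, then conjugating back by $T^{1/2}$, yields
\[
(1-(r-1)\beta)\,a\,I\preceq\sum_{i\in S_1}v_iv_i^*\preceq \beta\, b\, I .
\]
When $\delta$ is small, $\beta$ is close to $1/r$, so this replaces the scale $a$ by roughly $a/r$ while multiplying the condition number $b/a$ only by $\beta/(1-(r-1)\beta)$, a factor tending to $1$ as $\delta\to0$.

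I would then iterate this step starting from $S=\{1,\dots,K\}$, $T=I_L$ (so $a=b=1$), stopping the first time the ratio $(L/K)/a_t$ exceeds a fixed small universal threshold $\eta$ (chosen so that $\beta$ stays bounded away from $1/(r-1)$ throughout, keeping the lower factor $1-(r-1)\beta$ a positive constant). At each retained step the scale $a_t$ decreases geometrically (by roughly $r$) and the ratio $(L/K)/a_t$ grows geometrically; since we stop as soon as this ratio reaches $\eta$, the final scale $a_s$ is comparable to $(L/K)/\eta$, i.e. to $L/K$ up to a universal constant. The delicate point is the condition number: its per-step multiplier exceeds $1$ by an amount of order $\delta_t^{1/2}=((L/K)/a_t)^{1/4}$, and because these ratios grow geometrically up to the stopping value $\eta$, the sum $\sum_t \delta_t^{1/2}$ is a geometric series dominated by its last term, hence bounded by a universal multiple of $\eta^{1/4}$. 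Exponentiating, the total condition number stays bounded by a universal constant, so $J:=S_s$ satisfies $c_0\frac LK I\preceq\sum_{i\in J}v_iv_i^*\preceq C_0\frac LK I$. In the degenerate regime $L/K>\eta$ no iteration is needed: $J=\{1,\dots,K\}$ already works, since then $I_L$ lies between $(L/K)I_L$ and $\eta^{-1}(L/K)I_L$.

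The main obstacle is precisely this bookkeeping of constants: one must verify that the MSS error does not accumulate, i.e. that $a_t$, $b_t/a_t$ and $(L/K)/a_t$ are controlled simultaneously and uniformly in $K$ and $L$ through the whole chain of $\asymp\log(K/L)$ applications. The crucial cancellation is that $((L/K)/a_t)^{1/4}$, summed over the run, telescopes to a quantity of size $\eta^{1/4}$ rather than growing with $\log(K/L)$; pinning down the admissible pairs $(r,\eta)$ and handling the single step in which the ratio crosses $\eta$ is the heart of the argument, everything else being the routine frame and operator computations indicated above.
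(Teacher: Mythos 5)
Your argument is correct, but it is worth noting that the paper does not prove this lemma at all: it is imported verbatim as \cite[Lemma 3]{NOU}, with the Marcus--Spielman--Srivastava corollary quoted only as motivation. What you have done is reconstruct the proof that Nitzan, Olevskii and Ulanovskii themselves give, namely the reduction to an equal-norm Parseval frame $\{v_i\}$ with $\|v_i\|^2=L/K$ followed by an iterated application of the MSS partition theorem, renormalizing by $T^{-1/2}$ at each stage, keeping one block, and recovering the lower bound on that block from the upper bounds on its complement. Your bookkeeping is sound: the identities $A^*A=I_L$ and $\|v_i\|^2=L/K$ are right; the one-step estimate $(1-(r-1)\beta)\,a\,I\preceq\sum_{S_1}v_iv_i^*\preceq\beta b\,I$ with $\beta=(r^{-1/2}+((L/K)/a)^{1/4})^2$ is correct (using $T^{-1}\preceq a^{-1}I$ for the norm bound $\|u_i\|^2\leq (L/K)/a$); and the key point that the per-step loss in the condition number is $1+O(\rho_t^{1/4})$ with $\rho_t=(L/K)/a_t$ growing geometrically, so that $\sum_t\rho_t^{1/4}$ is dominated by its last term $\asymp\eta^{1/4}$, is exactly the cancellation that makes the constants universal. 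The only points that deserve explicit care in a written-out version are (a) the choice of $(r,\eta)$ so that $(r-1)\beta_t$ stays bounded away from $1$ throughout, which you flag, and (b) the overshoot at the stopping step, where $\rho_s$ can exceed $\eta$ only by the bounded factor $(1-(r-1)\beta_{s-1})^{-1}$; both are routine. So the proposal is a complete and correct proof, more self-contained than the paper, which treats the statement as a black box.
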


\medskip
This lemma leads naturally to the following:

\begin{proposition}\label{prop1.3a}
With $(R,B)$ as in Definition \ref{defifs}, there exist  universal constants $0< c_0< C_0<\infty$ such that for all $n$, there exists $J_n$ such that
$$
c_0\sum_{b\in B_n}|w_b|^2\leq \sum_{\lambda\in J_n}\left|\frac{1}{\sqrt{N^n}}\sum_{b\in B_n}w_be^{-2\pi i \langle R^{-n}b, \lambda\rangle}\right|^2\leq C_0\sum_{b\in B_n}|w_b|^2
$$
for all $(w_b)_{b\in B_n}\in{\mathbb C}^{N^n}$.
\end{proposition}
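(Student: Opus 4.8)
The plan is to apply Lemma \ref{lem6.1} directly to the tight frame constructed in the discussion preceding the statement. Recall that
$$
\overline{F_n} = \frac{1}{\sqrt{|\det R|^n}}\left(e^{2\pi i \langle R^{-n}b,\ell\rangle}\right)_{\ell\in\overline{L_n},\,b\in\overline{B_n}}
$$
is a $K\times K$ unitary matrix, with $K=|\det R|^n$. Since $B$ is a simple digit set for $R$, the elements of $B_n$ lie in exactly $N^n$ distinct residue classes $(\mod R^n(\bz^d))$, so we may regard $B_n$ as a subset of $\overline{B_n}$; let $A$ be the $K\times L$ matrix obtained from $\overline{F_n}$ by keeping only the $L=N^n$ columns indexed by $B_n$. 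Then $A$ is a submatrix of the unitary matrix $\overline{F_n}$, and every entry of $A$ has modulus $\frac{1}{\sqrt{|\det R|^n}}$, so every row of $A$ has $\ell^2$-norm equal to $\sqrt{N^n/|\det R|^n}$. In particular all rows of $A$ have equal norm, which is precisely the hypothesis of Lemma \ref{lem6.1}.

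First I would invoke Lemma \ref{lem6.1} for this $A$, obtaining universal constants $0<c_0<C_0<\infty$ and a subset $J\subset\overline{L_n}$ (identifying $\{1,\dots,K\}$ with $\overline{L_n}$) such that
$$
c_0\frac{N^n}{|\det R|^n}\|{\bf w}\|^2\le \|A(J){\bf w}\|^2\le C_0\frac{N^n}{|\det R|^n}\|{\bf w}\|^2,\quad \forall\,{\bf w}\in{\mathbb C}^{N^n}.
$$
Here $A(J)$ has entries $\frac{1}{\sqrt{|\det R|^n}}e^{2\pi i \langle R^{-n}b,\ell\rangle}$ for $\ell\in J$, $b\in B_n$, so that $\|A(J){\bf w}\|^2=\frac{1}{|\det R|^n}\sum_{\ell\in J}\left|\sum_{b\in B_n}w_be^{2\pi i \langle R^{-n}b,\ell\rangle}\right|^2$. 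Multiplying the displayed inequality through by $|\det R|^n/N^n$ cancels the factor $N^n/|\det R|^n$ and yields
$$
c_0\|{\bf w}\|^2\le \sum_{\ell\in J}\left|\frac{1}{\sqrt{N^n}}\sum_{b\in B_n}w_be^{2\pi i \langle R^{-n}b,\ell\rangle}\right|^2\le C_0\|{\bf w}\|^2.
$$

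It then remains only to match the sign of the exponent with the statement. Setting $J_n=J$ and replacing ${\bf w}=(w_b)$ by its complex conjugate $\overline{{\bf w}}=(\overline{w_b})$, which leaves $\|{\bf w}\|$ unchanged, turns the middle term into $\sum_{\lambda\in J_n}\left|\frac{1}{\sqrt{N^n}}\sum_{b\in B_n}w_be^{-2\pi i \langle R^{-n}b,\lambda\rangle}\right|^2$, giving exactly the desired inequality for every $(w_b)_{b\in B_n}\in{\mathbb C}^{N^n}$. Since the constants $c_0,C_0$ supplied by Lemma \ref{lem6.1} are universal — in particular independent of the dimensions $K,L$ and hence of $n$ — the same pair works for all $n$, as claimed.

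I do not expect a serious obstacle here: the substantive content (the Kadison--Singer theorem feeding into Lemma \ref{lem6.1}) is quoted, and the ambient tight frame is already constructed above the statement. The only points requiring care are the verification that all rows of $A$ share a common $\ell^2$-norm (a consequence of every entry having the same modulus, due to the Hadamard-type form) and the bookkeeping of the normalizing factor $N^n/|\det R|^n$, which is exactly the ratio $L/K$ appearing in Lemma \ref{lem6.1} and therefore cancels cleanly. The simplicity of $B$ enters only to guarantee that $A$ has $N^n$ genuinely distinct columns, without which the lower bound $c_0\|{\bf w}\|^2$ could not hold.
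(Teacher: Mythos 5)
Your argument is correct and is essentially identical to the paper's own proof: both apply Lemma \ref{lem6.1} to the $N^n$-column submatrix of the unitary matrix $\overline{F_n}$ indexed by $B_n\subset\overline{B_n}$, extract the row subset $J_n$, and cancel the factor $N^n/|\det R|^n$. Your explicit checks of the equal-row-norm hypothesis and of the role of simplicity of $B$ are correct and merely make explicit what the paper leaves implicit.
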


\begin{proof} Let
$$
F_n = \frac{1}{|\det R|^{n/2}}\left[e^{2\pi i \langle R^{-n}b,\ell\rangle}\right]_{\ell\in \overline{L}_n,b\in\overline{B}_n}
$$
where $\overline{B}_n$ is a complete coset representative (mod $R({\mathbb Z}^d)$) containing $B_n$ and  $\overline{L}_n$ is a complete coset representative (mod $R^T({\mathbb Z}^d)$). It is well known that ${\mathcal F}_n$ is an orthonormal matrix. Let $K = |\det R|^n$ and
$$
A_n = \frac{1}{|\det R|^{n/2}}\left[e^{2\pi i \langle R^{-n}b,\ell\rangle}\right]_{\ell\in \overline{L}_n,b\in B_n}.
$$
Then $A_n$ is a sub-matrix of $F_n$ whose columns are exactly the ones with index in $B_n$ so that the size $L$ is $L = N^n$. By Lemma \ref{lem6.1}, we can find universal constants $c_0,C_0$, independent of $n$, such that for some $J_n\subset \overline{L}_n$, we have
$$
c_0\frac{N^n}{|\det R|^n}\|{\bf w}\|^2\leq \|A(J_n){\bf w}\|^2\leq C_0\frac{N^n}{|\det R|^n}\|{\bf w}\|^2 ,\ \forall {\bf w}\in {\mathbb C}^{N^n}.
$$
As $\frac{|\det R|^{n/2}}{N^{n/2}}A(J_n) = \frac{1}{|\det R|^{n/2}}\left[e^{2\pi i \langle R^{-n}b,\ell\rangle}\right]_{\ell\in J_n,b\in B_n}: = F_n$, this shows
$$
c_0\|{\bf w}\|^2\leq \|F_n{\bf w}\|^2\leq C_0\|{\bf w}\|^2 ,\ \forall {\bf w}\in {\mathbb C}^{N^n}.
$$
This is equivalent to the inequality we stated.
\end{proof}

Since the bounds $c_0$ and $C_0$ are not close to 1, we cannot use the same procedure and ``concatenate'' the sets $J_n$ as in \eqref{eqLambda_k}, as in the proof of Theorem \ref{th1.2}; if we do so for Proposition \ref{prop1.3}, the upper bound grows to infinity and the lower bound decreases to 0. However, this can be circumvented if we can make construct $J_n$ {\it increasingly}.\medskip

\medskip

Another central problem in the study of spectral measure is  the converse of Theorem \ref{thmain}.

\medskip

{\bf (Q3):} Suppose that $\mu(R,B)$ is a spectral measure, does there exist $L$ such that $(R,B,L)$ forms a Hadamard triple?

\medskip

This question suggests that the only way to produce spectral self-affine measures is the first level discrete measure is spectral. The question is directly related to  the Laba-Wang conjecture \cite{LaWa02}. There are several difficulties one has to overcome in order to make progress in this problem. Maybe the first one is if self-affine spectral measures can have overlap. All known fractal spectral measures have no overlap. Perhaps the following questions have a positive answer.

\medskip

{\bf (Q4):} Suppose that $\mu(R,B)$ is a spectral measure, is it true that there is no overlap?

\medskip

{\bf (Q5):} Suppose that $\mu(R,B)$ is a spectral measure and there is no overlap, is there a set $L$ such that $(R,B,L)$ is a Hadamard triple?

\medskip

\medskip

\section{Appendix}
In this appendix, we follow all the same notation in Section 9. Our aim is to prove that the constant $\delta(\Lambda)$ does not appear in the lower bound, showing also that the frame inequality in \eqref{eq2.1i} in Theorem \ref{prop_main1} is indeed the Parseval identity.

\medskip

\begin{theorem}\label{th3.3}
Suppose that  $\delta(\Lambda)>0$. Then $\mu(R,B)$ admits a Fourier frame $E(\Lambda)$ with  lower and upper frame bounds respectively equal
$$
\prod_{j=1}^{\infty}(1-\epsilon_j), \prod_{j=1}^{\infty}(1+\epsilon_j).
$$
In particular, if all $\epsilon_j =0$, then $\mu$ admits a tight Fourier frame.
\end{theorem}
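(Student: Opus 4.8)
The plan is to reduce the sharp bounds to two-sided estimates on the \emph{tail} frame-sums and to establish those by a transfer-operator recursion. Set $c^{(k)}=\prod_{j>k}(1-\epsilon_j)$, $C^{(k)}=\prod_{j>k}(1+\epsilon_j)$, and for $\eta\in\br^d$ let $P^{(k)}(\eta)=\sum_{\beta}|\widehat\mu(\eta+\beta)|^2$, the sum running over the tail set $\Lambda^{(k)}=\bigcup_\ell(J_{n_{k+1}}+(R^T)^{n_{k+1}}J_{n_{k+2}}+\cdots)$ obtained by starting the concatenation \eqref{eqLambda_k} at level $k+1$. First I would fix $f=\sum_{\mathbf b\in B^{m_k}}w_{\mathbf b}\mathbf 1_{T(R,B)_{\mathbf b}}\in\mathcal S_{m_k}$ and compute $Q(f):=\sum_{\lambda\in\Lambda}|\int f e^{-2\pi i\langle\lambda,x\rangle}d\mu|^2$ exactly. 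Since $0\in J_{n_i}$, the sets $\Lambda_\ell$ increase and $Q(f)=\lim_\ell\sum_{\lambda\in\Lambda_\ell}(\cdots)$ by monotone convergence; each $\lambda\in\Lambda_\ell$ factors uniquely as $\lambda=\alpha+(R^T)^{m_k}\beta$ with $\alpha\in\Lambda_k$ and $\beta$ in a tail set. As $\mathbf b,\beta\in\bz^d$, we have $e^{-2\pi i\langle R^{-m_k}\mathbf b,(R^T)^{m_k}\beta\rangle}=e^{-2\pi i\langle\mathbf b,\beta\rangle}=1$, so by \eqref{eq4.3} the exponential sum over $B_{m_k}$ depends on $\alpha$ alone while $(R^T)^{-m_k}\lambda=(R^T)^{-m_k}\alpha+\beta$. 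This gives the identity
$$Q(f)=\frac{1}{N^{m_k}}\sum_{\alpha\in\Lambda_k}\Big|\frac{1}{\sqrt{N^{m_k}}}\sum_{\mathbf b\in B_{m_k}}w_{\mathbf b}e^{-2\pi i\langle R^{-m_k}\mathbf b,\alpha\rangle}\Big|^2 P^{(k)}\big((R^T)^{-m_k}\alpha\big).$$

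Granting the two-sided bound $c^{(k)}\le P^{(k)}(\eta)\le C^{(k)}$ for all $\eta$, the theorem follows quickly: substituting it into the identity above and using Proposition \ref{Prop_Approach_2.1}, which gives $c_k\|\mathbf w\|^2\le\sum_{\alpha\in\Lambda_k}|\cdots|^2\le C_k\|\mathbf w\|^2$ with $c_k=\prod_{j\le k}(1-\epsilon_j)$ and $C_k=\prod_{j\le k}(1+\epsilon_j)$, together with $\|f\|^2=\|\mathbf w\|^2/N^{m_k}$ from Lemma \ref{lem1}, yields $c\|f\|^2\le Q(f)\le C\|f\|^2$ with $c=\prod_j(1-\epsilon_j)$ and $C=\prod_j(1+\epsilon_j)$. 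As $C<\infty$ bounds the analysis operator and $\mathcal S$ is dense in $L^2(\mu)$ (Lemma \ref{lem1}), this extends to every $f\in L^2(\mu)$; when all $\epsilon_j=0$ it is the Parseval identity. The upper bound in the key estimate is the easy half: applying the Bessel bound from the frame theorem of Section \ref{Section9} to the unit vector $g=e^{-2\pi i\langle\eta,\cdot\rangle}\in L^2(\mu)$ and observing $\langle g,e_\beta\rangle=\widehat\mu(\eta+\beta)$ gives $P^{(k)}(\eta)\le C^{(k)}$, with no $\delta$ appearing.

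For the lower bound I would derive, from $\widehat\mu(\xi)=M_{B_{n_{k+1}}}((R^T)^{-n_{k+1}}\xi)\widehat\mu((R^T)^{-n_{k+1}}\xi)$, the transfer recursion
$$P^{(k)}(\eta)=\sum_{\gamma\in J_{n_{k+1}}}\big|M_{B_{n_{k+1}}}(\tau_\gamma(\eta))\big|^2 P^{(k+1)}(\tau_\gamma(\eta)),\qquad \tau_\gamma(\eta)=(R^T)^{-n_{k+1}}(\eta+\gamma).$$
Testing the almost-Parseval-frame condition on the character $w_b=e^{-2\pi i\langle R^{-n_{k+1}}b,\eta\rangle}$ shows the weights obey $1-\epsilon_{k+1}\le\sum_{\gamma}|M_{B_{n_{k+1}}}(\tau_\gamma(\eta))|^2\le1+\epsilon_{k+1}$ for every $\eta$. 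Writing $a^{(k)}=\inf_\eta P^{(k)}(\eta)$, the recursion gives $a^{(k)}\ge(1-\epsilon_{k+1})a^{(k+1)}$, so $a^{(k)}/c^{(k)}$ is non-increasing in $k$; hence the desired lower bound $a^{(k)}\ge c^{(k)}$ is equivalent to the single statement $\liminf_{m\to\infty}\inf_\eta P^{(m)}(\eta)\ge1$.

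This last statement is the crux and the expected main obstacle, because the naive pointwise bound $P^{(m)}(\eta)\ge|\widehat\mu(\eta)|^2$ collapses at the zeros of $\widehat\mu$ — exactly where $\delta(\Lambda)$ intervened in the original argument. I would prove it by a Jorgensen--Pedersen-type completeness argument. All points $(R^T)^{-m_k}\alpha$ relevant to the frame sum, and all their images under the contractions $\tau_\gamma$, lie in the compact dual attractor $X=T(R^T,\overline L)$, on which the partial sums increase to the lower semicontinuous limit $P^{(m)}$; the squeeze $P^{(m)}\le C^{(m)}\to1$ together with $P^{(m)}(0)\ge1$, the invariance built into the recursion, and the hypothesis $\mathcal Z=\emptyset$ (which rules out a nontrivial invariant set on which $\widehat\mu$ vanishes along all of $\bz^d$) should force every limit point of the functions $P^{(m)}$ to be $\ge1$, giving $\inf_\eta P^{(m)}(\eta)\to1$. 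Finally, in the pure Hadamard case $\epsilon_j\equiv0$ this entire step is unnecessary: $E(\Lambda)$ is then orthonormal in $L^2(\mu)$ and has already been shown complete, so it is an orthonormal basis and the Parseval identity is immediate.
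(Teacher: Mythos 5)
Your structural reductions are correct and even elegant: the factorization $Q(f)=\frac{1}{N^{m_k}}\sum_{\alpha\in\Lambda_k}\bigl|\frac{1}{\sqrt{N^{m_k}}}\sum_{\mathbf b}w_{\mathbf b}e^{-2\pi i\langle R^{-m_k}\mathbf b,\alpha\rangle}\bigr|^2P^{(k)}\bigl((R^T)^{-m_k}\alpha\bigr)$, the transfer recursion for $P^{(k)}$, the weight estimate $1-\epsilon_{k+1}\le\sum_\gamma|M_{B_{n_{k+1}}}(\tau_\gamma(\eta))|^2\le1+\epsilon_{k+1}$, and the monotonicity reduction to $\liminf_m\inf_\eta P^{(m)}(\eta)\ge1$ all check out. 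But that last statement is exactly where the proof stops being a proof. The claim that the squeeze $P^{(m)}\le C^{(m)}\to1$, the single value $P^{(m)}(0)\ge1$, ``invariance,'' and $\mathcal Z=\emptyset$ ``should force every limit point of the functions $P^{(m)}$ to be $\ge1$'' is not an argument: the $P^{(m)}$ are infinite sums over different index sets $\Lambda^{(m)}$, you establish no compactness or equicontinuity that would let you extract a limit function, and nothing connects the value at $0$ to the infimum over $\eta$. Moreover, by your own Bessel computation applied to $g=e^{-2\pi i\langle\eta,\cdot\rangle}$, the bound $\inf_\eta P^{(k)}(\eta)\ge c^{(k)}$ is essentially the lower frame bound for the tail system $\Lambda^{(k)}$ --- i.e., an instance of the very theorem being proved --- so the reduction is circular unless this Jorgensen--Pedersen-type completeness estimate is carried out in full; that is precisely the hard analysis the paper's method is designed to avoid. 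Note also that you invoke $\mathcal Z=\emptyset$, which is not among the hypotheses of the theorem: only $\delta(\Lambda)>0$ is assumed, and it does not obviously imply $\mathcal Z=\emptyset$.

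The paper's proof bypasses all pointwise control of tail sums. It writes $Q_m(f)=Q_n(f)+\sum_{\lambda\in\Lambda_m\setminus\Lambda_n}|\cdots|^2$, bounds each tail term below by $\delta(\Lambda)$ times the corresponding term with the factor $|\widehat\mu|^2$ removed, uses Proposition \ref{Prop_Approach_2.1} to bound the full sum over $\Lambda_m$ below by $\prod_{j\le m}(1-\epsilon_j)\int|f|^2d\mu$, and then lets $m\to\infty$ followed by $n\to\infty$ to arrive at the self-referential inequality $Q_\infty(f)\ge Q_\infty(f)+\delta(\Lambda)\bigl(\prod_{j=1}^\infty(1-\epsilon_j)\int|f|^2d\mu-Q_\infty(f)\bigr)$. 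Since $\delta(\Lambda)>0$, this forces $Q_\infty(f)\ge\prod_{j=1}^\infty(1-\epsilon_j)\int|f|^2d\mu$: the constant $\delta(\Lambda)$ serves only as a strictly positive multiplier that cancels out of the conclusion. To salvage your route you would have to genuinely prove $\liminf_m\inf_\eta P^{(m)}(\eta)\ge1$ (at least at the relevant points $(R^T)^{-m_k}\alpha$), and that is a substantial piece of analysis, not a remark.
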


\begin{proof}
This proof generalizes from the proofs in \cite{MR3055992}. It suffices to show that the Fourier frame inequality holds for a dense set of functions in $L^2(\mu)$, in which we will check it for step functions in ${\mathcal S}$. Let $f = \sum_{{\bf b}\in {\bf B}_n}w_{\bf b}{\bf 1}_{T(R,B)_{\bf b}}\in {\mathcal S}_n$ and
$$
Q (f) = \sum_{\lambda\in\Lambda}\left|\int f(x)e^{-2\pi i \lambda x}d\mu(x)\right|^2  =\lim_{n\rightarrow\infty}Q_n(f),
$$
where
$$
Q_n(f): = \sum_{\lambda\in \Lambda_n}\left|\int f(x)e^{-2\pi i \lambda x}d\mu(x)\right|^2 = \frac{1}{N^n}\sum_{\lambda\in\Lambda_n}|\widehat{\mu}((R^{T})^{-n}\lambda)|^2\left|\sum_{{\bf b}\in B_n}w_{\bf b} \frac{1}{\sqrt{N^n}}e^{-2\pi i  \langle R^{-n}{\bf b}, \lambda\rangle}\right|^2.
$$
 Let $C_n = \prod_{j=1}^{n}(1-\epsilon_j)$ and $D_n = \prod_{j=1}^{n}(1+\epsilon_j)$ for $n=1,2...$ and $n=\infty$. We first consider the upper bound. Considering $m>n$, we have $f\in {\mathcal S}_m$. Thus,
$$
\begin{aligned}
Q_m(f) =& \sum_{\lambda\in \Lambda_m}\left|\int f(x)e^{-2\pi i \lambda x}d\mu(x)\right|^2\\
=&\sum_{\lambda\in \Lambda_m}\frac{1}{N^m}|\widehat{\mu}((R^T)^{-m}\lambda)|^2\left|\sum_{{\bf b}\in B_m}w_{\bf b} \frac{1}{\sqrt{N^m}}e^{-2\pi i  \langle R^{-m}{\bf b}, \lambda\rangle}\right|^2.\\
\leq& \frac{1}{N^m}\sum_{\lambda\in \Lambda_m}\left|\sum_{{\bf b}\in B_m}w_{\bf b} \frac{1}{\sqrt{N^m}}e^{-2\pi i \langle R^{-m}{\bf b}, \lambda\rangle}\right|^2.\\
=&\frac{1}{N^m}D_m\sum_{{\bf b}\in B_m}|w_{\bf b}|^2  \le D_{\infty} \int|f|^2d\mu. \\.
\end{aligned}
$$
This shows the upper bound by taking $m$ to infinity. The lower bound requires some more work.
$$
\begin{aligned}
Q_m(f) =& Q_n(f)+ \sum_{\lambda\in\Lambda_m\setminus \Lambda_{n}}\left|\int f(x)e^{-2\pi i \langle\lambda, x\rangle}d\mu(x)\right|^2\\
=&Q_n(f)+\sum_{\lambda\in\Lambda_m\setminus \Lambda_{n}}\frac{1}{N^m}|\widehat{\mu}((R^T)^{-m}\lambda)|^2\left|\sum_{{\bf b}\in B_m}w_{\bf b} \frac{1}{\sqrt{N^m}}e^{-2\pi i  \langle R^{-n}{\bf b}, \lambda\rangle}\right|^2.\\
\ge& Q_n(f)+\delta(\Lambda)\cdot\sum_{\lambda\in \Lambda_m\setminus \Lambda_{n}}\frac{1}{N^m}\left|\sum_{{\bf b}\in B_m}w_{\bf b} \frac{1}{\sqrt{N^m}}e^{-2\pi i  \langle R^{-n}{\bf b}, \lambda\rangle}\right|^2.\\
\end{aligned}
$$
Note that
$$
\sum_{\lambda\in \Lambda_m}\frac{1}{N^m}\left|\sum_{{\bf b}\in B_m}w_{\bf b} \frac{1}{\sqrt{N^m}}e^{-2\pi i  \langle R^{-n}{\bf b}, \lambda\rangle}\right|^2 \ge C_m\int|f|^2d\mu.
$$
We further have
$$
\begin{aligned}
Q_m(f) \ge& Q_n(f)+\delta(\Lambda)\cdot \left(C_m\int|f|^2d\mu-\sum_{\lambda\in \Lambda_n}\frac{1}{N^m}\left|\sum_{{\bf b}\in B_m}w_{\bf b} \frac{1}{\sqrt{N^m}}e^{-2\pi i  \langle R^{-n}{\bf b}, \lambda\rangle}\right|^2\right)\\
\ge &Q_n(f)+\delta(\Lambda)\cdot \left(C_m\int|f|^2d\mu-\sum_{\lambda\in \Lambda_n}\left|\int f(x)e^{-2\pi i \lambda x}d\mu_m(x)\right|^2\right)
\end{aligned}
$$
For a fixed $n$, we take $m$ to infinity. By the fact that $Q_{m}(f)$ converges to $Q_{\infty}(f)$ and $\mu_m$ converges weakly to $\mu$, we have
$$
Q_{\infty}(f)\ge Q_n(f) +\delta(\Lambda)\cdot \left(C_{\infty}\int|f|^2d\mu- \sum_{\lambda\in \Lambda_n}\left|\int f(x)e^{-2 \pi i \lambda x}d\mu(x)\right|^2\right).
$$
We then take $n$ to infinity eventually and obtain
$$
Q_{\infty}(f)\ge Q_{\infty}(f) +\delta(\Lambda)\cdot \left(C_{\infty}\int|f|^2d\mu-\sum_{\lambda\in \Lambda}\left|\int f(x)e^{-2\pi i \lambda x}d\mu(x)\right|^2\right).
$$
and thus
$$
\delta(\Lambda)\cdot \left(C_{\infty}\int|f|^2d\mu-\sum_{\lambda\in \Lambda}\left|\int f(x)e^{-2\pi i \lambda x}d\mu(x)\right|^2\right)\leq 0.
$$
However, $\delta(\Lambda)>0$ and we have
$$
C_{\infty}\int|f|^2d\mu\le \sum_{\lambda\in \Lambda}\left|\int f(x)e^{-2\pi i \lambda x}d\mu(x)\right|^2
$$
This establishes the lower bound. If all $\epsilon_j = 0$, we will have $C_m = D_m$ for all $m$. Taking $m$ to infinity, $C_{\infty} = D_{\infty}$ and hence we have a tight Fourier frame.
\end{proof}

 \begin{acknowledgements}
This work was partially supported by a grant from the Simons Foundation (\#228539 to Dorin Dutkay) and Chun-Kit Lai was supported by the mini-grant by ORSP of San Francisco State University (Grant No: ST659).
\end{acknowledgements}

\bibliographystyle{abbrv}
\bibliography{eframes}

\end{document}